\numberwithin{equation}{section}
\newtheorem{theorem}{Theorem}[section]
\newtheorem{lemma}[theorem]{Lemma}
\newtheorem{proposition}[theorem]{Proposition}
\newtheorem{assumption}[theorem]{Assumption}
\newtheorem{model}[theorem]{Model}
\newtheorem{remark}[theorem]{Remark}
\newtheorem{example}[theorem]{Example}
\newtheorem{definition}[theorem]{Definition}
\DeclareMathSymbol{\leqslant}{\mathalpha}{AMSa}{"36} 
\DeclareMathSymbol{\geqslant}{\mathalpha}{AMSa}{"3E} 
\DeclareMathSymbol{\eset}{\mathalpha}{AMSb}{"3F}     
\renewcommand{\leq}{\;\leqslant\;}                   
\renewcommand{\geq}{\;\geqslant\;}                   
\def\captionfont@{\footnotesize}
\def\captionheadfont@{\scshape}
\long\def\@makecaption#1#2{%
  \vspace{2mm}
  \setbox\@tempboxa\vbox{\color@setgroup
    \advance\hsize-6pc\noindent
    \captionfont@\captionheadfont@#1\@xp\@ifnotempty\@xp
        {\@cdr#2\@nil}{.\captionfont@\upshape\enspace#2}%
    \unskip\kern-6pc\par
    \global\setbox\@ne\lastbox\color@endgroup}%
  \ifhbox\@ne 
    \setbox\@ne\hbox{\unhbox\@ne\unskip\unskip\unpenalty\unkern}%
  \fi
  \ifdim\wd\@tempboxa=\z@ 
    \setbox\@ne\hbox to\columnwidth{\hss\kern-6pc\box\@ne\hss}%
  \else 
    \setbox\@ne\vbox{\unvbox\@tempboxa\parskip\z@skip
        \noindent\unhbox\@ne\advance\hsize-6pc\par}%
\fi
  \ifnum\@tempcnta<64 
    \addvspace\abovecaptionskip
    \moveright 3pc\box\@ne
  \else 
    \moveright 3pc\box\@ne
    \nobreak
    \vskip\belowcaptionskip
  \fi
\relax
}
\def\writefig#1 #2 #3 {\rlap{\kern #1 truecm
\raise #2 truecm \hbox{#3}}}
\let\orgdescriptionlabel\descriptionlabel
\renewcommand*{\descriptionlabel}[1]{%
  \let\orglabel\label
  \let\label\@gobble
  \phantomsection
  \edef\@currentlabel{#1}%
  \let\label\orglabel
  \orgdescriptionlabel{#1}%
}
\title[Interacting diffusions on inhomogeneous random graphs]
{Quenched asymptotics for interacting diffusions on inhomogeneous random graphs}
\author{Eric Lu\c{c}on}
\address{Laboratoire MAP5 (UMR CNRS 8145), Universit\'e Paris Descartes, Sorbonne Paris Cit\'e, 75270 Paris, France, \url{eric.lucon@paridescartes.fr}.
}
\keywords{mean-field system, interacting diffusions, nonlinear Fokker-Planck equation, spatially-extended systems, nonlinear heat equation, neural field equation, random graphs, graph convergence}
\subjclass[2010]{60F15, 82C20, 35K55, 35Q84, 35Q92, 92B20}
\date{\today}
\begin{document}

\begin{abstract}
The aim of the paper is to address the behavior in large population of diffusions interacting on a random, possibly diluted and inhomogeneous graph. This is the natural continuation of a previous work, where the homogeneous Erd\H os-R\'enyi case was considered. The class of graphs we consider includes disordered $W$-random graphs, with possibly unbounded graphons. The main result concerns a quenched convergence (that is true for almost every realization of the random graph) of the empirical measure of the system towards the solution of a nonlinear Fokker-Planck PDE with spatial extension, also appearing in different contexts, especially in neuroscience. The convergence of the spatial profile associated to the diffusions is also considered, and one proves that the limit is described in terms of a nonlinear integro-differential equation which matches the neural field equation in certain particular cases. 
\end{abstract}

\maketitle

\section{The model}
\subsection{Interacting diffusions on a graph}
For all $n\geq1$, consider the system of coupled diffusions $( \theta_{ 1, t}^{ (n)}, \ldots, \theta_{ n, t}^{ (n)})$ in $ \mathbb{ R}^{ d}$ ($d\geq1$)
\begin{equation}
\label{eq:odegene}
{\rm d}\theta_{i, t}^{ (n)}= c(\theta_{i, t}^{ (n)}){\rm d} t + \frac{\kappa_{i}^{ (n)}}{n}\sum_{j=1}^{ n} \xi_{ i, j}^{ (n)}\Gamma\left(\theta_{i, t}^{ (n)}, \theta_{j, t}^{ (n)}\right) {\rm d} t + \sigma{\rm d} B_{i, t},\ 0\leq t\leq T,\ i=1, \ldots, n\ .
\end{equation} 
The dynamics in \eqref{eq:odegene} is decomposed into three terms: a local dynamics, represented by $c(\cdot): \mathbb{ R}^{ d} \to \mathbb{ R}^{ d}$, a mean-field coupling (governed by the binary kernel $ \Gamma(\cdot, \cdot): \mathbb{ R}^{ d}\times \mathbb{ R}^{ d} \to \mathbb{ R}^{ d}$) and a noise term, in the presence of i.i.d. standard Brownian motions in $ \mathbb{ R}^{ d}$, $B_{ 1}, \ldots, B_{ n}$. Here $ \sigma$ is a constant, but possibly degenerate (even equally $0$) diffusion matrix. The time horizon $T$ is fixed (but arbitrary). 

In \eqref{eq:odegene}, the diffusions $( \theta_{ 1}^{ (n)}, \ldots, \theta_{ n}^{ (n)})$ no longer interact on the complete graph (as it is a common framework for a mean-field analysis) but through an nontrivial graph of interaction, encoded by the sequence $(\xi_{ i,j}^{ (n)})_{i, j=1, \ldots, n}$ in $\{0, 1\}^{ n^{ 2}}$. More precisely, we define the graph of interaction of \eqref{eq:odegene} as $ \mathcal{ G}^{ (n)}:=( \mathcal{ V}^{(n)},  \mathcal{ E}^{ (n)})$ with set of vertices $ \mathcal{ V}^{ (n)}=[n]:=\{1, \ldots, n\}$ and set of edges $ \mathcal{ E}^{ (n)}= \{(i,j)\in \mathcal{ V}^{ (n)}\times  \mathcal{ V}^{ (n)},\ \xi_{ i, j}^{ (n)}=1\}$. The aim of the paper is to analyse the large population behavior of \eqref{eq:odegene} for situations where the graph of interaction $ \mathcal{ G}^{ (n)}$ is possibly inhomogeneous. This paper is the natural continuation of \cite{MR3568168} where the homogeneous Erd\H os-R\'enyi case is considered.
\begin{remark}
Note that it would also be possible to include disordered coefficients $ c( \theta_{ i}, \omega_{ i})$ and $ \Gamma(\theta_{ i}, \omega_{ i}, \theta_{ j}, \omega_{ j})$ in \eqref{eq:odegene}, where $(\omega_{ i})_{ i\in \left[n\right]}$ is some i.i.d sequence independent of everything, as it is customary for Kuramoto-type models (see Section~\ref{sec:applications} below). Everything below works with this additional random environment up to an additional expectation w.r.t. this disorder, under appropriate moment conditions.
\end{remark}
\subsection{Construction of the interaction graph}
The construction of the graph $ \mathcal{ G}^{ (n)}$ goes back to the formalism of $W$-random graphs developed in \cite{LOVASZ2006933,MR2455626,MR2925382,Borgs:2014aa,borgs2018}, which has been used in particular in a series of papers \cite{MR3238494,10113716M1075831,10113717M1134007,Chiba:2016aa,MR3187677} on macroscopic limits for Kuramoto-type models (see Section~\ref{sec:applications}), in the deterministic case $ \sigma=0$. In addition to the fact that we consider here more general dynamics, the crucial point is the presence of noise in \eqref{eq:odegene} that changes considerably the analysis (in particular, the techniques used in \cite{10113717M1134007,Chiba:2016aa} for the convergence of the empirical measure when $ \sigma=0$ do not seem to be directly applicable to the case $ \sigma\neq 0$). The work that is closest to the present analysis is the recent \cite{2018arXiv180706898O} where annealed large deviations estimates are given in the case of bounded graphons.

Given a set $I$ of spatial variables, we associate to each vertex $i\in \left[n\right]$ a position variable $x_{ i}^{ (n)}\in I$ encoding some local inhomogeneity for the vertex $i$ in the graph $ \mathcal{ G}^{ (n)}$. Here, 
\begin{definition}
\label{def:I_ell}
$I$ is a closed subset of $ \mathbb{ R}^{ p}$ ($p\geq1$), endowed with a probability measure $ \ell$ with support $I$.
\end{definition}
\noindent
In many situations (see e.g. \cite{LOVASZ2006933,MR2455626}), the set of positions $I$ is taken to be $[0, 1]$, but most of the results presented below remain valid in more general cases, closer to situations where $x_{ i}^{ (n)}$ actually encodes some real spatial position of the particle $ \theta_{ i}^{ (n)}$. Spatial extensions of mean-field dynamics are particularly relevant in a context of neuroscience where one accounts for the spatial organization of neurons in the cortex (see \cite{LucSta2014,Muller:2015aa,Cabana:2015aa,CHEVALLIER2018} and references therein for further details). The way positions $(x_{ 1}^{ (n)}, \ldots, x_{ n}^{ (n)})$ are chosen in $I$ will be made precise later (see Assumptions~\ref{ass:deterministic_positions} and~\ref{ass:random_positions} below). For now, we suppose that these positions are deterministic. In the rest of the paper, we denote by $\ell_{ n}({\rm d}x)$ the empirical measure of the positions:
\begin{equation}
\label{eq:emp_measure_positions}
\ell_{ n}({\rm d}x):= \frac{ 1}{ n}\sum_{ k=1}^{ n} \delta_{ x_{ k}^{ (n)}}({\rm d}x).
\end{equation}
Then, we introduce a kernel $W_{ n}:I^{ 2}\to [0, 1]$ such that $W_{ n}(x_{ i}^{ (n)}, x_{ j}^{ (n)})\in[0, 1]$ represents the probability of the presence of the edge $ \xi_{ i, j}^{ (n)}$ in the graph $ \mathcal{ G}^{ (n)}$:
\begin{definition}
\label{def:regularity_network}
On a common probability space $( \Omega, \mathcal{ F}, \mathbb{ P})$, we give ourselves a family of random variables $ \left( \xi_{ i,j}^{ (n)}\right)_{ i, j\in \left[n\right]; n\geq1}$ on $ \Omega$, such that, under $ \mathbb{ P}$, for each $n\geq1$, $(\xi_{ i,j}^{ (n)})_{ i, j\in \left[n\right]}$ is a collection of independent Bernoulli random variables with parameter $W_{ n}(x_{ i}^{ (n)}, x_{ j}^{ (n)})$.
\end{definition}
In \eqref{eq:odegene}, the parameter $ \kappa_{i}^{ (n)}>0$ is a dilution parameter that compensates for the possible local sparsity of the graph $ \mathcal{ G}^{ (n)}$ around vertex $i$: vertices with fewer neighbors will have a larger dilution parameter. Note that each $ \kappa_{i}^{ (n)}$ may actually depend on the whole sequence of positions in the graph $ \mathcal{ G}^{ (n)}$: $ \kappa_{i}^{ (n)}= \kappa_{i}^{ (n)}(x_{ 1}^{ (n)}, \ldots, x_{ n}^{ (n)})$. In the following $ \Xi:=\left( \xi_{ i,j}^{ (n)}\right)_{ i, j\in \left[n\right]; n\geq1}$ and $\mathcal{ X}:=\left(x_{ i}^{ (n)}\right)_{  i\in \left[n\right]; n\geq1}$ stand for the whole sequence of connections and positions. For fixed $n\geq1$, we also write $ \underline{ x}:=\left(x_{ i}^{ (n)}\right)_{  i\in \left[n\right]}$ and $ \underline{ \xi}:= \left(\xi_{ i, j}^{ (n)}\right)_{ i, j\in \left[n\right]}$. In absence of ambiguity, we write $x_{ i}$ instead of $x_{ i}^{ (n)}$, $ \xi_{ i, j}$ instead of $ \xi_{ i,j}^{ (n)}$ and $ \theta_{ i}$ instead of $ \theta_{ i}^{ (n)}$. The notations $ \mathbf{ P}\left(\cdot\right)$ and $ \mathbf{ E} \left[\cdot\right]$ stand for the probability and expectation w.r.t. the randomness in the Brownian motions and initial condition in \eqref{eq:odegene}. We use both notations $ x\cdot y$ or $ \left\langle x\, ,\, y\right\rangle$ for the scalar product of $x, y\in \mathbb{ R}^{ d}$, and $ \left\vert x \right\vert$ denotes the Euclidean norm of $x$. $ \sigma^{ \dagger}$ stands for the transpose of the matrix $ \sigma$. The notation $ \left\langle \mu\, ,\, f\right\rangle:= \int f {\rm d}\mu$ is also used for the usual duality between a measure and some test function. 
\subsection{The macroscopic kernel}
In order to obtain a macroscopic limit as $n\to\infty$ for \eqref{eq:odegene}, we require some averaging for the probability field $W_{ n}(\cdot, \cdot)$: we assume the existence of a nonnegative measurable function $W:I^{ 2} \to [0, +\infty)$ so that the probability field $(W_{ n})_{ n\geq1}$, correctly renormalized by the dilution parameters $ \kappa_{i}^{ (n)}$, converges along the sequence $ \mathcal{ X}$ as $n\to \infty$ to the macroscopic kernel $W$ (anticipating on the definitions of the Section~\ref{sec:general_prop_chaos} below, what we rigorously mean is that $ \delta_{ n}( \underline{ x})$ in \eqref{hyp:Delta_n_1} goes to $0$ as $n \to \infty$). This assumption encodes some notion of graph convergence (in the sense of \cite{LOVASZ2006933,Borgs:2014aa}) that is discussed in Section~\ref{sec:comment_graph_convergence} below.
\begin{remark}
Without loss of generality, we suppose that one particle does not interact with itself, that is $ \xi_{ i,i}^{ (n)}=0$ for all $i\in \left[n\right]$. In the limit as $n\to\infty$, this boils down to the assumption that the macroscopic kernel $W$ is zero on the diagonal. We make theses assumptions throughout this work without further notice.
\end{remark}

\subsection{The McKean-Vlasov process and the nonlinear Fokker-Planck equation}
The natural limit of the particle system \eqref{eq:odegene} is then described by the nonlinear process $\bar \theta^{x}$ (at position $x\in I$) solution to
\begin{equation}
\label{eq:theta_nonlin}
\qquad {\rm d}\bar\theta^{x}_{t}= c(\bar \theta^{x}_{ t}){\rm d} t +   \int W(x, y)\Gamma\left(\bar\theta_{ t}^{x}, \tilde\theta\right) \nu_{ t}^{ y}({\rm d} \tilde\theta)\ell({\rm d} y) {\rm d} t + \sigma{\rm d} B_{t},\ 0\leq t\leq T.
\end{equation}
where, for fixed $(x, t)$, $ \nu^{x}_{ t}({\rm d}\theta)$ is the law of $ \bar\theta_{ t}^{x}$. It is standard to see that the joint law \[\nu( {\rm d} \theta, {\rm d} x)= \nu^{x}({\rm d} \theta)\ell({\rm d} x)\] of $( \bar \theta^{x}, x)$ solves the nonlinear Fokker-Planck equation
\begin{align}
\left\langle \nu_{t}\, ,\, \varphi\right\rangle &= \left\langle \nu_{0}\, ,\, \varphi\right\rangle + \int_{ 0}^{t} \left\langle \nu_{s}\, ,\, \frac{ 1}{ 2}\nabla_{ \theta}\left( \sigma \sigma^{ \dagger} \nabla_{ \theta}\varphi \right) + \nabla_{ \theta} \varphi(\cdot)\cdot c(\cdot)\right\rangle{\rm d} s \nonumber\\ &+ \int_{ 0}^{t} \left\langle \nu_{ s}({\rm d} \theta, {\rm d} x)\, ,\, \nabla_{ \theta} \varphi(\theta, x) \cdot\int W(x, y)\Gamma(\theta, \tilde \theta) \nu_{ s}({\rm d} \tilde \theta, {\rm d} y)\right\rangle{\rm d} s, \label{eq:McKeanVlasov}
\end{align}
where $ \varphi$ is a regular test function. Writing formally $ \nu_{ t}({\rm d}\theta, {\rm d}x)= q_{ t}(\theta, x) {\rm d}\theta \ell({\rm d}x)$, \eqref{eq:McKeanVlasov} is the weak formulation of 
\begin{equation}
\label{eq:FP}
\partial_{ t} q_{ t}= \frac{1}{ 2} \nabla_{ \theta} \left( \sigma \sigma^{ \dagger} \nabla_{ \theta} q_{ t}\right) - \nabla_{ \theta} \left(q_{ t} \left(c(\cdot)+\int \Gamma(\cdot, \theta^{ \prime}) W(\cdot, y) q_{ t}( \theta^{ \prime}, y) {\rm d} \theta^{ \prime}\ell({\rm d} y)\right)\right).
\end{equation}
The precise meaning we give to \eqref{eq:theta_nonlin} and \eqref{eq:McKeanVlasov} is given in Section~\ref{sec:well_posedness_intro} below. \eqref{eq:theta_nonlin} and \eqref{eq:FP} are spatially-extended versions of standard McKean-Vlasov models that are natural large population limits of mean-field particle systems such as \eqref{eq:odegene}. A recent interest in models with spatial extension similar to \eqref{eq:FP} has been shown in a neuroscience context (see e.g. \cite{LucSta2014,Cabana:2015aa,2018arXiv180706898O,Muller:2015aa} and references therein).
\section{Main assumptions and results}
\label{sec:main_results}
\subsection{General assumptions}
\label{sec:general_assumptions_nonlin}
\subsubsection{Assumption on the kernel $W$}
For any $r\geq1$, for any $x\in I$ such that $W(x, \cdot)\in L^{r}(I, \ell)$, denote by
\begin{equation}
\label{hyp:def_Wr}
\mathcal{ W}_{ r}(x):=\int W(x, y)^{ r} \ell({\rm d} y),
\end{equation}
We require the minimal assumption that
\begin{equation}
\label{hyp:bound_W_L1}
\left\Vert \mathcal{ W}_{ 2} \right\Vert_{ \infty}:= \sup_{ z\in I} \mathcal{ W}_{ 2}(z)<\infty.
\end{equation}
This implies in particular that $\left\Vert \mathcal{ W}_{ 1} \right\Vert_{ \infty}<\infty$: in the limit $n\to \infty$, the degree of each node $x\in I$ in the macroscopic graph $W$ remains uniformly bounded (\cite{2018arXiv180709989D}). Suppose also
\begin{equation}
\label{eq:W_nondegenerate}
\inf_{ x\in I} \mathcal{ W}(x)>0.
\end{equation}
\begin{remark}
\label{rem:W_nondegenerate}
A closer look to the proofs below shows that \eqref{eq:W_nondegenerate} can be discarded if one assumes more integrability on $W$ (for example, \eqref{eq:W_nondegenerate} is not needed if $W$ is bounded). 
\end{remark}
\begin{remark}
An important remark is that we do not suppose any symmetry of the kernels $W$ and $W_{ n}$, nor that we suppose that $W$ and $W_{ n}$ are simple functions of the distance $x-y$ (this is a natural hypothesis if one thinks of applications in neuroscience, as the mutual influence between neuron $i$ on neuron $j$ need not be symmetric). There are also some interesting examples where $W$ is not symmetric, even if $ \mathcal{ G}^{ (n)}$ might be (see Section~\ref{sec:convergence_graphs_examples}). Note that the proof of Theorem~\ref{theo:identification_non_compact} below requires to consider asymmetric kernels (see \eqref{eq:tilde_WM}).
\end{remark}

\subsubsection{Assumptions on the coefficients $ \Gamma$ and $c$}
We suppose that $ (\theta, \tilde{ \theta}) \mapsto \Gamma( \theta, \tilde{ \theta})$ and $ \theta \mapsto c(\theta)$ are twice differentiable on $ \mathbb{ R}^{ d}$ with continuous derivatives and that $ \Gamma$ is Lipschitz continuous with sublinear behavior: there exists a constant $L_{ \Gamma}>0$  such that
\begin{align}
\left\vert \Gamma( \theta_{ 1}, \theta_{ 2}) - \Gamma( \tilde{ \theta}_{ 1},  \tilde{ \theta}_{ 2}) \right\vert  &\leq L_{  \Gamma} \left(\left\vert \theta_{ 1} - \tilde{ \theta}_{ 1} \right\vert + \left\vert \theta_{ 2} - \tilde{ \theta}_{ 2} \right\vert \right),\ \theta_{ 1}, \tilde{ \theta}_{ 1}, \theta_{ 2}, \tilde{ \theta}_{ 2}\in \mathbb{ R}^{ d} \label{hyp:Gamma_Lip_1},\\
\left\vert \Gamma(\theta, \tilde{ \theta}) \right\vert &\leq L_{ \Gamma} \left(1+ \left\vert \theta \right\vert + \left\vert \tilde{ \theta} \right\vert\right),\ \theta, \tilde{ \theta}\in \mathbb{ R}^{ d}.\label{hyp:Gamma_bound}
\end{align}
We require that $c(\cdot)$ is one-sided Lipschitz: there exists a constant $L_{ c}>0$ such that
\begin{equation}
\label{hyp:c_onesidedLip}
\left\langle \theta - \tilde{ \theta}\, ,\, c(\theta) - c(\tilde{ \theta})\right\rangle\leq L_{ c} \left\vert \theta- \tilde{ \theta} \right\vert^{ 2},\ \theta, \tilde{ \theta}\in \mathbb{ R}^{ d}.
\end{equation}
We also suppose some polynomial control on $c(\cdot)$: there exists $k\geq 2$ such that
\begin{equation}
\label{hyp:polynomial_control_c}
\sup_{ \theta\in \mathbb{ R}^{ d}} \frac{ \left\vert c(\theta) \right\vert}{ 1+ \left\vert \theta \right\vert^{ k}}< \infty.
\end{equation}
Unless specified otherwise, we only assume \eqref{hyp:Gamma_Lip_1}, \eqref{hyp:Gamma_bound}, \eqref{hyp:c_onesidedLip} and \eqref{hyp:polynomial_control_c}. Nonetheless, for some of the results of the paper, we may restrict for simplicity to a generic subset of these assumptions
\begin{model}[Polynomial interactions]
\label{mod:polynomial}
A particular case of the previous assumptions is to require that $c$ is polynomial of degree smaller than $k$ satisfying \eqref{hyp:c_onesidedLip} and that $ \Gamma$ is either bounded or linear: $ \Gamma(\theta, \tilde{\theta})= \Gamma\cdot \left(\theta- \tilde{\theta}\right)$ for some (possibly degenerate) matrix $ \Gamma$.
\end{model}
We give in Section~\ref{sec:applications} below several dynamics satisfying the present hypotheses, a significant application to have in mind being FitzHugh-Nagumo oscillators with electrical synapses (see Section~\ref{sec:applications}, item (2)). 
\subsubsection{Assumption on the initial condition}
We assume that \eqref{eq:McKeanVlasov} is endowed with an initial condition $ \nu_{ 0}({\rm d}\theta, {\rm d}x)$ of the form
\begin{equation}
\label{hyp:form_initial_condition_nu0}
\nu_{ 0}({\rm d}\theta, {\rm d}x)= \nu_{ 0}^{ x}({\rm d}\theta) \ell({\rm d}x)
\end{equation}
with some uniform a priori control on its moments: for $k$ given by \eqref{hyp:polynomial_control_c}, suppose that
\begin{equation}
\label{hyp:second_moment_nu0}
\sup_{x\in I} \int \left\vert \theta \right\vert^{ 2k} \nu_{ 0}^{ x}({\rm d} \theta)<+\infty.
\end{equation}
If $ (S, d)$ is a Polish space, let $w_{ 1}(\cdot, \cdot)$ be the usual Wasserstein distance \cite{MR2459454} on $S$: for any probability measures $ \nu_{ 1}, \nu_{ 2}$ on $S$,
\begin{equation}
\label{eq:wasserstein}
w_{ 1}(\nu_{ 1}, \nu_{ 2}):= \inf_{ \pi} \left\lbrace \int d \left(\theta_{ 1}, \theta_{ 2}\right) \pi({\rm d} \theta_{ 1}, {\rm d} \theta_{ 2})\right\rbrace,
\end{equation}
where the infimum is taken on all couplings $ \pi$ on $ S\times S$ with marginals $ \nu_{ 1}$ and $ \nu_{ 2}$. In \eqref{hyp:nu_0_Lip} below, we take $S= \mathbb{ R}^{ d}$, but we will also use the same definition later for $S= \mathbb{ R}^{ d}\times I$ when necessary. We assume here that there exist $L_{ 0}>0$ and $ \iota_{ 1}\in(0, 1]$ such that
\begin{equation}
\label{hyp:nu_0_Lip}
w_{ 1} \left(\nu_{ 0}^{ x}, \nu_{ 0}^{ y}\right) \leq L_{ 0} \left\vert x-y \right\vert^{ \iota_{ 1}},\ x,y\in I.
\end{equation}
We suppose finally that the initial condition of the particle system \eqref{eq:odegene} is such that $( \theta_{ 1, 0}^{ (n)}, \ldots, \theta_{ n, 0}^{ (n)})$ are independent, with respective law  $\theta_{ k, 0}^{ (n)}\sim \nu_{ 0}^{x_{ k}}({\rm d}\theta)$, for $k\in \left[n\right]$.
\subsection{Well-posedness on the nonlinear Fokker-Planck equation and a priori estimates} 
\label{sec:well_posedness_intro}
Consider $ \mathcal{ M}$ the set of probability measures $ \nu$ on $ \mathcal{ C}([0, T], \mathbb{ R}^{ d})\times I$ with marginals on $I$ equal to $\ell$. Since $ \mathcal{ C}([0, T], \mathbb{ R}^{ d})$ is Polish, it follows from the disintegration Theorem \cite{MR1932358} that any $ \nu\in \mathcal{ M}$ may be written as $ \nu({\rm d}\theta, {\rm d}x)= \nu^{ x}({\rm d}\theta) \ell({\rm d}x)$. We endow $ \mathcal{ M}$ with the following Wasserstein-type metric \cite{SznitSflour}
\begin{equation}
\label{eq:wasserstein_ptfixe}
 \delta_{ T}( \nu, \mu):= \sup_{ x\in I} \inf_{ \pi} \left\lbrace \sup_{ s\leq T} \int \left\vert \vartheta_{1, s}^{ x}- \vartheta_{2, s}^{ x} \right\vert^{ 2k} \pi ( {\rm d}\vartheta_{ 1}, {\rm d} \vartheta_{ 2})\right\rbrace^{ \frac{ 1}{ 2k}},
\end{equation} where the infimum is taken over all couplings $ \pi$ under which $ \vartheta_{1}^{ x}\sim \nu^{ x}$ and $ \vartheta_{ 2}^{ x}\sim \mu^{ x}$, for $\ell$-almost every $x\in I$.
\begin{proposition}
\label{prop:PDE_wellposed}
Under the hypotheses of Section~\ref{sec:general_assumptions_nonlin}, there exists a unique weak solution to \eqref{eq:McKeanVlasov} in $ \mathcal{ M}$ with initial condition $ \nu_{ 0}({\rm d}\theta, {\rm d}x)= \nu_{ 0}^{x}({\rm d}\theta) \ell({\rm d}x)$. This solution $ \nu$ is such that for $\ell$-almost every $x\in I$, $ \nu^{ x}({\rm d}\theta)$ is the law of the nonlinear process $( \theta_{ t}^{ x})_{ t\in[0, T]}$ given by \eqref{eq:theta_nonlin}.
\end{proposition}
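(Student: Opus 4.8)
The plan is to follow the classical contraction/fixed-point method for McKean-Vlasov equations (in the spirit of \cite{SznitSflour}): one builds a map $\Phi$ on a subset of $\mathcal M$ sending $\mu$ to the law of the process obtained by \emph{freezing} the interaction term at $\mu$, and shows that some iterate of $\Phi$ is a strict contraction for the metric $\delta_T$ of \eqref{eq:wasserstein_ptfixe}. Concretely, for $\mu\in\mathcal M$ satisfying $M_{2k}(\mu):=\sup_{x\in I}\int\sup_{s\le T}\vert\vartheta_s\vert^{2k}\,\mu^x({\rm d}\vartheta)<\infty$, one considers, for each $x\in I$, the non-autonomous linear SDE
\[
{\rm d}\theta^x_t=\Big(c(\theta^x_t)+\int W(x,y)\,\Gamma(\theta^x_t,\tilde\theta)\,\mu^y_t({\rm d}\tilde\theta)\,\ell({\rm d}y)\Big)\,{\rm d}t+\sigma\,{\rm d}B_t,\qquad \theta^x_0\sim\nu_0^x,
\]
where $\mu^y_t$ is the time-$t$ marginal of $\mu^y$. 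By \eqref{hyp:Gamma_Lip_1} and $\Vert\mathcal W_1\Vert_\infty<\infty$ the interaction drift is globally Lipschitz in $\theta$, uniformly in $(t,x)$, so with \eqref{hyp:c_onesidedLip} the total drift is one-sided Lipschitz in $\theta$ uniformly in $(t,x)$, while \eqref{hyp:Gamma_bound}, \eqref{hyp:polynomial_control_c} and $M_{2k}(\mu)<\infty$ give it polynomial growth; standard existence and uniqueness for additive-noise SDEs with monotone, polynomially growing drift then produce a unique strong solution $(\theta^x_t)_{t\le T}$, and one sets $\Phi(\mu)({\rm d}\theta,{\rm d}x):=\mathrm{Law}\big((\theta^x_t)_{t\le T}\big)({\rm d}\theta)\,\ell({\rm d}x)\in\mathcal M$.

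Applying Itô's formula to $\vert\theta^x_t\vert^{2k}$, absorbing $\langle\theta,c(\theta)\rangle$ via \eqref{hyp:c_onesidedLip} and \eqref{hyp:polynomial_control_c} and the interaction term via \eqref{hyp:Gamma_bound} and $\Vert\mathcal W_1\Vert_\infty<\infty$, and invoking Gronwall's lemma, yields $M_{2k}(\Phi(\mu))\le C\big(1+\sup_{x}\int\vert\theta\vert^{2k}\nu_0^x({\rm d}\theta)+M_{2k}(\mu)\big)<\infty$ thanks to \eqref{hyp:second_moment_nu0}. Hence, starting from $\nu^{(0)}$, the law of the non-interacting diffusion ${\rm d}\theta^x_t=c(\theta^x_t)\,{\rm d}t+\sigma\,{\rm d}B_t$ with $\theta^x_0\sim\nu_0^x$ (which has finite $2k$-th moments for the same reason), all Picard iterates $\nu^{(n+1)}:=\Phi(\nu^{(n)})$ lie in $\mathcal M^{2k}:=\{\mu\in\mathcal M:M_{2k}(\mu)<\infty\}$, on which $\delta_T$ is finite and $(\mathcal M^{2k},\delta_T)$ is a complete metric space.

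For the contraction, given $\mu^1,\mu^2\in\mathcal M^{2k}$ one couples the linearized solutions $\theta^{1,x},\theta^{2,x}$ through the same Brownian motion and the same initial value; since $\sigma$ is a constant matrix the martingale parts cancel and $t\mapsto a^x_t:=\theta^{1,x}_t-\theta^{2,x}_t$ solves an ODE with $a^x_0=0$. Estimating $\tfrac{{\rm d}}{{\rm d}t}\vert a^x_t\vert^{2k}$ via \eqref{hyp:c_onesidedLip} for the $c$-term, via the $L_\Gamma$-Lipschitz dependence of $\Gamma$ on its first variable together with $\Vert\mathcal W_1\Vert_\infty<\infty$ for the ``diagonal'' part $\int W(x,y)(\Gamma(\theta^{1,x}_t,\tilde\theta)-\Gamma(\theta^{2,x}_t,\tilde\theta))\mu^{1,y}_t({\rm d}\tilde\theta)\ell({\rm d}y)$, and, for the $\mu$-dependent part, via the $L_\Gamma$-Lipschitz dependence of $\Gamma$ on its second variable together with $w_1(\mu^{1,y}_t,\mu^{2,y}_t)\le\delta_t(\mu^1,\mu^2)$ (where $\delta_t$ is the metric \eqref{eq:wasserstein_ptfixe} with $T$ replaced by $t$), then using a Young inequality and Gronwall, one gets $\sup_{x\in I}\sup_{s\le t}\mathbf E\vert a^x_s\vert^{2k}\le C\int_0^t\delta_s(\mu^1,\mu^2)^{2k}\,{\rm d}s$, hence $\delta_t(\Phi(\mu^1),\Phi(\mu^2))^{2k}\le C\int_0^t\delta_s(\mu^1,\mu^2)^{2k}\,{\rm d}s$ for all $t\le T$, and by iteration $\delta_T(\Phi^{\circ n}(\mu^1),\Phi^{\circ n}(\mu^2))^{2k}\le\tfrac{(CT)^n}{n!}\,\delta_T(\mu^1,\mu^2)^{2k}$. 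Thus a power of $\Phi$ is a strict contraction on $(\mathcal M^{2k},\delta_T)$, the Cauchy sequence $(\nu^{(n)})$ converges to the unique fixed point $\nu$, and by construction $\nu^x$ is, for $\ell$-a.e. $x$, the law of $\bar\theta^x$ from \eqref{eq:theta_nonlin}; applying Itô's formula to $\varphi(\bar\theta^x_t,x)$ and integrating in $\ell({\rm d}x)$ shows $\nu$ solves \eqref{eq:McKeanVlasov}. Conversely, any weak solution $\tilde\nu\in\mathcal M$ has $M_{2k}(\tilde\nu)<\infty$ (by the same a priori estimate applied to $\tilde\nu$ itself, using truncated test functions and \eqref{hyp:second_moment_nu0}), and is, via the martingale problem associated with \eqref{eq:theta_nonlin} with nonlinear term frozen at $\tilde\nu$ and the well-posedness of the linearized SDE, the law of a solution of \eqref{eq:theta_nonlin}, i.e. a fixed point of $\Phi$; equivalently, running the contraction estimate with $\mu^i=\nu^i$ for two weak solutions $\nu^1,\nu^2$ forces $\delta_T(\nu^1,\nu^2)=0$. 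Either way $\tilde\nu=\nu$.

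The main obstacle is to carry out the moment and contraction estimates uniformly in $x\in I$ under the sole integrability assumption $\Vert\mathcal W_2\Vert_\infty<\infty$ of \eqref{hyp:bound_W_L1} (so $W(x,\cdot)$ need not be bounded): every spatial integral $\int W(x,y)g(y)\,\ell({\rm d}y)$ must be dominated by $\Vert\mathcal W_1\Vert_\infty\sup_y\vert g(y)\vert$, or by $\Vert\mathcal W_2\Vert_\infty^{1/2}(\int g^2\,{\rm d}\ell)^{1/2}$, never by $\Vert W(x,\cdot)\Vert_{L^p(\ell)}$ with $p$ large, and this has to coexist with the merely one-sided Lipschitz, polynomially growing $c$, which is precisely what forces working with the $2k$-th moments throughout and makes one check that \eqref{hyp:second_moment_nu0} propagates and that the differential inequality for $\vert\theta^x_t\vert^{2k}$ closes. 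The remaining ingredients --- completeness of $(\mathcal M^{2k},\delta_T)$, the pathwise differential inequality for $\vert a^x_t\vert^{2k}$ (unproblematic since $c,\Gamma\in C^2$), and the martingale-problem identification used in the uniqueness part --- are routine.
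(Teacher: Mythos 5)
Your existence argument is essentially the paper's: the same map $\mu\mapsto$ law of the SDE with interaction frozen at $\mu$, the same Picard/contraction scheme for the metric $\delta_T$ of \eqref{eq:wasserstein_ptfixe}, and the same moment propagation from \eqref{hyp:second_moment_nu0}. (A minor difference: you bound the frozen-measure term by $w_1(\mu^{1,y}_t,\mu^{2,y}_t)\le\delta_t(\mu^1,\mu^2)$ and $\Vert\mathcal W_1\Vert_\infty$ \emph{before} raising to the power $2k$, while the paper raises to the power $2k$ first and then applies Jensen with the normalized measure $W(x,y)\ell({\rm d}y)/\int W(x,z)\ell({\rm d}z)$, which is where \eqref{eq:W_nondegenerate} enters; your variant is correct and even bypasses that assumption at this step.)

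The gap is in your uniqueness part. You assert that any weak solution $\tilde\nu\in\mathcal M$ of \eqref{eq:McKeanVlasov} is, ``via the martingale problem \ldots and the well-posedness of the linearized SDE,'' the law of a solution of \eqref{eq:theta_nonlin}, hence a fixed point of $\Phi$. That identification is precisely the non-routine step: strong/weak uniqueness of the SDE with drift frozen at $\tilde\nu$ gives uniqueness \emph{within the class of laws of solutions of that SDE (or of its martingale problem)}, but a measure-valued weak solution of the associated linear Fokker--Planck equation is not a priori of that form; turning it into one is a superposition-principle statement (Figalli/Trevisan type), which comes with integrability hypotheses that must be checked here --- delicate because $c$ only satisfies \eqref{hyp:c_onesidedLip}--\eqref{hyp:polynomial_control_c} and your claimed a priori $2k$-moment bound for an \emph{arbitrary} weak solution via truncated test functions is itself only sketched. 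Your fallback, ``running the contraction estimate with $\mu^i=\nu^i$ for two weak solutions,'' is circular: the contraction controls $\delta_T(\Phi(\nu^1),\Phi(\nu^2))$, and replacing $\Phi(\nu^i)$ by $\nu^i$ presupposes exactly the identification above. The paper avoids this entirely with a propagator (duality) argument: using the backward Kolmogorov equation \eqref{eq:kolmogorov} for the flow frozen at $\nu$, it derives \eqref{eq:propagator_mu_nu} directly from the weak formulation satisfied by the other solution $\mu$, and then closes a Gr\"onwall estimate on $\int\sup_{s\le t}w_1(\mu_s^x,\nu_s^x)^2\,\ell({\rm d}x)$, using the Kantorovich--Rubinstein duality, a uniform bound on $\nabla_\theta P_{v,s}f$, and a Yosida regularization of $c$ to handle the merely one-sided Lipschitz drift. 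To repair your proof, either adopt this duality route or invoke an explicit superposition theorem and verify its hypotheses for this polynomial-drift, spatially-extended setting.
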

\begin{remark}
\label{rem:apriori_nonlin}
A byproduct of Proposition~\ref{prop:PDE_wellposed} is the following: under the hypotheses of Proposition~\ref{prop:PDE_wellposed}, there exists a constant $C_{ 0}$, only depending on $ \Gamma, c, W, \sigma, T$ and $ \nu_{ 0}$, such that
\begin{equation}
\label{eq:apriori_bound_nonlin}
\sup_{ x\in I} \mathbf{ E} \left[\sup_{ s\leq T}\left\vert \bar \theta_{ s}^{x} \right\vert^{ 2k}\right] \leq C_{ 0}.
\end{equation}
Similarly, it is standard to prove, under the same hypotheses, a similar estimate for the particle system \eqref{eq:odegene}:
\begin{equation}
\label{eq:apriori_bound_odegene}
\sup_{ i\in \left[n\right]} \mathbf{ E} \left[\sup_{ s\leq T}\left\vert \bar \theta_{ i, s}^{(n)} \right\vert^{ 2k}\right] \leq C_{ 0}.
\end{equation}
\end{remark}
The proof of Proposition~\ref{prop:PDE_wellposed} is standard and relies on a fixed-point argument \cite{SznitSflour} on the McKean-Vlasov diffusion \eqref{eq:theta_nonlin}. Existence and uniqueness in \eqref{eq:theta_nonlin} provides existence of a solution to \eqref{eq:McKeanVlasov}. Uniqueness in \eqref{eq:McKeanVlasov} comes from a propagator method. Similar well-posedness results for spatially-extended McKean-Vlasov processes may be found in \cite{LucSta2014,Muller:2015aa,2018arXiv180706898O}. Proposition~\ref{prop:PDE_wellposed}, as well as some further regularity estimates concerning $ \nu$, is proven in Appendix~\ref{sec:wellposed_PDE}.
\subsection{A general propagation of chaos estimate}
\label{sec:general_prop_chaos}
In this paragraph, we fix a sequence of positions $(x_{ i})_{ i\in \left[n\right]}$ (that is supposed to be deterministic in Theorem~\ref{theo:conv_general} below), a probability field $W_{ n}(x_{ i}, x_{ j})$ and a kernel $W$. We are interested in the approximation of the microscopic system \eqref{eq:odegene} by its mean field limit \eqref{eq:theta_nonlin}: let $(\bar \theta_{ 1}^{x_{ 1}}, \ldots, \bar \theta_{ n}^{x_{ n}})$, $n$ independent copies of the nonlinear process driven by the same Brownian motions $(B_{ 1}, \ldots, B_{ n})$, with the same positions $x_{ i}$ and initial conditions as in \eqref{eq:odegene}. For simplicity, we write $ \bar \theta_{ i, s}$ in place of $ \bar \theta_{ i, s}^{x_{ i}}$.  
\subsubsection{Assumptions on the graph $ \mathcal{ G}^{ (n)}$}
We suppose some uniformity in the dilution parameters $( \kappa_{i}^{ (n)})_{ i\in \left[n\right]}$, namely the existence of $ \kappa_{ n}\geq1$ and $w_{ n}\in(0, 1]$ such that
\begin{align}
\kappa_{ \infty}^{ (n)}( \underline{ x}) := \max_{ i\in \left[n\right]} \left( \kappa_{i}^{ (n)}( \underline{ x})\right)& \leq \kappa_{ n}, \label{hyp:alphan_VS_an}\\
\max_{ i, j\in \left[n\right]} \left( W_{ n}(x_{ i}, x_{ j})\right)&\leq w_{ n},\label{hyp:def_w_n_infty}
\end{align}
satisfying, as $n\to \infty$,
\begin{align}
\frac{ 1}{ \kappa_{ n}} &\leq w_{ n} \leq 1\label{hyp:compare_wn_alphan},\\
\kappa_{ n}^{ 2} w_{ n} &= o \left( \frac{ n}{ \log(n)}\right), \text{ as } n\to\infty.\label{hyp:alpha_n_infty}
\end{align}
\begin{remark}
Assumptions \eqref{hyp:def_w_n_infty} and \eqref{hyp:compare_wn_alphan} are mostly technical, since it is always possible to take $w_{ n}=1$. Nonetheless, there are simple cases where it is natural to take $w_{ n}\to 0$ as $n\to\infty$: consider $W_{ n}(x_{ i}, x_{ j})\equiv w_{ n}= \rho_{ n}$ for some $ \rho_{ n} \xrightarrow[ n\to\infty]{}0$ (this corresponds to a uniform diluted Erd\H os-R\'enyi graph $ \mathcal{ G}^{ (n)}$, see \cite{MR3568168}). In this case, it is natural to renormalize the sum in \eqref{eq:odegene} by the mean degree of each vertex (equal to $n \rho_{ n}$), so that we take $ \kappa_{i}^{ (n)}\equiv \kappa_{ n}= \frac{ 1}{ \rho_{ n}}$ for all $i\in \left[n\right]$. Then, \eqref{hyp:alpha_n_infty} boils down to the condition $ \kappa_{ n}=_{ n\to\infty} o \left( \frac{ n}{ \log(n)}\right)$, which is exactly the condition found in \cite{MR3568168}, Eq. (1.12) in the Erd\H os-R\'enyi case. A general extension of this simple case is considered in Section~\ref{sec:dense_bounded_graphons}.
\end{remark}
We define
\begin{equation}
\delta_{ n}( \underline{ x}):=\sup_{ i\in \left[n\right]} \left(\frac{1}{ n} \sum_{ k=1}^{ n} \left\vert \kappa_{i}^{ (n)}W_{ n}(x_{ i}, x_{ k}) - W(x_{ i}, x_{ k})\right\vert\right).\label{hyp:Delta_n_1}
\end{equation}
The main assumption of this paragraph is
\begin{equation}
\delta_{ n}( \underline{ x})\xrightarrow[ n\to \infty]{} 0.\label{hyp:Delta1_to_0}
\end{equation}
The convergence \eqref{hyp:Delta1_to_0} of the microscopic probability field $W_{ n}$ (properly renormalized by $ \kappa^{ (n)}$) to the macroscopic kernel $W$ encodes some notion of convergence of the underlying graph $ \mathcal{ G}^{ (n)}$ as $ n\to \infty$. Further comments on this point are made in \S~\ref{sec:comment_graph_convergence}. In the remaining of the paper, we adopt the following definition
\begin{definition}
\label{def:convergence_graph}
We say that $( \mathcal{ G}^{ (n)}, \kappa^{ (n)})$ converges to $W$ as $n\to\infty$ if Assumptions \eqref{hyp:alphan_VS_an}, \eqref{hyp:def_w_n_infty}, \eqref{hyp:compare_wn_alphan}, \eqref{hyp:alpha_n_infty} and \eqref{hyp:Delta1_to_0} are true.
\end{definition}
General examples of converging graphs are given in Section~\ref{sec:convergence_graphs_examples}. 
\subsubsection{Assumptions on the kernel $W$} 
The second set of assumptions concerns regularity estimates on the limiting kernel $W$. The following notations are used throughout the paper: 
\begin{align}
[\Gamma]_{u}(\theta, x)&:= \int \Gamma(\theta, \tilde\theta) \nu_{ u}^{x}({\rm d} \tilde\theta),\ \theta\in \mathbb{ R}^{ d}, x\in I, u\geq0.\label{eq:G}\\
\Upsilon_{ t}(x, y, z)&:= \int_{ 0}^{t} \int \left\langle [ \Gamma]_{ u}(\theta, y)\, ,\, [ \Gamma]_{ u}(\theta, z)\right\rangle \nu_{ u}^{ x}({\rm d}\theta){\rm d}u,\ x,y,z\in I, t\geq0.\label{eq:upsilon_t}
\end{align}
A priori controls on $ \left[ \Gamma\right]$ and $ \Upsilon$ are given in Lemma~\ref{lem:regul_Upsilon} below. Define (recall the definition of $\ell_{ n}({\rm d}x)$ in \eqref{eq:emp_measure_positions}), for $i\in \left[n\right]$
\begin{equation}
\label{hyp:epsilon_i_123}
\begin{split}
\epsilon_{ n, T}^{ (1, i)}( \underline{ x})&:= \int W(x_{ i}, y) W(x_{ i}, z) \Upsilon_{ T}(x_{ i}, y, z) \left\lbrace\ell_{ n}({\rm d}y)\ell_{ n}({\rm d}z)- \ell({\rm d} y)\ell({\rm d} z)\right\rbrace ,\\
\epsilon_{ n, T}^{ (2, i)}( \underline{ x})&:= \int W(x_{ i}, y) W(x_{ i}, z) \Upsilon_{ T}(x_{ i}, y, z) \left\lbrace \ell_{ n}({\rm d}y) - \ell({\rm d}y)\right\rbrace\ell({\rm d}z),\\
\epsilon_{ n, T}^{ (3, i)}( \underline{ x})&:= \int W(x_{ i}, y)W(x_{ i}, z) \Upsilon_{ T}(x_{ i}, y, z)\ell({\rm d}y) \left\lbrace \ell_{ n}({\rm d}z)-\ell({\rm d} z)\right\rbrace.
\end{split}
\end{equation}
We require that
\begin{equation}
\epsilon_{ n, T}^{ (m)}( \underline{ x}):=\sup_{ i\in \left[n\right]} \left\vert \epsilon_{ n, T}^{ (m, i)}( \underline{ x}) \right\vert\xrightarrow[ n\to \infty]{} 0, \text{ for }m=1,2,3,\label{hyp:Delta3_to_0}
\end{equation}
as well as
\begin{equation}
\sup_{ n\geq1} \sup_{ i\in \left[n\right]} \int W(x_{ i}, y) \ell_{ n}({\rm d}y)=\sup_{ n\geq1} \sup_{ i\in \left[n\right]} \frac{ 1}{ n} \sum_{ k=1}^{ n} W(x_{ i}, x_{ k})<+\infty.\label{hyp:bound_W_L1_sum}
\end{equation}
Assumptions \eqref{hyp:Delta3_to_0} and \eqref{hyp:bound_W_L1_sum} capture a notion of regularity of the macroscopic kernel $W$: \eqref{hyp:bound_W_L1_sum} is the discrete counterpart of \eqref{hyp:bound_W_L1} (when $r=1$) and (forgetting about the factor $\Upsilon_{ T}$ in \eqref{hyp:epsilon_i_123}) \eqref{hyp:Delta3_to_0} essentially says that various empirical means in $W$ converge to their expectation. Hence, we adopt the following definition
\begin{definition}
\label{def:graphon_regular}
We say that the kernel $W$ is regular along the sequence of positions $\mathcal{ X}$ if \eqref{hyp:Delta3_to_0} and \eqref{hyp:bound_W_L1_sum} are true.
\end{definition}
We are now in position to state the first main result of the paper:
\begin{theorem}
\label{theo:conv_general}
Fix $T\geq0$. Suppose that the hypotheses of Section~\ref{sec:general_assumptions_nonlin} are true and that $( \mathcal{G}^{ (n)}, \kappa^{ (n)}, \mathcal{ X}, W)$ satisfy Definitions~\ref{def:convergence_graph} and ~\ref{def:graphon_regular}. In this case,
\begin{equation}
\label{eq:conv_general}
\sup_{ i\in \left[n\right]}\mathbf{ E}\left[ \sup_{ s\leq T} \left\vert \theta_{ i, s} - \bar \theta_{ i, s} \right\vert^{ 2}\right] \to 0 \text{ as }n\to \infty,
\end{equation}
for almost every realization of the connectivity sequence $ \Xi$ given by Definition~\ref{def:regularity_network}.
\end{theorem}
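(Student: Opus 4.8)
The plan is to couple the particle system \eqref{eq:odegene} with the $n$ independent copies $(\bar\theta_{i}^{x_i})_{i\in[n]}$ of the nonlinear process \eqref{eq:theta_nonlin} driven by the same Brownian motions and the same initial data, and to run a Gronwall argument on $u_{i,t}:=\mathbf{E}\big[\sup_{s\leq t}|\theta_{i,s}-\bar\theta_{i,s}|^2\big]$. Writing the difference of the two SDEs, the drift splits into three contributions: the local term $c(\theta_{i,s})-c(\bar\theta_{i,s})$, handled by the one-sided Lipschitz bound \eqref{hyp:c_onesidedLip} (this is why we work with $|\cdot|^2$ rather than $|\cdot|$ and apply Itô's formula rather than a pathwise bound on the increment); the coupling term evaluated along the true trajectories, which is Lipschitz by \eqref{hyp:Gamma_Lip_1} and produces a $\frac{1}{n}\sum_j(\text{stuff})(|\theta_{i}-\bar\theta_i|+|\theta_j-\bar\theta_j|)$ term absorbed into $\frac{L_c'}{n}\sum_j u_{j,s}$ plus $C u_{i,s}$; and the genuinely new \emph{error term}
\[
R_{i,s}:=\frac{\kappa_i^{(n)}}{n}\sum_{j=1}^n\xi_{i,j}^{(n)}\Gamma(\bar\theta_{i,s},\bar\theta_{j,s})-\int W(x_i,y)\,[\Gamma]_s(\bar\theta_{i,s},y)\,\ell(\mathrm{d}y),
\]
which compares the empirical graph-weighted mean-field drift to its macroscopic limit along the decoupled system. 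Since the $u_{j,s}$ enter only through their average and the bounds are uniform in $i$, a Gronwall inequality on $\bar u_t:=\sup_i u_{i,t}$ gives $\bar u_T\leq C\big(\sup_i\mathbf{E}\int_0^T|R_{i,s}|^2\,\mathrm{d}s\big)e^{CT}$, so everything reduces to showing the error term is $\mathbf{P}$-a.s.\ negligible.

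The core of the proof is the control of $\mathbf{E}\int_0^T|R_{i,s}|^2\,\mathrm{d}s$, uniformly in $i$, for almost every realization of $\Xi$. I would decompose $R_{i,s}$ as $A_{i,s}+B_{i,s}+C_{i,s}$: (i) the \emph{Bernoulli fluctuation} $A_{i,s}=\frac{\kappa_i^{(n)}}{n}\sum_j(\xi_{i,j}^{(n)}-W_n(x_i,x_j))\Gamma(\bar\theta_{i,s},\bar\theta_{j,s})$, which is, conditionally on the positions and on the decoupled diffusions, a sum of independent centered terms — its contribution to $\mathbf{E}\int_0^T|A_{i,s}|^2\,\mathrm{d}s$ has conditional variance of order $\frac{(\kappa_n)^2}{n^2}\sum_j W_n(x_i,x_j)\,\mathbf{E}[\sup_s|\Gamma(\bar\theta_{i,s},\bar\theta_{j,s})|^2]\lesssim \frac{\kappa_n^2 w_n}{n}$ using \eqref{hyp:def_w_n_infty}, \eqref{eq:apriori_bound_nonlin} and \eqref{hyp:Gamma_bound}; (ii) the \emph{graphon approximation} $B_{i,s}=\frac{1}{n}\sum_j\big(\kappa_i^{(n)}W_n(x_i,x_j)-W(x_i,x_j)\big)\Gamma(\bar\theta_{i,s},\bar\theta_{j,s})$, bounded in $L^2$ by $C\,\delta_n(\underline x)^2$ via \eqref{hyp:Delta_n_1}–\eqref{hyp:Delta1_to_0} (after a Cauchy–Schwarz/moment bound on $\Gamma$); (iii) the \emph{empirical-measure error} $C_{i,s}=\frac{1}{n}\sum_j W(x_i,x_j)\Gamma(\bar\theta_{i,s},\bar\theta_{j,s})-\int W(x_i,y)[\Gamma]_s(\bar\theta_{i,s},y)\,\ell(\mathrm{d}y)$, which I further split: replacing $\Gamma(\bar\theta_{i,s},\bar\theta_{j,s})$ by its conditional expectation $[\Gamma]_s(\bar\theta_{i,s},x_j)$ (since the $\bar\theta_j$ are independent, the sum $\frac1n\sum_j W(x_i,x_j)(\Gamma(\bar\theta_{i,s},\bar\theta_{j,s})-[\Gamma]_s(\bar\theta_{i,s},x_j))$ is again an average of conditionally-independent centered terms, with second moment controlled — after integrating in $s$ — precisely by the quantity $\frac1{n^2}\sum_j W(x_i,x_j)^2\Upsilon_T(x_i,x_j,x_j)$, which is why $\Upsilon_T$ and the $\epsilon_n^{(m)}$ in \eqref{hyp:epsilon_i_123}–\eqref{hyp:Delta3_to_0} are exactly the right objects); and then $\frac1n\sum_j W(x_i,x_j)[\Gamma]_s(\bar\theta_{i,s},x_j)-\int W(x_i,y)[\Gamma]_s(\bar\theta_{i,s},y)\ell(\mathrm{d}y)$ is a deterministic Riemann-type error $\langle \ell_n-\ell,\,W(x_i,\cdot)[\Gamma]_s(\bar\theta_{i,s},\cdot)\rangle$ whose $L^2$-in-$s$ integral is governed by the $\epsilon_{n,T}^{(m,i)}$. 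Putting these together, $\sup_i\mathbf{E}\int_0^T|R_{i,s}|^2\,\mathrm{d}s\leq C\big(\frac{\kappa_n^2 w_n}{n}+\delta_n(\underline x)^2+\sum_{m}\epsilon_{n,T}^{(m)}(\underline x)+(\text{lower-order empirical terms})\big)$, and all summands $\to0$ by Definitions~\ref{def:convergence_graph} and~\ref{def:graphon_regular}; the a priori moment bounds of Remark~\ref{rem:apriori_nonlin} and the Lipschitz regularity of $[\Gamma]$ and $\Upsilon$ from Lemma~\ref{lem:regul_Upsilon} are used throughout to make the constants uniform in $i$ and independent of $n$.

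The quenched (almost-sure in $\Xi$) nature of the conclusion requires a Borel–Cantelli step: the bound on $\mathbf{E}\int_0^T|A_{i,s}|^2\,\mathrm{d}s$ obtained above is in $\mathbf{E}$-expectation but still random in $\Xi$, so I would instead estimate, via a concentration inequality for sums of bounded independent random variables (Hoeffding/Bernstein applied to $\sum_j\xi_{i,j}^{(n)}$-type quantities, conditionally on $\mathcal X$ and on the diffusions, or more robustly a fourth-moment/McDiarmid bound on $\sup_s|A_{i,s}|$), the probability that $\sup_i\mathbf{E}\big[\sup_{s\leq T}|A_{i,s}|^2\big]$ exceeds a threshold $\eta_n\to0$; the logarithm in \eqref{hyp:alpha_n_infty}, i.e.\ $\kappa_n^2 w_n=o(n/\log n)$, is exactly what makes $\sum_n \mathbf{P}(\text{bad event for some }i\leq n)<\infty$ after a union bound over the $n$ indices, so Borel–Cantelli yields that $\mathbf{P}$-a.s.\ the Bernoulli fluctuation is negligible for all large $n$. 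The main obstacle, and the place where the structural hypotheses \eqref{hyp:alphan_VS_an}–\eqref{hyp:alpha_n_infty} are indispensable, is precisely this interplay between the dilution scale $\kappa_n$, the edge-density scale $w_n$, and the need for a quenched rather than annealed statement: one must extract enough concentration from the Bernoulli variables to beat a union bound over $n$ vertices while $\kappa_n$ is allowed to grow almost linearly in $n$. Once this is settled, the remaining error terms $B$ and $C$ are either deterministic (given $\mathcal X$) or handled by the hypotheses directly, and the Gronwall conclusion \eqref{eq:conv_general} follows for every $\omega$ outside a $\mathbf{P}$-null set.
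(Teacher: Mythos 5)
Your overall skeleton is the one the paper follows (same coupling, Gr\"onwall on $\sup_i u_{i,t}$, decomposition of the drift error into a Bernoulli fluctuation, a graphon-approximation error and a law-of-large-numbers error, then concentration plus Borel--Cantelli for the quenched statement, cf.\ Propositions~\ref{prop:control_one} and~\ref{prop:control_Delta_2}), but one step, as you wrote it, does not follow from the stated hypotheses. In your term $C_{i,s}$ you first replace $\kappa_i^{(n)}\xi_{i,j}$ by $W(x_i,x_j)$ and only afterwards centre $\Gamma(\bar\theta_{i,s},\bar\theta_{j,s})$ around $[\Gamma]_s(\bar\theta_{i,s},x_j)$. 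The off-diagonal terms do vanish by conditional independence, but the diagonal contribution to the second moment is $\frac{1}{n^{2}}\sum_j W(x_i,x_j)^{2}\,\mathbf{E}\bigl[\vert\Gamma(\bar\theta_{i,s},\bar\theta_{j,s})-[\Gamma]_s(\bar\theta_{i,s},x_j)\vert^{2}\bigr]$. This is a \emph{centred variance} of $\Gamma$, not the quantity $\frac{1}{n^{2}}\sum_j W(x_i,x_j)^{2}\Upsilon_T(x_i,x_j,x_j)$ you invoke: $\Upsilon_T$ is built from products of the conditional means $[\Gamma]_u$ and only enters the Riemann-sum error $\langle \ell_n-\ell,\,W(x_i,\cdot)[\Gamma]_s(\bar\theta_{i,s},\cdot)\rangle$ (which you do treat correctly via the $\epsilon^{(m)}_{n,T}$ of \eqref{hyp:Delta3_to_0}). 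Moreover nothing in Definitions~\ref{def:convergence_graph} and~\ref{def:graphon_regular} controls $\sup_i\frac{1}{n^{2}}\sum_j W(x_i,x_j)^{2}$ when $W$ is unbounded: \eqref{hyp:bound_W_L1_sum} only bounds $\frac1n\sum_j W(x_i,x_j)$, and the $\epsilon$'s carry the factor $\Upsilon_T$, which may be identically zero while the centred variance is of order one. The paper avoids this by decoupling \emph{before} touching the graph (term $A^{(3)}$ in \eqref{eq:An3}): the weights are then $\kappa_i^{(n)}\xi_{i,j}$, and the diagonal is bounded by $C\,b_n(\underline\xi)\kappa_{\infty}^{(n)}/n$, which vanishes a.s.\ thanks to \eqref{hyp:alpha_n_infty}. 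You should reorder your decomposition in the same way (or add an hypothesis on the empirical $L^2$ sums of $W$, which is not among the stated ones).

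Two further points of the same nature. First, your Gr\"onwall constant is random: absorbing the Lipschitz coupling term requires the row sums $b_n(\underline\xi)=\sup_i\frac{\kappa_i^{(n)}}{n}\sum_j\xi_{i,j}$, which enter the rate as $e^{Cb_n(\underline\xi)^{2}T}$; you treat them as a deterministic $C$, whereas they must be bounded a.s.\ for large $n$ (this is \eqref{eq:control_sum_xi}, obtained by the same concentration machinery combined with $\delta_n(\underline x)\to0$ and \eqref{hyp:bound_W_L1_sum}). Second, for the quenched control of the Bernoulli fluctuation, after taking $\mathbf{E}$ the object is a \emph{quadratic} form $\frac{(\kappa_i^{(n)})^{2}}{n^{2}}\sum_{j,l}\bar\xi_{i,j}\bar\xi_{i,l}\,c^{(i)}_{j,l}$ with deterministic bounded coefficients, and plain Hoeffding or McDiarmid does not beat the union bound in the diluted regime: bounded differences of size $\kappa_n^{2}/n$ give an exponent of order $n/\kappa_n^{4}$, which is useless when $\kappa_n$ is close to $n/\log n$. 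One must use a variance-sensitive (Bennett/Bernstein) inequality exploiting $\mathrm{Var}(\xi_{i,j})\le w_n$ --- this is exactly where $\kappa_n^{2}w_n=o(n/\log n)$ enters --- and deal with the quadratic structure, e.g.\ by factorizing it into a supremum of linear forms times another linear form and union-bounding over the $n^{2}$ pairs $(i,k)$, as in Lemmas~\ref{lem:concentration_dembo}--\ref{lem:large_deviations} and Proposition~\ref{prop:control_Delta_2}. You gesture at Bernstein and identify the right scaling, but the alternative routes you list (Hoeffding, McDiarmid) would fail, so this step needs to be made precise.
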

Theorem~\ref{theo:conv_general} is proven in Section~\ref{sec:proof_conv_general}. We detail in Section~\ref{sec:convergence_graphs_examples} generic examples of graphs that are regular and convergent in the sense of Definitions~\ref{def:convergence_graph} and~\ref{def:graphon_regular}. The rest of the present section is organized as follows: a byproduct of Theorem~\ref{theo:conv_general} concerns the convergence of the empirical measure of \eqref{eq:odegene} (Section~\ref{sec:conv_empirical_measure_intro}). A different approach to the macroscopic description of \eqref{eq:odegene} is given in Section~\ref{sec:convergence_profile_intro} and links between the two approaches are provided in Section~\ref{sec:identification_intro}. A discussion on the notion of graph convergence encoded by assumption \eqref{hyp:Delta1_to_0} is given in Section~\ref{sec:comment_graph_convergence}. Comments on applications and links with existing literature are given in Section~\ref{sec:applications}.

\subsection{Convergence of the empirical measure}
\label{sec:conv_empirical_measure_intro}
It is standard (see e.g. \cite{MR3568168} for a similar result in the Erd\H os-R\'enyi case) to derive from Theorem~\ref{theo:conv_general} the convergence of the empirical measure of the system \eqref{eq:odegene}
\begin{equation}
\label{eq:emp_measure}
\nu_{ n, t}( {\rm d} \theta, {\rm d} x) = \frac{ 1}{ n} \sum_{ i=1}^{ n} \delta_{ (\theta_{ i, t}^{ (n)}, x_{ i}^{ (n)})}({\rm d}\theta, {\rm d}x),\ t\geq0.
\end{equation}
to the solution $ \nu$ to the nonlinear Fokker-Planck equation \eqref{eq:McKeanVlasov}. 
\subsubsection{Assumptions}
In addition to the hypotheses of Section~\ref{sec:general_assumptions_nonlin}, we assume that there exist $ L_{ W}>0$ and $ \iota_{ 2}\in(0, 1]$ such that
\begin{equation}
\label{hyp:Int_W_Lip}
\delta \mathcal{ W}(x, y):= \int \left\vert W(x, z)  - W(y,z)\right\vert{\rm d} z \leq L_{ W} \left\vert x-y \right\vert^{ \iota_{ 2}},\ x,y\in I.
\end{equation} 
When $I$ is not bounded, we suppose $ \iota_{ 1}= \iota_{ 2}$ (recall \eqref{hyp:nu_0_Lip}) and denote in any case $ \iota:= \iota_{ 1}\wedge \iota_{ 2}\in (0, 1]$. Let $ \mathcal{ H}_{ \iota}$ be the set of $\iota$-H\"older functions on $ I$
\begin{equation}
\mathcal{ H_{ \iota}}:= \left\lbrace \varphi: I \to \mathbb{ R}, \left\Vert \varphi \right\Vert_{ \mathcal{ H}_{ \iota}} < \infty\right\rbrace
\end{equation}
where $\left\Vert \varphi \right\Vert_{ \mathcal{ H}_{ \iota}}:= \sup_{ x\in I} \left\vert \varphi(x) \right\vert + \sup_{ x\neq y} \frac{ \left\vert \varphi(x)- \varphi(y) \right\vert}{ \left\vert x-y \right\vert^{ \iota} }$.
Denote by $d_{ \mathcal{ H}_{ \iota}}(\cdot, \cdot)$ the distance given by, for every probability measures $ \mu$ and $ \nu$ on $ I$,
\begin{equation}
d_{ \mathcal{ H}_{ \iota}}( \mu, \nu) = \sup_{ \varphi\in \mathcal{ H}_{ \iota}, \left\Vert \varphi \right\Vert_{ \mathcal{ H}_{ \iota}}\leq 1} \left\vert \left\langle \mu- \nu\, ,\, \varphi\right\rangle \right\vert.
\end{equation}
We suppose that the empirical measure of the positions $ \ell_{ n}$ satisfies
\begin{equation}
\label{hyp:conv_empirical_measure_positions}
d_{ \mathcal{ H}_{ \iota}}(\ell_{ n}, \ell) \xrightarrow[ n\to\infty]{} 0.
\end{equation}
\subsubsection{Result}
Under these assumptions, we state the convergence of the empirical measure \eqref{eq:emp_measure} in the following simple way:
\begin{theorem}
\label{theo:conv_empirical_measure}
Under the hypotheses of Theorem~\ref{theo:conv_general} and the hypotheses made in Section~\ref{sec:conv_empirical_measure_intro} above, for any continuous function $ \varphi: \mathbb{ R}^{ d} \times I \to \mathbb{ R}$ such that for some $C_{ \varphi}>0$, for any $ \theta, \tilde{ \theta}\in \mathbb{ R}^{ d}$, $x,y\in I$, $ \left\vert \varphi(\theta, x) - \varphi(\tilde{ \theta}, y)\right\vert \leq C_{ \varphi} \left(\left\vert \theta- \tilde{ \theta} \right\vert+ \left\vert x-y \right\vert^{ \iota}\right)$ and $ \sup_{ x\in I}\left\vert \varphi(\theta, x) \right\vert \leq C_{ \varphi} (1+ \left\vert \theta \right\vert)$, the following convergence holds:
\begin{equation}
\sup_{ t\in[0, T]} \mathbf{ E} \left[\left\vert \left\langle \nu_{ n, t} - \nu_{ t}\, ,\, \varphi\right\rangle \right\vert^{ 2}\right] \xrightarrow[ n\to\infty]{}0,
\end{equation}
for almost every realization of the connectivity sequence $ \Xi$ given by Definition~\ref{def:regularity_network}.
\end{theorem}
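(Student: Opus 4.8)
The plan is to compare the empirical measure $\nu_{n,t}$ with the empirical measure $\bar\nu_{n,t}:=\frac1n\sum_{i=1}^n\delta_{(\bar\theta_{i,t},x_i)}$ of the $n$ decoupled nonlinear processes introduced in Section~\ref{sec:general_prop_chaos} (same positions, same initial data, same Brownian motions), and to split
\[
\langle\nu_{n,t}-\nu_t\,,\,\varphi\rangle=\langle\nu_{n,t}-\bar\nu_{n,t}\,,\,\varphi\rangle+\langle\bar\nu_{n,t}-\nu_t\,,\,\varphi\rangle .
\]
The first term carries all the dependence on the connectivity sequence $\Xi$; the second depends only on the Brownian motions and initial conditions (the positions being deterministic under the hypotheses of Theorem~\ref{theo:conv_general}).

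For the first term, the Lipschitz control of $\varphi$ in its first argument gives $|\langle\nu_{n,t}-\bar\nu_{n,t},\varphi\rangle|\le\frac{C_\varphi}{n}\sum_{i=1}^n|\theta_{i,t}-\bar\theta_{i,t}|$, whence, by Cauchy--Schwarz and domination by $\sup_{s\le T}$,
\[
\sup_{t\le T}\mathbf{E}\bigl[|\langle\nu_{n,t}-\bar\nu_{n,t},\varphi\rangle|^2\bigr]\le C_\varphi^2\sup_{i\in[n]}\mathbf{E}\Bigl[\sup_{s\le T}|\theta_{i,s}-\bar\theta_{i,s}|^2\Bigr],
\]
which tends to $0$ for almost every $\Xi$ by Theorem~\ref{theo:conv_general}.

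For the second term, set $\bar\varphi_t(x):=\int\varphi(\theta,x)\,\nu_t^x({\rm d}\theta)$. Since the $\bar\theta_i$ are independent with $\bar\theta_{i,t}\sim\nu_t^{x_i}$ and $\nu_t({\rm d}\theta,{\rm d}x)=\nu_t^x({\rm d}\theta)\ell({\rm d}x)$, one has $\mathbf{E}[\langle\bar\nu_{n,t},\varphi\rangle]=\langle\ell_n,\bar\varphi_t\rangle$ and $\langle\nu_t,\varphi\rangle=\langle\ell,\bar\varphi_t\rangle$, so the bias equals $\langle\ell_n-\ell,\bar\varphi_t\rangle$; the variance $\frac1{n^2}\sum_i\mathrm{Var}(\varphi(\bar\theta_{i,t},x_i))$ is $O(1/n)$ uniformly in $t$ by the linear growth of $\varphi$ together with the a priori bound \eqref{eq:apriori_bound_nonlin}. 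It then suffices to prove $\sup_{t\le T}\|\bar\varphi_t\|_{\mathcal{H}_\iota}<\infty$: boundedness follows from the linear growth of $\varphi$ and \eqref{eq:apriori_bound_nonlin}, while for the Hölder seminorm one writes
\[
\bar\varphi_t(x)-\bar\varphi_t(y)=\int\bigl(\varphi(\theta,x)-\varphi(\theta,y)\bigr)\nu_t^y({\rm d}\theta)+\Bigl(\int\varphi(\theta,x)\nu_t^x({\rm d}\theta)-\int\varphi(\theta,x)\nu_t^y({\rm d}\theta)\Bigr),
\]
bounding the first summand by $C_\varphi|x-y|^\iota$, and, since $\varphi(\cdot,x)$ is $C_\varphi$-Lipschitz, the second by $C_\varphi\,w_1(\nu_t^x,\nu_t^y)$ via Kantorovich--Rubinstein duality. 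The required estimate $\sup_{t\le T}w_1(\nu_t^x,\nu_t^y)\le C|x-y|^\iota$ is the spatial regularity of the nonlinear Fokker--Planck solution, obtained by propagating \eqref{hyp:nu_0_Lip} and \eqref{hyp:Int_W_Lip} through \eqref{eq:theta_nonlin} with a Gronwall argument (this is where $\iota_1=\iota_2$ for unbounded $I$ enters) and belongs to the regularity package attached to Proposition~\ref{prop:PDE_wellposed}. Then $|\langle\ell_n-\ell,\bar\varphi_t\rangle|\le\|\bar\varphi_t\|_{\mathcal{H}_\iota}\,d_{\mathcal{H}_\iota}(\ell_n,\ell)\to0$ uniformly in $t$ by \eqref{hyp:conv_empirical_measure_positions}. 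Adding the variance bound gives $\sup_{t\le T}\mathbf{E}[|\langle\bar\nu_{n,t}-\nu_t,\varphi\rangle|^2]\to0$, and combining with the first term proves the theorem.

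The main obstacle is this last point: showing that the position-averaged test function $\bar\varphi_t$ lies, with a $t$-uniform norm, in the Hölder class $\mathcal{H}_\iota$ against which $\ell_n\to\ell$ is assumed. This rests on the $\iota$-Hölder continuity of $x\mapsto\nu_t^x$ in the Wasserstein distance, uniform over $[0,T]$, which is exactly what assumptions \eqref{hyp:nu_0_Lip}, \eqref{hyp:Int_W_Lip} (and the case distinction $\iota_1=\iota_2$ for unbounded $I$) are designed to guarantee. Everything else is routine.
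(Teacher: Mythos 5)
Your proposal is correct and follows essentially the same route as the paper: the same decomposition through the empirical measure $\bar\nu_{n,t}$ of the coupled nonlinear processes, with the first term handled by Theorem~\ref{theo:conv_general}, the variance term by independence and the a priori bound \eqref{eq:apriori_bound_nonlin}, and the bias term $\left\langle \ell_{n}-\ell\, ,\, \bar\varphi_{t}\right\rangle$ by showing $\bar\varphi_{t}$ has a $t$-uniform $\mathcal{H}_{\iota}$-norm via the spatial $w_{1}$-regularity of $x\mapsto\nu_{t}^{x}$ (the paper's Lemma~\ref{lem:regul_nu}, i.e.\ \eqref{eq:regularity_lipschitz_nu_x}, combined with \eqref{hyp:nu_0_Lip} and \eqref{hyp:Int_W_Lip}) and then \eqref{hyp:conv_empirical_measure_positions}. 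No gaps.
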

Theorem~\ref{theo:conv_empirical_measure} is proven in Section~\ref{sec:proof_conv_empirical_measure}. 
\subsection{The nonlinear spatial profile}
\label{sec:convergence_profile_intro}
In \cite{MR3238494,MR3187677,10113716M1075831,10113717M1134007}, a different approach to the large population behavior of \eqref{eq:odegene} is considered. The point of view here is to consider the deterministic macroscopic spatial profile $ ( \psi(\cdot, t))_{ t\in [0, T]}$ that, in our context, solves the following nonlinear integro-differential equation
\begin{equation}
\label{eq:heat_equation}
\partial_{ t}\psi(x, t)= c( \psi(x, t)) + \int_{I} \Gamma( \psi(x, t) , \psi(y, t)) W(x, y)\ell({\rm d}y).
\end{equation}
In the context of \cite{MR3187677}, \eqref{eq:heat_equation} is referred to as the nonlinear heat equation on the graph $W$. For FitzHugh-Nagumo dynamics with linear interaction, \eqref{eq:heat_equation} corresponds to the reaction-diffusion equation addressed in the recent work \cite{2018arXiv180401263C}. We consider here weak solutions to \eqref{eq:heat_equation} in the sense of the following definition: if $ \mathcal{ C}([0, T], L^{ k}(I, \ell))$ is the set of continuous functions with values in $L^{ k}(I, \ell)$ where $k\geq2$ is given in \eqref{hyp:second_moment_nu0},
\begin{definition}
\label{def:weak_sol_heat_eq}
We say that $ \psi(\cdot, t)_{ t\in[0, T]} \in \mathcal{ C}([0, T], L^{ k}(I,\ell))$ is a weak solution to \eqref{eq:heat_equation} if for all regular test functions $J:I\to \mathbb{ R}^{ d}$, for all $t\in[0,T]$, we have
\begin{multline}
\label{eq:heat_equation_weak}
\int_{I} \left\langle \psi(x, t)\, ,\, J(x)\right\rangle \ell({\rm d}x)= \int_{I} \left\langle \psi(x, 0)\, ,\, J(x)\right\rangle \ell({\rm d}x) + \int_{ 0}^{t} \int_{I} \left\langle c( \psi(x, s))\, ,\, J(x)\right\rangle \ell({\rm d}x) {\rm d}s \\+ \int_{ 0}^{t}\int_{I^{ 2}} \left\langle \Gamma( \psi(x, s) , \psi(y, s))\, ,\, J(x)\right\rangle W(x, y)\ell({\rm d}y) \ell({\rm d}x){\rm d}s.
\end{multline}
\end{definition}
We first state a uniqueness result for \eqref{eq:heat_equation_weak}:
\begin{proposition}
\label{prop:uniqueness_weak_solution}
Under the assumptions of Section~\ref{sec:general_assumptions_nonlin}, for any $ \psi_{ 0} \in L^{ k}(I, \ell)$, there is at most one weak solution in $\mathcal{ C}([0, T], L^{ k}(I, \ell))$ to \eqref{eq:heat_equation_weak} with initial condition $ \psi_{ 0}$.
\end{proposition}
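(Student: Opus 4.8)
The plan is to prove uniqueness by a Gronwall estimate for $g(t):=\Vert\psi^{1}(\cdot,t)-\psi^{2}(\cdot,t)\Vert_{L^{2}(I,\ell)}^{2}$, where $\psi^{1},\psi^{2}\in\mathcal{C}([0,T],L^{k}(I,\ell))$ are two weak solutions with the same initial datum $\psi_{0}$. This quantity is finite and continuous in $t$ because $L^{k}(I,\ell)\subset L^{2}(I,\ell)$ ($\ell$ being a probability measure), and $g(0)=0$. The only delicate point is the superlinear drift $c$, which I would control exclusively through the one-sided Lipschitz bound \eqref{hyp:c_onesidedLip}; the interaction term will be handled with the global Lipschitz bound \eqref{hyp:Gamma_Lip_1}, Cauchy--Schwarz, and $\Vert\mathcal{W}_{2}\Vert_{\infty}<\infty$ from \eqref{hyp:bound_W_L1} (recall also $\Vert\mathcal{W}_{1}\Vert_{\infty}\leq\Vert\mathcal{W}_{2}\Vert_{\infty}^{1/2}$).

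First I would pass from the weak formulation \eqref{eq:heat_equation_weak} to a pointwise-in-$x$ integral equation. Writing $G^{j}(x,s):=c(\psi^{j}(x,s))+\int_{I}\Gamma(\psi^{j}(x,s),\psi^{j}(y,s))W(x,y)\ell({\rm d}y)$, the polynomial control \eqref{hyp:polynomial_control_c}, the sublinear bound \eqref{hyp:Gamma_bound}, the estimate $\int_{I}|\psi^{j}(y,s)|W(x,y)\ell({\rm d}y)\leq\Vert\psi^{j}(\cdot,s)\Vert_{L^{2}(\ell)}\mathcal{W}_{2}(x)^{1/2}$ and \eqref{hyp:bound_W_L1} show that $G^{j}(\cdot,s)\in L^{1}(I,\ell)$ with $\int_{0}^{T}\Vert G^{j}(\cdot,s)\Vert_{L^{1}(\ell)}{\rm d}s<\infty$, using that $s\mapsto\Vert\psi^{j}(\cdot,s)\Vert_{L^{k}(\ell)}$ is continuous, hence bounded, on $[0,T]$. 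Since regular test functions separate $L^{1}(I,\ell)$, \eqref{eq:heat_equation_weak} forces, for each fixed $t$, $\psi^{j}(x,t)=\psi_{0}(x)+\int_{0}^{t}G^{j}(x,s){\rm d}s$ for $\ell$-a.e.\ $x$; running this over a countable dense set of times and noting that the right-hand side is continuous in $t$ for $\ell$-a.e.\ $x$, I obtain a single $\ell$-null set off which the identity holds for all $t\in[0,T]$ (equivalently, I replace $\psi^{j}$ by this continuous-in-$t$ modification).

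Next, with $u:=\psi^{1}-\psi^{2}$, for $\ell$-a.e.\ $x$ the path $t\mapsto u(x,t)$ is absolutely continuous with $u(x,0)=0$ and $\partial_{t}u(x,t)=G^{1}(x,t)-G^{2}(x,t)$ a.e., so $\frac{{\rm d}}{{\rm d}t}|u(x,t)|^{2}=2\langle u(x,t),G^{1}(x,t)-G^{2}(x,t)\rangle$ a.e. Splitting $G^{1}-G^{2}$ into drift and interaction parts, \eqref{hyp:c_onesidedLip} bounds the drift contribution by $2L_{c}|u(x,t)|^{2}$, and \eqref{hyp:Gamma_Lip_1} with Cauchy--Schwarz and \eqref{hyp:bound_W_L1} bounds the interaction contribution by $2L_{\Gamma}\Vert\mathcal{W}_{1}\Vert_{\infty}|u(x,t)|^{2}+2L_{\Gamma}\Vert\mathcal{W}_{2}\Vert_{\infty}^{1/2}\Vert u(\cdot,t)\Vert_{L^{2}(\ell)}|u(x,t)|$. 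Call $F(x,t)$ the sum of these two bounds; then $F\in L^{1}(I\times[0,T],\ell\otimes{\rm d}t)$ since $g$ is continuous. Integrating $|u(x,t)|^{2}=\int_{0}^{t}\frac{{\rm d}}{{\rm d}s}|u(x,s)|^{2}{\rm d}s\leq\int_{0}^{t}F(x,s){\rm d}s$ in $x$ against $\ell$ (Fubini), and using $\int_{I}|u(x,t)|\ell({\rm d}x)\leq g(t)^{1/2}$, I get $g(t)\leq 2C\int_{0}^{t}g(s){\rm d}s$ with $C:=L_{c}+2L_{\Gamma}\Vert\mathcal{W}_{2}\Vert_{\infty}^{1/2}$. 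As $g$ is continuous and $g(0)=0$, Gronwall's lemma gives $g\equiv 0$ on $[0,T]$, hence $\psi^{1}(\cdot,t)=\psi^{2}(\cdot,t)$ in $L^{k}(I,\ell)$ for every $t$.

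The hard part will not be the Gronwall step but the two measure-theoretic points: extracting from the weak formulation a pointwise integral equation valid off a \emph{single} $\ell$-null set for all $t\in[0,T]$, and the integrability bookkeeping in the energy estimate. The latter is genuinely delicate because $c$ is only polynomially bounded, so $c(\psi^{j})$ lies merely in $L^{1}(I,\ell)$ and the raw integrand $\langle u,c(\psi^{1})-c(\psi^{2})\rangle$ need not be integrable in $x$; I would circumvent this by working with the $\ell$-a.s.\ upper bound $F$, which \emph{is} integrable, rather than with the integrand itself --- this is legitimate because only an upper bound on $\frac{{\rm d}}{{\rm d}t}g$ is needed and \eqref{hyp:c_onesidedLip} is precisely a one-sided estimate.
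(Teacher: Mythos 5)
Your proposal is correct, and the analytic core is the same as the paper's: a Gr\"onwall inequality for $\|\psi^{1}(\cdot,t)-\psi^{2}(\cdot,t)\|_{L^{2}(I,\ell)}^{2}$, with the drift handled solely through the one-sided Lipschitz bound \eqref{hyp:c_onesidedLip} and the interaction through \eqref{hyp:Gamma_Lip_1}, Cauchy--Schwarz and $\|\mathcal{W}_{1}\|_{\infty}$, $\|\mathcal{W}_{2}\|_{\infty}$ from \eqref{hyp:bound_W_L1}; your constants match the paper's term by term. Where you diverge is in how the energy inequality is justified: the paper asserts that a weak solution lies in $H^{1}([0,T],L^{k}(I,\ell))$ and, by a density argument, uses the time-dependent test function $J(\cdot,t)=\psi^{1}(\cdot,t)-\psi^{2}(\cdot,t)$ directly in the weak formulation, whereas you first upgrade \eqref{eq:heat_equation_weak} to a pointwise-in-$x$ integral equation $\psi^{j}(x,t)=\psi_{0}(x)+\int_{0}^{t}G^{j}(x,s)\,{\rm d}s$ valid off a single $\ell$-null set (via separation of $L^{1}(I,\ell)$ by test functions, a countable dense set of times, and a continuous-in-$t$ modification), and then differentiate $|u(x,t)|^{2}$ pointwise before integrating in $x$. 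Both routes are legitimate; yours is slightly longer on measure-theoretic bookkeeping (which you correctly identify and sketch), but it buys an explicit treatment of the integrability issue created by the merely polynomial growth of $c$ --- the pairing $\langle\rho,c(\psi^{1})-c(\psi^{2})\rangle$ need not be integrable, and working with the $\ell$-a.s.\ one-sided upper bound $F$ rather than the integrand itself is exactly the right fix, a point the paper's duality argument passes over silently.
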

\subsubsection{Assumptions}
For the rest of Section~\ref{sec:convergence_profile_intro}, we restrict ourselves to the case $I:=[0,1]$, endowed with its Lebesgue measure $\ell({\rm d}x):= {\rm d}x$ and where $x_{ i}^{ (n)}:= \frac{ i}{ n}$ for $i\in \left[n\right]$ (see Section~\ref{sec:regularity_kernel} for further details). Following the approach of \cite{MR3187677}, it is possible to consider the spatial field:
\begin{equation}
\label{eq:spatial_profile}
\theta_{ n}(x, t):= \theta_{ \lfloor  nx +1\rfloor, t}^{ (n)}=\sum_{ i=1}^{ n} \theta_{ i, t}^{ (n)}\textbf{ 1}_{ [ x_{ i-1}^{ (n)}, x_{ i}^{ (n)})}(x), \ x\in I,\ t\geq0.
\end{equation}
We restrict here for simplicity to Model~\ref{mod:polynomial}. We suppose that the hypotheses of Section~\ref{sec:general_assumptions_nonlin} hold and that 
$( \mathcal{G}^{ (n)}, \kappa^{ (n)})$ converges to $W$ in the sense of Definition~\ref{def:convergence_graph}. We require the regularity of $(W, \mathcal{ X})$ in the sense of Definition~\ref{def:graphon_regular} together with the following supplementary condition:
\begin{align}
\sum_{ i,j=1}^{ n}\int_{\frac{ i-1}{ n}}^{\frac{ i}{ n}}\int_{\frac{ j-1}{ n}}^{\frac{ j}{ n}} \left\vert W \left(\frac{ i}{ n}, \frac{ j}{ n}\right) - W(x, y) \right\vert^{ 2} {\rm d}x{\rm d}y&\to 0, \label{hyp:Int_Wn_W_2}
\end{align}
\subsubsection{Result} The convergence is the following:
\begin{theorem}
\label{theo:conv_profile}
Suppose that the previous assumptions hold. Then, for almost every realization of the connectivity sequence $ \Xi$ given in Definition~\ref{def:regularity_network}, the spatial field $( \theta_{ n})$ given in \eqref{eq:spatial_profile} converges weakly in $ \mathcal{ C}([0,T], L^{ k})$ to $ \psi(\cdot, t)_{ t\in[0,T]}$, unique solution in $ \mathcal{ C}([0,T], L^{ k})$ to \eqref{eq:heat_equation_weak} with initial condition
\begin{equation}
\label{hyp:initial_condition_psi}
\psi_{ 0}(x) := \int_{ \mathbb{ R}^{ d}} \theta \nu_{ 0}^{ x}({\rm d} \theta).
\end{equation}
\end{theorem}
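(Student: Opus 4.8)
The plan is: (i) replace, in the spatial field \eqref{eq:spatial_profile}, the true diffusions $\theta_{i,\cdot}$ by the i.i.d.\ nonlinear copies $\bar\theta_i^{x_i}$ using the quenched propagation of chaos of Theorem~\ref{theo:conv_general}; (ii) prove tightness of the resulting random fields; (iii) identify each subsequential limit as a weak solution of \eqref{eq:heat_equation_weak} by passing to the limit in the weak formulation; (iv) conclude with the uniqueness statement Proposition~\ref{prop:uniqueness_weak_solution}. For the reduction, interpolating the bound $\sup_i\mathbf{E}[\sup_{s\le T}|\theta_{i,s}-\bar\theta_{i,s}|^2]\to0$ against the uniform $L^{2k}$ a priori bounds \eqref{eq:apriori_bound_nonlin}--\eqref{eq:apriori_bound_odegene} gives $\sup_i\mathbf{E}[\sup_{s\le T}|\theta_{i,s}-\bar\theta_{i,s}|^k]\to0$; writing $\bar\theta_n(x,t):=\sum_{i=1}^n\bar\theta_{i,t}^{x_i}\mathbf{1}_{[x_{i-1}^{(n)},x_i^{(n)})}(x)$ and averaging over $i$ then yields $\mathbf{E}[\sup_{t\le T}\|\theta_n(\cdot,t)-\bar\theta_n(\cdot,t)\|_{L^k}^k]\to0$ for a.e.\ $\Xi$, so it suffices to prove the statement for $\bar\theta_n$.

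For tightness, the same $L^{2k}$ moment bounds give $\sup_n\mathbf{E}[\sup_{t\le T}\|\bar\theta_n(\cdot,t)\|_{L^k}^k]<\infty$, so that for each fixed $t$ the law of $\bar\theta_n(\cdot,t)$ concentrates, up to arbitrarily small mass, on a weakly compact ball of $L^k(I,\ell)$; equicontinuity in $t$ follows by writing, for a regular test function $J:I\to\mathbb{R}^d$, the increment $\langle\bar\theta_n(\cdot,t)-\bar\theta_n(\cdot,s),J\rangle$ from \eqref{eq:theta_nonlin} as a drift part of size $O(|t-s|)$ (controlled via \eqref{hyp:Gamma_bound}, \eqref{hyp:polynomial_control_c} and the moment bounds) plus a martingale of bracket $O(|t-s|/n)$. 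This gives tightness of $(\bar\theta_n)$ in $\mathcal{C}([0,T],L^k)$ endowed with the \emph{weak} topology of $L^k$; strong convergence genuinely fails when $\sigma\ne0$ since the empirical spread of the noisy particles does not vanish, which is why the weak topology is the natural setting throughout.

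To identify a subsequential limit, fix a regular $J$ and write, from \eqref{eq:odegene}, the semimartingale decomposition of $t\mapsto\langle\bar\theta_n(\cdot,t),J\rangle=\sum_i\langle\bar\theta_{i,t}^{x_i},\int_{x_{i-1}^{(n)}}^{x_i^{(n)}}J(x)\,\mathrm{d}x\rangle$, and pass to the limit term by term. The martingale part vanishes in $L^2$, its bracket being $O(1/n)$ — this is where the noise is averaged out. The initial term converges to $\int_I\langle\psi_0(x),J(x)\rangle\,\mathrm{d}x$, with $\psi_0$ as in \eqref{hyp:initial_condition_psi}, by the law of large numbers applied to the independent $\theta_{i,0}\sim\nu_0^{x_i}$ together with $x_i^{(n)}=i/n$. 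The local drift term converges, by a law of large numbers for the independent copies and the continuity of $c$, to $\int_0^t\int_I\langle c(\psi(x,s)),J(x)\rangle\,\mathrm{d}x\,\mathrm{d}s$, where $\psi(x,s):=\int\theta\,\nu_s^x(\mathrm{d}\theta)$ — the passage from $\int c(\theta)\,\nu_s^x(\mathrm{d}\theta)$ to $c(\psi(x,s))$ using the particular form of $c$ in Model~\ref{mod:polynomial}. The interaction term is treated by first replacing $\kappa_i^{(n)}W_n(x_i,x_j)$ by $W(x_i,x_j)$ via the quenched convergence $\delta_n(\underline{x})\to0$, then invoking \eqref{hyp:Int_Wn_W_2} and the Riemann-sum convergence of the positions to pass to $\int_0^t\int_{I^2}\langle\Gamma(\psi(x,s),\psi(y,s)),J(x)\rangle\,W(x,y)\,\mathrm{d}y\,\mathrm{d}x\,\mathrm{d}s$. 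Hence every subsequential limit is a weak solution of \eqref{eq:heat_equation_weak} with initial datum $\psi_0$; Proposition~\ref{prop:uniqueness_weak_solution} then forces it to be the unique such $\psi$, and with the step-(i) reduction this yields the convergence of $\theta_n$, for a.e.\ $\Xi$.

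The main obstacle is the interaction term: it couples the Bernoulli connections $\xi_{ij}^{(n)}$, the dilution weights $\kappa_i^{(n)}$, the deterministic positions $x_i^{(n)}=i/n$, and the independent but non-identically-distributed nonlinear copies, and closing it requires combining the quenched graph convergence $\delta_n\to0$ with concentration and law-of-large-numbers estimates for the copies and with the $L^2$ averaging \eqref{hyp:Int_Wn_W_2}, uniformly enough in $i$ to survive the outer averaging against $J$; a Borel--Cantelli argument on suitable second-moment bounds (in the spirit of \cite{MR3568168}) is what upgrades the convergence to an $\Xi$-almost-sure one. A secondary, pervasive difficulty is simply that tightness, the time-continuity of the limit and the closure of the weak formulation must all be carried out in the weak $L^k$ topology rather than the norm topology.
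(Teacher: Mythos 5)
Your reduction step (i) via Theorem~\ref{theo:conv_general} and interpolation is fine, and so is the final appeal to Proposition~\ref{prop:uniqueness_weak_solution}, but the identification step contains a genuine gap, and it is exactly where your route departs from the paper's. Once you have replaced the particles by the independent nonlinear copies $\bar\theta_i^{x_i}$, the law of large numbers identifies the limit of the drift term as $\int_I\langle\int c(\theta)\,\nu_s^x({\rm d}\theta),J(x)\rangle\,{\rm d}x$ and the limit of the interaction term as $\int_{I^2}\langle\int\!\!\int\Gamma(\theta,\tilde\theta)\,\nu_s^x({\rm d}\theta)\nu_s^y({\rm d}\tilde\theta),J(x)\rangle W(x,y)\,{\rm d}y\,{\rm d}x$. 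Your claimed passage from $\int c(\theta)\,\nu_s^x({\rm d}\theta)$ to $c(\psi(x,s))$ ``using the particular form of $c$ in Model~\ref{mod:polynomial}'' is false whenever $c$ is a genuinely nonlinear polynomial (e.g.\ the FitzHugh--Nagumo cubic) and $\nu_s^x$ is not a Dirac mass, which is the generic situation when $\sigma\neq0$; the same obstruction appears for bounded nonlinear $\Gamma$. In other words, the LLN over independent copies closes the limit on the \emph{averaged} coefficients and yields the (non-closed) first-moment equation of \eqref{eq:McKeanVlasov}, not the heat equation \eqref{eq:heat_equation_weak}; no refinement of the LLN or of the graph estimates can repair this, since $\int c\,{\rm d}\nu_s^x= c(\int\theta\,{\rm d}\nu_s^x)$ simply does not hold. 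Your closure is valid only for affine $c$ and linear $\Gamma$, strictly less than what the theorem covers. (A symptom of the same confusion: you announce a semimartingale decomposition of $\langle\bar\theta_n(\cdot,t),J\rangle$ ``from \eqref{eq:odegene}'' and then speak of replacing $\kappa_i^{(n)}W_n(x_i,x_j)$ by $W(x_i,x_j)$, although after step (i) no connectivity variables remain in the dynamics of $\bar\theta_i$.)

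This is also why the switch to the weak $L^k$ topology is not an innocuous repackaging. The paper never compares with independent copies: it introduces the coupled auxiliary system \eqref{eq:psi_particle_system} (same Brownian motions and initial data, deterministic weights $W(x_i,x_j)$, still interacting), proves $\sup_{x\in I,\,t\leq T}\mathbf{E}\vert\theta_n(x,t)-\psi_n(x,t)\vert^{2k}\to0$ by a Gr\"onwall argument recycling the quenched concentration estimates of Proposition~\ref{prop:control_Delta_2} together with $\delta_n(\underline{x})\to0$, and then passes to the limit in the exact weak formulation \eqref{eq:psin_J} satisfied by $\psi_n$: there the noise enters only through $\int_I\langle\sigma B_{\lfloor nx+1\rfloor,t},J(x)\rangle\,{\rm d}x$, whose variance is $O(1/n)$, while the nonlinear terms $c(\psi_n)$ and $\Gamma(\psi_n,\psi_n)$ are passed to the limit through tightness and a Skorokhod almost-sure representation in $\mathcal{C}([0,T],L^k)$, together with \eqref{hyp:Int_Wn_W_2} and \eqref{hyp:bound_W_L1_sum} --- i.e.\ through strong convergence of the field itself, never through an LLN. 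By insisting on weak-$L^k$ convergence you discard precisely the mechanism that lets the nonlinearities pass to the limit, and the LLN shortcut you substitute for it identifies the wrong limiting equation. Your remark that norm convergence of a noisy field is delicate is fair, but the resolution you chose does not prove the stated theorem; to keep the independent-copy reduction you would have to restrict to affine $c$ and linear $\Gamma$, otherwise you must argue along the paper's route through $\psi_n$.
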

The present result can be seen as a generalization of \cite{MR3238494,MR3187677,10113716M1075831,10113717M1134007}, where the case of Kuramoto-type interaction (namely $ \Gamma$ of the form $ \Gamma( \theta- \tilde{ \theta})$ with $ \Gamma(\cdot)$ and $c(\cdot)$ Lipschitz and bounded) in absence of noise ($ \sigma=0$) is considered. Theorem~\ref{theo:conv_profile} is proven in Section~\ref{sec:proof_conv_spatial_profile}.

\subsection{Identification}
\label{sec:identification_intro}
A direct consequence of Theorems~\ref{theo:conv_empirical_measure} and~\ref{theo:conv_profile} is  the identification between the spatial profile $ \psi(\cdot, t)$, weak solution to \eqref{eq:heat_equation_weak} in terms of the expected value of the solution $ \nu_{ t}$ of \eqref{eq:McKeanVlasov} (see \eqref{eq:identification_psi_nu} and \eqref{eq:identification_psi_nu_noncompact} below). This identification is straightforward in the case $I=[0, 1]$, $\ell({\rm d}x)= {\rm d}x$, as it based on both convergence of processes $(\nu_{ n, t})_{ n\geq1}$ and $ (\theta_{ n}(\cdot, t))_{ n\geq1}$:
\begin{theorem}[Identification in the compact case]
\label{theo:identification}
Suppose $I=[0,1]$ and that the hypotheses of Theorems~\ref{theo:conv_empirical_measure} and~\ref{theo:conv_profile} hold. Let $ \left(\nu_{ t}\right)_{ t\in[0,T]}$ be the unique solution to \eqref{eq:McKeanVlasov} with initial condition $ \nu_{ 0}$ and $ \psi(\cdot, t)_{ t\in[0, T]}$ be the unique solution to \eqref{eq:heat_equation_weak} with initial condition $ \psi_{ 0}(x):= \int \theta \nu_{ 0}^{ x}({\rm d}\theta)$. Then, for all $t\in [0, T]$, all test regular functions $J$ on $ [0,1]$
\begin{equation}
\label{eq:identification_psi_nu}
\int_{ [0,1]} \left\langle \psi(x, t)\, ,\, J(x)\right\rangle {\rm d}x= \int_{ [0,1]} \left\langle \int_{ \mathbb{ R}^{ d}} \theta \nu_{ t}^{ x}({\rm d}\theta)\, ,\, J(x)\right\rangle {\rm d}x.
\end{equation}
\end{theorem}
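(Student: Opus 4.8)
The plan is to bridge the two convergence results of Theorems~\ref{theo:conv_empirical_measure} and~\ref{theo:conv_profile} at the level of the microscopic system \eqref{eq:odegene}, exploiting that the spatial field $\theta_{n}(\cdot, t)$ of \eqref{eq:spatial_profile} and the empirical measure $\nu_{n, t}$ of \eqref{eq:emp_measure} are built from the \emph{same} particles. Fix a regular test function $J:[0,1]\to\mathbb{R}^{d}$ and $t\in[0,T]$, and set $\varphi(\theta, x):=\langle\theta\,,\,J(x)\rangle$. Since $x_{i}^{(n)}=i/n$ and $\theta_{n}(x,t)=\theta_{i,t}^{(n)}$ for $x\in[x_{i-1}^{(n)}, x_{i}^{(n)})$,
\[
\int_{[0,1]}\langle\theta_{n}(x,t)\,,\,J(x)\rangle\,{\rm d}x - \langle\nu_{n,t}\,,\,\varphi\rangle = \sum_{i=1}^{n}\int_{x_{i-1}^{(n)}}^{x_{i}^{(n)}}\langle\theta_{i,t}^{(n)}\,,\,J(x)-J(x_{i}^{(n)})\rangle\,{\rm d}x,
\]
whose absolute value is at most $\tfrac{\|J\|_{\mathrm{Lip}}}{n}\cdot\tfrac1n\sum_{i=1}^{n}|\theta_{i,t}^{(n)}|$, and hence tends to $0$ in $L^{1}(\mathbf{P})$ by the a priori bound \eqref{eq:apriori_bound_odegene}. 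It therefore suffices to identify the limits of $\langle\nu_{n,t}\,,\,\varphi\rangle$ and of $\int_{[0,1]}\langle\theta_{n}(x,t)\,,\,J(x)\rangle\,{\rm d}x$ and to check that they coincide with, respectively, the right- and left-hand sides of \eqref{eq:identification_psi_nu}.

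For the first limit I would apply Theorem~\ref{theo:conv_empirical_measure}, but $\varphi(\theta,x)=\langle\theta\,,\,J(x)\rangle$ is \emph{not} admissible there: it is Lipschitz in $\theta$ uniformly in $x$, yet its $\iota$-H\"older modulus in $x$ grows like $|\theta|$ instead of being bounded. The remedy is truncation. Let $\chi_{R}:\mathbb{R}^{d}\to\mathbb{R}^{d}$ be smooth with $\chi_{R}(\theta)=\theta$ for $|\theta|\leq R$, $|\chi_{R}(\theta)|\leq R+1$ and $\mathrm{Lip}(\chi_{R})\leq1$, and set $\varphi_{R}(\theta,x):=\langle\chi_{R}(\theta)\,,\,J(x)\rangle$. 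Then $\varphi_{R}$ is continuous and bounded, with $|\varphi_{R}(\theta,x)-\varphi_{R}(\tilde\theta,y)|\leq \|J\|_{\infty}|\theta-\tilde\theta| + (R+1)\|J\|_{\mathrm{Lip}}|x-y|^{\iota}$ on $[0,1]$ (using $|x-y|\leq|x-y|^{\iota}$ since $\iota\leq1$), so $\varphi_{R}$ satisfies the hypotheses of Theorem~\ref{theo:conv_empirical_measure}; hence $\mathbf{E}[|\langle\nu_{n,t}-\nu_{t}\,,\,\varphi_{R}\rangle|]\to0$ for a.e.\ $\Xi$. The truncation error is uniform in $n$: since $|\theta-\chi_{R}(\theta)|\leq 2|\theta|\,\mathbf{1}_{|\theta|>R}$, H\"older's inequality together with the uniform $2k$-th moment bounds \eqref{eq:apriori_bound_odegene}, \eqref{eq:apriori_bound_nonlin} gives $\mathbf{E}[|\langle\nu_{n,t}\,,\,\varphi-\varphi_{R}\rangle|]\leq 2\|J\|_{\infty}\sup_{i,n}\mathbf{E}[|\theta_{i,t}^{(n)}|\mathbf{1}_{|\theta_{i,t}^{(n)}|>R}]\leq C R^{-(2k-1)}$ and, likewise, $|\langle\nu_{t}\,,\,\varphi-\varphi_{R}\rangle|\leq C R^{-(2k-1)}$. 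Letting $n\to\infty$ and then $R\to\infty$ yields $\mathbf{E}[\langle\nu_{n,t}\,,\,\varphi\rangle]\to\langle\nu_{t}\,,\,\varphi\rangle=\int_{[0,1]}\langle\int_{\mathbb{R}^{d}}\theta\,\nu_{t}^{x}({\rm d}\theta)\,,\,J(x)\rangle\,{\rm d}x$, the right-hand side of \eqref{eq:identification_psi_nu}.

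For the second limit, Theorem~\ref{theo:conv_profile} gives that $\theta_{n}$ converges weakly in $\mathcal{C}([0,T],L^{k})$ to the \emph{deterministic} $\psi$; convergence to a constant is convergence in probability, and since $u\mapsto\int_{[0,1]}\langle u(t)\,,\,J\rangle$ is continuous, $\int_{[0,1]}\langle\theta_{n}(x,t)\,,\,J(x)\rangle\,{\rm d}x\to\langle\psi(\cdot,t)\,,\,J\rangle$ in probability. This family is bounded in $L^{2}(\mathbf{P})$ — by Cauchy--Schwarz and $\mathbf{E}[\tfrac1n\sum_{i}|\theta_{i,t}^{(n)}|^{2}]\leq C_{0}^{1/k}$ from \eqref{eq:apriori_bound_odegene} — hence uniformly integrable, so the convergence also holds in $L^{1}(\mathbf{P})$. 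Combining the three observations on the full-measure event on which the conclusions of Theorem~\ref{theo:conv_profile} and of Theorem~\ref{theo:conv_empirical_measure} applied to each $\varphi_{R}$, $R\in\mathbb{N}$, simultaneously hold, we get
\[
\langle\psi(\cdot,t)\,,\,J\rangle = \lim_{n}\mathbf{E}\Big[\int_{[0,1]}\langle\theta_{n}(x,t)\,,\,J(x)\rangle\,{\rm d}x\Big] = \lim_{n}\mathbf{E}[\langle\nu_{n,t}\,,\,\varphi\rangle] = \int_{[0,1]}\langle\int_{\mathbb{R}^{d}}\theta\,\nu_{t}^{x}({\rm d}\theta)\,,\,J(x)\rangle\,{\rm d}x,
\]
which is \eqref{eq:identification_psi_nu}; as both sides are deterministic this holds unconditionally. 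I expect the main obstacle to be precisely the growth mismatch that makes $\varphi$ inadmissible in Theorem~\ref{theo:conv_empirical_measure}: once the truncation and the attendant uniform-in-$n$ moment estimates are in place, the remainder (discretization error, uniform integrability, intersection of the almost-sure events) is bookkeeping.
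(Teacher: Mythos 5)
Your argument is correct and follows essentially the same route as the paper: compare $\int_{[0,1]}\langle\theta_{n}(x,t)\,,\,J(x)\rangle\,{\rm d}x$ with $\langle\nu_{n,t}\,,\,\varphi\rangle$ for $\varphi(\theta,x)=\langle\theta\,,\,J(x)\rangle$, absorb the $O(1/n)$ discretization error via the a priori bound \eqref{eq:apriori_bound_odegene}, and then pass to the limit using Theorems~\ref{theo:conv_empirical_measure} and~\ref{theo:conv_profile}. Your truncation $\varphi_{R}$ is in fact a welcome extra precision: the paper applies Theorem~\ref{theo:conv_empirical_measure} directly to $\varphi(\theta,x)=\langle\theta\,,\,J(x)\rangle$, whose H\"older modulus in $x$ grows like $\vert\theta\vert$ and so does not literally satisfy the stated hypotheses, a point that your truncation together with the uniform $2k$-moment bounds handles cleanly.
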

Theorem~\ref{theo:identification} is proven in Section~\ref{sec:proof_conv_spatial_profile}. When $I\subset \mathbb{ R}^{ p}$ is not compact, there is no natural construction of the spatial profile $(\theta_{ n}(\cdot, t))$ as in \eqref{eq:spatial_profile}. However, by a simple truncation argument, it is still possible to get the following identification result (which may have an interest of its own, independently of the context of random graphs):
\begin{theorem}[Identification when $I= \mathbb{ R}^{ p}$]
\label{theo:identification_non_compact}
Suppose that $I= \mathbb{ R}^{ p}$ is endowed with a probability measure $\ell({\rm d}x)=\ell(x) {\rm d}x$ that is absolutely continuous w.r.t. the Lebesgue measure on $ \mathbb{ R}^{ p}$. Suppose that $\ell$ is $ \mathcal{ C}^{ 1}$ on $ \mathbb{ R}^{ d}$ and fix a kernel $W(x, y)$ that is $ \mathcal{ C}^{ 1}$ on $ \mathbb{ R}^{ p}\times \mathbb{ R}^{ p}$. Suppose that $c, \Gamma, \nu_{ 0}$ and $W$ satisfy Model~\ref{mod:polynomial} and the assumptions of Section~\ref{sec:general_assumptions_nonlin}. Then, \eqref{eq:heat_equation_weak} has a unique weak solution $ \psi(\cdot, t)_{ t\in[0, T]}$ with initial condition $ \psi_{ 0}(x):= \int \theta \nu_{ 0}^{ x}({\rm d}\theta)$. If $ \left(\nu_{ t}\right)_{ t\in[0,T]}$ is the unique solution to \eqref{eq:McKeanVlasov} with initial condition $ \nu_{ 0}$, then, for all $t\in [0, T]$, all test regular functions $J$ with compact support on $ \mathbb{ R}^{ p}$,
\begin{equation}
\label{eq:identification_psi_nu_noncompact}
\int_{ \mathbb{ R}^{ p}} \left\langle \psi(x, t)\, ,\, J(x)\right\rangle \ell({\rm d}x)= \int_{ \mathbb{ R}^{ p}} \left\langle \int_{ \mathbb{ R}^{ d}} \theta \nu_{ t}^{ x}({\rm d}\theta)\, ,\, J(x)\right\rangle \ell({\rm d}x).
\end{equation}
\end{theorem}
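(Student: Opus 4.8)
The plan is to bypass any reference to a spatial profile — which, as the authors note, has no canonical meaning on a non-compact $I$ — and instead to verify directly that the first $\theta$-moment of the solution to \eqref{eq:McKeanVlasov},
\[
m(x,t):=\int_{\mathbb{R}^{d}}\theta\,\nu_{t}^{x}({\rm d}\theta),\qquad x\in\mathbb{R}^{p},\ t\in[0,T],
\]
is itself the (unique) weak solution of \eqref{eq:heat_equation_weak} with initial datum $\psi_{0}(x)=\int\theta\,\nu_{0}^{x}({\rm d}\theta)$. Since uniqueness on an arbitrary $I$ is already supplied by Proposition~\ref{prop:uniqueness_weak_solution}, this single verification yields \emph{both} well-posedness of \eqref{eq:heat_equation_weak} on $\mathbb{R}^{p}$ and, upon pairing with an arbitrary regular compactly supported $J$, the identity \eqref{eq:identification_psi_nu_noncompact}. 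This is morally the same identification as in the compact case (Theorem~\ref{theo:identification}), but obtained directly through the dynamics rather than as a by-product of two particle-level convergences.

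First I would record that $m$ lies in the right space: by the a~priori bound \eqref{eq:apriori_bound_nonlin}, $|m(x,t)|\le\mathbf{E}\bigl[|\bar\theta_{t}^{x}|\bigr]\le C_{0}^{1/(2k)}$ uniformly in $(x,t)$, so $m(\cdot,t)\in L^{\infty}(I)\subseteq L^{k}(I,\ell)$ ($\ell$ being a probability measure), and the elementary increment bound $\sup_{x}\mathbf{E}\bigl[|\bar\theta_{t}^{x}-\bar\theta_{s}^{x}|^{2}\bigr]\le C\,|t-s|$ (from \eqref{hyp:Gamma_bound}, \eqref{hyp:polynomial_control_c}, \eqref{eq:apriori_bound_nonlin}) gives $m\in\mathcal{C}([0,T],L^{k}(I,\ell))$; the same control shows $\psi_{0}\in L^{k}(I,\ell)$ via \eqref{hyp:second_moment_nu0}. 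The heart of the argument is then to integrate \eqref{eq:theta_nonlin} on $[0,t]$ and take $\mathbf{E}$. Here the linear structure of Model~\ref{mod:polynomial} is essential: expectation commutes with the drift, so $\mathbf{E}[c(\bar\theta_{s}^{x})]=c(m(x,s))$, and, pushing $\mathbf{E}$ through the (a.s.\ absolutely convergent, since $\mathcal{W}_{1}(x)<\infty$) spatial integral and using linearity of $\Gamma$ in each argument, $\mathbf{E}\bigl[\int W(x,y)\Gamma(\bar\theta_{s}^{x},\tilde\theta)\,\nu_{s}^{y}({\rm d}\tilde\theta)\,\ell({\rm d}y)\bigr]=\int W(x,y)\Gamma(m(x,s),m(y,s))\,\ell({\rm d}y)$. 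This produces the mild identity $m(x,t)=m(x,0)+\int_{0}^{t}c(m(x,s))\,{\rm d}s+\int_{0}^{t}\!\int W(x,y)\Gamma(m(x,s),m(y,s))\,\ell({\rm d}y)\,{\rm d}s$; pairing with $J$, integrating against $\ell({\rm d}x)$, and using Fubini — legitimate since $\|m\|_{\infty}<\infty$, $\|\mathcal{W}_{1}\|_{\infty}<\infty$ by \eqref{hyp:bound_W_L1}, and $J$ has compact support — gives exactly the weak formulation \eqref{eq:heat_equation_weak}.

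To make these exchanges of $\mathbf{E}$, time integral and spatial integral rigorous and \emph{uniform in $x$} on the non-compact $\mathbb{R}^{p}$, I would follow the authors' truncation: fix a large ball, replace $W$ by the asymmetric kernel $\tilde W_{M}$ of \eqref{eq:tilde_WM} that localizes the coupling term to that ball (asymmetric even when $W$ is symmetric, since only one of the two position variables is cut off), and carry out the computation above for the $\tilde W_{M}$-nonlinear process $\bar\theta^{x,M}$, for which all spatial integrals are over a compact set and the $\mathcal{C}^{1}$ regularity of $W$ and $\ell$ makes the needed measurability and $x$-continuity of $\nu^{x,M}_{t}$ standard. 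Since $\tilde W_{M}$ still satisfies $\sup_{x}\int\tilde W_{M}(x,y)^{2}\,\ell({\rm d}y)\le\|\mathcal{W}_{2}\|_{\infty}$, the truncated process inherits the a~priori bound \eqref{eq:apriori_bound_nonlin} with the \emph{same} constant (the one-sided Lipschitz condition \eqref{hyp:c_onesidedLip} being the mechanism behind that bound), so everything is uniform in $M$. Letting $M\to\infty$, the truncation error is controlled, uniformly in $x$, by
\[
\sup_{x\in\mathbb{R}^{p}}\int_{\{|y|>M\}}W(x,y)\,\ell({\rm d}y)\ \le\ \|\mathcal{W}_{2}\|_{\infty}^{1/2}\,\ell(\{|y|>M\})^{1/2}\ \xrightarrow[M\to\infty]{}\ 0
\]
(Cauchy--Schwarz against the finite measure $\ell$, using \eqref{hyp:bound_W_L1}), which together with \eqref{hyp:Gamma_Lip_1} yields $\sup_{x}\mathbf{E}\bigl[\sup_{s\le T}|\bar\theta_{s}^{x,M}-\bar\theta_{s}^{x}|^{2}\bigr]\to0$, hence $m_{M}\to m$ in $\mathcal{C}([0,T],L^{k}(I,\ell))$; the weak identity for $m_{M}$ then passes to the limit.

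The main obstacle is exactly this non-compactness: every a~priori estimate, every interchange of expectation and integrals, and every truncation-error bound must be uniform in the position $x$ and, after truncation, in $M$. The three inputs that make this work are the uniform moment bound \eqref{eq:apriori_bound_nonlin}, the integrability \eqref{hyp:bound_W_L1} of $\mathcal{W}_{2}$ (which, through the Cauchy--Schwarz trick above against the \emph{finite} measure $\ell$, upgrades the merely pointwise tail decay of $\int W(x,y)\,\ell({\rm d}y)$ into a uniform one), and the one-sided Lipschitz bound \eqref{hyp:c_onesidedLip} behind the moment estimates. Restricting to compactly supported test functions $J$ is what keeps the final Fubini step and the passage $M\to\infty$ routine, since it dispenses with any moment assumption on $\ell$ near infinity. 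Once $m$ is identified as a weak solution, Proposition~\ref{prop:uniqueness_weak_solution} closes both the well-posedness claim and \eqref{eq:identification_psi_nu_noncompact}.
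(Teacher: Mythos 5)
The load-bearing step of your proposal is the moment closure $\mathbf{E}\left[c(\bar\theta_{s}^{x})\right]=c(m(x,s))$ (and the analogous identity for the coupling term), which you justify by ``the linear structure of Model~\ref{mod:polynomial}''. But Model~\ref{mod:polynomial} does not make $c$ linear: it allows an arbitrary polynomial drift satisfying \eqref{hyp:c_onesidedLip} — the paper's flagship example is FitzHugh--Nagumo, where $c$ is cubic — and it allows $\Gamma$ to be bounded and nonlinear as an alternative to linear. Since $\sigma$ may be nondegenerate and $\nu_{0}^{x}$ is not assumed to be a Dirac mass, the law $\nu_{t}^{x}$ of $\bar\theta_{t}^{x}$ is genuinely spread out, and $\int c(\theta)\,\nu_{t}^{x}({\rm d}\theta)\neq c\bigl(\int \theta\,\nu_{t}^{x}({\rm d}\theta)\bigr)$ in general: already for $\Gamma$ degenerate, $c(\theta)=-\theta^{3}$ and $\sigma>0$, the first moment of the diffusion does not solve $\dot m=-m^{3}$. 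Hence the mild identity you derive for $m(x,t)$ — and with it the claim that $m$ is a weak solution of \eqref{eq:heat_equation_weak}, which is what the whole proposal rests on — is only valid when $c$ is affine and $\Gamma$ linear (or when $\sigma=0$ and $\nu_{0}^{x}$ is a point mass, so that $\nu_{t}^{x}$ remains one). Under the hypotheses of Theorem~\ref{theo:identification_non_compact} as stated, this step fails, and none of your (otherwise sensible) truncation and uniform-in-$x$, uniform-in-$M$ estimates can repair it: the obstruction is not the non-compactness of $I$ but the absence of moment closure for the McKean--Vlasov dynamics.

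This is also precisely where the paper takes a different route: it never writes a closed equation for the first moment. It truncates $I$ to boxes $B_{M}$, replaces $W$ by the bounded asymmetric kernel $\tilde W_{M}$ of \eqref{eq:tilde_WM}, and invokes the compact-case identification of Theorem~\ref{theo:identification}, which is itself a particle-level statement: the single observable $\int\langle\theta_{n}(x,t),J(x)\rangle\,{\rm d}x$ converges on one hand to the pairing of $J$ with $\psi^{(M)}$ (Theorem~\ref{theo:conv_profile}) and on the other hand to the pairing with $\int\theta\,\nu_{t}^{(M),x}({\rm d}\theta)$ (Theorem~\ref{theo:conv_empirical_measure}), so the two limits coincide without ever commuting an expectation through $c$ or $\Gamma$. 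The limit $M\to\infty$ is then taken on each side separately: uniform-in-$M$ a priori bounds and a Cauchy estimate in $\mathcal{C}([0,T],L^{k}(I,\ell))$ identify $\lim_{M}\psi^{(M)}\mathbf{1}_{B_{M}}$ as the unique weak solution $\psi$ (this is also how existence on $\mathbb{R}^{p}$ is obtained, rather than by exhibiting $m$ as a solution), while a propagator/Gr\"onwall argument gives $\nu^{(M)}\to\nu$. If you wish to keep a direct argument, you must either restrict the theorem to affine $c$ and linear $\Gamma$, or follow the paper and route the identification through the particle system.
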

Theorem~\ref{theo:identification_non_compact} is proven in Section~\ref{sec:identification_noncompact_case}. In the case of FitzHugh-Nagumo oscillators, \eqref{eq:identification_psi_nu_noncompact} (i.e. the identification of the expected value of \eqref{eq:FP} as a solution of \eqref{eq:heat_equation}) can be seen as a weak formulation of a recent work \cite{2018arXiv180401263C} where a similar issue is addressed, using PDE techniques. Although we consider here a more general class of model, the present identification is weaker, as it is only valid in the sense of distributions (in particular, the spatial regularity of $(\psi(\cdot, t))$ is not addressed here). Another significant difference with \cite{2018arXiv180401263C} is that we crucially need here to have a probability measure $\ell({\rm d}x)$ on the spatial variable $x$, whereas \cite{2018arXiv180401263C} adresses directly the case where $\ell$ is Lebesgue on $ \mathbb{ R}^{ p}$.
\subsection{A comment on graph convergence}
\label{sec:comment_graph_convergence}
The aim of this paragraph is to question the notion of graph convergence given by Definition~\ref{def:convergence_graph}. The point we want to raise here is that this notion of convergence does not really concern so much the unlabeled and (possibly) undirected graph $ \mathcal{ G}^{(n)}$ constructed in Definition~\ref{def:regularity_network}, but is rather a notion of convergence of a directed and weighted graph $\bar{\mathcal{ G}}^{ (n)}$ that is coupled to $ \mathcal{ G}^{(n)}$ (with weights that depend on the dilution sequence $ \kappa^{ (n)}$). The reason is that, even though the original graph $ \mathcal{ G}^{ (n)}$ might be symmetric, the renormalization $ \kappa_{ i}^{ (n)}$ for each node $i$ in \eqref{eq:odegene} induces an asymmetry in the interaction between $i$ and any of its neighbor $j$. To be more specific:
\begin{definition}
\label{def:bar_Gn}
Let $ \bar{ \mathcal{ G}}^{ (n)}$ be the directed and weighted graph (with vertex set $[n]$) constructed from $ \mathcal{ G}^{ (n)}$ in the following way: for any $i\neq j\in [n]$, both edges $i\to j$ and $j\to i$ are present in $\bar{\mathcal{ G}}^{ (n)}$ if and only if the undirected edge $\{i, j\}$ is present in $ \mathcal{ G}^{ (n)}$. Then, attribute the weight $ \kappa_{i}^{ (n)}$ (resp. $ \kappa_{j}^{ (n)}$) to $i\to j$ (resp. $j\to i$) in $ \bar{ \mathcal{ G}}^{ (n)}$.
\end{definition}
We suppose here again for simplicity that $I:=[0,1]$, endowed with its Lebesgue measure $\ell({\rm d}x):= {\rm d}x$ and where $x_{ i}^{ (n)}:= \frac{ i}{ n}$ for $i\in \left[n\right]$. We assume in this paragraph that Definition~\ref{def:convergence_graph} holds as well as:
\begin{equation}
\label{hyp:Int_complete_W_converge}
 \sum_{ i,j=1}^{ n} \int_{ \frac{ i-1}{ n}}^{ \frac{ i}{ n}}\int_{ \frac{ j-1}{ n}}^{ \frac{ j}{ n}} \left\vert W(x_{ i}^{ (n)}, x_{ j}^{ (n)}) -W(x, y) \right\vert {\rm d}x {\rm d}y \xrightarrow[n\to\infty]{}0.
\end{equation}
\begin{proposition}
\label{prop:conv_graph_bar_Gn}
Let $\bar{\mathcal{ G}}^{ (n)}$ be given by Definition~\ref{def:bar_Gn}. Under the above hypotheses, we have the following convergence result:
\begin{equation}
\label{eq:limit_barGn_W}
d_{ \Box} \left( \bar{ \mathcal{ G}}^{ (n)}, W\right) \xrightarrow[ n\to\infty]{}0,
\end{equation}
where $d_{ \Box}(\cdot, \cdot)$ is the cut-off distance.
\end{proposition}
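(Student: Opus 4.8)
The plan is to pass to the standard step-function representation of $\bar{\mathcal{G}}^{(n)}$: let $W^{(n)}$ be the kernel on $I^{2}=[0,1]^{2}$ which equals $\kappa_{i}^{(n)}\xi_{i,j}^{(n)}$ on the block $[\tfrac{i-1}{n},\tfrac{i}{n})\times[\tfrac{j-1}{n},\tfrac{j}{n})$, so that by definition $d_{\Box}(\bar{\mathcal{G}}^{(n)},W)\leq\Vert W^{(n)}-W\Vert_{\Box}$, the cut distance $d_{\Box}$ being bounded by the cut norm $\Vert\cdot\Vert_{\Box}$ (take the identity as measure-preserving map). It thus suffices to prove $\Vert W^{(n)}-W\Vert_{\Box}\to0$ for $\mathbb{P}$-almost every realization of $\Xi$, and I would do this via the triangle inequality through the two \emph{deterministic} step kernels $\bar W^{(n)}$ and $\tilde W^{(n)}$, equal on the $(i,j)$ block to $\kappa_{i}^{(n)}W_{n}(x_{i},x_{j})$ and to $W(x_{i},x_{j})$ respectively (both zeroed on the diagonal blocks, which carry a negligible total mass $O(\kappa_{n}w_{n}/n)=o(1)$ by \eqref{hyp:compare_wn_alphan}--\eqref{hyp:alpha_n_infty}).

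The two deterministic comparisons are immediate from $\Vert\cdot\Vert_{\Box}\leq\Vert\cdot\Vert_{L^{1}(I^{2})}$: one has $\Vert\bar W^{(n)}-\tilde W^{(n)}\Vert_{L^{1}}=\tfrac1n\sum_{i\in[n]}\bigl(\tfrac1n\sum_{j\in[n]}|\kappa_{i}^{(n)}W_{n}(x_{i},x_{j})-W(x_{i},x_{j})|\bigr)\leq\delta_{n}(\underline{x})\to0$ by \eqref{hyp:Delta1_to_0}, and $\Vert\tilde W^{(n)}-W\Vert_{L^{1}}$ is precisely the left-hand side of \eqref{hyp:Int_complete_W_converge}, hence vanishes. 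Only $\Vert W^{(n)}-\bar W^{(n)}\Vert_{\Box}$ carries randomness, and there the $L^{1}$ bound is useless: the block entries $U^{(n)}_{ij}:=\kappa_{i}^{(n)}(\xi_{i,j}^{(n)}-W_{n}(x_{i},x_{j}))$ are centered but of $L^{1}$-average only $O(\kappa_{n}w_{n})$, which need not go to zero, so one must genuinely exploit the cancellations measured by the cut norm. Since $W^{(n)}-\bar W^{(n)}$ is block-constant, a standard convexity argument gives $\Vert W^{(n)}-\bar W^{(n)}\Vert_{\Box}=\tfrac{1}{n^{2}}\max_{S,T\subseteq[n]}\bigl|\sum_{i\in S,\,j\in T}U^{(n)}_{ij}\bigr|$. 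For fixed $S,T$, after discarding the (null) diagonal and splitting into the $i<j$ and $i>j$ parts so as to work with \emph{independent} summands, each bounded by $\kappa_{n}$ with variance at most $\kappa_{n}^{2}w_{n}$, Bernstein's inequality yields $\mathbb{P}\bigl(|\sum_{i\in S,\,j\in T}U^{(n)}_{ij}|>\varepsilon n^{2}\bigr)\leq2\exp\bigl(-c\,\varepsilon^{2}n^{2}/(\kappa_{n}^{2}w_{n}+\kappa_{n}\varepsilon)\bigr)$ for a universal $c>0$. A union bound over the $4^{n}$ pairs $(S,T)$ then gives $\mathbb{P}(\Vert W^{(n)}-\bar W^{(n)}\Vert_{\Box}>\varepsilon)\leq2\cdot4^{n}\exp\bigl(-c\,\varepsilon^{2}n^{2}/(\kappa_{n}^{2}w_{n}+\kappa_{n}\varepsilon)\bigr)$; since \eqref{hyp:compare_wn_alphan} gives $\kappa_{n}\leq\kappa_{n}^{2}w_{n}$ and \eqref{hyp:alpha_n_infty} gives $\kappa_{n}^{2}w_{n}=o(n/\log n)$, the exponent is eventually far larger than $n\log4$ and the probabilities are summable, so Borel--Cantelli yields $\Vert W^{(n)}-\bar W^{(n)}\Vert_{\Box}\to0$ $\mathbb{P}$-a.s. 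Intersecting over $\varepsilon=1/m$, $m\geq1$, and combining the three estimates proves \eqref{eq:limit_barGn_W}.

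The main obstacle is exactly this concentration step: one has to verify that the entropy $n\log4$ coming from the union bound over cut sets $(S,T)$ is absorbed by the Bernstein deviation exponent, which is precisely where the dilution conditions \eqref{hyp:compare_wn_alphan}--\eqref{hyp:alpha_n_infty} enter (in fact $\kappa_{n}^{2}w_{n}=o(n)$ already suffices for this particular statement, the $\log n$ refinement not being needed here). The symmetry of $\mathcal{G}^{(n)}$ only forces the routine bookkeeping of separating the upper- and lower-triangular contributions to obtain truly independent Bernoulli variables; all other steps are elementary.
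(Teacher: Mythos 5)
Your proof is correct and follows essentially the same route as the paper's (Appendix~\ref{sec:link_graph_conv}): the same triangle inequality through the deterministic step kernels with entries $\kappa_{i}^{(n)}W_{n}(x_{i},x_{j})$ and $W(x_{i},x_{j})$, the $L^{1}$ bounds via $\delta_{n}(\underline{x})$ and \eqref{hyp:Int_complete_W_converge}, and for the random term the same split into upper/lower triangular parts, a Bernstein-type concentration bound, a union bound over the $4^{n}$ cut sets, and Borel--Cantelli. The only (harmless) deviation is that you bound the variance directly by $n^{2}\kappa_{n}^{2}w_{n}$, whereas the paper controls it through $\delta_{n}(\underline{x})$ and $\left\Vert W \right\Vert_{L^{1}}$ before invoking the same dilution conditions \eqref{hyp:compare_wn_alphan}--\eqref{hyp:alpha_n_infty}.
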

We use here the formalism of graph convergence developed in \cite{LOVASZ2006933,MR2455626,MR2825531,borgs2018,Borgs:2014aa} (and references therein). The precise definition of the cut-off distance together with the proof of Proposition~\ref{prop:conv_graph_bar_Gn} are given in Appendix~\ref{sec:link_graph_conv}. Note that one needs to slightly generalize the formalism of \cite{borgs2018,Borgs:2014aa} to the case of directed graphs and asymmetric kernels $W$ (but this is of minor difficulty).

An important point concerning Proposition~\ref{prop:conv_graph_bar_Gn} is that a lot of the structure of the microscopic graph $ \mathcal{ G}^{ (n)}$ is lost in the limit $n\to\infty$: the macroscopic limit \eqref{eq:FP} essentially captures a dynamics that lives on the \emph{renormalized} graph $ \mathcal{ \bar{G}}^{ (n)}$, which may be significantly different to $ \mathcal{ G}^{ (n)}$. To be more specific, one of the main contributions of the theory developed in \cite{LOVASZ2006933,MR2455626,MR2825531,borgs2018,Borgs:2014aa} is to show the existence of a large class of generic models of microscopic graphs $ \mathcal{ G}^{ (n)}$ that converge to some graphon $ \mathcal{P}(x, y)$. Here, through the renormalization by $( \kappa_{ i}^{ (n)})_{ i\in \left[n\right]}$ in \eqref{eq:odegene}, the limit $\mathcal{ P}$ of $ \mathcal{ G}^{ (n)}$  is in general different from the limit $W$ of $\bar{\mathcal{ G}}^{ (n)}$ given by \eqref{eq:limit_barGn_W}. This is typically true when $ \mathcal{ G}^{ (n)}$ has vertices with diverging degree as $n\to\infty$, see e.g. Section~\ref{sec:convergence_graphs_examples}, (Examples~\ref{ex:unbounded_graphon_ex1} and~\ref{ex:unbounded_graphon_ex2}) and Remark~\ref{rem:diff_Gn_sameW}, where we have two different $ \mathcal{ G}^{ (n)}$, converging to different $ \mathcal{ P}$, such that their renormalized graphs $ \bar{ \mathcal{ G}}^{ (n)}$ converge to the same $W$. 

In particular, even though the graph of interaction $ \mathcal{ G}^{ (n)}$ might be of power-law type, the renormalized graph $ \bar {\mathcal{ G}}^{ (n)}$ and its macroscopic counterpart $W$ that we consider in this paper are \emph{never} of power-law type: a crucial assumption that is constantly used in this work is  \eqref{hyp:bound_W_L1}, i.e. the degree of each macroscopic node remains uniformly of order $1$. This uniformity in degrees crucially depends on the choice of the dilution coefficients $(\kappa_{ i}^{ (n)})$. To illustrate this, consider the graph $ \mathcal{ G}^{ (n)}$ defined in Example~\ref{ex:unbounded_graphon_ex2} where, instead of \eqref{hyp:dilution_alpha_ni_cP}, we choose now
\begin{equation}
\label{hyp:renorm_uniform}
\kappa_{i}^{ (n)} = \frac{ 1}{ \rho_{ n}},\ i\in \left[n\right],
\end{equation}
that is, the same uniform dilution as for bounded kernels  \eqref{eq:kappan_uniform_P} (example already considered in \cite{10113716M1075831}, \S~6.2). The graph $( \mathcal{ G}^{(n)}, \kappa^{ (n)})$ remains convergent in the sense of Definition~\ref{def:convergence_graph} to
\begin{equation}
\label{hyp:W_equal_cP}
W(x, y):= \mathcal{ P}(x, y)= (1- \alpha)^{ 2}x^{ - \alpha} y^{ - \alpha}.
\end{equation}
Indeed, choosing for simplicity $ 0< \alpha< \frac{ 1}{ 6}$ and $ \rho_{ n}= n^{ - \delta}$ with $ 2 \alpha< \delta <\frac{ 1}{ 2} - \alpha$  in \eqref{eq:generic_Wn},
\begin{align*}
\delta_{ n}(\underline{ x})&=\sup_{ i\in \left[n\right]}\frac{ 1}{ n} \sum_{ k=1}^{ n} \left\vert \min \left( n^{ \delta}, (1- \alpha)^{ 2} (x_{ i}x_{ j})^{ - \alpha}\right) - (1- \alpha)^{ 2} (x_{ i}x_{ j})^{ - \alpha} \right\vert,
\end{align*}
is equally $0$ for all $n$, since $ \delta> 2 \alpha$. But now, the uniform renormalization \eqref{hyp:renorm_uniform} (well adapted to vertices with low degree, with position away from $0$) is no longer sufficient to compensate for vertices with high degree with position close to $0$ and the boundedness assumption \eqref{hyp:bound_W_L1} is no longer satisfied for \eqref{hyp:W_equal_cP}: macroscopic nodes $x\in [0,1]$ have diverging degrees as $ x\to 0$. At the level of generality considered in this work (but even for Kuramoto-type interaction), it is unclear if the convergence results (Theorems~\ref{theo:conv_general} or Theorem~\ref{theo:conv_profile}) remain true when assumption \eqref{hyp:bound_W_L1} is discarded.
\subsection{Applications and links with the existing literature}
\label{sec:applications}
\subsubsection{Applications} The kind of applications we have in mind are:
\begin{enumerate}
\item The Kuramoto model and its variants: take $d=1$ and $ \Gamma(\theta, \tilde{ \theta})=\sin \left(\tilde{ \theta}- \theta\right)$. Examples of local dynamics are usually $c(\cdot)\equiv \omega$ or $c(\theta)=1+ a \sin(\theta)$. In this context, \eqref{eq:McKeanVlasov} gives rise to a family of Kuramoto models with spatial extension already considered in the literature ($P$-nearest neighbor model \cite{PhysRevLett.106.234102}, long-range interactions \cite{PhysRevE.85.066201}). The question of characterizing the synchronized states, as for the original Kuramoto model, is still an ongoing question (see \cite{doi:10.1142/S0218127406014551,1742-5468-2014-8-R08001} and references therein). In the case of Kuramoto type ODEs (that is when $ \sigma=0$), a series of papers (see \cite{MR3238494,10113716M1075831,10113717M1134007,Chiba:2016aa,MR3187677} and references therein) have addressed similar issues to the ones addressed here. In addition to the fact that we consider more general hypotheses (e.g. possibly unbounded and asymmetric $ \Gamma$ and $c$ being non-Lipschitz), the main difficulty of the present analysis is that noise is present in \eqref{eq:odegene}. In particular, the fixed-point argument \cite{Neunzert1984} used in \cite{10113717M1134007,Chiba:2016aa} for the convergence of the empirical measure in the deterministic case does not seem to generalize easily to the case $ \sigma\neq 0$. 
\item FitzHugh-Nagumo oscillators: this corresponds to $d=2$, $ \theta= (V, w)$, $ c(V, w):= (V-\frac{V^3}{3} - w,  \frac{1}{ \tau}(V+a-bw))$ (for appropriate parameters $a, b, \tau$) and $ \Gamma(\theta, \tilde{ \theta}):= ( V- \tilde{ V}, 0)$. Here, $V$ stands for the potential of one neuron and $w$ its recovery variable. We refer to \cite{MR3392551} and references therein for more details on this particular model and its applications to neuroscience. Once again, \eqref{eq:McKeanVlasov} gives a spatially-extended version of a Fokker-Planck PDE, already analyzed in the context of neurons interacting through a deterministic spatial kernel (\cite{MR2998591,LucSta2014,LucSta2016}). The present work gives a new interpretation of such spatially-extended PDEs in terms of the mean-field limit of diffusions on random graphs.
\end{enumerate}
\subsubsection{Long-time behavior}
Theorems~\ref{theo:conv_general} and~\ref{theo:conv_empirical_measure} are the natural extensions of \cite{MR3568168} that concerns the case of homogeneous Erd\H os-R\'enyi graphs. One should also mention at this point the recent work \cite{2018arXiv180710921C} which addresses quenched propagation of chaos and large deviations results on homogeneous graphs. The result that is closest to this work is the recent \cite{2018arXiv180706898O} where a similar result of convergence is addressed in the case of bounded and Lipschitz coefficients $ c$, $ \Gamma$. The analysis in \cite{2018arXiv180706898O} restricts to bounded kernels $W$ and random positions (that is a particular case of Section~\ref{sec:dense_bounded_graphons} below). Note also that the convergence of \cite{2018arXiv180706898O} is annealed in both disorder (connections and positions), whereas the present analysis is quenched. Contrary to \cite{2018arXiv180706898O}, we do not address large deviation estimates here. 

This work comes with all the comments and restrictions raised in \cite{MR3568168}: the convergence results are only valid on bounded time intervals $[0, T]$, where $T$ is independent of $n$ (although that, with a little more work, it would certainly be possible to extend the result up to times $T$ which grows logarithmically in $n$, as in \cite{MR3568168}, Corollary~1.2). In any case, the behavior of the empirical measure \eqref{eq:emp_measure} on larger time scales remains unclear. The difficulty of the long-time analysis of \eqref{eq:odegene} (already present for complete mean-field interactions, see \cite{Bertini:2013aa,MR3689966} for results in this direction for Kuramoto-type models) is here even more present for general graphs, since the interaction in \eqref{eq:odegene} cannot be written as a closed expression of the empirical measure \eqref{eq:emp_measure}. In particular, a trajectorial Central Limit Theorem associated to Theorem~\ref{theo:conv_empirical_measure} remains open (in this direction, see \cite{BHAMIDI2018} for an annealed fluctuation theorem in the Erd\H os-R\'enyi case).

One point raised in \cite{MR3568168} concerned the necessity of the independence of the initial condition of \eqref{eq:odegene} $(\theta_{ 1, 0}, \ldots, \theta_{ n, 0})$ with respect to the graph $ \mathcal{ G}^{ (n)}$ (in \cite{MR3568168}, the $(\theta_{ 1, 0}, \ldots, \theta_{ n, 0})$ are identically distributed; see also \cite{2018arXiv180710921C} where a convergence result is proven in the Erd\H os-R\'enyi case assuming only the convergence of the empirical measure of the initial condition). Here, we note that the present framework allows for a slight connection between the initial condition and the graph: the law $ \nu_{ 0}^{ x_{ k}}$ of $ \theta_{ k, 0}$ depends on its position $x_{ k}$ which encodes for the way the graph $ \mathcal{ G}^{ (n)}$ is built. A simple illustration is when the graph is made of two complete disconnected components (one concerning the particles with positions in $[0, \frac{ 1}{ 2}]$, with initial law $ \nu_{ 0}^{ x}=\mu_{ 1}$ and one concerning particles with positions in $[ \frac{ 1}{ 2}, 1]$, with initial law $ \nu_{ 0}^{ x}=\mu_{ 2}$). In this case, the behavior of the system is governed by \eqref{eq:FP}, with macroscopic kernel $W= \mathbf{ 1}_{ [0, \frac{ 1}{ 2}]^{ 2}} + \mathbf{ 1}_{ [ \frac{ 1}{ 2}, 1]^{ 2}}$.
\subsubsection{Neural field equation and traveling waves}
In the particular case where $d=1$, $ c(\theta) = - \alpha \theta$ (for some $ \alpha>0$) and $ \Gamma(\theta, \tilde{ \theta})= f( \tilde{ \theta})$ (typically $f$ is a sigmoid function),  \eqref{eq:heat_equation} becomes:
\begin{equation}
\label{eq:neural_field_equation}
\partial_{ t}\psi(x, t)= - \alpha \psi(x, t) + \int_{I} f(\psi(y, t)) W(x, y)\ell({\rm d}y).
\end{equation}
Equation \eqref{eq:neural_field_equation} is nothing else than the neural field equation, introduced by Wilson and Cowan \cite{Wilson:1972aa} and Amari \cite{Amari:1977:DPF:2731211.2731248} in order to describe the macroscopic activity of a population of neurons with spatial extension. Eq. \eqref{eq:neural_field_equation} has been the subject of an extensive literature (see \cite{MR3136844,MR2871421} and references therein; see in particular the recent \cite{CHEVALLIER2018} showing that \eqref{eq:neural_field_equation} is a proper limit for spatially-extended Hawkes processes).  An important issue here is the existence and stability of traveling waves \cite{doi:10.1137/18M1165797,101137130918721}. The point we want to raise here is the possibility of studying such traveling waves through the analysis of the corresponding McKean-Vlasov PDE \eqref{eq:McKeanVlasov} (whose dynamics is, to our knowledge, much less studied than \eqref{eq:neural_field_equation}, see \cite{maclaurin2017mean}), through the identification \eqref{eq:identification_psi_nu}. An interesting and open question concerns the possiblility of extending this identification beyond finite time scales (as for the Kuramoto model). In this context, it is reasonable to expect that the effect of thermal noise will persist on larger time intervals, resulting in stochastic neural field equations \cite{MR3367676,10113715M102856X,10113713095094X,10113715M1033927}. 
\section{Examples}
\label{sec:convergence_graphs_examples}
The point of this section is twofold: to describe generic models $( \mathcal{ X}, W)$ that are regular in the sense of Definition~\ref{def:graphon_regular} (Section~\ref{sec:regularity_kernel}) and to give examples of graphs $( \mathcal{ G}^{ (n)}, \kappa^{ (n)})$ that are convergent in the sense of Definition~\ref{def:convergence_graph} (Sections~\ref{sec:generic_class_convergent_graphs},~\ref{sec:dense_bounded_graphons} and~\ref{sec:convergent_graph_singular}). These examples are directly inspired by the formalism of \emph{$W$-random graphs}, introduced in  \cite{LOVASZ2006933,MR2455626,MR2925382,Borgs:2014aa,borgs2018} and used in \cite{MR3238494,10113716M1075831,10113717M1134007,Chiba:2016aa,MR3187677} (and references therein) in the context of Section~\ref{sec:convergence_profile_intro}. In this framework, a usual setting is to consider the compact $I=[0,1]$. This set-up is particularly well adapted to the choice of deterministic regular positions (see Assumptions~\ref{ass:deterministic_positions} below). Nonetheless, as already mentioned, it also makes sense to consider a general state space $I$ endowed with a general probability measure $\ell$, where each $x_{ i}$ actually encodes for a real position \cite{Cabana:2015aa,CHEVALLIER2018}.
\subsection{Two classes of regular models}
\label{sec:regularity_kernel}
We describe in this paragraph two generic classes of positions $ \mathcal{ X}$ and macroscopic kernels $W$ and provide simple conditions in both models ensuring that $W$ is regular in the sense of Definition~\ref{def:graphon_regular}.
\subsubsection{Deterministic positions}
A first set of hypotheses corresponds to deterministic positions \cite{MR3238494,Cabana:2015aa,CHEVALLIER2018,Muller:2015aa}:
\begin{assumption}[Deterministic positions]
\label{ass:deterministic_positions} 
We suppose that $I:=[0, 1]$, endowed with its Lebesgue measure $\ell({\rm d}x):= {\rm d}x$.
For all $n\geq1$, the sequence $ \underline{ x}=(x_{ 1}^{ (n)}, \ldots, x_{ n}^{ (n)})$ is deterministic, regularly positioned on $I$:
\begin{equation}
\label{hyp:x_regular}
x_{ i}^{ (n)}:= \frac{ i}{ n},\ n\geq1, i\in \left[n\right].
\end{equation}
We set $x_{ 0}^{ (n)}:=0$ and $x_{ n+1}^{ (n)}:= 1$ for notational convenience.
\end{assumption}
A sufficient condition for the regularity of $W$ is to require \eqref{hyp:Int_W_Lip} and
\begin{equation}
\label{hyp:delta_n_W}
s_{ n}(W)( \underline{ x}):=\sup_{ i\in \left[n\right]} \left(\sum_{ k=1}^{ n}\int_{ x_{ k-1}}^{x_{ k}}  \left\vert W(x_{ i}, x_{ k}) -  W(x_{ i}, y) \right\vert  {\rm d} y\right) \xrightarrow[ n\to\infty]{} 0.
\end{equation}
\begin{proposition}
\label{prop:control_det_Delta3}
Under Assumption~\ref{ass:deterministic_positions}, under the hypotheses of Section~\ref{sec:general_assumptions_nonlin} and \eqref{hyp:Int_W_Lip}, \eqref{hyp:delta_n_W},
the kernel $W$ is regular along the sequence $\mathcal{ X}$ in the sense of Definition~\ref{def:graphon_regular}.
\end{proposition}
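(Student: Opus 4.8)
The plan is to verify directly the two requirements of Definition~\ref{def:graphon_regular}: the uniform bound \eqref{hyp:bound_W_L1_sum} and the three convergences \eqref{hyp:Delta3_to_0}. The only elementary device needed is the quadrature estimate on the regular grid $x_i^{(n)}=i/n$ (recall \eqref{hyp:x_regular}): for any bounded measurable $h$ on $[0,1]$, since $\int_{x_{k-1}}^{x_k}{\rm d}y=1/n$,
\begin{equation}
\label{eq:proposal_quadrature}
\Big| \int h\,({\rm d}\ell_n - {\rm d}\ell) \Big| = \Big| \sum_{k=1}^n \int_{x_{k-1}}^{x_k} \big( h(x_k) - h(y) \big)\,{\rm d}y \Big| \leq \sum_{k=1}^n \int_{x_{k-1}}^{x_k} \big| h(x_k) - h(y) \big|\,{\rm d}y .
\end{equation}
Applied with $h=W(x_i,\cdot)$ this gives, straight from \eqref{hyp:delta_n_W}, that $\sup_i\big|\int W(x_i,\cdot)\,({\rm d}\ell_n-{\rm d}\ell)\big|\leq s_n(W)(\underline{x})\to0$; combined with $\|\mathcal{W}_{1}\|_\infty<\infty$ (Jensen and \eqref{hyp:bound_W_L1}) and the boundedness of the convergent sequence $(s_n(W)(\underline{x}))_n$, this shows $\frac1n\sum_{k=1}^n W(x_i,x_k)=\int W(x_i,\cdot)\,{\rm d}\ell_n$ is bounded uniformly in $n$ and $i$, i.e.\ \eqref{hyp:bound_W_L1_sum} holds; denote the resulting finite bound by $C_1$.

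For \eqref{hyp:Delta3_to_0} I would first extract from Lemma~\ref{lem:regul_Upsilon} together with the regularity estimates for the nonlinear flow $x\mapsto\nu^x$ (which rest on \eqref{hyp:nu_0_Lip}, \eqref{hyp:Int_W_Lip} and the a priori moment bound \eqref{eq:apriori_bound_nonlin}) the input I need about the object $\Upsilon_T$ of \eqref{eq:upsilon_t}: namely that $\Upsilon_T$ is bounded on $I^3$, with $\|\Upsilon_T\|_\infty=:C_\Upsilon$, and $\iota$-Hölder separately in each of its three arguments, with Hölder constant uniform over $I$. Consequently the auxiliary function $H_i(y):=\int W(x_i,z)\,\Upsilon_T(x_i,y,z)\,{\rm d}\ell(z)$ satisfies $\|H_i\|_\infty\leq C_\Upsilon\|\mathcal{W}_{1}\|_\infty$ and $|H_i(y)-H_i(y')|\leq C_\Upsilon\|\mathcal{W}_{1}\|_\infty|y-y'|^{\iota}$, both uniformly in $i$. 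Since $\epsilon_{n,T}^{(2,i)}(\underline{x})=\int W(x_i,y)H_i(y)\,({\rm d}\ell_n-{\rm d}\ell)(y)$, applying \eqref{eq:proposal_quadrature} with $h=W(x_i,\cdot)H_i(\cdot)$, after the splitting $|W(x_i,x_k)H_i(x_k)-W(x_i,y)H_i(y)|\leq |W(x_i,x_k)-W(x_i,y)|\,\|H_i\|_\infty+W(x_i,y)\,|H_i(x_k)-H_i(y)|$ and using $|x_k-y|\leq1/n$ on $[x_{k-1},x_k]$ and $\int W(x_i,\cdot)\,{\rm d}\ell=\mathcal{W}_{1}(x_i)\leq\|\mathcal{W}_{1}\|_\infty$, yields
\begin{equation}
\label{eq:proposal_eps2}
\sup_{i\in[n]}\big| \epsilon_{n,T}^{(2,i)}(\underline{x}) \big| \leq C_\Upsilon\|\mathcal{W}_{1}\|_\infty\, s_n(W)(\underline{x}) + C_\Upsilon\|\mathcal{W}_{1}\|_\infty^2\, n^{-\iota},
\end{equation}
which tends to $0$ by \eqref{hyp:delta_n_W}; the term $\epsilon_{n,T}^{(3,i)}(\underline{x})$ is identical with $y$ and $z$ interchanged.

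For $\epsilon_{n,T}^{(1,i)}(\underline{x})$ I would telescope the product measures, $\ell_n\otimes\ell_n-\ell\otimes\ell=(\ell_n-\ell)\otimes\ell_n+\ell\otimes(\ell_n-\ell)$. The contribution of $\ell\otimes(\ell_n-\ell)$ is exactly $\epsilon_{n,T}^{(3,i)}(\underline{x})$, already controlled. In the contribution of $(\ell_n-\ell)\otimes\ell_n$, performing the $z$-integral against the discrete measure $\ell_n$ turns it into $\frac1n\sum_{l=1}^n\int g_i(y,x_l)\,({\rm d}\ell_n-{\rm d}\ell)(y)$ with $g_i(y,z):=W(x_i,y)W(x_i,z)\Upsilon_T(x_i,y,z)$; applying \eqref{eq:proposal_quadrature} in $y$ for each fixed $l$ and splitting exactly as above gives $|\int g_i(y,x_l)\,({\rm d}\ell_n-{\rm d}\ell)(y)|\leq W(x_i,x_l)\,C_\Upsilon\big(s_n(W)(\underline{x})+\|\mathcal{W}_{1}\|_\infty n^{-\iota}\big)$, and averaging over $l$ with $\frac1n\sum_l W(x_i,x_l)\leq C_1$ closes the estimate. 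The only genuinely delicate point — the remaining steps being routine bookkeeping — is ensuring that the Hölder regularity of $\Upsilon_T$ in its spatial arguments is available and, crucially, \emph{uniform over} $I$: this is where \eqref{hyp:Int_W_Lip} is used, feeding into the Hölder-in-$x$ regularity of the nonlinear flow on which Lemma~\ref{lem:regul_Upsilon} relies, and it is precisely this uniformity that makes the suprema over $i\in[n]$ in \eqref{hyp:Delta3_to_0} converge.
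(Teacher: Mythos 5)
Your proposal is correct and takes essentially the same route as the paper: the quadrature bound on the regular grid yields \eqref{hyp:bound_W_L1_sum} via $s_n(W)(\underline{x})$ and \eqref{hyp:bound_W_L1}, and the terms $\epsilon_{n,T}^{(m,i)}(\underline{x})$ are controlled uniformly in $i$ by combining $s_n(W)(\underline{x})\to 0$ with the boundedness and spatial H\"older estimates on $\Upsilon_T$ furnished by Lemma~\ref{lem:regul_Upsilon} together with \eqref{hyp:Int_W_Lip} and \eqref{hyp:nu_0_Lip}; the paper handles $\epsilon_{n,T}^{(1,i)}$ directly as a cell-by-cell double Riemann approximation rather than telescoping $\ell_n\otimes\ell_n-\ell\otimes\ell$, which is a purely organizational difference. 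The only slight inaccuracy is your claim that $\Upsilon_T$ is H\"older in each of its three arguments — Lemma~\ref{lem:regul_Upsilon} provides this only in the second and third arguments — but since the first argument is always frozen at $x_i$ in the quantities \eqref{hyp:epsilon_i_123}, this is harmless.
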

Proposition~\ref{prop:control_det_Delta3} is proven in Section~\ref{sec:proof_regularity_graphon_det}.
\subsubsection{Random positions}
Another general framework concerns the case of random positions \cite{2018arXiv180706898O,MR3187677}:
\begin{assumption}[Random positions]
\label{ass:random_positions}
Let $I$ be a closed subset of $ \mathbb{ R}^{ p}$ that is the support of a probability measure $\ell({\rm d}x)$. The sequence \begin{equation}
\mathcal{ X}=(x_{ 1}, x_{ 2}, \ldots)
\end{equation} is the realization of i.i.d. random variables with law $\ell({\rm d}x)$ on $I$.
\end{assumption}
For any $p\geq1$ denote by $ \left\Vert W \right\Vert_{ L^{ p}}= \left\Vert W \right\Vert_{ L^{ p}(I^{ 2}, \ell\otimes\ell)}$ the usual $L^{ p}$-norm of $W$ on $I^{ 2}$:
\begin{equation}
\left\Vert W \right\Vert_{ L^{ p}}:= \left(\int_{ I^{ 2}} W(x, y)^{ p} \ell({\rm d}x)\ell({\rm d}y)\right)^{ \frac{ 1}{ p}}.
\end{equation}
\begin{proposition}
\label{prop:control_An6}
If Assumption~\ref{ass:random_positions} is true, under the hypotheses of Section~\ref{sec:general_assumptions_nonlin} and if there exists $ \chi>9$ such that
\begin{equation}
\label{hyp:norm_W_chi}
\left\Vert W \right\Vert_{ L^{ \chi}}< \infty,
\end{equation}
then the kernel $W$ is regular along the sequence $\mathcal{ X}$ for $\ell$-almost every realization of the sequence $ \mathcal{ X}$, in the sense of Definition~\ref{def:graphon_regular}. 
\end{proposition}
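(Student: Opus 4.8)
The plan is to check directly the two requirements entering Definition~\ref{def:graphon_regular}, namely \eqref{hyp:bound_W_L1_sum} and \eqref{hyp:Delta3_to_0}, as statements that hold for $\ell$-almost every realisation of the i.i.d.\ positions $\mathcal{X}=(x_1,x_2,\dots)$ (throughout, $\mathbb{E}$ and $\mathbb{P}$ refer to the randomness of $\mathcal{X}$). The common mechanism is the following: once $x_i$ is frozen, each quantity above is an empirical average, over the i.i.d.\ sample $(x_k)_{k\neq i}$ with law $\ell$, of a functional built from at most two copies of the slice $W(x_i,\cdot)$ times the factor $\Upsilon_T$; by Lemma~\ref{lem:regul_Upsilon} together with the a priori bound \eqref{eq:apriori_bound_nonlin}, $\Upsilon_T$ is bounded, $\Lambda_T:=\|\Upsilon_T\|_\infty<\infty$, by \eqref{hyp:bound_W_L1} one has $\|\mathcal{W}_1\|_\infty<\infty$, and $W$ vanishes on the diagonal. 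So I would condition on $x_i$, subtract the conditional mean, and be left with (i) centred sums of i.i.d.\ terms and (ii) canonical (degenerate) $U$-statistics of order two. For each of those I would apply a moment inequality with exponent $p$ — Rosenthal's inequality for the sums, the classical moment inequality for canonical order-two $U$-statistics (de la Pe\~na--Gin\'e) for the second-order pieces — and then combine Markov's inequality, a union bound over $i\in[n]$ and the Borel--Cantelli lemma to turn an $L^{p}$-decay into almost sure convergence (resp.\ boundedness). Crucially, $p$ must be taken $>4$ (so that the factor $n$ from the union bound still leaves a summable sequence in $n$), while $\|W\|_{L^{p}}$ and, through the quadratic kernels $W(x_i,y)W(x_i,z)$, $\|W\|_{L^{2p}}$ must remain finite; the hypothesis $\|W\|_{L^{\chi}}<\infty$ with $\chi>9$ is exactly what permits such a choice of $p$.

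For \eqref{hyp:bound_W_L1_sum}: writing $A_n^{(i)}:=\tfrac1n\sum_{k=1}^{n}W(x_i,x_k)=\tfrac1n\sum_{k\neq i}W(x_i,x_k)$ (the diagonal term vanishes), one has $\mathbb{E}[A_n^{(i)}\mid x_i]=\tfrac{n-1}{n}\mathcal{W}_1(x_i)\leq\|\mathcal{W}_1\|_\infty$, and conditionally on $x_i$ the centred part $A_n^{(i)}-\mathbb{E}[A_n^{(i)}\mid x_i]$ is an i.i.d.\ average whose conditional second moment is $\leq\|\mathcal{W}_2\|_\infty$ and whose conditional $p$-th moment is controlled by $\mathcal{W}_p(x_i)+\|\mathcal{W}_1\|_\infty^{p}$. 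Rosenthal's inequality conditionally on $x_i$, then integration over $x_i\sim\ell$ using $\int\mathcal{W}_p\,\mathrm{d}\ell=\|W\|_{L^{p}}^{p}\leq\|W\|_{L^{\chi}}^{p}<\infty$ for $p\leq\chi$, give $\mathbb{E}\bigl[|A_n^{(i)}-\mathbb{E}[A_n^{(i)}\mid x_i]|^{p}\bigr]\leq C_p\,(n^{-p/2}+n^{-(p-1)})$. A union bound over $i\in[n]$ and Markov's inequality then yield, for fixed $p\in(4,\chi]$, $\mathbb{P}\bigl(\sup_{i\in[n]}A_n^{(i)}>2\|\mathcal{W}_1\|_\infty\bigr)\leq C_p\,(n^{1-p/2}+n^{2-p})$, which is summable, so by Borel--Cantelli $\sup_{i\in[n]}A_n^{(i)}\leq 2\|\mathcal{W}_1\|_\infty$ for all $n$ large, almost surely; since each $A_n^{(i)}$ is a.s.\ finite, this gives \eqref{hyp:bound_W_L1_sum}. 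Running the same argument with $W^{2}$ in place of $W$ — which is precisely where more than square integrability of $W$ is used, since it forces $\|W\|_{L^{2p}}<\infty$ — yields in addition $\sup_{n\geq1}\sup_{i\in[n]}\tfrac1n\sum_{k=1}^{n}W(x_i,x_k)^{2}<\infty$ almost surely, which I will use below.

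For \eqref{hyp:Delta3_to_0}: for $m=2$ (and $m=3$ symmetrically), $\epsilon_{n,T}^{(2,i)}=\int\phi_i\,\mathrm{d}(\ell_n-\ell)$ with $\phi_i(y):=W(x_i,y)\int W(x_i,z)\Upsilon_T(x_i,y,z)\,\ell(\mathrm{d}z)$, so that $|\phi_i(y)|\leq\Lambda_T\|\mathcal{W}_1\|_\infty W(x_i,y)$ and $\phi_i(x_i)=0$; hence $\epsilon_{n,T}^{(2,i)}=\tfrac1n\sum_{k\neq i}\phi_i(x_k)-\int\phi_i\,\mathrm{d}\ell$ is, conditionally on $x_i$ and up to a deterministic $O(1/n)$ bias, a centred i.i.d.\ average, and the Rosenthal + Markov + union + Borel--Cantelli scheme above (with $p\in(4,\chi]$; the $p$-th moments of $\phi_i$ are controlled by $\mathcal{W}_p(x_i)$, $\ell$-integrable) gives $\sup_{i\in[n]}|\epsilon_{n,T}^{(2,i)}|\to0$ almost surely. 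For $m=1$, I would write $\ell_n\otimes\ell_n-\ell\otimes\ell=(\ell_n-\ell)\otimes\ell+\ell\otimes(\ell_n-\ell)+(\ell_n-\ell)\otimes(\ell_n-\ell)$ and test $F_i(y,z):=W(x_i,y)W(x_i,z)\Upsilon_T(x_i,y,z)$ against each piece: the first two produce terms of exactly the form already treated, while the last, $\int F_i\,\mathrm{d}\bigl[(\ell_n-\ell)\otimes(\ell_n-\ell)\bigr]$, splits into the diagonal part $\tfrac1{n^{2}}\sum_k F_i(x_k,x_k)$, bounded by $\tfrac{\Lambda_T}{n}\,\tfrac1n\sum_k W(x_i,x_k)^{2}$ and hence $\to0$ uniformly in $i$ by the $W^{2}$-bound of the previous step, and the off-diagonal part $\tfrac1{n^{2}}\sum_{k\neq l}F_i(x_k,x_l)$; the Hoeffding projections of the latter are again of the $\epsilon^{(2,i)}$/$\epsilon^{(3,i)}$ type, and the leftover canonical order-two $U$-statistic is handled by the $U$-statistic moment inequality with exponent $p\in(4,\chi]$ — its kernel $h_i$ satisfies $\|h_i\|_{L^{p}}^{p}\lesssim\mathcal{W}_p(x_i)^{2}+1$, $\ell$-integrable in $x_i$ precisely because $\|W\|_{L^{2p}}<\infty$ — followed once more by Markov's inequality, a union bound over $i\in[n]$ and Borel--Cantelli. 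This gives $\sup_{i\in[n]}|\epsilon_{n,T}^{(1,i)}|\to0$ almost surely, completing the proof that $W$ is regular along $\mathcal{X}$.

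The step I expect to be the main obstacle is precisely the uniformity over $i\in[n]$ for an unbounded kernel: a plain second-moment (Chebyshev) estimate is not summable after the union bound over the $n$ nodes, which forces the higher-moment route and, with it, the simultaneous constraints $p>4$ and $\|W\|_{L^{2p}}<\infty$ (the latter coming from the quadratic kernels $W(x_i,y)W(x_i,z)$ and from the $W^{2}$-diagonal term); the threshold $\chi>9$ is what this analysis produces, the extra room above the formally optimal value absorbing the non-sharpness of the $U$-statistic moment bounds and of the centring corrections. Note that \eqref{eq:W_nondegenerate} plays no role in this argument, and — in line with Remark~\ref{rem:W_nondegenerate} — the integrability requirement can be weakened when $W$ is bounded.
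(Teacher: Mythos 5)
Your argument is correct, but it takes a genuinely different route from the paper's. The paper proves Proposition~\ref{prop:control_An6} by truncating the kernel at level $M=n^{\delta_{1}}$ ($W_{M}=W\wedge M$), bounding the resulting error on each $\epsilon_{n,T}^{(m,i)}$ via Lemma~\ref{lem:regul_Upsilon} and a Markov-plus-union bound powered by \eqref{hyp:norm_W_chi} (Claims 1--2), and then, for the now bounded kernel, rewriting the quadratic sums as sums $Y_{i,r}^{(M)}$ of bounded martingale increments in the sample index, to which the exponential concentration inequality of Lemma~\ref{lem:concentration_dembo} applies (Claim 3); the exponents $\delta_{1},\delta_{2},\delta_{3}$ are tuned in terms of $\varepsilon_{0}=\chi-9$, Borel--Cantelli concludes, and \eqref{hyp:bound_W_L1_sum} comes from a sixth-moment Markov bound. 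You dispense with truncation: conditioning on $x_{i}$, you treat the linear pieces by Rosenthal's inequality (the paper's sixth-moment computation is essentially the case $p=6$) and the genuinely quadratic piece of $\epsilon_{n,T}^{(1,i)}$ via its Hoeffding decomposition, the diagonal being absorbed by your auxiliary almost-sure bound on $n^{-1}\sum_{k}W(x_{i},x_{k})^{2}$ and the canonical part by a moment inequality for degenerate order-two $U$-statistics, before the same Markov/union/Borel--Cantelli endgame. The truncation buys the paper a proof using only bounded-increment concentration, at the price of the $\delta_{1},\delta_{2},\delta_{3}$ bookkeeping; your route is structurally cleaner and in fact needs less: your constraints ($p>4$ for summability after the union bound over $i\in[n]$, together with $2p\leq\chi$ from the quadratic kernels) only require $\chi>8$, so your closing claim that the analysis ``produces'' $\chi>9$ is not quite accurate --- $\chi>9$ is merely sufficient. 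Two points to make explicit in a full write-up: upgrade ``eventually below a fixed $\varepsilon$'' to almost-sure convergence to $0$ in \eqref{hyp:Delta3_to_0} by taking $\varepsilon=n^{-\gamma}$ for small $\gamma>0$ (the polynomial moment bounds leave room), and apply the $U$-statistic inequality conditionally on $x_{i}$, integrating $\left\Vert h_{i} \right\Vert_{L^{p}}^{p}\lesssim \mathcal{ W}_{ p}(x_{i})^{2}+1$ over $x_{i}$ afterwards, which is harmless since the Rosenthal and de la Pe\~na--Gin\'e constants depend only on $p$.
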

Proposition~\ref{prop:control_An6} is proven in Section~\ref{sec:proof_graphon_regularity_rand}.

\subsection{A class of convergent graphs}
\label{sec:generic_class_convergent_graphs}
We give now examples of microscopic graphs $ \mathcal{ G}^{ (n)}$ constructed as in Definition~\ref{def:regularity_network}, that satisfy the results of Section~\ref{sec:main_results}, in both situations of deterministic (Assumption~\ref{ass:deterministic_positions}) and random positions (Assumption~\ref{ass:random_positions}). The present examples fall into the framework of $W$-random graphs (with possibly unbounded graphons), that is, when the probability field $W_{ n}(x, y)$ is directly constructed upon a predetermined determinisitic kernel $ \mathcal{ P}(x, y)$. A general framework may be given by (see \cite{Borgs:2014aa,10113716M1075831} for similar definitions)
\begin{definition}[Generic random graph with graphon $ \mathcal{ P}$]
For fixed $(I, \ell)$, $n\geq1$ and a given positive measurable kernel $(x, y) \mapsto \mathcal{ P}(x, y)$ on $I^{ 2}$, we define
\begin{equation}
\label{eq:generic_Wn}
W_{ n}(x, y) := \rho_{ n} \min \left(\frac{ 1}{ \rho_{ n}}, \mathcal{ P}(x, y)\right),
\end{equation}
where $\rho_{ n}\in[0, 1]$.
\end{definition}
One important aim of \cite{LOVASZ2006933,MR2455626,MR2925382,Borgs:2014aa,borgs2018} (and references therein) is precisely to prove that, under various hypotheses, $ \mathcal{ G}^{ (n)}$ converges to $ \mathcal{ P}$. In this context, one generally distinguishes between \emph{bounded graphons} $ \mathcal{ P}$ \cite{LOVASZ2006933,MR2455626,MR2925382} and \emph{unbounded graphons} (a typical hypothesis being that $ \mathcal{ P}\in L^{ p}(I^{ 2})$ for some $p\geq1$ see \cite{Borgs:2014aa,borgs2018} and references therein). Note that when the graphon $ \mathcal{ P}$ is bounded (and up to the change $ \rho_{ n} \leftrightarrow \frac{ \rho_{ n}}{ \left\Vert \mathcal{ P} \right\Vert_{ \infty}} $, one can always suppose that $ \left\Vert \mathcal{ P} \right\Vert_{ \infty}=1$), \eqref{eq:generic_Wn} boils down to
\begin{equation}
\label{eq:generic_Wn_P_bounded}
W_{ n}(x, y) = \rho_{ n} \mathcal{ P}(x, y),\ x,y\in I.
\end{equation}
When $ \rho_{ n}=1$, we are dealing with dense graphs, whereas in the case $ \rho_{ n} \xrightarrow[ n\to\infty]{}0$, we consider diluted graphs. A very simple particular case of \eqref{eq:generic_Wn_P_bounded} corresponds to $ \mathcal{ P}(x, y)\equiv 1$ which boils to a (possibly diluted) homogeneous Erd\H os-R\'enyi random graph, already studied in \cite{MR3568168,2018arXiv180710921C}. Thus, one has to think of \eqref{eq:generic_Wn_P_bounded} as an inhomogeneous version of the Erd\H os-R\'enyi case. When $ \mathcal{ P}$ is not bounded, one usually assumes in \eqref{eq:generic_Wn} that $ \rho_{ n}\to 0$ and $ n \rho_{ n}\to \infty$, as $n\to\infty$.

Here the distinction is not really on the boundedness of $ \mathcal{ P}$ in \eqref{eq:generic_Wn}, but rather between $\sup_{ x} \int \mathcal{ P}(x, y) \ell({\rm d}y)<+\infty$ and $\sup_{ x} \int \mathcal{ P}(x, y) \ell({\rm d}y)=+\infty$. In the first case, there is uniform control on the asymptotic degree of each node in the graph $ \mathcal{ G}^{ (n)}$ whereas in the second, $ \mathcal{ G}^{ (n)}$ has nodes with diverging degree as $n\to\infty$. We treat these two cases in Section~\ref{sec:dense_bounded_graphons} and~\ref{sec:convergent_graph_singular} below.
\subsection{Convergent graphs: the case of graphons with uniformly bounded degrees}
\label{sec:dense_bounded_graphons}
We assume in this paragraph that $W_{ n}$ is given by \eqref{eq:generic_Wn} for
\begin{equation}
\label{eq:P_int_unif_bounded}
\sup_{ x\in I} \int \mathcal{ P}(x, y) \ell({\rm d}y)<+\infty.
\end{equation}
Here, we adopt a renormalization that is uniform on the nodes $i\in \left[n\right]$: set
\begin{equation}
\label{eq:kappan_uniform_P}
\kappa_{i}^{ (n)}= \kappa_{ n}:=\frac{ 1}{ \rho_{ n}},\ i=1, \ldots, n \text{ and }w_{ n}= \rho_{ n},
\end{equation}
satisfying
\begin{equation}
\label{eq:cond_dilution_ER_P}
\kappa_{ n}=\frac{ 1}{ \rho_{ n}} = o \left( \frac{ n}{ \log(n)}\right), \text{ as } n\to\infty.
\end{equation}
In this case, the appropriate limit for $(\mathcal{ G}^{ (n)}, \kappa_{ n})$ is simply given by $W:= \mathcal{ P}$ itself. The verification of the hypotheses of Section~\ref{sec:general_assumptions_nonlin} require that 
\begin{equation}
\label{hyp:int_P_bounded}
\sup_{ z\in I} \int \mathcal{ P}(z, y)^{ 2}\ell({\rm d}y)<+\infty \text{ and } \inf_{ z\in I} \int \mathcal{ P}(z, y) \ell({\rm d}y)>0.
\end{equation} We address now the question of the convergence of $(\mathcal{ G}^{ (n)}, \kappa_{ n})$ to $ \mathcal{ P}$ (Definition~\ref{def:convergence_graph}) as well as the regularity of the model (Definition~\ref{def:graphon_regular}) in both cases of deterministic (Assumption~\ref{ass:deterministic_positions}) and random positions (Assumption~\ref{ass:random_positions}).
\subsubsection{Convergence of the graph $( \mathcal{ G}^{ (n)}, \kappa^{ (n)})$:}
Note that \eqref{hyp:alphan_VS_an}, \eqref{hyp:def_w_n_infty} and \eqref{hyp:compare_wn_alphan} are trivially verified here, as well as \eqref{hyp:alpha_n_infty}, by \eqref{eq:cond_dilution_ER_P}. This dilution condition was already noticed in \cite{MR3568168} in the Erd\H os-R\'enyi case $ \mathcal{ P}\equiv 1$: the microscopic graphs that are relevant for the present work have an averaged degree larger than $\log(n)$. The only point that we need to check is \eqref{hyp:Delta1_to_0}. Note first that when $ \mathcal{ P}$ is bounded, \eqref{hyp:Delta1_to_0} is immediately verified, since we have $ \delta_{ n}( \underline{ x})\equiv0$ for all $n$. This can be slightly generalized into the following sufficient condition:
\begin{proposition}
\label{prop:crit_graph_P_bounded}
Suppose that the sequence of positions $\mathcal{ X}$ and the kernel $ \mathcal{ P}$ are such that for some $ \epsilon\in\left[0, \frac{ 1}{ 2}\right)$ and $C>0$, for all $n\geq1$ sufficiently large,
\begin{equation}
\label{hyp:control_cP}
\sup_{ i,j\in \left[n\right]} \mathcal{ P}(x_{ i}, x_{ j})\leq C n^{ \epsilon},
\end{equation}
Choose $ \delta$ such that $ \epsilon< \delta < \frac{ 1}{ 2}$ and define
\begin{equation}
\label{hyp:rho_n_P}
\rho_{ n}:= n^{ - \delta}.
\end{equation}
Then, $( \mathcal{ G}^{ (n)}, \kappa^{ (n)})$ given by \eqref{eq:generic_Wn} and \eqref{eq:kappan_uniform_P} (with $ \rho_{ n}$ given by \eqref{hyp:rho_n_P}) converges to $ \mathcal{ P}$ in the sense of Definition~\ref{def:convergence_graph}.
\end{proposition}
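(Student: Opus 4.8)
The plan is to verify, one at a time, the five conditions \eqref{hyp:alphan_VS_an}, \eqref{hyp:def_w_n_infty}, \eqref{hyp:compare_wn_alphan}, \eqref{hyp:alpha_n_infty} and \eqref{hyp:Delta1_to_0} entering Definition~\ref{def:convergence_graph}, with macroscopic kernel $W:=\mathcal{ P}$, uniform dilution $ \kappa_{ i}^{ (n)}= \kappa_{ n}= \rho_{ n}^{ -1}= n^{ \delta}$, and $ w_{ n}:=1$ (recall from \eqref{eq:generic_Wn} that $W_{ n}$ already takes values in $[0,1]$, so $ w_{ n}=1$ is admissible; if one additionally knows $ \mathcal{ P}\leq1$ one may keep $ w_{ n}= \rho_{ n}$ as in \eqref{eq:kappan_uniform_P}). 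The first three are then immediate: $ \kappa_{ \infty}^{ (n)}( \underline{ x})= \kappa_{ n}$ by construction, $ \max_{ i,j}W_{ n}(x_{ i},x_{ j})\leq1=w_{ n}$, and $ \kappa_{ n}^{ -1}= \rho_{ n}=n^{ - \delta}\leq1=w_{ n}$. For \eqref{hyp:alpha_n_infty} one computes $ \kappa_{ n}^{ 2}w_{ n}=n^{ 2 \delta}$, which is $o(n/\log n)$ precisely because $ \delta<\tfrac12$; this is where the upper bound on $ \delta$ in \eqref{hyp:rho_n_P} is used.

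The only substantial point is \eqref{hyp:Delta1_to_0}. Since $ \kappa_{ i}^{ (n)}= \rho_{ n}^{ -1}$ and $W_{ n}(x_{ i},x_{ k})= \rho_{ n}\min\bigl( \rho_{ n}^{ -1}, \mathcal{ P}(x_{ i},x_{ k})\bigr)$, one has $ \kappa_{ i}^{ (n)}W_{ n}(x_{ i},x_{ k})= \min\bigl( \rho_{ n}^{ -1}, \mathcal{ P}(x_{ i},x_{ k})\bigr)$, so that, using $W= \mathcal{ P}\geq\min( \rho_{ n}^{ -1}, \mathcal{ P})$,
\begin{equation*}
\delta_{ n}( \underline{ x})= \sup_{ i\in \left[n\right]} \frac{ 1}{ n}\sum_{ k=1}^{ n} \bigl( \mathcal{ P}(x_{ i},x_{ k}) - \rho_{ n}^{ -1}\bigr)_{ +}.
\end{equation*}
Now invoke \eqref{hyp:control_cP}: for $n$ large enough $ \mathcal{ P}(x_{ i},x_{ j})\leq Cn^{ \epsilon}$ for all $i,j\in\left[n\right]$, whereas $ \rho_{ n}^{ -1}=n^{ \delta}$ with $ \delta> \epsilon$, hence $Cn^{ \epsilon}<n^{ \delta}$ for all $n$ sufficiently large. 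Thus the truncation in \eqref{eq:generic_Wn} is inactive at the positions, every summand above vanishes, and $ \delta_{ n}( \underline{ x})=0$ for all large $n$, which a fortiori gives \eqref{hyp:Delta1_to_0} and completes the verification of Definition~\ref{def:convergence_graph}.

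In truth there is no real obstacle: the statement is a direct verification. The two things to watch are (i) choosing $ w_{ n}$ (equivalently, the uniform upper bound on the probability field) consistently with the dilution requirement \eqref{hyp:alpha_n_infty}, which is exactly what forces $ \delta<\tfrac12$ in \eqref{hyp:rho_n_P}; and (ii) comparing the two polynomial rates $n^{ \epsilon}$ (bounding $ \mathcal{ P}$ along the positions) and $n^{ \delta}= \rho_{ n}^{ -1}$ (the truncation level), which makes the clipping in \eqref{eq:generic_Wn} irrelevant for large $n$ and kills $ \delta_{ n}( \underline{ x})$ outright. All remaining ingredients are identical to the (diluted) Erd\H os-R\'enyi discussion immediately preceding the statement.
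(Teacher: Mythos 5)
Your proof is correct and follows essentially the same route as the paper: the only substantive point is \eqref{hyp:Delta1_to_0}, which you settle exactly as the paper does, by observing that \eqref{hyp:control_cP} and $\delta>\epsilon$ make the truncation in \eqref{eq:generic_Wn} inactive for large $n$, so that $\kappa_{i}^{(n)}W_{n}(x_{i},x_{k})=\mathcal{P}(x_{i},x_{k})$ and $\delta_{n}(\underline{x})=0$. The only (harmless) deviation is your choice $w_{n}=1$ in place of the paper's $w_{n}=\rho_{n}$ from \eqref{eq:kappan_uniform_P}, which is admissible, makes \eqref{hyp:def_w_n_infty} hold even when $\mathcal{P}$ is unbounded along the positions, and correctly identifies $\delta<\tfrac12$ as what is needed for \eqref{hyp:alpha_n_infty}.
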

\begin{proof}[Proof of Proposition~\ref{prop:crit_graph_P_bounded}]
It suffices to note that by \eqref{hyp:control_cP} and \eqref{hyp:rho_n_P}, we have, for $n$ sufficiently large, $ \min \left(\frac{ 1}{ \rho_{ n}}, \mathcal{ P}(x_{ i}, x_{ k})\right) = \mathcal{ P}(x_{ i}, x_{ k})$ for all $i, k\in \left[n\right]$. Hence, $ \delta_{ n}(\underline{ x})=0$ for such $n$. 
\end{proof}
\subsubsection{Regular kernels for deterministic positions (Assumption~\ref{ass:deterministic_positions}):} if $ \mathcal{ P}$ is bounded, having in mind Proposition~\ref{prop:control_det_Delta3}, a simple sufficient condition is $W= \mathcal{ P}$ $ \beta$-H\"older for some $ \beta\in (0, 1]$: for such $ \mathcal{ P}$,  \eqref{hyp:Int_W_Lip} holds for $ \iota_{ 2}= \beta$ and \eqref{hyp:delta_n_W} is straightforward. The supplementary regularity condition \eqref{hyp:Int_Wn_W_2} required for Theorem~\ref{theo:conv_profile} is also valid. Interesting examples include $ \mathcal{ P}(x, y)= 1- \max (x, y)$ or $ \mathcal{ P}(x,y)= 1- xy$ which are encountered in the context of dense inhomogeneous graphs (see \cite{borgs2018,MR2825531} for many interesting examples). However, note that the hypotheses \eqref{hyp:Int_W_Lip}, \eqref{hyp:delta_n_W} and \eqref{hyp:Int_Wn_W_2} are sufficiently general to capture some interesting cases where $ \mathcal{ P}$ is not continuous: the $P$-nearest neighbor model \cite{LucSta2014} corresponds to $ \rho_{ n}=1$ and
$\mathcal{ P}(x, y)= \mathbf{ 1}_{ \left\vert x-y \right\vert\leq R},\ x,y\in I$, for some $R\in(0, 1]$. It is immediate to see that \eqref{hyp:Int_W_Lip} is true for $ \iota_{ 2}= \frac{ 1}{ 2}$, that $s_{ n}(W) = O \left(n^{ -1}\right)$ in \eqref{hyp:delta_n_W} and that \eqref{hyp:Int_Wn_W_2} holds. Another interesting case of unbounded kernel $ \mathcal{ P}$ (which still satisfies \eqref{eq:P_int_unif_bounded}) is $ \mathcal{ P}(x, y):= \frac{ 1}{ \left\vert x-y \right\vert^{ \alpha}}$ on $I=[0,1]$ (already considered in \cite{LucSta2014}). This enters into the present framework for $0< \alpha< \frac{ 1}{ 2}$. 
\medskip

In order to compare with Section~\ref{sec:convergent_graph_singular}, we end this paragraph with the following example:
\begin{example}[\cite{10113716M1075831}]
\label{ex:unbounded_graphon_ex1}
Let $( \mathcal{ G}^{ (n)}, \kappa^{ (n)})$ be given by \eqref{eq:generic_Wn} where
\begin{equation}
\label{eq:cP_y_alpha}
\mathcal{ P}(x, y) := (1- \alpha)y^{ - \alpha},\ \alpha\in \left[0, \frac{ 1}{ 2}\right),\ x, y \in I=[0,1],
\end{equation}
and $ \kappa_{i}^{ (n)}$ is given by \eqref{eq:kappan_uniform_P} and $ \rho_{ n}= n^{ - \delta}$ for some $ \alpha< \delta < \frac{ 1}{ 2}$. Then, $( \mathcal{ G}^{ (n)}, \kappa_{ n})$ is regular and convergent to $ \mathcal{ P}$.
\end{example}
Indeed a rough bound gives that $\sup_{ i, j\in \left[n\right]} \mathcal{ P}(x_{ i}, x_{ j}) \leq (1- \alpha) n^{\alpha}$, so that \eqref{hyp:control_cP} holds for $ \epsilon= \alpha$. Moreover, 
\begin{align*}
s_{ n}(W)(\underline{ x})
&= (1- \alpha) \left(\int_{0}^{1}y^{ - \alpha}{\rm d}y  - \frac{ 1}{ n^{ 1- \alpha}}\sum_{ k=1}^{ n} k^{ - \alpha}\right)= O \left(\frac{ 1}{ n^{ 1-\alpha}} \right).
\end{align*}
which proves \eqref{hyp:delta_n_W} since $ \alpha< 1/2$. In a same way, \eqref{hyp:Int_Wn_W_2} is true. Inequality \eqref{hyp:Int_W_Lip} (for $ \iota_{ 2}=1$) is trivial, so that Definition~\ref{def:graphon_regular} is verified.
\subsubsection{Regular kernels for random positions (Assumption~\ref{ass:random_positions}):} in the case of random positions, in addition to \eqref{hyp:int_P_bounded}, we need to verify Proposition~\ref{prop:control_An6} and Proposition~\ref{prop:crit_graph_P_bounded}. It turns out that condition \eqref{hyp:norm_W_chi} is sufficient for both: indeed, fix $ \mathcal{ P}\in L^{ \chi}(I^{ 2})$ with $ \chi>9$ and let $ \epsilon\in \left(\frac{ 3}{ \chi}, \frac{ 1}{ 2}\right)$. For any $i, j\in [n]$, 
\begin{align*}
\mathbb{ P} \left(\left\vert \mathcal{ P}(x_{ i}, x_{ j})\right\vert > n^{ \epsilon}\right)&\leq \frac{ \mathbb{ E} \left[ \left\vert \mathcal{ P}(x_{ i}, x_{ j}) \right\vert^{ \chi}\right]}{ n^{ \epsilon \chi}}= \frac{ \left\Vert \mathcal{ P} \right\Vert_{ \chi}^{ \chi}}{ n^{ \epsilon \chi}}
\end{align*}
A rough union bound on $i, j\in [n]$ gives
\begin{align*}
\mathbb{ P} \left(\sup_{ i,j\in \left[n\right]}\left\vert \mathcal{ P}(x_{ i}, x_{ j})\right\vert > n^{ \epsilon}\right)&\leq \frac{ \left\Vert \mathcal{ P} \right\Vert_{ \chi}^{ \chi}}{ n^{ \epsilon \chi-2}}.
\end{align*}
Since $ \epsilon \chi -2>1$, by Borel-Cantelli Lemma, we have almost surely that $\sup_{ i,j\in \left[n\right]}\left\vert \mathcal{ P}(x_{ i}, x_{ j})\right\vert \leq n^{ \epsilon}$ for $n$ sufficiently large. In particular, any bounded measurable weight $ \mathcal{ P}$ in $L^{ \infty}([0, 1]^{ 2})$ satisfy the hypotheses. Among interesting examples which have not been addressed so far, one can highlight the case of kernels with values in $\{0, 1\}$ (with $ \rho_{ n}=1$). This case corresponds to deterministic graphs (see \cite{MR3238494}, \S~4).

\subsection{Convergent graphs: the case of graphons with diverging degrees}
\label{sec:convergent_graph_singular}
We consider here $W_{ n}$ given by \eqref{eq:generic_Wn} for $ \mathcal{ P}$ satisfying 
\begin{equation}
\label{hyp:Int_P_unbounded}
\forall x\in I, \int \mathcal{ P}(x, y)^{ 2} \ell({\rm d}y)<+\infty \text{ and } \mathcal{ P}_{ \ast}:=\inf_{ z\in I} \int \mathcal{ P}(z, y) \ell({\rm d}y)>0.
\end{equation}
The point of the paragraph is to discuss the consequences of having possibly
\begin{equation}
\sup_{ x\in I}\int \mathcal{ P}(x, y) \ell({\rm d}y)=+\infty.
\end{equation}
Here, the uniform renormalization \eqref{eq:kappan_uniform_P} is no longer adapted: we consider instead
\begin{equation}
\label{hyp:dilution_alpha_ni_cP}
\kappa_{i}^{ (n)} = \frac{ n}{\rho_{ n} \sum_{ j=1}^{ n} \min \left(\frac{ 1}{ \rho_{ n}}, \mathcal{ P}(x_{ i}, x_{ j})\right)},\ i\in \left[n\right].
\end{equation}
This corresponds to renormalizing the interaction in \eqref{eq:odegene} by the averaged degree $\sum_{ j} W_{ n}(x_{ i}, x_{ j})$ of each vertex. Here, the correct choice for the macroscopic interaction kernel $W$ is (\cite{10113716M1075831})
\begin{equation}
\label{eq:def_W_VS_cP}
W(x, y):= \frac{ \mathcal{ P}(x, y)}{\int \mathcal{ P}(x, z) \ell({\rm d} z)},\ x, y\in I.
\end{equation}
\begin{remark}
By construction, $\sup_{ x\in I} \int W(x, y) \ell({\rm d}y)=1$. Verifying the technical second moment \eqref{hyp:bound_W_L1} requires to have $ \int \frac{ \mathcal{ P}(x, y)^{ 2}}{ \left(\int \mathcal{ P}(x, z) \ell({\rm d}z)\right)^{ 2}} \ell({\rm d}x)<\infty$, which is in particular immediate for Example~\ref{ex:unbounded_graphon_ex2} below.
Of course, it is possible to apply the renormalization \eqref{hyp:dilution_alpha_ni_cP} to the previous case of Section~\ref{sec:dense_bounded_graphons}. However, when $ \mathcal{ P}$ is bounded, the two renormalizations \eqref{eq:kappan_uniform_P} and \eqref{hyp:dilution_alpha_ni_cP} lead to slightly different macroscopic models: in \eqref{eq:def_W_VS_cP}, $W$ is renormalized by the  factor $\int \mathcal{ P}(x, z) \ell({\rm d} z)$, which is not natural in the bounded case. 
\end{remark}
\subsubsection{On the convergence of the graph $( \mathcal{ G}^{ (n)}, \kappa^{ (n)})$:} the following result is the counterpart of Proposition~\ref{prop:crit_graph_P_bounded}:
\begin{proposition}
\label{prop:conv_delta_n_cP}
Suppose that the sequence of positions $\mathcal{ X}$ and the kernel $ \mathcal{ P}$ are such that for some $ \epsilon\in\left[0, \frac{ 1}{ 2}\right)$ and $C>0$, for all $n\geq1$ sufficiently large, estimate \eqref{hyp:control_cP} holds. Choose also $ \delta$ such that $ \epsilon< \delta < \frac{ 1}{ 2}$ and define
$\rho_{ n}:= n^{ - \delta}$ as in \eqref{hyp:rho_n_P}. Suppose
\begin{equation}
\label{hyp:uniform_conv_Sn}
\sup_{ i=1, \ldots, n} \left\vert  \frac{ 1}{ n} \sum_{ j=1}^{ n} \mathcal{ P}(x_{ i}, x_{ j}) - \int \mathcal{ P}(x_{ i}, z) \ell({\rm d}z)\right\vert\to 0,\text{ as }n\to\infty.
\end{equation}
Then, $( \mathcal{ G}^{ (n)}, \kappa^{ (n)})$ given by \eqref{eq:generic_Wn} and \eqref{hyp:dilution_alpha_ni_cP} converges to $W$ given by \eqref{eq:def_W_VS_cP} in the sense of Definition~\ref{def:convergence_graph}, for the choice of $ \kappa_{ n}:= \frac{ 2n^{ \delta}}{ \mathcal{ P}_{ \ast}}$ and $w_{ n}:=1$.
\end{proposition}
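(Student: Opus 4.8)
The plan is to verify, one by one, the five conditions entering Definition~\ref{def:convergence_graph}, the decisive preliminary remark being that for $n$ large the truncation in \eqref{eq:generic_Wn} is inactive. Indeed, \eqref{hyp:control_cP} gives $\sup_{i,j\in[n]}\mathcal{P}(x_i,x_j)\leq Cn^{\epsilon}$, while $1/\rho_n=n^{\delta}$ with $\delta>\epsilon$; hence for $n$ large $\min(1/\rho_n,\mathcal{P}(x_i,x_j))=\mathcal{P}(x_i,x_j)$ for every $i,j\in[n]$, so that $W_n(x_i,x_j)=\rho_n\mathcal{P}(x_i,x_j)$ and, abbreviating $S_n(x):=\frac1n\sum_{j=1}^n\mathcal{P}(x,x_j)$ and $S(x):=\int\mathcal{P}(x,z)\ell({\rm d}z)$, the dilution parameter collapses to $\kappa_i^{(n)}=\frac1{\rho_n S_n(x_i)}$.

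I would then combine \eqref{hyp:uniform_conv_Sn} with the lower bound $\inf_x S(x)\geq\mathcal{P}_\ast>0$ from \eqref{hyp:Int_P_unbounded} to obtain $S_n(x_i)\geq\mathcal{P}_\ast/2$ uniformly in $i\in[n]$, for $n$ large. Condition \eqref{hyp:alphan_VS_an} follows at once: $\kappa_\infty^{(n)}(\underline x)=\max_i(\rho_n S_n(x_i))^{-1}\leq 2(\rho_n\mathcal{P}_\ast)^{-1}=2n^{\delta}/\mathcal{P}_\ast=\kappa_n$. Condition \eqref{hyp:def_w_n_infty} with $w_n=1$ is immediate from $W_n(x,y)\leq\rho_n\cdot\rho_n^{-1}=1$; \eqref{hyp:compare_wn_alphan} holds since $w_n=1$ and $1/\kappa_n=\mathcal{P}_\ast/(2n^{\delta})\leq1$ for $n$ large; and \eqref{hyp:alpha_n_infty} becomes $\kappa_n^2 w_n=4n^{2\delta}/\mathcal{P}_\ast^2=o(n/\log n)$, which is exactly where the restriction $\delta<\frac12$ is used.

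Finally, for the main convergence \eqref{hyp:Delta1_to_0}, the simplifications above give, for $n$ large and any $i\in[n]$,
\[
\frac1n\sum_{k=1}^n\bigl|\kappa_i^{(n)}W_n(x_i,x_k)-W(x_i,x_k)\bigr|
=\Bigl|\frac1{S_n(x_i)}-\frac1{S(x_i)}\Bigr|\,\frac1n\sum_{k=1}^n\mathcal{P}(x_i,x_k)
=\frac{|S(x_i)-S_n(x_i)|}{S(x_i)},
\]
since $\frac1n\sum_k\mathcal{P}(x_i,x_k)=S_n(x_i)$ cancels one of the denominators. Bounding $S(x_i)\geq\mathcal{P}_\ast$ and taking $\sup_i$ yields $\delta_n(\underline x)\leq\mathcal{P}_\ast^{-1}\sup_{i\in[n]}|S(x_i)-S_n(x_i)|\xrightarrow[n\to\infty]{}0$ by \eqref{hyp:uniform_conv_Sn}, which is precisely \eqref{hyp:Delta1_to_0}.

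The computation is essentially algebraic once the exact cancellation in the last display is spotted, so there is no genuine obstacle here; the only delicate point is to arrange that every intermediate estimate — ``the $\min$ is inactive'', ``$S_n\geq\mathcal{P}_\ast/2$'' — holds \emph{uniformly} in $i\in[n]$ for all large $n$, which is exactly the role played by the uniform hypotheses \eqref{hyp:control_cP} and \eqref{hyp:uniform_conv_Sn}.
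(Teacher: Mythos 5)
Your proposal is correct and follows essentially the same route as the paper: note the truncation in \eqref{eq:generic_Wn} is inactive for large $n$ thanks to \eqref{hyp:control_cP} and $\delta>\epsilon$, use \eqref{hyp:uniform_conv_Sn} and $\mathcal{P}_\ast>0$ to bound $S_n(x_i)\geq\mathcal{P}_\ast/2$ uniformly (giving \eqref{hyp:alphan_VS_an} with $\kappa_n=2n^\delta/\mathcal{P}_\ast$, and \eqref{hyp:alpha_n_infty} from $\delta<\tfrac12$), and reduce $\delta_n(\underline{x})$ to $\sup_i\vert S_n(x_i)-S(x_i)\vert/S(x_i)$, exactly the cancellation used in the paper. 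No gaps.
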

\begin{proof}[Proof of Proposition~\ref{prop:conv_delta_n_cP}]
By \eqref{hyp:control_cP} and \eqref{hyp:rho_n_P}, we have, for $n$ sufficiently large, $ \min \left(\frac{ 1}{ \rho_{ n}}, \mathcal{ P}(x_{ i}, x_{ k})\right) = \mathcal{ P}(x_{ i}, x_{ k})$ for all $i, k\in \left[n\right]$. Hence, for such $n$, for $i\in \left[n\right]$, $\kappa_{i}^{ (n)} = \frac{ n^{\delta}}{ \frac{ 1}{ n}\sum_{ j=1}^{ n} \mathcal{ P}(x_{ i}, x_{ j})}$. Using \eqref{hyp:uniform_conv_Sn}, we have $\inf_{i\in \left[n\right]} \frac{ 1}{ n} \sum_{ j=1}^{ n} \mathcal{ P}(x_{ i}, x_{ j})\geq \frac{ 1}{ 2}\int \mathcal{ P}(x_{ i}, y) \ell({\rm d}y)\geq \frac{ \mathcal{ P}_{ \ast}}{ 2}$, for $n$ sufficiently large. This proves \eqref{hyp:alphan_VS_an} for $\kappa_{ n}:= \frac{ 2n^{ \delta}}{ \mathcal{ P}_{ \ast}}$.  Assumption \eqref{hyp:compare_wn_alphan} is trivial for $w_{ n}=1$ and the dilution condition \eqref{hyp:alpha_n_infty} holds since $ \delta< \frac{ 1}{ 2}$. It remains to check \eqref{hyp:Delta1_to_0}: denote by
\begin{equation*}
S_{ n}(x):=\frac{ 1}{ n}\sum_{ j=1}^{ n} \mathcal{ P}(x, x_{ j}),\ \text{ and } S(x):=  \int \mathcal{ P}(x, z) \ell({\rm d} z),\ x\in I.
\end{equation*}
With these notations, $\frac{ 1}{ n} \sum_{ k=1}^{ n}\left\vert \kappa_{i}^{ (n)}W_{ n}(x_{ i}, x_{ k}) - W(x_{ i}, x_{ k})\right\vert = \frac{ \left\vert S_{ n}(x_{ i})  - S(x_{ i})\right\vert}{S(x_{ i})}$ and the result follows immediately from \eqref{hyp:Int_P_unbounded} and \eqref{hyp:uniform_conv_Sn}.
\end{proof}
\subsubsection{Regular kernels for random positions (Assumption~\ref{ass:random_positions}):} as for Section~\ref{sec:dense_bounded_graphons}, regularity and convergence holds under sufficient integrability of the kernel $ \mathcal{ P}$:
\begin{proposition}
\label{prop:conv_singular_random}
Suppose that Assumption~\ref{ass:random_positions} holds. For any $ \mathcal{ P}\in L^{ \chi}(I^{ 2})$ with $ \chi>9$ which verifies \eqref{hyp:Int_P_unbounded}, the following is true: for almost realizations of the sequence $ \mathcal{ X}$, $( \mathcal{ G}^{ (n)}, \kappa^{ (n)})$ defined by \eqref{eq:generic_Wn} and \eqref{hyp:dilution_alpha_ni_cP} (with $ \rho_{ n}= n^{ - \delta}$, $ \frac{ 3}{ \chi}< \delta < \frac{ 1}{ 2}$) converges to $W$ given by \eqref{eq:def_W_VS_cP} in the sense of Definition~\ref{def:convergence_graph}, (for the choice of $\kappa_{ n}= \frac{ 2n^{ \delta}}{ \mathcal{ P}_{ \ast}}$ and $w_{ n}=1$) and Definition~\ref{def:graphon_regular} is verified.
\end{proposition}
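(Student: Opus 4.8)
The plan is to deduce the statement from the two deterministic results already at our disposal, Proposition~\ref{prop:conv_delta_n_cP} (convergence of the graph) and Proposition~\ref{prop:control_An6} (regularity), by verifying that their hypotheses hold for $\ell$-almost every realization of $\mathcal{X}$. Regularity is the easy half: since $\mathcal{P}_\ast=\inf_{z}\int\mathcal{P}(z,y)\ell(dy)>0$ by \eqref{hyp:Int_P_unbounded}, one has the pointwise bound $W(x,y)=\mathcal{P}(x,y)/\int\mathcal{P}(x,z)\ell(dz)\leq\mathcal{P}(x,y)/\mathcal{P}_\ast$, whence $\left\Vert W\right\Vert_{L^\chi}\leq\left\Vert\mathcal{P}\right\Vert_{L^\chi}/\mathcal{P}_\ast<\infty$, so that \eqref{hyp:norm_W_chi} holds; combined with $\chi>9$ and the standing assumptions of Section~\ref{sec:general_assumptions_nonlin} on $W$ (in particular \eqref{hyp:bound_W_L1}, cf. the Remark following \eqref{eq:def_W_VS_cP}), Proposition~\ref{prop:control_An6} gives Definition~\ref{def:graphon_regular} for $\ell$-a.e. $\mathcal{X}$.

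For the convergence part, the plan is to freeze a realization of $\mathcal{X}$ in a full-measure event on which the two deterministic assumptions \eqref{hyp:control_cP} (for a suitable exponent $\epsilon$) and \eqref{hyp:uniform_conv_Sn} hold, and then invoke Proposition~\ref{prop:conv_delta_n_cP} verbatim. The bound \eqref{hyp:control_cP} is obtained exactly as in the random-position discussion of Section~\ref{sec:dense_bounded_graphons}: fix $\epsilon\in(3/\chi,\delta)$ (nonempty since $\delta>3/\chi$); Markov's inequality gives $\mathbb{P}(\mathcal{P}(x_i,x_j)>n^\epsilon)\leq\left\Vert\mathcal{P}\right\Vert_{L^\chi}^\chi n^{-\epsilon\chi}$, a crude union bound over $i,j\in[n]$ yields $\mathbb{P}(\sup_{i,j}\mathcal{P}(x_i,x_j)>n^\epsilon)\leq\left\Vert\mathcal{P}\right\Vert_{L^\chi}^\chi n^{2-\epsilon\chi}$, and since $\epsilon\chi-2>1$ the Borel--Cantelli lemma gives $\sup_{i,j\in[n]}\mathcal{P}(x_i,x_j)\leq n^\epsilon$ for $n$ large, almost surely. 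As $\epsilon<\delta<\frac{1}{2}$, this is exactly what Proposition~\ref{prop:conv_delta_n_cP} requires (with $\rho_n=n^{-\delta}$), once \eqref{hyp:uniform_conv_Sn} is established.

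The core of the argument is the almost-sure uniform law of large numbers \eqref{hyp:uniform_conv_Sn}, namely $\sup_{i\in[n]}|S_n(x_i)-S(x_i)|\to0$ with $S_n(x)=\frac1n\sum_{j=1}^n\mathcal{P}(x,x_j)$ and $S(x)=\int\mathcal{P}(x,z)\ell(dz)$. I would proceed by a union bound combined with a high-order moment estimate: by exchangeability it suffices that $n\,\mathbb{P}(|S_n(x_1)-S(x_1)|>\eta)$ be summable in $n$ for each fixed $\eta>0$. Writing $S_n(x_1)-S(x_1)=\frac1n\sum_{j=2}^n(\mathcal{P}(x_1,x_j)-S(x_1))-S(x_1)/n$ (the diagonal term vanishes since $\mathcal{P}$ is zero on the diagonal), the contribution of $S(x_1)/n$ is harmless because $\mathbb{E}[S(x_1)^\chi]\leq\mathbb{E}[\int\mathcal{P}(x_1,z)^\chi\ell(dz)]=\left\Vert\mathcal{P}\right\Vert_{L^\chi}^\chi<\infty$ by Jensen's inequality. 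For the centered sum, conditionally on $x_1$ the summands are i.i.d. and centered, and Rosenthal's inequality yields $\mathbb{E}[|\frac1n\sum_{j=2}^n(\mathcal{P}(x_1,x_j)-S(x_1))|^\chi\mid x_1]\leq C_\chi(n^{-\chi/2}(\int\mathcal{P}(x_1,z)^2\ell(dz))^{\chi/2}+n^{1-\chi}\int\mathcal{P}(x_1,z)^\chi\ell(dz))$; taking expectations and applying Jensen once more to bound $\mathbb{E}[(\int\mathcal{P}(x_1,z)^2\ell(dz))^{\chi/2}]\leq\left\Vert\mathcal{P}\right\Vert_{L^\chi}^\chi$, one obtains $\mathbb{E}[|S_n(x_1)-S(x_1)|^\chi]=O(n^{-\chi/2})$, hence $n\,\mathbb{P}(|S_n(x_1)-S(x_1)|>\eta)=O(n^{1-\chi/2})$, which is summable as $\chi>9>4$. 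Borel--Cantelli along a sequence $\eta\downarrow0$ then gives \eqref{hyp:uniform_conv_Sn}. On the intersection of these full-measure events, Proposition~\ref{prop:conv_delta_n_cP} (applied to the frozen realization, with $\kappa_n=2n^\delta/\mathcal{P}_\ast$ and $w_n=1$) yields convergence of $(\mathcal{G}^{(n)},\kappa^{(n)})$ to $W$ in the sense of Definition~\ref{def:convergence_graph}, completing the proof.

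The main obstacle is precisely this uniform-in-$i$ quantitative law of large numbers: the delicate point is that the moments $\int\mathcal{P}(x_i,z)^p\ell(dz)$ are not bounded uniformly in $x_i$ (this is exactly the diverging-degree feature under study), so a deterministic union bound is unavailable, and one must instead exploit the global integrability $\mathcal{P}\in L^\chi(I^2)$, channelled through Jensen's inequality, to keep the relevant expected moments finite and the union bound summable.
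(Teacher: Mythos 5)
Your proposal is correct and follows essentially the same route as the paper: reduce to Propositions~\ref{prop:control_An6} and~\ref{prop:conv_delta_n_cP}, verify \eqref{hyp:control_cP} almost surely by Markov plus a union bound and Borel--Cantelli, and establish the uniform law of large numbers \eqref{hyp:uniform_conv_Sn} almost surely by a high-order moment bound, union bound over $i\in[n]$, and Borel--Cantelli (your explicit remark that $\left\Vert W \right\Vert_{L^{\chi}}\leq \left\Vert \mathcal{P} \right\Vert_{L^{\chi}}/\mathcal{P}_{\ast}$, so that \eqref{hyp:norm_W_chi} holds, is left implicit in the paper but is exactly the point needed). The only difference is the tool for the moment estimate in \eqref{hyp:uniform_conv_Sn}: you invoke Rosenthal's inequality at order $\chi$ conditionally on $x_{1}$ (needing $\chi>4$), whereas the paper expands a sixth moment of the centered sum $\frac{1}{n}\sum_{j}\bar{\mathcal{P}}(x_{i},x_{j})$ and uses the vanishing of terms with an isolated index to get an $O(n^{-3})$ bound; both yield summable tails under $\chi>9$, so this is a cosmetic rather than substantive deviation.
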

\begin{proof}[Proof of Proposition~\ref{prop:conv_singular_random}]
Once again, we apply Proposition~\ref{prop:control_An6} and Proposition~\ref{prop:conv_delta_n_cP} together with a Borel-Cantelli argument. Let $ \epsilon\in \left(\frac{ 3}{ \chi}, \frac{ 1}{ 2}\right)$. The same reasoning as before shows that, since $ \epsilon \chi -2>1$, we have almost surely that $\sup_{ i,j\in \left[n\right]}\left\vert \mathcal{ P}(x_{ i}, x_{ j})\right\vert \leq n^{ \epsilon}$ for $n$ sufficiently large. Secondly, let 
\begin{equation}
\bar{ \mathcal{ P}}(x, y) := \mathcal{ P}(x, y) -\int \mathcal{ P}(x, z) \ell({\rm d}z).
\end{equation}
Compute
\begin{equation}
\label{eq:mcP_moment_6}
\mathbb{ E} \left[ \left(\frac{ 1}{ n}\sum_{ j=1}^{ n} \bar{\mathcal{ P}}(x_{ i}, x_{ j})\right)^{ 6}\right]= \frac{ 1}{ n^{ 6}} \sum_{ j_{ 1}, \ldots, j_{ 6}=1}^{ n} \mathbb{ E} \left[\prod_{ l=1}^{ 6}\bar{\mathcal{ P}}(x_{ i}, x_{ j_{ l}})\right].
\end{equation}
Among the sum above, consider the case where one index (for example $j_{ 1}$) is such that $j_{ 1}\notin \left\lbrace j_{ 2},\ldots, j_{ 6}\right\rbrace$. In this case, conditioning w.r.t. $(x_{ i}, x_{ j}, j\neq j_{ 1})$ within the previous expectation gives $\mathbb{ E} \left[\prod_{ l=1}^{ 6}\bar{\mathcal{ P}}(x_{ i}, x_{ j_{ l}})\right]=0$, by independence of the $(x_{ k})_{ k\in \left[n\right]}$ and by definition of $\bar{ \mathcal{ P}}$. Hence, the nontrivial contributions to \eqref{eq:mcP_moment_6} are necessarily of the form $\mathbb{ E} \left[\prod_{ l=1}^{ 3}\bar{\mathcal{ P}}(x_{ i}, x_{ u_{ l}})^{ 2}\right]$ for any $(u_{ 1}, u_{ 2}, u_{ 3})\in [n]^{ 3}$. This means that there exists a constant $C>0$ (independent of $i\in \left[n\right]$) such that
\begin{equation}
\mathbb{ E} \left[ \left(\frac{ 1}{ n}\sum_{ j=1}^{ n} \bar{\mathcal{ P}}(x_{ i}, x_{ j})\right)^{ 6}\right]\leq \frac{ C}{ n^{ 3}}.
\end{equation}
By a union bound on $i\in \left[n\right]$ and Markov inequality, we obtain, for all $ p\geq1$
\begin{equation*}
\mathbb{ P} \left( \sup_{ i\in \left[n\right]}\left\vert \frac{ 1}{ n}\sum_{ j=1}^{ n} \bar{\mathcal{ P}}(x_{ i}, x_{ j}) \right\vert\geq \frac{ 1}{ p}\right) \leq \sum_{ i=1}^{ n}\mathbb{ P} \left(\left\vert \frac{ 1}{ n}\sum_{ j=1}^{ n} \bar{\mathcal{ P}}(x_{ i}, x_{ j}) \right\vert\geq \frac{ 1}{ p}\right)\leq \frac{ Cp^{ 6}}{ n^{ 2}}.
\end{equation*}
An application of Borel-Cantelli Lemma shows that \eqref{hyp:uniform_conv_Sn} holds almost surely.
\end{proof}
We finish this section with an example in the deterministic case:
\begin{example}[\cite{10113716M1075831}, Ex.~2.1]
\label{ex:unbounded_graphon_ex2}
Consider here the model $( \mathcal{ G}^{ (n)}, \kappa^{ (n)})$ given by \eqref{eq:generic_Wn} and \eqref{hyp:dilution_alpha_ni_cP} where
\begin{equation}
\label{eq:cP_xy_alpha}
\mathcal{ P}(x, y) := (1- \alpha)^{ 2} x^{ - \alpha} y^{ - \alpha},\ \alpha\in \left[0, \frac{ 1}{ 2} \right),\ x, y \in I=[0, 1].
\end{equation}
\end{example}
\begin{proposition}
\label{prop:ex_cP_xy}
Suppose that Assumption~\ref{ass:deterministic_positions} holds. Let $ \alpha\in \left[0, \frac{ 1}{ 2}\right)$ and $ \mathcal{ P}$ defined by \eqref{eq:cP_xy_alpha}. There exists $ \delta(\alpha)< 1/2$ such that for all $ \delta(\alpha)< \delta< \frac{ 1}{ 2}$, the renormalized graph $( \mathcal{ G}^{ (n)}, \kappa^{ (n)})$ given by \eqref{eq:generic_Wn} and  \eqref{hyp:dilution_alpha_ni_cP} with $\rho_{ n}:= n^{ - \delta}$ converges to $W(x, y)= (1- \alpha)y^{ - \alpha}$, for the choice of $\kappa_{ n}:= \frac{ n^{ \delta}}{(1- \alpha)^{ 2}}$ and $w_{ n}:=1$ and $W$ is regular in the sense of Definition~\ref{def:graphon_regular}.
\end{proposition}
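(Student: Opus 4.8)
First I would record the elementary identity $S(x):=\int_{0}^{1}\mathcal{ P}(x,z)\,{\rm d}z=(1-\alpha)^{2}x^{-\alpha}\int_{0}^{1}z^{-\alpha}\,{\rm d}z=(1-\alpha)x^{-\alpha}$, so that the recipe \eqref{eq:def_W_VS_cP} indeed produces $W(x,y)=\mathcal{ P}(x,y)/S(x)=(1-\alpha)y^{-\alpha}$; moreover $\mathcal{ W}_{2}(x)=(1-\alpha)^{2}/(1-2\alpha)<\infty$ and $\mathcal{ W}(x)=1$ for every $x$, so this $W$ satisfies the standing assumptions \eqref{hyp:bound_W_L1} and \eqref{eq:W_nondegenerate}. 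The plan is then to check, for the prescribed data ($\rho_{n}=n^{-\delta}$, $\kappa_{i}^{(n)}$ as in \eqref{hyp:dilution_alpha_ni_cP}, $w_{n}=1$, $\kappa_{n}=n^{\delta}/(1-\alpha)^{2}$), the hypotheses of Definitions~\ref{def:convergence_graph} and~\ref{def:graphon_regular}, the threshold turning out to be $\delta(\alpha):=\alpha$.

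For regularity I would appeal to Proposition~\ref{prop:control_det_Delta3}: since $W(x,y)=(1-\alpha)y^{-\alpha}$ does not depend on its first variable, $\delta\mathcal{ W}(x,y)\equiv0$ and \eqref{hyp:Int_W_Lip} holds trivially (any $\iota_{2}\in(0,1]$); and $s_{n}(W)(\underline{ x})=(1-\alpha)\sum_{k=1}^{n}\int_{(k-1)/n}^{k/n}\lvert(k/n)^{-\alpha}-y^{-\alpha}\rvert\,{\rm d}y$ is the gap between a right Riemann sum of $y\mapsto y^{-\alpha}$ and $\int_{0}^{1}y^{-\alpha}\,{\rm d}y$, which an elementary estimate (treating the cell $[0,1/n]$ separately) bounds by $O(n^{\alpha-1})\to0$, giving \eqref{hyp:delta_n_W}; hence Definition~\ref{def:graphon_regular}.

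For convergence, with $w_{n}=1$ the structural conditions \eqref{hyp:def_w_n_infty} (since $W_{n}=\min(1,\rho_{n}\mathcal{ P})\leq1$), \eqref{hyp:compare_wn_alphan} and \eqref{hyp:alpha_n_infty} (since $\kappa_{n}^{2}w_{n}\asymp n^{2\delta}=o(n/\log n)$ as $\delta<1/2$) are immediate. For \eqref{hyp:alphan_VS_an} and \eqref{hyp:Delta1_to_0} I would write $\kappa_{i}^{(n)}=1/\bar W_{n}(x_{i})$ with $\bar W_{n}(x_{i}):=\frac{1}{n}\sum_{j}W_{n}(x_{i},x_{j})=\rho_{n}S_{n}(x_{i})-r_{n}(x_{i})$, where $S_{n}(x_{i}):=\frac{1}{n}\sum_{j}\mathcal{ P}(x_{i},x_{j})$ and $r_{n}(x_{i}):=\frac{1}{n}\sum_{k}(\rho_{n}\mathcal{ P}(x_{i},x_{k})-1)_{+}$ is the truncation defect. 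A Riemann-sum estimate gives $\sup_{i}\lvert S_{n}(x_{i})/S(x_{i})-1\rvert=O(n^{2\alpha-1})\to0$ (using $S(x_{i})\geq1-\alpha$), and the crucial point is that $\rho_{n}\mathcal{ P}(x_{i},x_{k})>1$ exactly when $ik<M_{n}:=(1-\alpha)^{2/\alpha}n^{2-\delta/\alpha}$, whence, bounding $\sum_{k\leq K}k^{-\alpha}=O(K^{1-\alpha})$, uniformly in $i$,
\[ \frac{r_{n}(x_{i})}{\rho_{n}S(x_{i})}\ \leq\ C\,n^{(1-\alpha)(\alpha-\delta)/\alpha}\ \xrightarrow[n\to\infty]{}\ 0\qquad\text{as soon as }\delta>\alpha, \]
which fixes $\delta(\alpha)=\alpha$ ($\alpha=0$ being the diluted Erd\H os-R\'enyi case, where $r_{n}\equiv0$ for large $n$). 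It follows that $\bar W_{n}(x_{i})=\rho_{n}S(x_{i})(1+\eta_{n,i})$ with $\sup_{i}\lvert\eta_{n,i}\rvert\to0$; this yields $\kappa_{i}^{(n)}\leq n^{\delta}/(1-\alpha)^{2}$ for $n$ large (hence \eqref{hyp:alphan_VS_an}), and, splitting $\lvert W_{n}(x_{i},x_{k})-\bar W_{n}(x_{i})W(x_{i},x_{k})\rvert\leq(\rho_{n}\mathcal{ P}(x_{i},x_{k})-1)_{+}+\rho_{n}\mathcal{ P}(x_{i},x_{k})\lvert\eta_{n,i}\rvert$ and summing over $k$ (using $S_{n}(x_{i})/S(x_{i})=O(1)$), one gets $\delta_{n}(\underline{ x})=\sup_{i}\bar W_{n}(x_{i})^{-1}\frac{1}{n}\sum_{k}\lvert W_{n}-\bar W_{n}W\rvert\to0$, which is \eqref{hyp:Delta1_to_0}.

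The step I expect to be the main obstacle is precisely this control of the truncation $\min(1/\rho_{n},\mathcal{ P})$ in the range $\alpha\in[1/4,1/2)$: there $2\alpha\geq1/2>\delta$, so Proposition~\ref{prop:conv_delta_n_cP} is not directly applicable (its hypothesis \eqref{hyp:control_cP} would force $\epsilon=2\alpha<1/2$), and the truncation genuinely affects a number of node-pairs that diverges with $n$; the content of the argument is that this defect, once renormalised by the small degree of order $\rho_{n}$, still vanishes uniformly in $i$, which is the displayed estimate. For $\alpha<1/4$ one may instead pick $\delta\in(2\alpha,1/2)$ and invoke Proposition~\ref{prop:conv_delta_n_cP} with $\epsilon=2\alpha$, \eqref{hyp:uniform_conv_Sn} being exactly the Riemann-sum estimate above, but the direct computation handles all $\alpha\in[0,1/2)$ at once.
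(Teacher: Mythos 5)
Your proposal is correct, and its analytic core coincides with the paper's: the same truncation threshold $ik<(1-\alpha)^{2/\alpha}n^{2-\delta/\alpha}$ (the paper's $j_{n}^{i}$), the same Riemann-sum asymptotics \eqref{eq:asympt_Riemann} for $\sum_{k}k^{-\alpha}$, and the same decisive exponent $(1-\alpha)(\delta-\alpha)/\alpha$. The difference is organizational, and it is a genuine one: the paper splits into $\alpha\in[0,\frac14)$ (handled by invoking Proposition~\ref{prop:conv_delta_n_cP} with $\epsilon=2\alpha$, hence threshold $2\alpha$) and $\alpha\in[\frac14,\frac12)$ (a direct term-by-term estimate of $\delta_{n,i}$ via $D_{n,i}$ and $j_{n}^{i}$), whereas your single decomposition $\kappa_{i}^{(n)}=1/\bar W_{n}(x_{i})$, $\bar W_{n}=\rho_{n}S_{n}-r_{n}$, together with the identity $\lvert W_{n}(x_{i},x_{k})-\bar W_{n}(x_{i})W(x_{i},x_{k})\rvert\leq(\rho_{n}\mathcal{P}(x_{i},x_{k})-1)_{+}+\rho_{n}\mathcal{P}(x_{i},x_{k})\lvert\eta_{n,i}\rvert$, treats all $\alpha\in[0,\frac12)$ at once and gives the cleaner threshold $\delta(\alpha)=\alpha$ uniformly (the paper's hard-case computation also only uses $\delta>\alpha$, but it is presented only for $\alpha\geq\frac14$); what the paper's route buys in exchange is the reuse of the generic criterion of Proposition~\ref{prop:conv_delta_n_cP} where it applies. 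Two small remarks: the bound $\kappa_{i}^{(n)}\leq n^{\delta}/(1-\alpha)^{2}$ is obtained more directly, as in the paper, from $\min\bigl(n^{\delta},(1-\alpha)^{2}(x_{i}x_{j})^{-\alpha}\bigr)\geq(1-\alpha)^{2}$ for large $n$ (your route via $\eta_{n,i}$ works but needs $\sup_{i}\lvert\eta_{n,i}\rvert\leq\alpha$, with $\alpha=0$ covered by your Erd\H os--R\'enyi remark); and since $x_{i}^{-\alpha}$ cancels in the ratio, $\sup_{i}\lvert S_{n}(x_{i})/S(x_{i})-1\rvert$ is in fact $O(n^{\alpha-1})$, sharper than the $O(n^{2\alpha-1})$ you state (which is of course still sufficient). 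The regularity part is exactly the paper's argument (Proposition~\ref{prop:control_det_Delta3} with $\delta\mathcal{W}\equiv0$ and $s_{n}(W)=O(n^{\alpha-1})$, as in Example~\ref{ex:unbounded_graphon_ex1}).
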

\begin{remark}
\label{rem:diff_Gn_sameW}
The macroscopic limits in Example~\ref{ex:unbounded_graphon_ex1} and Example~\ref{ex:unbounded_graphon_ex2} are the same, although the underlying graphs $ \mathcal{ G}^{ (n)}$ have really different structures. In Example~\ref{ex:unbounded_graphon_ex1}, $ \mathcal{ G}^{ (n)}$ is more or less homogeneous whereas Example~\ref{ex:unbounded_graphon_ex2} is much more hub-like: nodes with positions close to $0$ are connected to the whole population with probability close to $1$. We see here the effect of the renormalization \eqref{hyp:dilution_alpha_ni_cP}: it compensates for the hubs in the graph $ \mathcal{ G}^{ (n)}$ so that, even though the graphs $ \mathcal{ G}^{ (n)}$ might be different, the renormalized graphs $ \bar{\mathcal{ G}}^{ (n)}$ are actually quite similar.
\end{remark}
\begin{proof}[Proof of Proposition~\ref{prop:ex_cP_xy}] 
Recall the following simple asymptotics: for $ \alpha\in(0, 1)$, there exist some sequence $ \epsilon_{ n}\to_{ n\to\infty}0$ and some constant $C(\alpha)\neq0$ such that
\begin{equation}
\label{eq:asympt_Riemann}
\sum_{ k=1}^{ n} k^{ - \alpha} = \frac{ n^{ 1- \alpha}}{ 1- \alpha} + C(\alpha) + \epsilon_{ n},\ n\geq1.
\end{equation}
We first verify Definition~\ref{def:convergence_graph}. First, the easy case where $ \alpha\in \left[0, \frac{ 1}{ 4}\right)$ can be treated via Proposition~\ref{prop:conv_delta_n_cP}: the following rough bound $\sup_{ i, j\in \left[n\right]} \mathcal{ P}(x_{ i}, x_{ j}) \leq (1- \alpha)^{ 2} n^{ 2 \alpha}$ holds so that \eqref{hyp:control_cP} is true for $ \epsilon= 2 \alpha< \frac{ 1}{ 2}$. Morever,
\begin{align*}
\left\vert  \frac{ 1}{ n} \sum_{ j=1}^{ n} \mathcal{ P}(x_{ i}, x_{ j}) - \int \mathcal{ P}(x_{ i}, z) {\rm d}z\right\vert &= (1- \alpha) x_{ i}^{ - \alpha} \left\vert  \frac{1- \alpha}{ n^{ 1- \alpha}} \sum_{ j=1}^{ n}  j^{ - \alpha} -1\right\vert,
\end{align*}
which, since $ x_{ i}^{ - \alpha}\leq n^{ \alpha}$, is of order $n^{ -(1- 2 \alpha)}$, uniformly in $i\in \left[n\right]$. Thus, Proposition~\ref{prop:conv_delta_n_cP} is true, for any $ \delta$ such that $2 \alpha< \delta < \frac{ 1}{ 2}$. 

The case $ \alpha\in \left[ \frac{ 1}{ 4}, \frac{ 1}{ 2}\right)$ is more technical and cannot be dealt via Proposition~\ref{prop:conv_delta_n_cP} directly. Choose any $ \delta$ such that $ \alpha< \delta < \frac{ 1}{ 2} \leq 2 \alpha <1$. Since $x_{ i}\leq 1$, we have
\begin{align*}
\kappa_{i}^{ (n)} &= \frac{ n^{ 1+ \delta}}{\sum_{ j=1}^{ n} \min \left( n^{ \delta}, (1- \alpha)^{ 2} \frac{ 1}{ x_{ i}^{ \alpha} x_{ j}^{ \alpha}}\right)} \leq \frac{ n^{ \delta}}{(1- \alpha)^{ 2}},
\end{align*}
so that \eqref{hyp:alphan_VS_an} and \eqref{hyp:compare_wn_alphan} are true for $w_{ n}=1$. The choice of $ \delta< \frac{ 1}{ 2}$ ensures that \eqref{hyp:alpha_n_infty} is verified. For all $i\in \left[n\right]$, we have $ \delta_{ n}( \underline{ x}) = \sup_{ i\in \left[n\right]} \delta_{ n, i}( \underline{ x})$ with 
\begin{equation*}
\delta_{ n, i}( \underline{ x}):=\frac{ 1}{ n} \sum_{ k=1}^{ n} \left\vert \kappa_{i}^{ (n)} W_{ n}(x_{ i}, x_{ k}) - W(x_{ i}, x_{ k}) \right\vert.
\end{equation*}
Using the notations
\begin{align}
j_{ n}^{ i}&:= \left\lfloor (1- \alpha)^{ 2/ \alpha} \frac{ n^{ 2- \delta/ \alpha}}{ i} \right\rfloor  \label{hyp:j_n_i}
\end{align}
and
\begin{align*}
D_{ n, i}&:= \sum_{ j=1}^{ n} \min \left( n^{ \delta}, (1- \alpha)^{ 2} \frac{ 1}{ x_{ i}^{ \alpha} x_{ j}^{ \alpha}}\right)= j_{ n}^{ i} n^{ \delta} +(1- \alpha)^{ 2}  \frac{ n^{\alpha}}{ x_{ i}^{ \alpha}} \sum_{ j=j_{ n}^{ i}+1}^{n}  \frac{ 1}{ j^{ \alpha}},
\end{align*}
we obtain
\begin{align}
\delta_{ n, i}( \underline{ x})
&=\frac{ 1}{ n} \sum_{ k=1}^{ j_{ n}^{ i}} \left\vert \frac{ n^{ 1+ \delta}}{D_{ n, i}} - (1- \alpha) \frac{ n^{ \alpha}}{ k^{ \alpha}} \right\vert + \left\vert \frac{(1- \alpha) n}{D_{ n, i} x_{ i}^{ \alpha}} - 1 \right\vert  \left(\frac{ 1- \alpha}{ n^{ 1- \alpha}} \sum_{ k=j_{ n}^{ i}+1}^{n} \frac{ 1}{ k^{ \alpha}}\right), \nonumber\\
&\leq \frac{ j_{ n}^{ i}n^{\delta}}{D_{ n, i}} + \frac{ 1- \alpha}{ n^{ 1- \alpha}} \sum_{ k=1}^{ j_{ n}^{ i}} \frac{ 1}{ k^{ \alpha}} + \left\vert \frac{(1- \alpha) n}{D_{ n, i} x_{ i}^{ \alpha}} - 1 \right\vert  \left(\frac{ 1- \alpha}{ n^{ 1- \alpha}} \sum_{ k=j_{ n}^{ i}+1}^{n} \frac{ 1}{ k^{ \alpha}}\right).\label{eq:sum_Delta_ni}
\end{align}
By \eqref{eq:asympt_Riemann}, we have for any $ \beta\in(0, 1]$
\begin{equation}
\label{eq:estim_Riem_2}
\frac{ 1- \alpha}{ n^{ 1- \alpha}} \sum_{ k=1}^{  \left\lfloor n^{ \beta}\right\rfloor} \frac{ 1}{ k^{ \alpha}} \begin{cases}
\to_{ n\to\infty} 1& \text{ if } \beta=1,\\ O \left( \frac{ 1}{ n^{ (1- \alpha)(1- \beta)}}\right)\to_{ n\to\infty} 0& \text{ if } \beta\in(0,1).
\end{cases}
\end{equation}
First observe that for all $i=1, \ldots, n$ $j_{ n}^{ i} \leq  \left\lfloor n^{ 2- \delta/ \alpha}\right\rfloor $. Since $ \delta>\alpha$, $2- \delta/ \alpha<1$ and by \eqref{eq:estim_Riem_2}, the second term in \eqref{eq:sum_Delta_ni} is such that
\begin{equation}
\label{eq:sum_jni_to_0}
\sup_{ i\in \left[n\right]}  \left(\frac{ 1- \alpha}{ n^{ 1- \alpha}} \sum_{ k=1}^{ j_{ n}^{ i}} \frac{ 1}{ k^{ \alpha}}\right)  \to 0, \text{ as }n\to\infty.
\end{equation}
Moreover we have the existence of $n_{ 0}\geq1$ such that for all $n\geq n_{ 0}$
\begin{equation}
\inf_{ i\in \left[n\right]} \left( \frac{ 1- \alpha}{ n^{ 1- \alpha}} \sum_{ k=j_{ n}^{ i}+1}^{ n} \frac{ 1}{ k^{ \alpha}}\right)\geq \frac{ 1}{ 2}.\label{eq:sum_jn_n_12}
\end{equation}
Let us now concentrate on the first term of \eqref{eq:sum_Delta_ni}: for all $i\in \left[n\right]$,
\begin{align*}
\frac{ j_{ n}^{ i} n^{\delta}}{D_{ n, i}}&= \frac{ 1}{ 1 +(1- \alpha)^{ 2}  \frac{ n^{2\alpha}}{i^{ \alpha}j_{ n}^{ i} n^{ \delta}} \sum_{ j=j_{ n}^{ i}+1}^{n}  \frac{ 1}{ j^{ \alpha}}},\\
&\leq \frac{1}{ 1 +(1- \alpha)^{ 1- 2/ \alpha} n^{\alpha - \delta -1 + \delta/ \alpha} \left( \frac{ 1- \alpha}{ n^{ 1- \alpha}}\sum_{ j=j_{ n}^{ i}+1}^{n}  \frac{ 1}{ j^{ \alpha}}\right)},\text{ (by  \eqref{hyp:j_n_i} and since $ i\geq1$)},\\
&\leq \frac{1}{ 1 + \frac{ (1- \alpha)^{ 1- 2/ \alpha}}{ 2} n^{\alpha - \delta -1 + \delta/ \alpha}}\to 0,\ \text{ by }\eqref{eq:sum_jn_n_12},
\end{align*}
since $\alpha - \delta -1 + \delta/ \alpha = \frac{ \delta- \alpha}{ \alpha}(1- \alpha)>0$. This proves that the first term of \eqref{eq:sum_Delta_ni} converges to $0$ uniformly in $i\in \left[n\right]$. The third term of \eqref{eq:sum_Delta_ni} $\frac{ 1- \alpha}{ n^{ 1- \alpha}} \sum_{ k=j_{ n}^{ i}+1}^{n} \frac{ 1}{ k^{ \alpha}}$ is smaller than $\frac{ 1- \alpha}{ n^{ 1- \alpha}} \sum_{ k=1}^{n} \frac{ 1}{ k^{ \alpha}}$ which converges (to $1$) and hence, bounded. It remains to control $\left\vert \frac{(1- \alpha) n}{D_{ n, i} x_{ i}^{ \alpha}} - 1 \right\vert $. We can write
\begin{align*}
\left\vert \frac{(1- \alpha) n}{D_{ n, i} x_{ i}^{ \alpha}} - 1 \right\vert &= \frac{(1- \alpha) n}{D_{ n, i} x_{ i}^{ \alpha}} \left\vert  - \frac{j_{ n}^{ i} i^{ \alpha}}{ (1- \alpha)n^{ 1+ \alpha- \delta}} +1-  \frac{ 1- \alpha}{n^{ 1- \alpha}} \sum_{ j=j_{ n}^{ i}+1}^{n}  \frac{ 1}{ j^{ \alpha}} \right\vert,\\
&\leq \frac{(1- \alpha) n}{D_{ n, i} x_{ i}^{ \alpha}} \left(\frac{(1- \alpha)^{ 2/ \alpha-1}}{n^{ -1+ \alpha- \delta + \delta/ \alpha}} +  \frac{ 1- \alpha}{n^{ 1- \alpha}} \sum_{ j=1}^{j_{ n}^{ i}}  \frac{ 1}{ j^{ \alpha}} +\left\vert 1 - \frac{ 1- \alpha}{n^{ 1- \alpha}} \sum_{ j=1}^{n}  \frac{ 1}{ j^{ \alpha}} \right\vert \right).\end{align*}
The terms within the brackets converge to $0$, uniformly in $i\in \left[n\right]$, (recall \eqref{eq:estim_Riem_2} and \eqref{eq:sum_jni_to_0}) and we have
\begin{align*}
 \frac{(1- \alpha) n}{D_{ n, i} x_{ i}^{ \alpha}}&=  \frac{1- \alpha}{j_{ n}^{ i} n^{ \delta-1}x_{ i}^{ \alpha} +(1- \alpha)^{ 2} n^{\alpha-1} \sum_{ j=j_{ n}^{ i}+1}^{n}  \frac{ 1}{ j^{ \alpha}}}\leq \frac{1}{\frac{ 1- \alpha}{ n^{ 1- \alpha}} \sum_{ j=j_{ n}^{ i}+1}^{n}  \frac{ 1}{ j^{ \alpha}}}\leq 2,
\end{align*}
at least for large $n$, once again by \eqref{eq:sum_jn_n_12}. This proves \eqref{hyp:Delta1_to_0} in the case $ \alpha\in \left[ \frac{ 1}{ 4}, \frac{ 1}{ 2}\right)$. The proof of the regularity of $W$ has already been done in Section~\ref{sec:dense_bounded_graphons}. This concludes the proof of Proposition~\ref{prop:ex_cP_xy}.
\end{proof}

\section{Proofs for the propagation of chaos results}
\subsection{Proof of Theorem~\ref{theo:conv_general}}
\label{sec:proof_conv_general}
For the moment, the sequence of positions $\mathcal{ X}$ and the associated connectivity sequence $ \Xi$ are fixed. For all $n\geq1$, for fixed $ \underline{ x}$ and $ \underline{ \xi}$, we introduce the following quantities
\begin{align}
b_{ n}( \underline{ \xi})&:= \sup_{ i\in \left[n\right]} \left\vert \frac{1}{ n} \sum_{ k=1}^{ n} \kappa_{i}^{ (n)}\xi_{ i, k}^{ (n)} \right\vert, \label{hyp:bn_xi}\\
d_{ n, t}( \underline{ \xi}, \underline{ x})&:= \sup_{ i\in \left[n\right]}\int_{ 0}^{t}\mathbf{ E} \left[\left\vert \frac{ 1}{n} \sum_{ k=1}^{ n} \kappa_{i}^{ (n)}\left(\xi_{ i, k}^{ (n)} - W_{ n}(x_{ i}, x_{ k})\right) [\Gamma]_{u}(\bar\theta_{ i, u}, x_{ k}) \right\vert^{ 2}\right] {\rm d} u. \label{hyp:Delta_n_2}
\end{align}
\begin{proposition}
\label{prop:control_one}
Suppose that the hypotheses of Section~\ref{sec:general_assumptions_nonlin} are true. For $n\geq1$ and $T>0$, let $( \theta_{ i, t})_{ i\in \left[n\right]}:=( \theta_{ i, t}^{ (n)})_{ i\in \left[n\right]}$ be the solution of \eqref{eq:odegene} and $( \bar \theta_{i, t})_{ i\in \left[n\right]}:= ( \bar \theta_{t}^{ x_{ i}})_{ i\in \left[n\right]}$ independent copies of \eqref{eq:theta_nonlin} with the same initial conditions and Brownian motions as \eqref{eq:odegene}. There exists a constant $C>0$ (independent of $n$ and $T$), such that for any fixed choice of connectivities and positions $( \underline{ \xi}, \underline{ x})$,
\begin{multline}
\label{cln:control_one}
\sup_{ i\in \left[n\right]}\mathbf{ E}\left[ \sup_{ s\leq T} \left\vert \theta_{ i, s} - \bar \theta_{ i, s} \right\vert^{ 2}\right]\leq C\left(T + e^{ CT}\right)e^{ C b_{ n}( \underline{ \xi})^{ 2}T}\\ \times\left(b_{ n}( \underline{ \xi})\frac{ \kappa_{\infty}^{(n)}( \underline{ x})}{ n} + \delta_{ n}( \underline{ x})^{ 2}+ d_{ n, T}( \underline{ \xi}, \underline{ x}) + \sum_{ m=1}^{ 3} \epsilon_{ n, T}^{ (m)}( \underline{ x})\right),
\end{multline}
where we recall the definitions of $ \kappa_{ \infty}^{ (n)}( \underline{ x})$ in \eqref{hyp:alphan_VS_an}, of $ \delta_{ n}( \underline{ x})$ in \eqref{hyp:Delta_n_1} and of $ \epsilon_{ n, T}^{ (m)}( \underline{ x})$ in \eqref{hyp:Delta3_to_0}.
\end{proposition}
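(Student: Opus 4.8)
The plan is to run a synchronous coupling between the particle system \eqref{eq:odegene} and the nonlinear copies \eqref{eq:theta_nonlin}. Set $u_{i,t}:=\theta_{i,t}-\bar\theta_{i,t}$: since both equations are driven by the same Brownian motions and start from the same initial data, the noise terms cancel, $t\mapsto u_{i,t}$ is absolutely continuous and $u_{i,0}=0$. Differentiating $|u_{i,t}|^2$, the local-drift contribution $2\langle u_{i,t},c(\theta_{i,t})-c(\bar\theta_{i,t})\rangle$ is $\le 2L_c|u_{i,t}|^2$ by the one-sided Lipschitz bound \eqref{hyp:c_onesidedLip}, so everything reduces to controlling $2\langle u_{i,t},D_{i,t}\rangle$, with $D_{i,t}$ the difference of the two interaction drifts. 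I would write $D_{i,t}=A_{i,t}+B_{i,t}$, where $A_{i,t}:=\tfrac{\kappa_i^{(n)}}{n}\sum_{j=1}^n\xi_{i,j}\big(\Gamma(\theta_{i,t},\theta_{j,t})-\Gamma(\bar\theta_{i,t},\bar\theta_{j,t})\big)$ collects the Lipschitz part. Using \eqref{hyp:Gamma_Lip_1}, the bound $\tfrac{\kappa_i^{(n)}}{n}\sum_j\xi_{i,j}\le b_n(\underline\xi)$ and Cauchy--Schwarz on the weighted sum $\sum_j\tfrac{\kappa_i^{(n)}\xi_{i,j}}{n}|u_{j,t}|$, one gets $2\langle u_{i,t},A_{i,t}\rangle\le C(1+b_n(\underline\xi)^2)|u_{i,t}|^2+Cb_n(\underline\xi)\sum_j\tfrac{\kappa_i^{(n)}\xi_{i,j}}{n}|u_{j,t}|^2$; after taking $\mathbf{E}$ and $\sup_i$, the last sum is bounded by $Cb_n(\underline\xi)^2\sup_i\mathbf{E}\big[\sup_{r\le s}|u_{i,r}|^2\big]$, which is exactly what produces the factor $e^{Cb_n(\underline\xi)^2T}$ in \eqref{cln:control_one}.

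The heart of the argument is to split the fluctuation term $B_{i,t}$ into four pieces, one for each error quantity. Writing $[\Gamma]_t(\theta,x):=\int\Gamma(\theta,\tilde\theta)\nu_t^x({\rm d}\tilde\theta)$, set
\[
\begin{aligned}
B^{(1)}_{i,t}&:=\frac1n\sum_{j=1}^n\kappa_i^{(n)}\big(\xi_{i,j}-W_n(x_i,x_j)\big)[\Gamma]_t(\bar\theta_{i,t},x_j),\\
B^{(2)}_{i,t}&:=\frac{\kappa_i^{(n)}}{n}\sum_{j=1}^n\xi_{i,j}\big(\Gamma(\bar\theta_{i,t},\bar\theta_{j,t})-[\Gamma]_t(\bar\theta_{i,t},x_j)\big),\\
B^{(3)}_{i,t}&:=\frac1n\sum_{j=1}^n\big(\kappa_i^{(n)}W_n(x_i,x_j)-W(x_i,x_j)\big)[\Gamma]_t(\bar\theta_{i,t},x_j),\\
B^{(4)}_{i,t}&:=\frac1n\sum_{j=1}^nW(x_i,x_j)[\Gamma]_t(\bar\theta_{i,t},x_j)-\int W(x_i,y)[\Gamma]_t(\bar\theta_{i,t},y)\,\ell({\rm d}y),
\end{aligned}
\]
so that $B_{i,t}=\sum_{k=1}^4B^{(k)}_{i,t}$. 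Then $\int_0^t\mathbf{E}\big[|B^{(1)}_{i,s}|^2\big]\,{\rm d}s\le d_{n,t}(\underline\xi,\underline x)$ is precisely the definition \eqref{hyp:Delta_n_2}. For $B^{(2)}$, the nonlinear copies $\bar\theta_1,\dots,\bar\theta_n$ being independent and $\mathbf{E}[\Gamma(\bar\theta_{i,t},\bar\theta_{j,t})\mid\bar\theta_{i,t}]=[\Gamma]_t(\bar\theta_{i,t},x_j)$, all off-diagonal terms of $\mathbf{E}\big[|B^{(2)}_{i,t}|^2\big]$ vanish (the quenched $\xi_{i,j}$ are frozen throughout, and $\xi_{i,i}=0$), and the diagonal is controlled through the sublinear bound \eqref{hyp:Gamma_bound} and the a priori estimate \eqref{eq:apriori_bound_nonlin}, giving $\mathbf{E}\big[|B^{(2)}_{i,t}|^2\big]\le Cb_n(\underline\xi)\,\kappa_\infty^{(n)}(\underline x)/n$. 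For $B^{(3)}$, the pointwise control $|[\Gamma]_t(\theta,x)|\le C(1+|\theta|)$ from Lemma~\ref{lem:regul_Upsilon} yields $|B^{(3)}_{i,t}|\le C(1+|\bar\theta_{i,t}|)\,\delta_n(\underline x)$, hence $\mathbf{E}\big[|B^{(3)}_{i,t}|^2\big]\le C\delta_n(\underline x)^2$. For $B^{(4)}$, expanding the square, integrating $\mathbf{E}[\cdot]$ over $\bar\theta_{i,t}\sim\nu_t^{x_i}$ and then over $t\in[0,T]$ reproduces $\int\!\!\int W(x_i,y)W(x_i,z)\,\Upsilon_T(x_i,y,z)\,(\ell_n-\ell)({\rm d}y)(\ell_n-\ell)({\rm d}z)$, which by \eqref{hyp:epsilon_i_123} equals $\epsilon_{n,T}^{(1,i)}(\underline x)-\epsilon_{n,T}^{(2,i)}(\underline x)-\epsilon_{n,T}^{(3,i)}(\underline x)$, so $\sup_i\int_0^T\mathbf{E}\big[|B^{(4)}_{i,t}|^2\big]\,{\rm d}t\le\sum_{m=1}^3\epsilon_{n,T}^{(m)}(\underline x)$.

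It remains to assemble: integrating the differential inequality in time, using $2\langle u_{i,t},B_{i,t}\rangle\le|u_{i,t}|^2+4\sum_{k=1}^4|B^{(k)}_{i,t}|^2$, taking $\mathbf{E}$ and then $\sup_i$, one obtains for $H(t):=\sup_i\mathbf{E}\big[\sup_{s\le t}|u_{i,s}|^2\big]$ an inequality of the form $H(t)\le C(1+b_n(\underline\xi)^2)\int_0^tH(s)\,{\rm d}s+C\big(T\,b_n(\underline\xi)\kappa_\infty^{(n)}(\underline x)/n+T\delta_n(\underline x)^2+d_{n,T}(\underline\xi,\underline x)+\sum_{m=1}^3\epsilon_{n,T}^{(m)}(\underline x)\big)$, the factors $T$ arising from the time-uniform bounds on $\mathbf{E}[|B^{(2)}|^2]$ and $\mathbf{E}[|B^{(3)}|^2]$. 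Grönwall's lemma, followed by a harmless adjustment of constants (absorbing $Te^{CT}$ into $(T+e^{CT})e^{Cb_n(\underline\xi)^2T}$), then gives \eqref{cln:control_one}. The step I expect to be the main obstacle is the estimate for $B^{(2)}$: one must exploit carefully that, conditionally on $\bar\theta_{i,t}$, the variables $\Gamma(\bar\theta_{i,t},\bar\theta_{j,t})-[\Gamma]_t(\bar\theta_{i,t},x_j)$ are centered and independent in $j$ — with the quenched edge weights held fixed — so that the $O(n^2)$ off-diagonal sum collapses and only the $O(n)$ diagonal survives, producing $b_n(\underline\xi)\kappa_\infty^{(n)}(\underline x)/n$ rather than the much worse $(\kappa_\infty^{(n)}(\underline x))^2/n$; the algebraic identity relating $B^{(4)}$ to the $\epsilon_{n,T}^{(m)}$ via $\Upsilon_T$ is the other delicate (though purely bookkeeping) point.
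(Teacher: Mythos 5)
Your proposal is correct and takes essentially the same route as the paper's proof: the same synchronous coupling with the one-sided Lipschitz bound \eqref{hyp:c_onesidedLip}, the same decomposition of the interaction error (your $A_{i,t}$ simply merges the paper's two Lipschitz terms $A^{(1)},A^{(2)}$, and your $B^{(1)},\dots,B^{(4)}$ are exactly the paper's $A^{(4)},A^{(3)},A^{(5)},A^{(6)}$), the same conditional-centering argument killing the off-diagonal terms in $B^{(2)}$ (using $\xi_{i,i}=0$), and the same expansion of $(\ell_{n}-\ell)\otimes(\ell_{n}-\ell)$ identifying $\int_{0}^{T}\mathbf{E}\vert B^{(4)}_{i,t}\vert^{2}\,{\rm d}t$ with $\epsilon_{n,T}^{(1,i)}-\epsilon_{n,T}^{(2,i)}-\epsilon_{n,T}^{(3,i)}$, followed by Gr\"onwall. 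The constants assemble into the stated form $C(T+e^{CT})e^{Cb_{n}(\underline{\xi})^{2}T}$ exactly as you indicate, so there is no gap.
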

\begin{proof}[Proof of Proposition~\ref{prop:control_one}]
 For all $s\leq t$, $i\in \left[n\right]$, by the one-sided Lipschitz-continuity \eqref{hyp:c_onesidedLip} of $c$,
\begin{multline*}
\left\vert \theta_{ i, s} - \bar \theta_{ i, s} \right\vert^{ 2} =  2 \int_{ 0}^{s} \left\langle \theta_{ i, u} - \bar \theta_{ i, u}\, ,\, c( \theta_{ i, u}) - c( \bar \theta_{ i, u})\right\rangle {\rm d} u \\ 
+ 2\int_{ 0}^{s}  \Bigg\langle \theta_{ i, u} - \bar \theta_{ i, u}\, ,\, \Bigg(\frac{ 1}{n} \sum_{ k=1}^{ n} \kappa_{i}^{ (n)}\xi_{ i, k} \Gamma( \theta_{ i, u}, \theta_{ k, u}) - \int W(x_{ i}, \tilde x)\Gamma(\bar \theta_{ i, u}, \tilde\theta) \nu_{ u}({\rm d} \tilde\theta, {\rm d} \tilde x)\Bigg)\Bigg\rangle{\rm d} u,\\
\leq (2 L_{ c} +1)\int_{ 0}^{s}  \left\vert \theta_{ i, u} - \bar \theta_{ i, u} \right\vert^{ 2}{\rm d} u 
+ \int_{ 0}^{s} \left\vert \frac{ 1}{n} \sum_{ k=1}^{ n} \kappa_{i}^{ (n)}\xi_{ i, k} \Gamma( \theta_{ i, u}, \theta_{ k, u}) - \int W(x_{ i}, \tilde x)\Gamma(\bar \theta_{ i, u}, \tilde\theta) \nu_{ u}({\rm d} \tilde\theta, {\rm d} \tilde x) \right\vert^{ 2}  {\rm d} u.
\end{multline*}
Taking the supremum in $s\leq t$ and the expectation w.r.t. Brownian motions and initial conditions,
\begin{multline}
\mathbf{ E} \left[ \sup_{ s\leq t}\left\vert \theta_{ i, s} - \bar \theta_{ i, s} \right\vert^{ 2}\right] \leq  (2 L_{ c}+1)\int_{ 0}^{t}\mathbf{ E} \left[ \sup_{ v\leq u}\left\vert  \theta_{ i, v} - \bar \theta_{ i, v}\right\vert^{ 2}\right] {\rm d} u \\ + \int_{0}^{t} \mathbf{ E} \left[\left\vert \frac{ 1}{n} \sum_{ k=1}^{ n} \kappa_{i}^{(n)}\xi_{ i, k} \Gamma( \theta_{ i, u}, \theta_{ k, u}) - \int W(x_{ i}, \tilde x)\Gamma(\bar \theta_{ i, u}, \tilde\theta) \nu_{ u}({\rm d} \tilde\theta, {\rm d} \tilde x) \right \vert^{ 2}\right]{\rm d} u. \label{eq:diff_thetai} 
\end{multline}
It remains to control the last term in \eqref{eq:diff_thetai}, which can be bounded from above by $ \int_{ 0}^{t}\left(6\sum_{ k=1}^{ 6}A_{ n, i, u}^{ (k)}\right){\rm d} u$, where, for $u\leq t$,
\begin{align}
A_{ n, i, u}^{ (1)}&:= \mathbf{ E} \left[\left\vert \frac{ 1}{ n} \sum_{ k=1}^{ n} \kappa_{i}^{ (n)}\xi_{ i, k}\left(\Gamma( \theta_{ i, u}, \theta_{ k, u})- \Gamma( \bar\theta_{ i, u}, \theta_{ k, u})\right)\right\vert^{ 2}\right],\\
A_{ n, i, u}^{ (2)}&:= \mathbf{ E} \left[\left\vert \frac{1}{n} \sum_{ k=1}^{ n} \kappa_{i}^{ (n)}\xi_{ i, k} \left(\Gamma( \bar\theta_{ i, u}, \theta_{ k, u}) - \Gamma( \bar\theta_{ i, u}, \bar\theta_{ k, u}) \right) \right\vert^{ 2}\right],\\
A_{ n, i, u}^{ (3)}&:= \mathbf{ E} \left[\left\vert \frac{ 1}{ n} \sum_{ k=1}^{ n} \kappa_{i}^{ (n)}\xi_{ i, k} \left(\Gamma( \bar\theta_{ i, u}, \bar\theta_{ k, u}) -  \left[ \Gamma\right]_{ u}(\bar \theta_{ i, u}, x_{ k})\right) \right\vert^{ 2}\right],\label{eq:An3}\\
A_{ n, i, u}^{ (4)}&:= \mathbf{ E} \left[\left\vert \frac{ 1}{n} \sum_{ k=1}^{ n} \kappa_{i}^{ (n)}\left(\xi_{ i, k} - W_{ n}(x_{ i}, x_{ k})\right) \left[ \Gamma\right]_{ u}(\bar \theta_{ i, u}, x_{ k}) \right\vert^{ 2}\right],\label{eq:An4}\\
A_{ n, i, u}^{ (5)}&:= \mathbf{ E} \left[\left\vert \frac{1}{ n} \sum_{ k=1}^{ n} \left(\kappa_{i}^{ (n)}W_{ n}(x_{ i}, x_{ k}) - W(x_{ i}, x_{ k})\right) \left[ \Gamma\right]_{ u}(\bar \theta_{ i, u}, x_{ k}) \right\vert^{ 2}\right],\\
A_{ n, i, u}^{ (6)}&:= \mathbf{ E} \left[\left\vert \frac{ 1}{ n} \sum_{ k=1}^{ n}W(x_{ i}, x_{ k}) \left[ \Gamma\right]_{ u}(\bar \theta_{ i, u}, x_{ k}) - \int W(x_{ i}, \tilde x)  \left[ \Gamma\right]_{ u}(\bar \theta_{ i, u}, \tilde{ x})\ell({\rm d} \tilde{ x}) \right\vert^{ 2}\right],\label{eq:An6}
\end{align}
where we recall the definition of $\left[ \Gamma\right]_{ u}$ in \eqref{eq:G}. Note as this point that, $d_{ n, t}$ defined in \eqref{hyp:Delta_n_2} is such that $ d_{ n, t}=  \sup_{ i\in \left[n\right]}\int_{ 0}^{t} A_{ n, i, u}^{ (4)} {\rm d} u$. Among the other terms, $A_{ n}^{ (1)}$ and $A_{ n}^{ (2)}$ capture the approximation of the particle system $( \theta_{ i})_{ i}$ by its mean-field limit $( \bar\theta_{ i})_{ i}$ and $A_{ n}^{ (3)}$ relates the empirical measure of the mean-field particle system to its deterministic limit $ \nu$.  The convergence of $(W_{ n}(x_{ i}, x_{ j}))_{ i, j}$ to the macroscopic kernel $(W(x_{ i}, x_{ j}))_{ i, j}$ is controlled by $A_{ n}^{ (5)}$. By the Lipschitz continuity of $ \Gamma$ \eqref{hyp:Gamma_Lip_1}, we have (recall the definition of $b_{ n}$ in \eqref{hyp:bn_xi}),
\begin{align*}
A_{ n, i, u}^{ (1)} &\leq L_{ \Gamma}^{ 2} b_{ n}( \underline{ \xi})^{ 2}\sup_{ r\in \left[n\right]}\mathbf{ E} \left[\sup_{ v\in[0, u]}\left\vert \theta_{ r, v} - \bar \theta_{ r, v} \right\vert^{ 2} \right].
\end{align*}
In a same way, by \eqref{hyp:Gamma_Lip_1},
\begin{align*}
A_{ n, i, u}^{ (2)} &\leq L_{ \Gamma}^{ 2}  \mathbf{ E}\left[\frac{1}{n} \sum_{ k=1}^{ n} \kappa_{i}^{ (n)} \xi_{ i, k} \left\vert \theta_{ k, u}- \bar \theta_{ k, u} \right\vert\right]^{ 2}= \frac{L_{ \Gamma}^{ 2}}{n^{ 2}} \sum_{ k, l=1}^{ n} \left(\kappa_{i}^{ (n)}\right)^{ 2}\xi_{ i, k}\xi_{ i, l} \mathbf{ E}\left[\left\vert \theta_{ k, u}- \bar \theta_{ k, u} \right\vert \left\vert \theta_{ l, u}- \bar \theta_{ l, u} \right\vert\right], \\
&\leq \frac{L_{ \Gamma}^{ 2}}{2 n^{ 2}} \sum_{ k, l=1}^{ n} \left(\kappa_{i}^{ (n)}\right)^{ 2}\xi_{ i, k}\xi_{ i, l} \mathbf{ E}\left[\left\vert \theta_{ k, u}- \bar \theta_{ k, u} \right\vert^{ 2} + \left\vert \theta_{ l, u}- \bar \theta_{ l, u} \right\vert^{ 2}\right]\leq L_{ \Gamma}^{ 2}  b_{ n}( \underline{ \xi})^{ 2} \sup_{ r\in \left[n\right]}\mathbf{ E} \left[\sup_{ v\in[0, u]}\left\vert \theta_{ r, v} - \bar \theta_{ r, v} \right\vert^{ 2}\right].
\end{align*} 
Concerning the term $A^{ (3)}$, denote by (recall \eqref{eq:G})
\begin{equation}
\label{eq:Delta}
\delta \Gamma( \bar\theta_{ i,u}, \bar \theta_{ k, u}):= \left(\Gamma( \bar\theta_{ i, u}, \bar\theta_{ k, u}) -  \left[ \Gamma\right]_{ u}(\bar \theta_{ i, u}, x_{ k})\right)= \left(\Gamma( \bar\theta_{ i, u}, \bar\theta_{ k, u}) - \int \Gamma( \bar\theta_{ i, u}, \tilde\theta) \nu_{ u}^{x_{ k}}({\rm d} \tilde\theta)\right).
\end{equation} 
One has in particular, using \eqref{hyp:Gamma_bound} and Remark~\ref{rem:apriori_nonlin},
\begin{align}
\mathbf{ E}\left[\left\vert \delta \Gamma( \bar\theta_{ i,u}, \bar \theta_{ k, u}) \right\vert^{ 2}\right] &\leq 2 \mathbf{ E} \left[\left\vert \Gamma( \bar\theta_{ i, u}, \bar\theta_{ k, u}) \right\vert^{ 2}\right] +2 \mathbf{ E}  \left[\int \left\vert \Gamma( \bar\theta_{ i, u}, \tilde\theta) \right\vert \nu_{ u}^{x_{ k}}({\rm d} \tilde\theta)\right]^{ 2}, \nonumber\\
&\leq 2 L_{ \Gamma}^{ 2} \mathbf{ E}\left[1+ \left\vert \bar\theta_{ i, u} \right\vert + \left\vert \bar \theta_{ k, u} \right\vert\right]^{ 2} + 2 L_{ \Gamma}^{ 2} \mathbf{ E}\left[1+ \left\vert \bar\theta_{ i, u} \right\vert + \mathbf{ E}\left\vert \bar \theta_{ k, u} \right\vert\right]^{ 2},\nonumber\\
&\leq 12 L_{ \Gamma}^{ 2} \left(1+\mathbf{ E}\left[\left\vert \bar\theta_{ i, u} \right\vert^{ 2}\right] + \mathbf{ E} \left[\left\vert \bar \theta_{ k, u} \right\vert^{ 2}\right]\right),\nonumber\\
&\leq 12 L_{ \Gamma}^{ 2}\left(1+ 2\sup_{ r\in \left[n\right]}\mathbf{ E} \left[\sup_{ v\leq u}\left\vert \bar\theta_{ r, v} \right\vert^{ 2}\right]\right)\leq 12 L_{ \Gamma}^{ 2} \left(1+ 2C_{ 0}\right),\label{eq:Esp_Delta}
\end{align}
for $C_{ 0}$ given by \eqref{eq:apriori_bound_nonlin}. Thus,
\begin{align*}
A_{ n, i, u}^{ (3)} &\leq  \mathbf{ E} \left[\left\vert \frac{ 1}{ n} \sum_{ k=1}^{ n} \kappa_{i}^{ (n)}\xi_{ i, k}\delta \Gamma( \bar\theta_{ i,u}, \bar \theta_{ k, u}) \right\vert^{ 2}\right],\\
&=\frac{  \left(\kappa_{i}^{ (n)}\right)^{ 2}}{n^{ 2}} \sum_{ k=1}^{ n} \xi_{ i, k} \mathbf{ E} \left[ \left\vert \delta \Gamma( \bar\theta_{ i,u}, \bar \theta_{ k, u}) \right\vert^{ 2}\right]+ \frac{ \left(\kappa_{i}^{ (n)}\right)^{ 2}}{ n^{ 2}} \sum_{ k\neq l}^{ n} \xi_{ i, k} \xi_{ i, l} \mathbf{ E} \left[ \left\langle \delta \Gamma( \bar\theta_{ i,u}, \bar \theta_{ k, u})\, ,\, \delta \Gamma( \bar\theta_{ i,u}, \bar \theta_{ l, u})\right\rangle\right].
\end{align*}
Since we have supposed that $ \xi_{ i,i}=0$, one can suppose that $ k\neq i$ and $l\neq i$ in the second sum and conditioning by $ \mathcal{ F}_{ k}:=\sigma( \bar \theta_{ r, u}, r\neq k)$ gives
\begin{align*}
\mathbf{ E} \left[ \left\langle  \delta \Gamma( \bar\theta_{ i, u}, \bar \theta_{ k, u})\, ,\, \delta \Gamma( \bar\theta_{ i, u}, \bar \theta_{ l, u})\right\rangle\right] &= \mathbf{ E} \left[ \mathbf{ E} \left[ \left\langle \delta \Gamma( \bar\theta_{ i, u}, \bar \theta_{ k, u})\, ,\, \delta \Gamma( \bar\theta_{ i, u}, \bar \theta_{ l, u})\right\rangle\vert \mathcal{ F}_{ k}\right]\right],\\
&=\mathbf{ E} \left[ \left\langle \delta \Gamma( \bar\theta_{ i, u}, \bar \theta_{ l, u})\, ,\, \mathbf{ E} \left[\delta \Gamma( \bar\theta_{ i, u}, \bar \theta_{ k, u})\vert \mathcal{ F}_{ k}\right]\right\rangle\right]=0,
\end{align*} by definition of $ \delta \Gamma(\bar \theta_{ i, u}, \bar \theta_{ k, u})$. Consequently, by \eqref{eq:Esp_Delta} (recall the definition of $ \kappa_{\infty}^{ (n)}( \underline{ x})$ in \eqref{hyp:alphan_VS_an}), 
\begin{align*}
A_{ n, i, u} ^{ (3)} &\leq 12 L_{ \Gamma}^{ 2} \left(1+ 2C_{ 0}\right) \left(\frac{ \left(\kappa_{i}^{ (n)}\right)^{ 2}}{ n^{ 2}} \sum_{ k=1}^{ n} \xi_{ i, k} \right)\leq 12 L_{ \Gamma}^{ 2} \left(1+ 2C_{ 0}\right) b_{ n}( \underline{ \xi}) \frac{ \kappa_{\infty}^{(n)}( \underline{ x})}{ n}.
\end{align*}
Concerning the term $A_{ n}^{ (5)}$, using \eqref{eq:bound_Gamma_u} and \eqref{eq:apriori_bound_nonlin} (recall the definition of $ \delta_{ n}( \underline{ x})$ in \eqref{hyp:Delta_n_1}), we have
\begin{align*}
A_{ n, i, u}^{ (5)} &\leq  3 L_{ \Gamma}^{ 2}\left(1+ 2C_{ 0}\right) \left(\frac{1}{ n} \sum_{ k=1}^{ n} \left\vert \kappa_{i}^{ (n)}W_{ n}(x_{ i}, x_{ k}) - W(x_{ i}, x_{ k})\right\vert\right)^{ 2}=3 L_{ \Gamma}^{ 2}\left(1+ 2C_{ 0}\right) \delta_{ n}( \underline{ x})^{ 2}.
\end{align*}
Finally, let us control the last term $A_{ n}^{ (6)}$: using the shortcut
\begin{equation}
\overline{[ \Gamma W]}_{ u}(\theta, x, y)=W(x, y) [ \Gamma]_{ u}( \theta, y) - \int W(x, z) [ \Gamma]_{ u}(\theta, z)\ell({\rm d} z)
\end{equation}
we have (recall the definition of $\Upsilon_{ t}$ in \eqref{eq:upsilon_t}) 
\begin{align*}
\int_{ 0}^{ t}&A_{ n, u}^{ (6)} {\rm d}u= \int_{ 0}^{t}\int\mathbf{ E} \left[\left\langle \overline{[ \Gamma W]}_{ u}(\bar\theta_{ i, u}, x_{ i}, y)\, ,\, \overline{[ \Gamma W]}_{ u}(\bar\theta_{ i, u}, x_{ i}, z)\right\rangle\right]\ell_{ n}({\rm d}y)\ell_{ n}({\rm d}z) {\rm d}u\\
&= \int W(x_{ i}, y)W(x_{ i}, z) \Upsilon_{ t}(x_{ i}, y, z)\ell_{ n}({\rm d}y)\ell_{ n}({\rm d}z)+\int W(x_{ i}, y) W(x_{ i}, z) \Upsilon_{ t}(x_{ i}, y, z) \ell({\rm d} y)\ell({\rm d} z)\\
&- \int W(x_{ i}, y)W(x_{ i}, z) \Upsilon_{ t}(x_{ i}, y, z) \ell_{ n}({\rm d}y)\ell({\rm d} z)- \int W(x_{ i}, z) W(x_{ i}, y) \Upsilon_{ t}(x_{ i}, y, z) \ell({\rm d} y)\ell_{ n}({\rm d}z).
\end{align*} 
So that (recall the definition of the $\epsilon_{ n, T}^{ (m, i)}( \underline{ x})$ in \eqref{hyp:epsilon_i_123}),
\begin{align*}
\sup_{ i\in \left[n\right]} \int_{ 0}^{t}A_{ n, u}^{ (6)} {\rm d}u\leq \sum_{ m=1}^{ 3}  \epsilon_{ n, t}^{ (m)}( \underline{ x}).
\end{align*}
Define now
\begin{equation}
f_{ t}:= \sup_{ i\in \left[n\right]}\mathbf{ E} \left[ \sup_{ s\leq t}\left\vert \theta_{ i, s} - \bar \theta_{ i, s} \right\vert^{ 2}\right], t\leq T.
\end{equation}
Taking the supremum on $ i\in \left[n\right]$ and gathering all the previous estimates in \eqref{eq:diff_thetai} gives, for some constant $C=C(\Gamma, c, W, \sigma, \nu_{ 0})>0$,
\begin{align*}
f_{ t} &\leq  C b_{ n}( \underline{ \xi})^{ 2}\int_{ 0}^{t} f_{ u}{\rm d} u + C\left(t + e^{ Ct}\right) \left(b_{ n}( \underline{ \xi})\frac{ \kappa_{\infty}^{(n)}( \underline{ x})}{ n} + \delta_{ n}( \underline{ x})^{ 2}\right) + d_{ n, t}( \underline{ \xi}, \underline{ x}) + \sum_{ m=1}^{ 3}\epsilon_{ n, t}^{ (m)}(\underline{ x}),\ t\in[0, T].
\end{align*}
An application of Gr\"onwall's Lemma gives the conclusion. This proves Proposition~\ref{prop:control_one}.
\end{proof}
At this point of the proof, in \eqref{cln:control_one}, $ \delta_{ n}( \underline{ x})$ and $\sum_{ m=1}^{ 3} \epsilon_{ n, T}^{ (m)}( \underline{ x})$ go to $0$ as $n\to\infty$, by hypothesis. The point now is to prove that the two remaining terms in \eqref{cln:control_one} (that depend on the realization of the connectivity sequence $ \underline{ \xi}$) are such that, first, $b_{ n}( \underline{ \xi})$ is bounded and second, that $d_{ n, T}( \underline{ \xi}, \underline{ x})$ goes to $0$ as $n\to\infty$, almost surely. This is the purpose of Proposition~\ref{prop:control_Delta_2} below. The following concentration estimate may be found in \cite{Dembo1998}, Corollary 2.4.7:
\begin{lemma}
\label{lem:concentration_dembo}
Fix $n\geq 1$ and $(Y_{ l})_{ l=1, \ldots, n}$ real valued random variables defined on a probability space $( \Omega, \mathcal{ F}, \mathbb{ P})$. Suppose that there exists $v>0$ such that, almost surely, for all $l=1, \ldots, n-1$, $Y_{ l}\leq 1$, $ \mathbb{ E} \left[Y_{ l+1}\vert Y_{ l}\right]= 0$ and $ \mathbb{ E} \left[Y_{l+1}^{ 2}\vert Y_{ l}\right] \leq v$. Then for all $x\geq0$,
\begin{equation}
\mathbb{ P} \left(n^{ -1} (Y_{ 1}+ \ldots + Y_{ n}) \geq x\right) \leq \exp \left(-n H\left( \frac{ x+v}{ 1+v} \Big\vert \frac{ v}{ 1+v}\right)\right),
\end{equation}
where $H(p\vert q)= p \log(p/q) + (1-p) \log((1-p)/(1-q))$, for $p, q\in [0, 1]$.
\end{lemma}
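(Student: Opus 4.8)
The plan is a Chernoff bound powered by an optimal one-step exponential moment estimate. Write $S_n := Y_1 + \cdots + Y_n$ and $\mathcal{F}_l := \sigma(Y_1, \ldots, Y_l)$, with $\mathcal{F}_0$ trivial; the hypotheses on $\mathbb{E}[Y_{l+1}\mid Y_l]$ will be used along this filtration, which is costless in all the applications, where the $Y_l$ are conditionally independent. For $\lambda \geq 0$, Markov's inequality gives $\mathbb{P}(n^{-1}S_n \geq x) \leq e^{-\lambda n x}\,\mathbb{E}[e^{\lambda S_n}]$, so the task reduces to bounding $\mathbb{E}[e^{\lambda S_n}]$ by $M(\lambda)^n$ for a suitable constant $M(\lambda)$ and then optimising over $\lambda$.

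The heart of the argument will be the one-step bound
\[
\mathbb{E}\big[e^{\lambda Y_{l+1}}\,\big|\,\mathcal{F}_l\big] \;\leq\; M(\lambda) \;:=\; \frac{v}{1+v}\,e^{\lambda} + \frac{1}{1+v}\,e^{-\lambda v},
\]
which I would obtain from the following elementary fact: the parabola $q(y) := \alpha + \beta y + \gamma y^2$ that is tangent to $y \mapsto e^{\lambda y}$ at $y = -v$ and passes through $(1, e^{\lambda})$ has $\gamma \geq 0$ and dominates $e^{\lambda y}$ on all of $(-\infty, 1]$. To check the domination, set $h := q - e^{\lambda\,\cdot}$: then $h(-v) = h'(-v) = 0$ and $h(1) = 0$, while $h'' = 2\gamma - \lambda^2 e^{\lambda\,\cdot}$ is strictly decreasing; a one-line estimate (reducing to $e^{t} > 1 + t + t^2/2$ for $t := \lambda(1+v) > 0$) shows $h''(-v) > 0$, so $h$ is convex up to a single inflection point and concave thereafter, and the three vanishing conditions then force $h \geq 0$ on $(-\infty, 1]$. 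Integrating $q(Y_{l+1}) \geq e^{\lambda Y_{l+1}}$ against $\mathbb{E}[\cdot\mid\mathcal{F}_l]$ and using $\mathbb{E}[Y_{l+1}\mid\mathcal{F}_l] = 0$, $\mathbb{E}[Y_{l+1}^2\mid\mathcal{F}_l] \leq v$ and $\gamma \geq 0$ gives $\mathbb{E}[e^{\lambda Y_{l+1}}\mid\mathcal{F}_l] \leq \alpha + \gamma v$; and since $q$ meets $e^{\lambda\,\cdot}$ at the two points $1$ and $-v$, evaluating both functions against the two-point law $\tfrac{v}{1+v}\delta_{1} + \tfrac{1}{1+v}\delta_{-v}$ — which has mean $0$ and second moment $v$ — identifies $\alpha + \gamma v$ with $M(\lambda)$.

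Iterating the one-step bound by conditioning successively on $\mathcal{F}_{n-1}, \mathcal{F}_{n-2}, \ldots, \mathcal{F}_0$ yields $\mathbb{E}[e^{\lambda S_n}] \leq M(\lambda)^n$, hence $\mathbb{P}(n^{-1}S_n \geq x) \leq \exp\!\big({-}n(\lambda x - \log M(\lambda))\big)$ for every $\lambda \geq 0$. It then remains to evaluate $\sup_{\lambda \geq 0}(\lambda x - \log M(\lambda))$. Since $M$ is the moment generating function of a variable taking the value $1$ with probability $p := \tfrac{v}{1+v}$ and $-v$ with probability $1-p$, the classical Cramér computation for a two-valued law applies: for $0 \leq x < 1$ the supremum is attained at $\lambda^{\ast} = \tfrac{1}{1+v}\log\tfrac{v+x}{v(1-x)} \geq 0$ and equals $H\!\big(\tfrac{x+v}{1+v}\,\big|\,p\big) = H\!\big(\tfrac{x+v}{1+v}\,\big|\,\tfrac{v}{1+v}\big)$, while for $x \geq 1$ the left-hand probability is $0$ and the bound is vacuous. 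Substituting $\lambda^{\ast}$ into the Chernoff estimate gives the claim. The one genuinely delicate point is the \emph{sharp} one-step bound above: replacing it by the cruder $e^{\lambda y} \leq 1 + \lambda y + (e^{\lambda}-1-\lambda)y^2$ (valid for $y \leq 1$) would yield a Bennett-type exponent rather than the relative-entropy rate $H(\cdot\,|\,\cdot)$, so the tangent-parabola construction — equivalently, a direct two-point extremal analysis of $\sup\{\mathbb{E}[e^{\lambda Y}] : Y \leq 1,\ \mathbb{E}Y = 0,\ \mathbb{E}Y^2 \leq v\}$ — is exactly what is needed to reach the stated constant.
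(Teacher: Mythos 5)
Your proof is correct. Note that the paper does not prove this lemma at all: it is quoted verbatim from Dembo--Zeitouni, Corollary 2.4.7, so there is no internal argument to compare with, and your write-up is essentially the classical proof of that corollary. The Chernoff reduction, the identification of $M(\lambda)=\frac{v}{1+v}e^{\lambda}+\frac{1}{1+v}e^{-\lambda v}$ as the extremal one-step conditional exponential moment, and the Legendre computation giving $\lambda^{\ast}=\frac{1}{1+v}\log\frac{x+v}{v(1-x)}$ with value $H\left(\frac{x+v}{1+v}\,\middle|\,\frac{v}{1+v}\right)$ all check out; for the tangent-parabola step, with $t=\lambda(1+v)$ one finds $\gamma=(1+v)^{-2}e^{-\lambda v}\left(e^{t}-1-t\right)\ge 0$ and $h''(-v)=(1+v)^{-2}e^{-\lambda v}\left(2(e^{t}-1-t)-t^{2}\right)>0$, so your convex-then-concave argument together with $h(-v)=h'(-v)=h(1)=0$ does force $h\ge 0$ on $(-\infty,1]$, and the evaluation of $\alpha+\gamma v$ against the two-point law at $\{1,-v\}$ is a clean way to get the constant. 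Two small remarks. First, as in Dembo--Zeitouni, the hypotheses must be read as conditioning on the full past $\mathcal{F}_{l}=\sigma(Y_{1},\dots,Y_{l})$, with $\mathcal{F}_{0}$ trivial (so $\mathbb{E}Y_{1}=0$, $\mathbb{E}Y_{1}^{2}\le v$, and $Y_{n}\le 1$ as well); conditioning only on $Y_{l}$, as the statement literally says, would not support the tower argument. You flag this, and in the paper's applications the $Y_{l}$ are independent, so nothing is lost. Second, at $x=1$ the event $\{n^{-1}S_{n}\ge 1\}$ need not be null (take all $Y_{l}$ equal to the extremal two-point variable), so ``vacuous'' is not quite right there; but the bound still follows from your Chernoff estimate by letting $\lambda\to\infty$, since $e^{-\lambda}M(\lambda)\to\frac{v}{1+v}$ and $H\left(1\,\middle|\,\frac{v}{1+v}\right)=\log\frac{1+v}{v}$. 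Only for $x>1$ is the probability genuinely zero.
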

Using this result, one can prove the following
\begin{lemma}
\label{lem:large_deviations}
Fix $n\geq1$, $ \kappa_{ n}>0$, $(p_{ 1}, \ldots, p_{ n})$ in $[0, 1]$ and a sequence $(v_{ 1}, \ldots, v_{ n})$ such that $ \left\vert v_{ l} \right\vert \leq 1$ for all $l\in \left[n\right]$. Suppose that there exist $\kappa_{ n}>0$ and $w_{ n}\in(0, 1]$, $n\geq1$ satisfying \eqref{hyp:compare_wn_alphan} and \eqref{hyp:alpha_n_infty}  such that $0< \kappa_{ n}< \kappa_{ n}$, $p_{ l}\leq w_{ n}$ for all $l\in \left[n\right]$. If $ (U_{ 1}, \ldots, U_{ n})$ are independent random variable with $ U_{ l}\sim \mathcal{ B}(p_{ l})$ for all $l\in \left[n\right]$, we have the following estimate
\begin{equation}
\label{eq:LD_gen}
\mathbb{ P} \left(\left\vert \frac{ \kappa_{ n}}{ n} \sum_{ l=1}^{ n} ( U_{ l}- p_{ l}) v_{ l} \right\vert > \varepsilon_{ n}\right) \leq 2\exp \left(-16\log(n) B \left( 4 \sqrt{ 2}\left(\frac{\log(n)}{ n w_{ n}}\right)^{ 1/2} \right)\right),
\end{equation}
where
\begin{equation}
\label{eq:func_B}
B(u):= u^{ -2} \left[(1+u)\log(1+u)-u\right]
\end{equation}
for the choice of
\begin{equation}
\label{eq:xve_n}
\varepsilon_{ n}^{ 2}:= 32 \frac{ \kappa_{ n}^{ 2} w_{ n}}{ n} \log(n),
\end{equation}
which goes to $0$ as $n\to\infty$, by \eqref{hyp:alpha_n_infty}.
\end{lemma}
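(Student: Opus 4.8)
The plan is to recognize $\frac{\kappa_n}{n}\sum_l (U_l-p_l)v_l$ as a normalized sum of centered, independent, bounded random variables, apply the Bennett-type concentration estimate of Lemma~\ref{lem:concentration_dembo}, and then translate the relative-entropy rate function it produces into the Bennett function $h(u):=(1+u)\log(1+u)-u=u^2 B(u)$ (recall \eqref{eq:func_B}).

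Concretely, I would set $Y_l:=(U_l-p_l)v_l$ and $S_n:=\sum_{l=1}^n Y_l$. Since $U_l\in\{0,1\}$ and $|v_l|\leq 1$, one has $|Y_l|\leq\max(p_l,1-p_l)|v_l|\leq 1$ almost surely, and by independence $\mathbb{E}[Y_{l+1}\mid Y_l]=0$, $\mathbb{E}[Y_{l+1}^2\mid Y_l]=v_{l+1}^2 p_{l+1}(1-p_{l+1})\leq p_{l+1}\leq w_n$; so Lemma~\ref{lem:concentration_dembo} applies with $v=w_n$, both to $(Y_l)_l$ and to $(-Y_l)_l$. Writing $x:=\varepsilon_n/\kappa_n=(32\,w_n\log n/n)^{1/2}$ (note $x\leq\varepsilon_n\to 0$ since $\kappa_n\geq 1$ by \eqref{hyp:compare_wn_alphan} and \eqref{hyp:alpha_n_infty}, so one may assume $x<1$, the bound being trivial otherwise as $|n^{-1}S_n|\leq 1$), the event $\{|\frac{\kappa_n}{n}S_n|>\varepsilon_n\}$ equals $\{|n^{-1}S_n|>x\}\subseteq\{n^{-1}S_n\geq x\}\cup\{n^{-1}(-S_n)\geq x\}$, and a union bound over the two tails gives
\[
\mathbb{P}\!\left(\left| \frac{\kappa_n}{n}\sum_{l=1}^n (U_l-p_l)v_l \right| > \varepsilon_n\right) \leq 2\exp\!\left(-n\,H\!\left(\frac{x+w_n}{1+w_n}\,\Big|\,\frac{w_n}{1+w_n}\right)\right).
\]

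It then remains to lower-bound the exponent. Setting $u:=x/w_n$ and using $H(p\mid q)=p\log(p/q)+(1-p)\log\frac{1-p}{1-q}$ with $p=\frac{x+w_n}{1+w_n}$, $q=\frac{w_n}{1+w_n}$ (so that $p/q=1+u$, $1-p=\frac{1-x}{1+w_n}$, $\frac{1-p}{1-q}=1-x$), one computes the identity
\[
(1+w_n)\,H\!\left(\frac{x+w_n}{1+w_n}\,\Big|\,\frac{w_n}{1+w_n}\right) = w_n(1+u)\log(1+u)+(1-x)\log(1-x).
\]
Subtracting $w_n h(u)=w_n(1+u)\log(1+u)-x$ leaves $g(x):=(1-x)\log(1-x)+x$, which is $\geq 0$ on $[0,1)$ because $g(0)=0$ and $g'(x)=-\log(1-x)\geq 0$. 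Hence $H(\cdots)\geq\frac{w_n}{1+w_n}h(x/w_n)\geq\frac{w_n}{2}h(x/w_n)$ (using $w_n\leq 1$), so that $n\,H(\cdots)\geq\frac{nx^2}{2w_n}B(x/w_n)$; plugging in $x^2=32 w_n\log n/n$ turns $\frac{nx^2}{2w_n}$ into $16\log n$ and $x/w_n$ into $4\sqrt{2}\,(\log n/(n w_n))^{1/2}$, which is exactly \eqref{eq:LD_gen}.

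The one genuinely delicate step is this last comparison between the Cramér/relative-entropy exponent coming from Lemma~\ref{lem:concentration_dembo} and the Bennett exponent $u^2 B(u)$; everything else is a direct substitution, and it is worth noting that no monotonicity of $B$ is needed, since the argument of $B$ comes out of the computation as an exact identity rather than an inequality.
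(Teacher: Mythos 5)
Your proof is correct and follows essentially the same route as the paper: center and normalize the summands, apply Lemma~\ref{lem:concentration_dembo} to $(Y_l)$ and $(-Y_l)$ with $v=w_n$, and then convert the relative-entropy exponent into the Bennett exponent before substituting $\varepsilon_n$. The only difference is that where the paper invokes the inequality \eqref{aux:ineq_H} cited from Dembo--Zeitouni, you derive it directly (in the regime $w_n\leq 1$) via the identity $(1+w_n)H=w_n(1+u)\log(1+u)+(1-x)\log(1-x)$ and the positivity of $(1-x)\log(1-x)+x$, which makes the argument self-contained and yields exactly the same bound.
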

\begin{proof}[Proof of Lemma~\ref{lem:large_deviations}]
 Fix $n\geq1$, $ \kappa_{ n}>0$, $(p_{ l})_{ l\in \left[n\right]}$, $(v_{ l})_{ l\in \left[n\right]}$ and $( U_{ l})_{ l\in \left[n\right]}$ previously defined. Let $Y_{l}:=( U_{ l}- p_{ l})v_{ l}$. $(Y_{1}, \ldots, Y_{n})$ are independent random variables such that for all $l\in \left[n\right]$, $ \mathbb{ E}[Y_{l}]=0$ and $ \mathbb{ E} \left[Y_{l}^{ 2}\right]\leq w_{ n}$. Then, for all $ \varepsilon>0$, by Lemma~\ref{lem:concentration_dembo},
\begin{align*}
\mathbb{ P} \left( \frac{ \kappa_{ n}}{ n} \sum_{ l=1}^{ n} ( U_{ l}- p_{ l}) v_{ l} > \varepsilon\right)&\leq \exp \left(-n H \left( \frac{ \varepsilon \kappa_{ n}^{ -1}+ w_{ n}}{ 1+ w_{ n}}\Big \vert \frac{ w_{ n}}{ 1+ w_{ n}}\right)\right)\leq \exp \left(-\frac{ n \varepsilon^{ 2}}{ 2w_{ n} \kappa_{ n}^{ 2}} B \left( \frac{\varepsilon }{ w_{ n} \kappa_{ n}}\right)\right),
\end{align*}
where we used the inequality (\cite{Dembo1998}, Exercise~2.4.21)
\begin{equation}
\label{aux:ineq_H}
H \left( \frac{ x+v}{ 1+v} \Big\vert \frac{ v}{ 1+v}\right) \geq \frac{ x^{ 2}}{ 2v} B \left( \frac{ x}{ v}\right),\ x,v>0.
\end{equation}
Since $ u \mapsto u^{ 2}B(u)$ is nondecreasing, $\exp \left(-\frac{ n \varepsilon^{ 2}}{ 2w_{ n} \kappa_{ n}^{ 2}} B \left( \frac{\varepsilon }{ w_{ n} \kappa_{ n}}\right)\right)\leq \exp \left(-\frac{ n \varepsilon^{ 2}}{ 2w_{ n} \kappa_{ n}^{ 2}} B \left( \frac{\varepsilon }{ w_{ n} \kappa_{ n}}\right)\right)$, so that, for the choice of $ \varepsilon= \varepsilon_{ n}$ defined by \eqref{eq:xve_n}, we have
\begin{align*}
\mathbb{ P} \left(\frac{ \kappa_{ n}}{ n} \sum_{ l=1}^{ n} ( U_{ l}- p_{ l}) v_{ l} > \varepsilon_{ n}\right) &\leq \exp \left(- 16 \log(n) B \left( 4 \sqrt{ 2}\left(\frac{\log(n)}{ n w_{ n}}\right)^{ 1/2} \right)\right)
\end{align*}
Doing the same for the sequence $(- v_{ l})_{ l\in \left[n\right]}$, one obtains \eqref{eq:LD_gen}. This proves Lemma~\ref{lem:large_deviations}.\end{proof}
Theorem~\ref{theo:conv_general} is an immediate consequence of the following result:
\begin{proposition}
\label{prop:control_Delta_2}
Suppose that the hypotheses of Section~\ref{sec:general_assumptions_nonlin} and Section~\ref{sec:general_prop_chaos} (namely   \eqref{hyp:compare_wn_alphan} and \eqref{hyp:alpha_n_infty}) are true. Suppose that the sequence of positions $ \mathcal{ X}$ is such that $ \delta_{ n}( \underline{ x})$ goes to $0$ as $n\to \infty$.
There exist a deterministic sequence $( \varepsilon_{ n})_{ n\geq1}$ with $ \varepsilon_{ n}\to0$ as $n\to\infty$, a constant $C>0$ such that for all $T>0$, there is an event $ \mathcal{ O}\in \mathcal{ F}$ with $ \mathbb{ P}( \mathcal{ O})=1$ such that the following is true: for every $ \omega\in \mathcal{ O}$, there exists $n_{ 0}<+\infty$, such that for all $n\geq n_{ 0}$,
\begin{align}
b_{ n}( \underline{ \xi}(\omega)) & \leq 2 + \sup_{ i\in \left[n\right]} \frac{ 1}{ n} \sum_{ k=1}^{ n} W(x_{ i}, x_{ k}),\label{eq:control_sum_xi}\\
d_{ n, T}( \underline{ \xi}(\omega), \underline{ x})&\leq \varepsilon_{ n} C \left(T + e^{ CT}\right) \left( 1 + \sup_{ i\in \left[n\right]} \frac{ 1}{ n} \sum_{ k=1}^{ n} W(x_{ i}, x_{ k})\right).\label{eq:control_int_An4}
\end{align}
\end{proposition}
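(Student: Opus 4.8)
Both bounds \eqref{eq:control_sum_xi} and \eqref{eq:control_int_An4} will be deduced from the scalar concentration estimate of Lemma~\ref{lem:large_deviations} combined with a Borel--Cantelli argument, applied with $\kappa_n$ in place of the true multipliers $\kappa_i^{(n)}$ (legitimate since $\kappa_i^{(n)}\leq\kappa_n$ by \eqref{hyp:alphan_VS_an}). The preliminary observation is that \eqref{hyp:compare_wn_alphan} and \eqref{hyp:alpha_n_infty} force $\kappa_n=o(n/\log n)$, hence $nw_n/\log(n)\to\infty$, so that $\log(n)/(nw_n)\to0$ and, $B$ being continuous with $B(0^+)=\tfrac12$, the rate in \eqref{eq:LD_gen} satisfies $2\exp(-16\log(n)B(4\sqrt2(\log(n)/(nw_n))^{1/2}))\leq 2n^{-4}$ for $n$ large; moreover $\varepsilon_n^2=32\kappa_n^2w_n\log(n)/n\geq 32\log(n)/n$ since $\kappa_n^2w_n\geq\kappa_n\geq1$, so $1/\varepsilon_n$ is at most polynomial in $n$.

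For \eqref{eq:control_sum_xi}, write $\frac1n\sum_k\kappa_i^{(n)}\xi_{i,k}=\frac1n\sum_k\kappa_i^{(n)}(\xi_{i,k}-W_n(x_i,x_k))+\frac1n\sum_k\kappa_i^{(n)}W_n(x_i,x_k)$; the second term is at most $\delta_n(\underline x)+\frac1n\sum_{k=1}^{n} W(x_i,x_k)$ by \eqref{hyp:Delta_n_1}, and $\delta_n(\underline x)\to0$ by assumption. For the first term, Lemma~\ref{lem:large_deviations} with $v_k\equiv1$ and $p_k=W_n(x_i,x_k)\leq w_n$ gives $\mathbb P(|\frac1n\sum_k\kappa_i^{(n)}(\xi_{i,k}-W_n(x_i,x_k))|>\varepsilon_n)\leq 2n^{-4}$; a union bound over $i\in[n]$ and Borel--Cantelli then yield, almost surely, $\sup_{i\in[n]}|\frac1n\sum_k\kappa_i^{(n)}(\xi_{i,k}-W_n(x_i,x_k))|\leq\varepsilon_n\leq 1$ for $n$ large. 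Adding the two estimates gives \eqref{eq:control_sum_xi}.

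The bound \eqref{eq:control_int_An4} is harder: in $d_{n,T}(\underline\xi,\underline x)=\sup_{i\in[n]}\int_0^T A_{n,i,u}^{(4)}{\rm d}u$ the coefficients $[\Gamma]_u(\bar\theta_{i,u},x_k)$ in \eqref{eq:An4} are random (through $\bar\theta_{i,u}$, which is however independent of the graph) and unbounded, so Lemma~\ref{lem:large_deviations} is not directly applicable. The plan is a discretization argument. For fixed $\theta\in\mathbb R^d$, $u\leq T$ and each coordinate, $\frac1n\sum_k\kappa_i^{(n)}(\xi_{i,k}-W_n(x_i,x_k))[\Gamma]_u(\theta,x_k)$ is, given the graph, a deterministic sum with weights bounded by $C(1+|\theta|)$ (by \eqref{hyp:Gamma_bound} and the a priori bound \eqref{eq:apriori_bound_nonlin}), so Lemma~\ref{lem:large_deviations} gives a deviation of size $C(1+|\theta|)\varepsilon_n$. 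One then discretizes $(\theta,u)$: $\theta$ over a ball $\{|\theta|\leq R_n\}$, $R_n$ a suitable positive power of $1/\varepsilon_n$, with mesh of order $\varepsilon_n$, and $u$ over $[0,T]$ with polynomially fine mesh, using the Lipschitz continuity of $\theta\mapsto[\Gamma]_u(\theta,x)$ from \eqref{hyp:Gamma_Lip_1} and the (H\"older) continuity of $u\mapsto[\Gamma]_u(\theta,x)$ coming from the regularity of the nonlinear flow (cf.\ the estimates behind Proposition~\ref{prop:PDE_wellposed} and Lemma~\ref{lem:regul_Upsilon}). Since $1/\varepsilon_n$ is polynomial in $n$, both grids have polynomially many points, so a union bound over $i\in[n]$ and all grid points together with Borel--Cantelli shows that almost surely the deviation bound holds simultaneously at every grid point for $n$ large. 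Interpolating to arbitrary $(\theta,u)$ with $|\theta|\leq R_n$ costs an extra term bounded by the mesh times $\frac1n\sum_k\kappa_i^{(n)}(\xi_{i,k}+W_n(x_i,x_k))$, which by \eqref{eq:control_sum_xi} and $\delta_n\to0$ is $O(1+\sup_{i}\frac1n\sum_k W(x_i,x_k))$, hence (choosing the mesh $o(\varepsilon_n)$) is $o(\varepsilon_n)$. Therefore $|\frac1n\sum_k\kappa_i^{(n)}(\xi_{i,k}-W_n(x_i,x_k))[\Gamma]_u(\theta,x_k)|^2\leq C(1+|\theta|)^2\varepsilon_n^2(1+\sup_{i}\frac1n\sum_k W(x_i,x_k))^2$ uniformly over $|\theta|\leq R_n$, $u\leq T$, $i\in[n]$. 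Integrating in $\theta$ against $\nu_u^{x_i}({\rm d}\theta)$, the part $\{|\theta|\leq R_n\}$ is controlled by \eqref{eq:apriori_bound_nonlin} and the tail $\{|\theta|>R_n\}$ via the crude deterministic bound, Cauchy--Schwarz and $\nu_u^{x_i}(|\theta|>R_n)\leq C_0 R_n^{-2k}$ (negligible for $R_n$ chosen suitably), so $A_{n,i,u}^{(4)}\leq C\varepsilon_n^2(1+\sup_{i}\frac1n\sum_k W(x_i,x_k))^2$ uniformly in $u\leq T$, $i\in[n]$. Integrating in $u$, taking $\sup_i$, and using $\varepsilon_n\leq1$ and the boundedness of $\sup_i\frac1n\sum_k W(x_i,x_k)$ from \eqref{hyp:bound_W_L1_sum} yields \eqref{eq:control_int_An4}, the dependence $C(T+e^{CT})$ entering through the constant $C_0=C_0(T)$ of \eqref{eq:apriori_bound_nonlin}.

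The main obstacle is precisely this decoupling of the graph randomness from the law of $\bar\theta_{i,u}$ inside $A_{n,i,u}^{(4)}$: the discretization must be fine enough (mesh $\ll\varepsilon_n$ and radius $R_n\to\infty$) to make the interpolation and tail errors negligible, yet coarse enough (polynomially many points) that the union bound survives the rate $2\exp(-16\log(n)B(\cdot))$ of Lemma~\ref{lem:large_deviations}; reconciling these is where $\varepsilon_n\gtrsim\sqrt{\log(n)/n}$ and the dilution conditions \eqref{hyp:compare_wn_alphan}--\eqref{hyp:alpha_n_infty} enter. An alternative to the discretization is to use the identity $\int_0^T A_{n,i,u}^{(4)}{\rm d}u=\frac{(\kappa_i^{(n)})^2}{n^2}\sum_{k,l}(\xi_{i,k}-W_n(x_i,x_k))(\xi_{i,l}-W_n(x_i,x_l))\Upsilon_T(x_i,x_k,x_l)$, splitting off the diagonal (a sum of bounded independent variables, handled by Bernstein) and bounding the off-diagonal degenerate $U$-statistic by a Hanson--Wright-type inequality together with the uniform control on $\Upsilon_T$ from Lemma~\ref{lem:regul_Upsilon}.
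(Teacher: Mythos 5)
Your treatment of \eqref{eq:control_sum_xi} is exactly the paper's (Lemma~\ref{lem:large_deviations} with $v_l\equiv1$, a union bound over $i$, Borel--Cantelli, and $\delta_n(\underline x)\to0$ to pass from $\kappa_i^{(n)}W_n$ to $W$). For \eqref{eq:control_int_An4}, however, you take a genuinely different route, and the step you yourself single out as ``the main obstacle'' is not actually closed. Lemma~\ref{lem:large_deviations}, used as a black box, only provides the fixed rate $2\exp(-16\log(n)B(\cdot))$, i.e.\ essentially $n^{-4}$ once $B\geq\tfrac14$ (and never better than $n^{-8+o(1)}$), for the specific threshold \eqref{eq:xve_n}. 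Your chaining argument needs a simultaneous union bound over $i\in[n]$, a time grid, and a spatial grid of mesh $O(\varepsilon_n)$ in the ball $\{|\theta|\leq R_n\}\subset\mathbb R^d$; in the dense regime ($\kappa_n,w_n\asymp1$) one has $\varepsilon_n\asymp\sqrt{\log n/n}$, and the tail requirement that $b_n^2\,\mathbf E\bigl[(1+|\bar\theta_{i,u}|)^2\mathbf 1_{|\bar\theta_{i,u}|>R_n}\bigr]$ be $O(\varepsilon_n^2)$ forces $R_n$ to be a positive power of $n$, so the grid has order $n^{cd}$ points with $c$ bounded away from $0$. A fixed polynomial rate cannot beat $n^{cd}$ for every $d\geq1$, so the Borel--Cantelli step fails as written once $d$ is moderately large. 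The gap is repairable -- rerun the proof of Lemma~\ref{lem:large_deviations} with $\varepsilon_n^2=C\kappa_n^2w_n\log(n)/n$ for a constant $C=C(d,k,T)$ large enough to dominate the grid cardinality, which still gives $\varepsilon_n\to0$ by \eqref{hyp:alpha_n_infty} -- but you assert this reconciliation rather than perform it, and you would also need a time-regularity estimate for $u\mapsto[\Gamma]_u(\theta,x)$ that the paper never states (provable from \eqref{eq:theta_nonlin}, but an extra lemma you must supply).

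The paper's proof shows the discretization is unnecessary. Since the expectation $\mathbf E$ is already inside $A^{(4)}_{n,i,u}$ in \eqref{eq:An4} and the nonlinear processes are independent of the graph, expanding the square gives precisely the identity you mention in your last sentence: $\int_0^T A^{(4)}_{n,i,u}\,{\rm d}u=\frac{(\kappa_i^{(n)})^2}{n^2}\sum_{k,l}\bar\xi_{i,k}\bar\xi_{i,l}\,G^{(i)}_{k,l,T}$, with deterministic coefficients $G^{(i)}_{k,l,T}=\int_0^T\mathbf E\bigl[\langle[\Gamma]_u(\bar\theta_{i,u},x_k),[\Gamma]_u(\bar\theta_{i,u},x_l)\rangle\bigr]{\rm d}u$ bounded by $C(T+e^{CT})$ via \eqref{eq:apriori_bound_nonlin} and Lemma~\ref{lem:regul_Upsilon}. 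But it then avoids any Hanson--Wright/$U$-statistic machinery: it factorizes the quadratic form as $\frac{\kappa_i^{(n)}\Vert G_T\Vert_\infty}{n}\sum_k\bar\xi_{i,k}X^{(n)}_{i,k,T}$ with $X^{(n)}_{i,k,T}=\frac{\kappa_i^{(n)}}{n}\sum_l\bar\xi_{i,l}G^{(i)}_{k,l,T}/\Vert G_T\Vert_\infty$, and bounds $d_{n,T}\leq\Vert G_T\Vert_\infty\,(\sup_{i,k}|X^{(n)}_{i,k,T}|)\,\sup_i\bigl(\frac{\kappa_i^{(n)}}{n}\sum_k\xi_{i,k}+\frac{\kappa_i^{(n)}}{n}\sum_kW_n(x_i,x_k)\bigr)$. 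The middle factor is $\leq\varepsilon_n$ almost surely for large $n$ by Lemma~\ref{lem:large_deviations} applied with $v_l=G^{(i)}_{k,l,T}/\Vert G_T\Vert_\infty$ and a union bound over the $n^2$ pairs $(i,k)$ only, and the last factor is exactly the degree bound \eqref{eq:control_sum_xi} you already proved (plus $\delta_n\to0$ and \eqref{hyp:bound_W_L1_sum}). Two applications of the same scalar concentration lemma, no grids, no dimension dependence, no additional inequalities. If you keep your route you must strengthen the lemma as indicated; otherwise adopt the quadratic-form identity and note that the paper's elementary linearization makes the Hanson--Wright step superfluous.
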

\begin{proof}[Proof of Proposition~\ref{prop:control_Delta_2}]
Introduce the following notations
\begin{align}
\bar \xi_{ i,k}=\bar \xi_{ i,k}&:= \xi_{ i, k} - W_{ n}(x_{ i}, x_{ k}),\label{eq:xibar}\\
\gamma_{ k,l, u}^{ (i)} &=\mathbf{ E} \left[[\Gamma]_{ u}(\bar\theta_{ i, u}, x_{ k})[\Gamma]_{ u}(\bar\theta_{ i, u}, x_{ l})\right],\ k,l\in \left[n\right],
\end{align}
so that we can rewrite \eqref{eq:An4} as
\begin{align}
A_{ n, i, u}^{ (4)}&= \frac{\left(\kappa_{i}^{ (n)}\right)^{ 2}}{ n^{ 2}} \sum_{ k,l=1}^{ n} \bar \xi_{ i, k}\bar \xi_{ i, l}\gamma_{k, l, u}^{ (i)}.\label{aux:Usik}
\end{align}
Using \eqref{eq:bound_Gamma_u}, for some constant $C>0$ independent of $i, k, l, u$,
\begin{align}
\left\vert \gamma_{ k,l, u}^{ (i)} \right\vert &\leq\mathbf{ E} \left[ \left\vert [\Gamma]_{ u}(\bar\theta_{ i, u}, x_{ k})[\Gamma]_{ u}(\bar\theta_{ i, u}, x_{ l}) \right\vert\right]\leq \frac{ 1}{ 2}\mathbf{ E} \left[ \left\vert [\Gamma]_{ u}(\bar\theta_{ i, u}, x_{ k})\right\vert^{ 2}\right] + \frac{ 1}{ 2}\mathbf{ E} \left[ \left\vert [\Gamma]_{ u}(\bar\theta_{ i, u}, x_{ l}) \right\vert^{ 2}\right],\nonumber\\
&\leq C \left(1+ \mathbf{ E}\left[ \left\vert \bar \theta_{ i, u} \right\vert^{ 2}\right]\right)\leq C \left(1+ \sup_{ r\in \left[n\right]}\mathbf{ E} \left[\left\vert \bar\theta_{ r,u} \right\vert^{ 2}\right] \right)\leq C \left(1+ C_{ 0}\right),\label{aux:bound_gamma_ikn}
\end{align} 
using \eqref{eq:apriori_bound_nonlin}. Setting 
\begin{align}
G_{ k, l, t}^{ (i)}&:=\int_{ 0}^{t} \gamma_{ k, l, u}^{ (i)}{\rm d}u, \text{ and }\left\Vert G_{ t} \right\Vert_{ \infty}:= \sup_{ k,l,i} \left\vert G_{ k, l, t}^{ (i)} \right\vert,
\end{align} 
one obtains from \eqref{aux:bound_gamma_ikn} that 
\begin{equation}
\label{eq:bound_G_t}
\left\Vert G_{ t} \right\Vert_{ \infty}\leq C(t + e^{ Ct})
\end{equation}
for some appropriate constant $C>0$. Let us define now
\begin{equation}
X_{ i, k, t}^{ (n)}:= \frac{ \kappa_{i}^{ (n)}}{ n} \sum_{ l=1}^{ n} \bar \xi_{ i, l} \frac{ G_{ k,l,t}^{ (i)}}{ \left\Vert G_{ t} \right\Vert_{ \infty}}
\end{equation}
we can write
\begin{align*}
\int_{ 0}^{t}A_{ n, i, u}^{ (4)} {\rm d}u&=\frac{ \kappa_{i}^{ (n)} \left\Vert G_{ t} \right\Vert_{ \infty}}{ n} \sum_{ k=1}^{ n}  (\xi_{ i, k}- W_{ n}(x_{ i}, x_{ k})) X_{i, k, t}^{ (n)},\\
&=\frac{ \kappa_{i}^{ (n)} \left\Vert G_{ t} \right\Vert_{ \infty}}{ n} \sum_{ k=1}^{ n} (1- W_{ n}(x_{ i}, x_{ k}))\xi_{ i, k} X_{ i, k, t}^{ (n)} + \frac{ \kappa_{i}^{ (n)} \left\Vert G_{ t} \right\Vert_{ \infty}}{ n} \sum_{ k=1}^{ n} (-W_{ n}(x_{ i}, x_{ k}))(1-\xi_{ i, k}) X_{ i, k, t}^{ (n)}.
\end{align*}
Consequently, almost surely, for all $t\geq0$, the following inequality holds
\begin{equation}
\label{eq:bound_An4}
d_{ n, t}\leq \left\Vert G_{ t} \right\Vert_{ \infty}\left(\sup_{ i, k\in \left[n\right]}\left\vert X_{ i, k, t}^{ (n)} \right\vert\right) \sup_{ i\in \left[n\right]}\left(\frac{ \kappa_{i}^{ (n)}}{ n} \sum_{ k=1}^{ n} \xi_{ i, k} + \frac{ \kappa_{i}^{ (n)}}{ n} \sum_{ k=1}^{ n} W_{ n}(x_{ i}, x_{ k})\right).
\end{equation}
We first derive a uniform bound on $ \left(X_{ i, k, t}^{ (n)}\right)_{ i, k\in \left[n\right]}$. For fixed $i,k\in \left[n\right]$, apply Lemma~\ref{lem:large_deviations} for the choice of $ U_{ l}= \xi_{ i, l}$,  $ \kappa_{ n}= \kappa_{i}^{ (n)}$, $p_{ l}= W_{ n}(x_{ i}, x_{ l})$ and $v_{ l}= \frac{ G_{ k,l,t}^{ (i)}}{ \left\Vert G_{ t} \right\Vert_{ \infty}}$: inequality \eqref{eq:LD_gen} together with a simple union bound gives
\begin{align*}
\mathbb{ P} \left( \sup_{ i,k\in \left[n\right]} \left\vert X_{ i, k, t}^{ (n)} \right\vert > \varepsilon_{ n}\right) &\leq 2n^{ 2}\exp \left(-16\log(n) B \left( 4 \sqrt{ 2}\left(\frac{\log(n)}{ n w_{ n}}\right)^{ 1/2} \right)\right).
\end{align*}
Note that under the assumptions \eqref{hyp:compare_wn_alphan} and \eqref{hyp:alpha_n_infty} on $ \kappa_{ n}$ and $w_{ n}$, we have \[\frac{ \log(n)}{ n w_{ n}} \leq  \frac{ \log(n) \kappa_{ n}^{ 2}w_{ n}}{ n} \to 0.\] Since $B(u)\to \frac{ 1}{ 2}$ as $u\to 0$, choose a deterministic $p\geq1$ such that for all $n\geq p$, $B \left( 4 \sqrt{ 2}\left(\frac{\log(n)}{ n w_{ n}}\right)^{ 1/2} \right)\geq \frac{ 1}{ 4}$. For such an $n$,
\begin{align*}
\mathbb{ P} \left( \sup_{ i, k\in \left[n\right]} \left\vert X_{ i, k, t}^{ (n)} \right\vert > \varepsilon_{ n}\right) &\leq 2n^{ 2}\exp \left(-4\log(n)\right)= \frac{ 2}{ n^{ 2}}.
\end{align*}
Hence, by Borel-Cantelli Lemma, there exists $ \mathcal{ O}_{ 1} \in \mathcal{ F}$ with $ \mathbb{ P}( \mathcal{ O}_{ 1})=1$ such that, on $ \mathcal{ O}_{ 1}$, there exists $n_{ 1}<+\infty$ such that for all $n\geq n_{ 1}$,
\begin{equation}
\label{eq:bound_Xik}
\sup_{ i\in \left[n\right]}\sup_{ k\in \left[n\right]} \left\vert X_{ i, k, t}^{ (n)} \right\vert \leq \varepsilon_{ n}.
\end{equation}
Secondly, apply once again Lemma~\ref{lem:large_deviations} for the choice of $ U_{ l}= \xi_{ i, l}$, $ \kappa_{ n}= \kappa_{i}^{ (n)}$, $p_{ l}= W_{ n}(x_{ i}, x_{ l})$ and $v_{ l}\equiv1$. The same reasoning as above gives for $n\geq p$,
\begin{align*}
\mathbb{ P} \left( \sup_{ i\in \left[n\right]}\left\vert \frac{ \kappa_{i}^{ (n)}}{ n} \sum_{ k=1}^{ n} \bar\xi_{ i, k} \right\vert > \varepsilon_{ n}\right)&\leq 2n\exp \left(-4\log(n)\right)= \frac{ 2}{ n^{ 3}},
\end{align*}
so that, there exists $ \mathcal{ O}_{ 2}\in \mathcal{ F}$ such that $ \mathbb{ P}( \mathcal{ O}_{ 2})=1$, such that on $ \mathcal{ O}_{ 2}$, there exists $n_{ 2}<+\infty$, such that for all $n\geq n_{ 2}$,
\begin{equation}
\label{eq:bound_xiik}
\sup_{ i\in \left[n\right]} \left( \frac{ \kappa_{i}^{ (n)}}{ n} \sum_{ k=1}^{ n} \xi_{ i, k}\right) \leq\varepsilon_{ n} + \sup_{ i\in \left[n\right]} \frac{ \kappa_{i}^{ (n)}}{ n} \sum_{ k=1}^{ n} W_{ n}(x_{ i}, x_{ k}) \leq 1 + \sup_{ i\in \left[n\right]} \frac{ \kappa_{i}^{ (n)}}{ n} \sum_{ k=1}^{ n} W_{ n}(x_{ i}, x_{ k}).
\end{equation}
On the event $ \mathcal{ O}:= \mathcal{ O}_{ 1}\cap \mathcal{ O}_{ 2}$ (of probability $1$), the inequality \eqref{eq:bound_An4} together with \eqref{eq:bound_Xik} and \eqref{eq:bound_xiik} gives, for $n\geq \max(n_{ 1}, n_{ 2})$
\begin{equation}
d_{ n, T}\leq C\varepsilon_{ n}(T+ e^{ CT}) \left( 1 + 2 \sup_{ i\in \left[n\right]} \frac{ \kappa_{i}^{ (n)}}{ n} \sum_{ k=1}^{ n} W_{ n}(x_{ i}, x_{ k})\right).
\end{equation}
Using now the fact that $ \delta_{ n}( \underline{ x})\to 0$ as $n\to\infty$, $\sup_{ i\in \left[n\right]} \frac{ \kappa_{i}^{ (n)}}{ n} \sum_{ k=1}^{ n} W_{ n}(x_{ i}, x_{ k})$ is smaller than $1+\sup_{ i\in \left[n\right]} \frac{ 1}{ n} \sum_{ k=1}^{ n} W(x_{ i}, x_{ k})$ at least for large $n$. This concludes the proof of Proposition~\ref{prop:control_Delta_2}.
\end{proof}
\subsection{Proof of Theorem~\ref{theo:conv_empirical_measure}}
\label{sec:proof_conv_empirical_measure}
Recall that $\ell_{ n}$ below is the empirical measure of the positions \eqref{eq:emp_measure_positions}.
We first show that the convergence \eqref{hyp:conv_empirical_measure_positions} is true for both deterministic positions (Assumption~\ref{ass:deterministic_positions}) and random positions (Assumption~\ref{ass:random_positions}):
\begin{lemma}
\label{lem:conv_elln_Halpha}
In the deterministic case (Assumption~\ref{ass:deterministic_positions}) (resp. in the random case, Assumption~\ref{ass:random_positions}), the convergence \eqref{hyp:conv_empirical_measure_positions} holds (resp. almost surely).
\end{lemma}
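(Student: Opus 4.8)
The plan is to treat the two settings separately; the deterministic one is an elementary Riemann-sum estimate uniform over the test class, while the random one is a uniform law of large numbers over the unit ball $\mathcal{H}_\iota^1:=\{\varphi:\|\varphi\|_{\mathcal{H}_\iota}\leq1\}$.

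\emph{Deterministic positions.} Here $\ell_n=\frac1n\sum_{k=1}^n\delta_{k/n}$ and $\ell$ is Lebesgue measure on $[0,1]$. For any $\varphi$ with $\|\varphi\|_{\mathcal{H}_\iota}\leq1$, slicing $[0,1]$ along the points $x_k^{(n)}=k/n$,
\begin{equation*}
\left\vert \langle \ell_n-\ell,\varphi\rangle\right\vert=\left\vert\sum_{k=1}^n\int_{(k-1)/n}^{k/n}\bigl(\varphi(k/n)-\varphi(y)\bigr)\,{\rm d}y\right\vert\leq\sum_{k=1}^n\int_{(k-1)/n}^{k/n}\left\vert\tfrac{k}{n}-y\right\vert^{\iota}\,{\rm d}y\leq\frac{1}{n^{\iota}},
\end{equation*}
using only the $\iota$-Hölder bound $|\varphi(k/n)-\varphi(y)|\leq|k/n-y|^{\iota}\leq n^{-\iota}$ on the $k$-th interval. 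Taking the supremum over such $\varphi$ gives $d_{\mathcal{H}_\iota}(\ell_n,\ell)\leq n^{-\iota}\to0$, which is the announced (quantitative) convergence.

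\emph{Random positions.} The constraint $\|\varphi\|_{\mathcal{H}_\iota}\leq1$ forces $\|\varphi\|_\infty\leq1$, so $\mathcal{H}_\iota^1$ is uniformly bounded; moreover $d_\iota(x,y):=|x-y|^{\iota}$ is a metric on $I$ (subadditivity of $t\mapsto t^{\iota}$ for $\iota\in(0,1]$) inducing the Euclidean topology, and $\mathcal{H}_\iota^1$ is exactly the set of $d_\iota$-Lipschitz functions bounded by $1$. Up to a factor $2$, $d_{\mathcal{H}_\iota}$ then coincides with the Dudley bounded-Lipschitz metric of $(I,d_\iota)$, which metrizes weak convergence of probability measures on the separable metric space $I$. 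Since $\ell_n$ is the empirical measure of the i.i.d.\ sequence $(x_k)$ with common law $\ell$, Varadarajan's theorem gives $\ell_n\Rightarrow\ell$ for $\ell$-almost every realization of $\mathcal{X}$, whence $d_{\mathcal{H}_\iota}(\ell_n,\ell)\to0$ a.s.

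If one prefers a self-contained argument avoiding these classical results, one proceeds directly: fix $\varepsilon>0$; by tightness choose a compact $K\subset I$ with $\ell(I\setminus K)<\varepsilon$; by Arzelà--Ascoli (uniform boundedness and equicontinuity, with modulus $\delta\mapsto\delta^{\iota}$, of $\mathcal{H}_\iota^1$) there is a finite $\sup$-norm $\varepsilon$-net $\varphi_1,\dots,\varphi_N$ of $\{\varphi|_K:\varphi\in\mathcal{H}_\iota^1\}$, extended to functions of $\mathcal{H}_\iota^1$ on $I$ by McShane extension followed by truncation in $[-1,1]$. For $\varphi\in\mathcal{H}_\iota^1$, picking $j$ with $\sup_K|\varphi-\varphi_j|\leq\varepsilon$ and splitting the integral over $K$ and $I\setminus K$,
\begin{equation*}
\left\vert\langle\ell_n-\ell,\varphi\rangle\right\vert\leq\left\vert\langle\ell_n-\ell,\varphi_j\rangle\right\vert+2\varepsilon+2\bigl(\ell_n(I\setminus K)+\ell(I\setminus K)\bigr),
\end{equation*}
and the strong law of large numbers applied to the finitely many $\varphi_j$ and to $\mathbf{1}_{I\setminus K}$ yields $\limsup_n\sup_{\varphi\in\mathcal{H}_\iota^1}|\langle\ell_n-\ell,\varphi\rangle|\leq6\varepsilon$ a.s.; letting $\varepsilon\downarrow0$ along a sequence concludes. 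The only real obstacle is this passage from a fixed-$\varphi$ law of large numbers to uniformity over the infinite class $\mathcal{H}_\iota^1$ when $I$ is unbounded, and it is exactly what tightness of $\ell$ plus the uniform boundedness and (on compacts) relative compactness of $\iota$-Hölder balls take care of.
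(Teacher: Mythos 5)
Your proposal is correct and follows essentially the same route as the paper: the deterministic case is the same Riemann-sum bound (the paper gets $\frac{1}{(1+\iota)n^{\iota}}$, you get the slightly cruder $n^{-\iota}$, which is immaterial), and your self-contained random-case argument is exactly the paper's reduction — tightness, Ascoli--Arzel\`a compactness of the unit H\"older ball restricted to a compact $K$, a finite $\varepsilon$-net, and then almost-sure convergence for the finitely many remaining test functions — the only cosmetic differences being that you invoke the SLLN for $\mathbf{1}_{I\setminus K}$ where the paper uses the continuous cutoff $g(x)=\max(0,1-d(x,K)/\varepsilon)$ so as to rely solely on Varadarajan's theorem. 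Your first, shorter random-case argument (viewing $\mathcal{H}_{\iota}$ as the bounded-Lipschitz ball for the snowflake metric $d_{\iota}(x,y)=\left\vert x-y \right\vert^{\iota}$ and quoting Dudley's metrization theorem plus Varadarajan directly) is a neat packaging of the same facts the paper reproves by hand, and is equally valid.
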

\begin{proof}[Proof of Lemma~\ref{lem:conv_elln_Halpha}]
 In the deterministic case, it is immediate to see that for any $ \varphi\in \mathcal{ H}_{ \iota}$ with $ \left\Vert \varphi \right\Vert_{ \mathcal{ H}_{ \iota}}\leq 1$, $\left\vert \left\langle \ell_{ n}- \ell\, ,\, \varphi\right\rangle \right\vert \leq \frac{ 1}{ (1+\iota) n^{ \iota}}$ so that \eqref{hyp:conv_empirical_measure_positions} follows directly. We now focus on the random case. First note that the following holds: for all $ \varepsilon>0$, there exists a finite set $ \mathcal{ H}_{ \iota}^{ \varepsilon} \subset \mathcal{ H}_{ \iota}$ such that, for all $n\geq1$, $ d_{ \mathcal{ H}_{ \iota}}(\ell_{ n}, \ell) \leq \varepsilon + \max_{ \varphi\in \mathcal{ H}_{ \iota}^{ \varepsilon}} \left\vert \left\langle \ell_{ n}- \ell\, ,\, \varphi\right\rangle \right\vert$. The proof of this point follows closely the proof of \cite{MR1932358}, Th.11.3.3: for any $ \varepsilon>0$, take $K\subset I$ compact such that $ \ell(K)> 1- \varepsilon$. The set of functions $B:= \left\lbrace \varphi,\ \left\Vert \varphi \right\Vert_{ \mathcal{ H}_{ \iota}}\leq1\right\rbrace$, restricted to $K$, is compact, by Ascoli-Arzel\`a theorem. Thus there exists $k\geq1$ and $ \varphi_{ 1}, \ldots, \varphi_{ k}\in B$ such that for all $ \varphi\in B$, there exists $j\leq k$ such that $ \sup_{ y\in K} \left\vert \varphi(y) - \varphi_{ j}(y) \right\vert< \varepsilon$, so that $\sup_{ x\in K^{ \varepsilon}} \left\vert \varphi(x) - \varphi_{ j}(x)\right\vert < 3 \varepsilon$. Set $g(x) := \max \left(0, 1- d(x, K)/ \varepsilon\right)$. Then $g\in \mathcal{ H}_{ \iota}$ and for $n$ large enough, $\ell_{ n}(K^{ \varepsilon})\geq \int g {\rm d}\ell_{ n}> 1- 2 \varepsilon$. Thus
 \begin{align*}
 \left\vert \left\langle \ell_{ n} - \ell \, ,\, \varphi\right\rangle \right\vert &\leq  \int \left\vert \varphi - \varphi_{ j} \right\vert {\rm d}(\ell_{ n}+ \ell) + \left\vert \left\langle \ell_{ n} - \ell\, ,\, \varphi_{ j}\right\rangle \right\vert\leq 12 \varepsilon + \max_{ j=1,\ldots, k} \left\vert \left\langle \ell_{ n} - \ell\, ,\, \varphi_{ j}\right\rangle \right\vert.
 \end{align*} 
 Thus, the result follows from the fact that almost surely, for every bounded continuous function $ \varphi$ (and hence, for all $ \varphi\in \mathcal{ H}_{ \iota}$) we have $ \left\vert \left\langle \ell_{ n}- \ell\, ,\, \varphi\right\rangle \right\vert \xrightarrow[ n\to\infty]{}0$ (\cite{MR1932358}, Th.~11.4.1).
\end{proof}
\begin{proof}[Proof of Theorem~\ref{theo:conv_empirical_measure}]
For $t\in[0, T]$ and  $ \varphi: \mathbb{ R}^{ d} \times I \to \mathbb{ R}$ satisfying, for $ \theta, \tilde{ \theta}\in \mathbb{ R}^{ d}$, $x, y\in I$, $ \left\vert \varphi(\theta, x) - \varphi(\tilde{ \theta}, y)\right\vert \leq C_{ \varphi} \left(\left\vert \theta- \tilde{ \theta} \right\vert+ \left\vert x-y \right\vert\right)$ and $\sup_{ x\in I} \left\vert \varphi(\theta, x) \right\vert \leq C_{ \varphi} (1+ \left\vert \theta \right\vert)$, we have
\begin{align*}
\mathbf{ E} \left[\left\vert \left\langle \nu_{ n, t} - \nu_{ t}\, ,\, \varphi\right\rangle \right\vert^{ 2}\right] &\leq 2 \mathbf{ E} \left[\left\vert \left\langle \nu_{ n, t} - \bar\nu_{ n, t}\, ,\, \varphi\right\rangle \right\vert^{ 2}\right] + 2 \mathbf{ E} \left[\left\vert \left\langle \bar\nu_{ n, t} - \nu_{ t}\, ,\, \varphi\right\rangle \right\vert^{ 2}\right].
\end{align*} 
Introducing the empirical measure of the nonlinear processes $(\bar \theta_{ i})_{ i\in \left[n\right]}$ defined in Section~\ref{sec:general_prop_chaos}:
\begin{equation}
\label{eq:emp_measure_bar}
\bar \nu_{ n, t} = \frac{ 1}{ n}\sum_{ i=1}^{ n} \delta_{ (\bar\theta_{ i, t}, x_{ i}^{ (n)})},\ t\geq0,
\end{equation}
we can estimate the first term above as
\begin{align*}
\mathbf{ E} \left[\left\vert \left\langle \nu_{ n, t} - \bar\nu_{ n, t}\, ,\, \varphi\right\rangle \right\vert^{ 2}\right]&\leq\frac{ 1}{ n} \sum_{ i=1}^{ n} \mathbf{ E} \left[\left\vert \varphi(\theta_{ i, t}, x_{ i}) - \varphi(\bar \theta_{ i, t}, x_{ i})\right\vert^{ 2} \right] \leq \frac{ C}{ n} \sum_{ i=1}^{ n} \mathbf{ E} \left[\left\vert \theta_{ i, t} - \bar \theta_{ i, t}\right\vert^{ 2} \right],\\
&\leq C \sup_{ i\in \left[n\right]}\mathbf{ E} \left[ \sup_{ s\in [0, T]}\left\vert \theta_{ i, s} - \bar \theta_{ i, s}\right\vert^{ 2} \right] \xrightarrow[ n\to\infty]{}0,
\end{align*}
by Theorem~\ref{theo:conv_general}. Concerning the second term
\begin{align}
\mathbf{ E} \left[\left\vert \left\langle \bar\nu_{ n, t} - \nu_{ t}\, ,\, \varphi\right\rangle \right\vert^{ 2}\right]
&\leq 2\mathbf{ E} \left[\left\vert \frac{ 1}{ n} \sum_{ i=1}^{ n} \left(\varphi(\bar \theta_{ i, t}, x_{ i}) - \int \varphi(\theta, x_{ i}) \nu_{ t}^{ x_{ i}}({\rm d}\theta) \right) \right\vert^{ 2}\right] \label{aux:nunbar1}\\ 
&+ 2\left\vert \frac{ 1}{ n} \sum_{ i=1}^{ n}\int \varphi(\theta, x_{ i}) \nu_{ t}^{ x_{ i}}({\rm d}\theta) - \int \varphi(\theta, x) \nu_{ t}^{ x}({\rm d}\theta) {\rm d}x \right\vert^{ 2}.\label{aux:nunbar2}
\end{align}
The first term \eqref{aux:nunbar1} above can be computed as
\begin{multline*}
\mathbf{ E} \left[\left\vert \frac{ 1}{ n} \sum_{ i=1}^{ n} \left(\varphi(\bar \theta_{ i, t}, x_{ i}) - \int \varphi(\theta, x_{ i}) \nu_{ t}^{ x_{ i}}({\rm d}\theta) \right) \right\vert^{ 2}\right]= \frac{ 1}{ n^{ 2}} \sum_{ i=1}^{ n}\mathbf{ E} \left[ \left(\varphi(\bar \theta_{ i, t}, x_{ i}) - \int \varphi(\theta, x_{ i}) \nu_{ t}^{ x_{ i}}({\rm d}\theta) \right)^{ 2}\right],\\
\leq \frac{ 1}{ n^{ 2}} \sum_{ i=1}^{ n} \int \varphi(\theta, x_{ i})^{ 2} \nu_{ t}^{ x_{ i}}({\rm d}\theta)\leq \frac{ C_{ \varphi}^{ 2}}{ n^{ 2}} \sum_{ i=1}^{ n} \int \left(1+ \left\vert \theta \right\vert\right)^{ 2} \nu_{ t}^{ x_{ i}}({\rm d}\theta),
\end{multline*}
which goes to $0$ as $n\to\infty$, uniformly in $t\in [0, T]$, by \eqref{eq:apriori_bound_nonlin}. For the last term \eqref{aux:nunbar2}, write for simplicity $ u_{ t}(x):= \int \varphi(\theta, x) \nu_{ t}^{ x}({\rm d}\theta)$. Note that $ \left\vert u_{ t}(x) \right\vert \leq C_{ \varphi}\int \left(1+ \left\vert \theta \right\vert\right) \nu_{ t}^{ x}({\rm d}\theta) $ which is bounded uniformly in $t\in [0,T]$ and $x\in I$, by \eqref{eq:apriori_bound_nonlin}. Moreover, (recall the definition of $ \delta \mathcal{ W}$ in \eqref{hyp:Int_W_Lip}), for some constant $C(\varphi, T)>0$ sufficiently large that changes from one line to the other,
\begin{align*}
\left\vert u_{ t}(x) - u_{ t}(y) \right\vert &\leq \int \left\vert \varphi(\theta, x) - \varphi(\theta, y) \right\vert \nu_{ t}^{ x}({\rm d}\theta) +  \left\vert \int\varphi(\theta, y) \nu_{ t}^{ x}({\rm d}\theta)-\int\varphi(\theta, y) \nu_{ t}^{ y}({\rm d}\theta)\right\vert,\\
&\leq C_{ \varphi} \left\vert x-y \right\vert^{ \iota} + C(\varphi, T)\left( \delta \mathcal{ W}(x, y)+ w_{ 1}(\nu_{ 0}^{x}, \nu_{ 0}^{y})\right)\leq C(\varphi, T) \left\vert x-y \right\vert^{ \iota},
\end{align*}
where we used \eqref{eq:regularity_lipschitz_nu_x}, \eqref{hyp:nu_0_Lip} and \eqref{hyp:Int_W_Lip}. Thus, for all $t\in[0, T]$, $ \left\Vert \frac{ u_{ t}}{ C(\varphi, T)} \right\Vert_{ \mathcal{ H}_{ \iota}}\leq 1$. Hence, the term \eqref{aux:nunbar2} can be estimated as
\begin{align*}
\left\vert \frac{ 1}{ n} \sum_{ i=1}^{ n} u_{ t}(x_{ i}) - \int u_{ t}(x) {\rm d}x \right\vert^{ 2}&= \left\vert \left\langle \ell_{ n}- \ell\, ,\, u_{ t}\right\rangle \right\vert^{ 2}\leq C( \varphi, T)^{ 2} d_{ \mathcal{ H}}(\ell_{ n}, \ell)^{ 2}
\end{align*}
which goes to $0$ as $n\to\infty$ by \eqref{hyp:conv_empirical_measure_positions}.
\end{proof}

\subsection{Proof of Proposition~\ref{prop:control_det_Delta3}: regularity of the kernel in the deterministic case}
\label{sec:proof_regularity_graphon_det}
We suppose here that Assumption~\ref{ass:deterministic_positions} holds. Since for all $n\geq1$,
\begin{align*}
\left\vert \frac{ 1}{ n}\sum_{ k=1}^{ n} W(x_{ i}, x_{ k}) - \int W(x_{ i}, y) {\rm d}y \right\vert &\leq \sum_{ k=1}^{ n} \int_{ x_{ k-1}}^{ x_{ k}}  \left\vert W(x_{ i}, x_{ k})  - W(x_{ i}, y) \right\vert  {\rm d} y = s_{ n}(W),
\end{align*}
this inequality together with \eqref{hyp:delta_n_W} and \eqref{hyp:bound_W_L1} implies \eqref{hyp:bound_W_L1_sum}. So we are left with proving \eqref{hyp:Delta3_to_0}. For $i\in \left[n\right]$, we have
\begin{align}
\left\vert \epsilon_{ n, T}^{ (1, i)} \right\vert&\leq \sum_{ k,l=1}^{ n}\int_{ x_{ l-1}}^{x_{ l}} \int_{ x_{ k-1}}^{x_{ k}} \left\vert W(x_{ i}, x_{ k})W(x_{ i}, x_{ l}) \Upsilon_{ T}(x_{ i}, x_{ k}, x_{ l}) -  W(x_{ i}, y) W(x_{ i}, z) \Upsilon_{ T}(x_{ i}, y, z)\right\vert {\rm d} y{\rm d} z \nonumber\\
&\leq \sum_{ k,l=1}^{ n} \int_{ x_{ l-1}}^{x_{ l}}\int_{ x_{ k-1}}^{x_{ k}}  \left\vert W(x_{ i}, x_{ k})W(x_{ i}, x_{ l}) -  W(x_{ i}, y) W(x_{ i}, z) \right\vert \left\vert \Upsilon_{ T}(x_{ i}, x_{ k}, x_{ l}) \right\vert  {\rm d} y{\rm d} z \nonumber\\
&+\sum_{ k,l=1}^{ n} \int_{ x_{ l-1}}^{x_{ l}}\int_{ x_{ k-1}}^{x_{ k}} \left\vert  \Upsilon_{ T}(x_{ i}, x_{ k}, x_{ l})-\Upsilon_{ T}(x_{ i}, y, z) \right\vert W(x_{ i}, y)W(x_{ i}, z) {\rm d} y{\rm d} z := (I) + (II).\label{eq:decomp_eps_1_III}
\end{align}
Concerning the first term above, we have, by Lemma~\ref{lem:regul_Upsilon}:
\begin{align}
(I)&\leq C\sum_{ k,l=1}^{ n} \int_{ x_{ l-1}}^{x_{ l}} \int_{ x_{ k-1}}^{x_{ k}} \left\vert W(x_{ i}, x_{ k})W(x_{ i}, x_{ l}) -  W(x_{ i}, y) W(x_{ i}, z) \right\vert {\rm d} y{\rm d} z, \nonumber\\
&\leq \left(\frac{ 1}{ n}\sum_{ l=1}^{ n}W(x_{ i}, x_{ l})+ \int_{ 0}^{1}  W(x_{ i}, y) {\rm d} y\right) s_{ n}(W). \label{eq:I_eps_1}
\end{align}
By \eqref{hyp:bound_W_L1}, \eqref{hyp:bound_W_L1_sum} and \eqref{hyp:delta_n_W}, this last quantity converges to $0$ as $n\to\infty$, uniformly in $i\in \left[n\right]$. Concerning the second term in \eqref{eq:decomp_eps_1_III}: 
\begin{align*}
(II)& \leq \sum_{ k,l=1}^{ n} \int_{ x_{ l-1}}^{x_{ l}}\int_{ x_{ k-1}}^{x_{ k}} \left\vert  \Upsilon_{ T}(x_{ i}, x_{ k}, x_{ l})-\Upsilon_{ T}(x_{ i}, y, x_{ l}) \right\vert W(x_{ i}, y)W(x_{ i}, z) {\rm d} y{\rm d} z,\\
&+ \sum_{ k,l=1}^{ n} \int_{ x_{ l-1}}^{x_{ l}}\int_{ x_{ k-1}}^{x_{ k}} \left\vert  \Upsilon_{ T}(x_{ i}, y, x_{ l})-\Upsilon_{ T}(x_{ i}, y, z) \right\vert W(x_{ i}, y)W(x_{ i}, z) {\rm d} y{\rm d} z.
\end{align*}
Using Lemma~\ref{lem:regul_Upsilon} again, we have (recall the definition of $ \delta \mathcal{ W}$ in \eqref{hyp:Int_W_Lip})
\begin{align*}
(II)&\leq C \sum_{ k,l=1}^{ n} \int_{ x_{ l-1}}^{x_{ l}}\int_{ x_{ k-1}}^{x_{ k}}  \left( \delta \mathcal{ W}(x_{ k}, y) + w_{ 1} (\nu_{ 0}^{ x_{ k}}, \nu_{ 0}^{ y})\right) W(x_{ i}, y)W(x_{ i}, z) {\rm d} y{\rm d} z,\\
&+  C \sum_{ k,l=1}^{ n} \int_{ x_{ l-1}}^{x_{ l}}\int_{ x_{ k-1}}^{x_{ k}} \left( \delta \mathcal{ W}(x_{ l}, z) + w_{ 1}(\nu_{ 0}^{ x_{ l}}, \nu_{ 0}^{ z})\right) W(x_{ i}, y)W(x_{ i}, z) {\rm d} y{\rm d} z.
\end{align*}
Using now \eqref{hyp:Int_W_Lip} and \eqref{hyp:nu_0_Lip}, we obtain for $L=\max(L_{ 0}, L_{ W})$
\begin{align}
(II)& \leq 2C L\left( \frac{ 1}{ n^{ \iota_{ 2}}} + \frac{ 1}{ n^{ \iota_{ 1}}}\right) \int W(x_{ i}, y)W(x_{ i}, z) {\rm d} y{\rm d} z\leq 2C L \left\Vert \mathcal{ W} \right\Vert_{ \infty}^{ 2} \left( \frac{ 1}{ n^{ \iota_{ 2}}} + \frac{ 1}{ n^{ \iota_{ 1}}}\right) \label{eq:II_eps_1}
\end{align}
Taking $\sup_{ i\in \left[n\right]}$ in \eqref{eq:decomp_eps_1_III}, we conclude by \eqref{eq:I_eps_1} and \eqref{eq:II_eps_1} that $ \epsilon_{ n, T}^{ (1)}( \underline{ x}) \xrightarrow[ n\to \infty]{}0$. The two other terms $\epsilon_{ n, T}^{ (2)}( \underline{ x})$ and $\epsilon_{ n, T}^{ (3)}( \underline{ x})$ can be dealt in a similar way, we leave the proof to the reader. This proves Proposition~\ref{prop:control_det_Delta3}.

\subsection{Proof of Proposition~\ref{prop:control_An6}: regularity of the kernel in the random case}
\label{sec:proof_graphon_regularity_rand}
We assume here that Assumption~\ref{ass:random_positions} holds. For simplicity of notations, we will write $ \epsilon_{ n}^{ (m, i)}$ in place of $ \epsilon_{ n, T}^{ (m, i)}( \underline{ x})$ and $ \epsilon_{ n}^{ (m)}$ in place of $ \epsilon_{ n, T}^{ (m)}( \underline{ x})$ in \eqref{hyp:Delta3_to_0}.
Introduce the following truncation:
\begin{equation}
\label{hyp:WM}
W_{ M}: (x, y) \mapsto W(x, y) \wedge M,\ M>0
\end{equation}
and define $ \epsilon_{ n, M}^{ (m, i)}$ (resp. $ \epsilon_{ n, M}^{ (m)}$) as the truncated version  of $ \epsilon_{n}^{ (m, i)}$ (resp. $ \epsilon_{n}^{ (m)}$), that is, when $W$ is replaced by $W_{ M}$.

\textbf{ Claim 1:} there exists a constant $C>0$ such that for any $M>0$, $i\in \left[n\right]$, $m=1,2,3$,
\begin{multline}
\label{eq:An6_An6M_qu}
 \left\vert \epsilon_{ n}^{ (m, i)} - \epsilon_{ n, M}^{ (m, i)}\right\vert  \leq C\left(\frac{ 1}{ n}\sum_{l=1}^{ n} W(x_{ i}, x_{ l}) +1\right) \\ \Bigg(\frac{ 1}{ n}\sum_{ k=1}^{ n}\left\vert W(x_{ i}, x_{ k})  -   W_{ M}(x_{ i}, x_{ k})\right\vert  + \int  \left\vert W_{ M}(x_{ i}, y) - W(x_{ i}, y) \right\vert \ell({\rm d} y)\Bigg).
\end{multline}
To prove Claim~1, we only consider $m=1$ and leave the other cases to the reader. Fix $M>0$ and $i\in \left[n\right]$. Then, using Lemma~\ref{lem:regul_Upsilon},
\begin{align*}
\Delta_{ n, i}^{ (1)}&:= \left\vert \frac{ 1}{ n^{ 2}} \sum_{ k,l=1}^{ n} \left\lbrace W(x_{ i}, x_{ k})W(x_{ i}, x_{ l}) - W_{ M}(x_{ i}, x_{ k})W_{ M}(x_{ i}, x_{ l})\right\rbrace \Upsilon_{ T}(x_{ i}, x_{ k}, x_{ l}) \right\vert,\\
&\leq \frac{ C}{ n^{ 2}} \sum_{ k,l=1}^{ n} \left\vert W(x_{ i}, x_{ k})W(x_{ i}, x_{ l}) - W_{ M}(x_{ i}, x_{ k})W_{ M}(x_{ i}, x_{ l})\right\vert,\\
&\leq C \left(\frac{ 1}{ n}  \sum_{ k=1}^{ n}W(x_{ i}, x_{ k}) + \frac{ 1}{ n} \sum_{ k=1}^{ n} W_{ M}(x_{ i}, x_{ k})\right) \left(\frac{ 1}{ n}\sum_{ k=1}^{ n}\left\vert W(x_{ i}, x_{ k})- W_{ M}(x_{ i}, x_{ k})\right\vert\right),\\
&\leq 2C \left(\frac{ 1}{ n}  \sum_{ k=1}^{ n}W(x_{ i}, x_{ k})\right) \left(\frac{ 1}{ n}\sum_{ k=1}^{ n}\left\vert W(x_{ i}, x_{ k})- W_{ M}(x_{ i}, x_{ k})\right\vert\right),
\end{align*}
and, in a similar way (recall \eqref{hyp:bound_W_L1})
\begin{align*}
\Delta_{ n, i}^{ (2)}&:= \left\vert \int  \left\lbrace W(x_{ i}, y) W(x_{ i}, z)  - W_{ M}(x_{ i}, y) W_{ M}(x_{ i}, z)\right\rbrace \Upsilon_{ T}(x_{ i}, y, z) {\rm d} y{\rm d} z \right\vert,\\
&\leq C\int  \left\vert W(x_{ i}, y) W(x_{ i}, z)  - W_{ M}(x_{ i}, y) W_{ M}(x_{ i}, z)\right\vert \ell({\rm d} y)\ell({\rm d} z),\\
&\leq 2C \left\Vert \mathcal{ W}_{ 1} \right\Vert_{ \infty} \int\left\vert W(x_{ i}, y) - W_{ M}(x_{ i}, y) \right\vert \ell({\rm d} y).
\end{align*}
Doing the same for $m=2, 3$, this proves Claim~1. The next point is now to show that the quantities in \eqref{eq:An6_An6M_qu} can be almost surely controlled for large $n$, choosing carefully the truncation parameter $M$.

\textbf{ Claim 2:}
 Let us fix parameters $ \delta_{ 1}, \delta_{ 2}>0$ (to be chosen later) and define
\begin{equation}
\label{hyp:M}
M:= n^{ \delta_{ 1}}.
\end{equation}
For this choice of $M$, we have (recall the hypothesis on the $L^{ 2}$-norm of $W$ \eqref{hyp:norm_W_chi}), for some constant $C>0$ independent of $n$,
\begin{align}
\mathbb{ P} \left( \sup_{ i\in \left[n\right]} \frac{ 1}{ n}\sum_{ k=1}^{ n}\left\vert W(x_{ i}, x_{ k})  -   W_{ M}(x_{ i}, x_{ k})\right\vert > n^{ - \delta_{ 2}}\right)
&\leq \frac{ 6 \left\Vert W \right\Vert_{ L^{  \chi}(I^{ 2})}^{ \chi}}{ n^{ \delta_{ 1} ( \chi-1)- \delta_{ 2} -1} },\label{cln:control_W_WM_sum}\\
\mathbb{ P} \left( \sup_{ i\in \left[n\right]} \int  \left\vert W_{ M}(x_{ i}, \tilde x) - W(x_{ i}, \tilde x) \right\vert {\rm d} \tilde{ x} > n^{ - \delta_{ 2}}\right)
&\leq \frac{ 6 \left\Vert W \right\Vert_{ L^{ \chi}(I^{ 2})}^{ \chi}}{ n^{ \delta_{ 1} (\chi-1)- \delta_{ 2} -1} },\label{cln:control_W_WM_int}\\
\mathbb{ P} \left( \sup_{ i\in \left[n\right]} \left\vert \frac{ 1}{ n} \sum_{ l=1}^{ n} W(x_{ i}, x_{ l})  \right\vert > 1+ \left\Vert \mathcal{ W}_{ 1} \right\Vert_{ \infty}\right) &\leq \frac{ C}{ n^{ 2}}.\label{cln:control_W_int}
\end{align}
Let us prove Claim 2: we have
\begin{align*}
\mathbb{ E} \left[\frac{ 1}{ n}\sum_{ k=1}^{ n}\left\vert W(x_{ i}, x_{ k})  -   W_{ M}(x_{ i}, x_{ k})\right\vert\right] &= \frac{ 1}{ n}\sum_{ k=1}^{ n}\mathbb{ E} \left[\left\vert W(x_{ i}, x_{ k})  -   M\right\vert \mathbf{ 1}_{ W(x_{ i}, x_{ k})>M}\right],\\
&\leq \frac{ 2}{ n} \sum_{ k=1}^{ n}\mathbb{ E} \left[W(x_{ i}, x_{ k}) \mathbf{ 1}_{ W(x_{ i}, x_{ k})>M}\right].
\end{align*}
Now, for any independent $X, Y$ on $I$ with law $\ell$,
\begin{align*}
\mathbb{ E} \left[W(X, Y) \mathbf{ 1}_{ W(X, Y)>M}\right] &=\sum_{ l=0}^{ +\infty}\mathbb{ E} \left[W(X, Y) \mathbf{ 1}_{ 2^{ l}M<W(X, Y)\leq 2^{ l+1}M}\right],\\
&\leq M\sum_{ l=0}^{ +\infty} 2^{ l+1}\left(\mathbb{ P} \left(W(X, Y)>2^{ l}M\right) - \mathbb{ P} \left(W(X, Y)>2^{ l+1}M\right)\right),\\
&= M \left(2 \mathbb{ P} \left(W(X, Y)>M\right) + \sum_{ l=1}^{ +\infty} 2^{ l}\mathbb{ P} \left(W(X, Y)>2^{ l}M\right) \right),\\
&\leq \frac{ \mathbb{ E} \left[W(X, Y)^{ \chi}\right]}{ M^{ \chi-1}} \left(2  + \sum_{ l=1}^{ +\infty} 2^{ -(\chi-1)l} \right)\leq \frac{ 3 \left\Vert W \right\Vert_{ L^{ \chi}(I^{ 2})}^{ \chi}}{ M^{ \chi-1}}.
\end{align*}
This gives, for $M$ given by \eqref{hyp:M}:
\begin{align*}
\mathbb{ E} \left[\frac{ 1}{ n}\sum_{ k=1}^{ n}\left\vert W(x_{ i}, x_{ k})  -   W_{ M}(x_{ i}, x_{ k})\right\vert\right] &\leq \frac{ 6 \left\Vert W \right\Vert_{ L^{ \chi}(I^{ 2})}^{ \chi}}{ n^{ \delta_{ 1} (\chi-1)}}.
\end{align*} Hence, \eqref{cln:control_W_WM_sum} follows immediately from Markov inequality and a union bound. In a same way, we have
\begin{align*}
\mathbb{ E} \left[  \int  \left\vert W_{ M}(x_{ i}, \tilde x) - W(x_{ i}, \tilde x) \right\vert \ell({\rm d} \tilde{ x})\right] &\leq  \frac{ 6 \left\Vert W \right\Vert_{ L^{ \chi}(I^{ 2})}^{ \chi}}{ n^{ \delta_{ 1} (\chi-1)}},
\end{align*}
so that inequality \eqref{cln:control_W_WM_int} holds. Inequality 
\begin{equation}
\mathbb{ P} \left( \sup_{ i\in \left[n\right]} \left\vert \frac{ 1}{ n} \sum_{ l=1}^{ n} W(x_{ i}, x_{ l}) - \int W(x_{ i}, z) \ell({\rm d} z) \right\vert > 1\right) \leq \frac{ C}{ n^{ 2}}
\end{equation} follows directly from Markov inequality and the fact that $ \chi>9 \geq 6$ in \eqref{hyp:norm_W_chi} and the independence of the variables $(x_{ k})_{ k\in \left[k\right]}$. Then  \eqref{cln:control_W_int} is a consequence of the inequality $\sup_{ i\in \left[n\right]}\int W(x_{ i}, z) {\rm d} z \leq \left\Vert \mathcal{ W}_{ 1} \right\Vert_{ \infty}<+\infty$. This proves Claim 2.

\textbf{ Claim 3:}
Let $ \delta_{ 3}>0$ be a last constant to be defined later. There exists a constant $C=C_{ T}>0$ such that, for the choice of $M= n^{ \delta_{ 1}}$ defined in \eqref{hyp:M}, 
\begin{equation}
\label{cln:control_An6M}
\mathbb{ P} \left( \epsilon_{ n, M}^{ (1)} + \epsilon_{ n, M}^{ (2)} +\epsilon_{ n, M}^{ (3)}  \geq C \left( \frac{ 1}{ n^{ \delta_{ 3} -2 \delta_{ 1}}} + \frac{ 1}{ n^{ 1- 2 \delta_{ 1}}}\right)\right) \leq Cn\exp \left(- \frac{n^{ 1- 2 \delta_{ 3}} }{ 8}\right).
\end{equation}
Let us prove Claim~3: we only control $ \epsilon_{ n, M}^{ (1)}$ and leave the two other terms to the reader. Since for fixed $x\in I$, $(y, z) \mapsto \Upsilon_{ T}(x, y, z)$ is symmetric, we have, for $i\in \left[n\right]$
\begin{align*}
\epsilon_{ n, M}^{ (1, i)}&:= \frac{ 1}{ n^{ 2}} \sum_{ k=1}^{ n} \left\lbrace W_{ M}(x_{ i}, x_{ k})^{ 2} \Upsilon_{ T}(x_{ i}, x_{ k}, x_{ k}) - \int W_{ M}(x_{ i}, y) W_{ M}(x_{ i}, z) \Upsilon_{ T}(x_{ i}, y, z) \ell({\rm d} y)\ell({\rm d} z)\right\rbrace\\
&+ \frac{ 2}{ n^{ 2}} \sum_{1\leq l< k\leq n} \left\lbrace W_{ M}(x_{ i}, x_{ k})W_{ M}(x_{ i}, x_{ l}) \Upsilon_{ T}(x_{ i}, x_{ k}, x_{ l}) - \int W_{ M}(x_{ i}, y) W_{ M}(x_{ i}, z) \Upsilon_{ T}(x_{ i}, y, z) \ell({\rm d} y)\ell({\rm d} z)\right\rbrace,\\
&=(I)+(II).
\end{align*}
Using Lemma~\ref{lem:regul_Upsilon}, the first term is easily bounded (almost surely for all $t$) by $\frac{ C M^{ 2}}{ n}=\frac{ C}{ n^{ 1- 2 \delta_{ 1}}}$. We now turn to the control of the second term:
\begin{align*}
(II)&= \frac{ 2}{ n^{ 2}}\sum_{1\leq l< k\leq n}^{ n} W_{ M}(x_{ i}, x_{ l}) \left\lbrace W_{ M}(x_{ i}, x_{ k}) \Upsilon_{ T}(x_{ i}, x_{ k}, x_{ l}) -  \int W_{ M}(x_{ i}, y) \Upsilon_{ T}(x_{ i}, y, x_{ l}) \ell({\rm d} y)\right\rbrace\\
&+\frac{ 2}{ n^{ 2}}\sum_{1\leq l< k\leq n}^{ n} \left\lbrace W_{ M}(x_{ i}, x_{ l}) \int W_{ M}(x_{ i}, y) \Upsilon_{ T}(x_{ i}, y, x_{ l})\ell({\rm d}y) - \int W_{ M}(x_{ i}, y) W_{ M}(x_{ i}, z) \Upsilon_{ T}(x_{ i}, y, z) \ell({\rm d} y)\ell({\rm d} z)\right\rbrace\\
&:= \zeta_{ n, M}^{ (1, i)}+ \zeta_{ n, M}^{ (2, i)}.
\end{align*}
We only make the calculations for the $ \zeta_{ n, M}^{ (1, i)}$ and leave the (easier) term $ \zeta_{ n, M}^{ (2, i)}$ to the reader. Denote by
\begin{equation*}
\pi^{ i, M}_{ k,l}=W_{ M}(x_{ i}, x_{ l}) \left\lbrace W_{ M}(x_{ i}, x_{ k}) \Upsilon_{ T}(x_{ i}, x_{ k}, x_{ l}) -  \int W_{ M}(x_{ i}, y) \Upsilon_{ T}(x_{ i}, y, x_{ l}) \ell({\rm d} y)\right\rbrace.
\end{equation*}
By definition of $W_{ M}$ in \eqref{hyp:WM} and using Lemma~\ref{lem:regul_Upsilon}, $\Pi_{ n}^{ M}:= \sup_{ k,l,i}\left\vert \pi^{ i, M}_{ k,l} \right\vert \leq C_{ T}M^{ 2}= C_{ T}n^{ 2 \delta_{ 1}}$.
Writing differently the summation in the definition of $ \zeta_{ n, M}^{ (1, i)}$ leads to
\begin{align}
\zeta_{ n, M}^{ (1, i)}&= \frac{ 2 \Pi_{ n}^{ M}}{ n} \sum_{ r=1}^{ n} Y_{ i, r}^{ (M)} = \frac{ 2 \Pi_{ n}^{ M}}{ n} Y_{ i, i}^{ (M)} + \frac{ 2 \Pi_{n}^{ M}}{ n} \sum_{r\in \left[n\right],\ r\neq i} Y_{ i, r}^{ (M)},\label{aux:An6_sum}
\end{align}
where we have defined $Y_{ i, r}^{ (M)} := \frac{ 1}{ n}\sum_{ p=1}^{ r-1} \frac{ \pi_{ r, p}^{ (i, M)}}{  \Pi_{ n}^{ M}},\ r\in \left[n\right]$.
The first term in \eqref{aux:An6_sum} is easily bounded (almost surely for all $t\leq T$) by $ \frac{ 2 \Pi_{ n}^{ M}}{ n}\leq \frac{ 2C}{ n^{ 1- 2 \delta_{ 1}}}$. We now turn to the second term of \eqref{aux:An6_sum}: for each $r\in \left[n\right]$, $Y_{ i, r}^{ (M)}$ is measurable w.r.t. the $ \sigma$-field $ \mathcal{ F}_{ r}^{ (i)}:= \sigma \left(x_{ i}, x_{ k};\ k=1, \ldots, r\right)$ and such that $ \left\vert Y_{ i, r}^{ (M)} \right\vert \leq 1$. For all $r=2, \ldots , n$, $r\neq i$, 
\begin{align*}
&\mathbb{ E} \left[Y_{ i, r}^{ (M)} \vert Y_{ i, r-1}^{ (M)}\right]= \mathbb{ E} \left[Y_{ i, r}^{ (M)} \vert \mathcal{ F}_{ r-1}^{ (i)}\right] = \frac{ 1}{ n \Pi_{n}^{ M}}\sum_{ p=1}^{ r-1} \mathbb{ E}  \left[\pi_{ r, p}^{ (i, M)}\Big\vert \mathcal{ F}_{ r-1}^{ (i)}\right],\\
&=\frac{ 1}{ n \Pi_{ n}^{ M}}\sum_{ p=1}^{ r-1} \mathbb{ E}  \left[ W_{ M}(x_{ i}, x_{ p}) \left\lbrace W_{ M}(x_{ i}, x_{ r}) \Upsilon_{ T}(x_{ i}, x_{ r}, x_{ p}) -  \int W_{ M}(x_{ i}, y) \Upsilon_{ T}(x_{ i}, y, x_{ p}) \ell({\rm d} y)\right\rbrace \Big\vert \mathcal{ F}_{ r-1}^{ (i)}\right],
\end{align*}
so that by independence of the $(x_{ k})_{ k\in \left[n\right]}$, $\mathbb{ E} \left[Y_{ i, r}^{ (M)} \vert Y_{ i, r-1}^{ (M)}\right]=0$. Note that this calculation only works for $r \neq i$ (this is why we have treated the term $Y_{ i, i}^{ (M)}$ apart in \eqref{aux:An6_sum}). Since $\left\vert Y_{ i, r}^{ (M)} \right\vert \leq 1$, we have obviously $ \mathbb{ E} \left[ \left(Y_{ i, r}^{ (M)}\right)^{ 2} \vert Y_{ i, r-1}^{ (M)}\right]\leq 1$ for all $r=2, \ldots, n$. We are now in position to apply Lemma~\ref{lem:concentration_dembo}: for all $ \varepsilon>0$,
\begin{multline*}
\mathbb{ P} \left( \frac{ 2\Pi_{n}^{ M} }{ n} \sum_{r\in \left[n\right],\ r\neq i} Y_{ i, r}^{ (M)} \geq \varepsilon\right) = \mathbb{ P} \left( \frac{1}{ n} \sum_{r\in \left[n\right],\ r\neq i} Y_{ i, r, t}^{ (M)} \geq \frac{ \varepsilon}{ 2 \Pi_{n}^{ M}}\right),\\
\leq \exp \left(-n H\left( \frac{ \frac{ \varepsilon}{ 2}  \left(\Pi_{n}^{ M}\right)^{ -1} +1}{ 2} \Bigg\vert \frac{ 1}{ 2}\right)\right)\leq \exp \left(- \frac{n \varepsilon^{ 2} }{ 8\left(\Pi_{n}^{ M}\right)^{ 2}}\right).
\end{multline*}
Doing the same for $-Y_{ i, r}$ and by a union bound, we obtain finally, for the choice $ \varepsilon:= n^{ - \delta_{ 3}} \Pi_{ n}^{ M}$,
\begin{align}
\mathbb{ P} \left( \sup_{ i\in \left[n\right]}\left\vert \frac{ 2 \Pi_{n}^{ M}}{ n} \sum_{r\in \left[n\right],\ r\neq i} Y_{ i, r}^{ (M)} \right\vert \geq n^{ - \delta_{ 3}}\Pi_{ n}^{ M}\right) &\leq 2n\exp \left(- \frac{n^{ 1- 2 \delta_{ 3}} }{ 8}\right).
\end{align} 
This proves Claim 3.

\textbf{ Conclusion:} let $ \varepsilon_{ 0}:= \chi-9>0$. Define
\begin{equation}
\delta_{ 1}:= \frac{ 1}{ 4} - \frac{ \varepsilon_{ 0}}{ 8( \varepsilon_{ 0}+6)}>0,\ \delta_{ 2}:= \frac{ \varepsilon_{ 0}}{ 2( \varepsilon_{ 0}+6)}>0 \text{ and }\delta_{ 3}:= \frac{ 1}{ 2} - \frac{ \varepsilon_{ 0}}{ 8( \varepsilon_{ 0}+6)}>0.
\end{equation}
For this choice of parameters, one has obviously $1- 2 \delta_{ 1}>0$, $1- 2 \delta_{ 3}>0$ and $\delta_{ 3}- 2 \delta_{ 1}>0$, and it is easy to verify that $\delta_{ 1} (\chi-1)- \delta_{ 2} -1 >1$.
This means that the probabilities in \eqref{cln:control_W_WM_sum}, \eqref{cln:control_W_WM_int}, \eqref{cln:control_An6M} are summable in $n$ and that both $ n^{ -(\delta_{ 3}- 2 \delta_{ 1})}$ and $ n^{ -(1- 2 \delta_{ 1})}$ go to $0$ as $n\to\infty$. From \eqref{eq:An6_An6M_qu}, \eqref{cln:control_W_WM_sum}, \eqref{cln:control_W_WM_int}, \eqref{cln:control_W_int}, \eqref{cln:control_An6M}, we deduce from Borel Cantelli Lemma that there exists an event of probability $1$, such that on this event,
\begin{equation}
\sum_{ m=1}^{ 3} \epsilon_{ n, T}^{ (m)}( \underline{ x})\leq C \left(n^{ - \delta_{ 2}} + n^{ -(\delta_{ 3} -2 \delta_{ 1})} + n^{ -(1- 2 \delta_{ 1})}\right)
\end{equation}
and 
\begin{equation}
\label{cln:control_sum_W}
\sup_{ i\in \left[n\right]} \frac{ 1}{ n} \sum_{ k=1}^{ n} W(x_{ i}, x_{ k}) \leq 1 + \left\Vert \mathcal{ W}_{ 1} \right\Vert_{ \infty}.
\end{equation}
This concludes the proof of Proposition~\ref{prop:control_An6}.
\section{The spatial field and the nonlinear heat equation}
\subsection{Uniqueness of a solution}
Here, we consider the general case where $(I, \ell)$ is given by Definition~\ref{def:I_ell}. We suppose here that the hypotheses of Proposition~\ref{prop:uniqueness_weak_solution} hold.
\begin{proof}[Proof of Proposition~\ref{prop:uniqueness_weak_solution}]
Let $ \varphi, \psi$ be two weak solutions in $ \mathcal{ C} \left([0, T], L^{ k}(I, \ell)\right)$ with the same initial condition and denote by $ \rho:= \varphi- \psi$ the difference. Since $ \varphi \in \mathcal{ C}([0, T], L^{ k}(I, \ell))$ (resp. $\psi$) is a weak solution, then $ \varphi$ (resp. $ \psi$) belongs to $H^{ 1}([0, T], L^{ k}(I, \ell))$ and we have, for all test function $J$, for almost every $t\in[0, T]$, 
\begin{equation}
\left\langle \frac{ {\rm d} \varphi}{ {\rm d}t }(\cdot, t)\, ,\, J\right\rangle_{ L^{ 2}(I, \ell)}= \left\langle c( \varphi(\cdot, t)) + \int_{I}\Gamma( \varphi(\cdot, t) , \varphi(y, t)) W(\cdot, y) {\rm d}y\, ,\,  J\right\rangle_{ L^{ 2}(I, \ell)}.
\end{equation}
By density, this also true for all test function $(t,x) \mapsto J(x,t)$ in $ L^{ 2}([0,T]\times I)$. Substracting the two equations for $ \varphi$ and $ \psi$ and choosing the test function $J(\cdot, t)= \varphi(\cdot, t)- \psi(\cdot, t)$, we obtain
\begin{align*}
\frac{ {\rm d}}{ {\rm d}t} &\left\Vert \rho(\cdot, t) \right\Vert_{ L^{ 2}(I, \ell)}^{ 2} = \left\langle c(\varphi(\cdot, t) - c(\psi, t)\, ,\, \rho(\cdot, t)\right\rangle_{ L^{ 2}(I, \ell)}\\ &+ \left\langle \int_{I} \left\lbrace\Gamma( \varphi(\cdot, t) , \varphi(y, t)) - \Gamma( \psi(\cdot, t) , \psi(y, t))\right\rbrace W(\cdot, y) \ell({\rm d}y)\, ,\, \rho(\cdot, t)\right\rangle_{ L^{ 2}(I, \ell)}.
\end{align*} 
The first term is bounded by $L_{ c} \left\Vert \rho(\cdot, t) \right\Vert_{ L^{ 2}(I, \ell)}^{ 2}$. The second can be evaluated as, by the properties of $ \Gamma$
\begin{multline*}
\left\vert \left\langle \int_{I} \left\lbrace\Gamma( \varphi(\cdot, t) , \varphi(y, t)) - \Gamma( \psi(\cdot, t) , \psi(y, t))\right\rbrace W(\cdot, y) \ell({\rm d}y)\, ,\, \rho(\cdot, t)\right\rangle_{ L^{ 2}(I, \ell)} \right\vert\\ 
\leq L_{ \Gamma}\int_{ I} \int_{I} \left\vert \rho(x, t) \right\vert^{ 2} W(x, y) \ell({\rm d}y)  \ell({\rm d}x) + L_{ \Gamma}\int_{ I}\left\vert \rho(x, t) \right\vert \int_{I} \left\vert \rho(y, t) \right\vert W(x, y) \ell({\rm d}y)  \ell({\rm d}x),
\end{multline*}
The first term is bounded by $ L_{ \Gamma} \left\Vert \mathcal{ W}_{ 1} \right\Vert_{ \infty} \left\Vert \rho(\cdot, t) \right\Vert_{ L^{ 2}(I, \ell)}^{ 2}$. Concerning the second, by Cauchy-Schwarz inequality,
\begin{multline*}
\int_{ I}\left\vert \rho(x, t) \right\vert \int_{I} \left\vert \rho(y, t) \right\vert W(x, y) \ell({\rm d}y)  \ell({\rm d}x)\leq\int_{ I}\left\vert \rho(x, t) \right\vert \left(\int_{I} \left\vert \rho(y, t) \right\vert^{ 2} \ell({\rm d}y)\right)^{ \frac{ 1}{ 2}}\left(\int_{I} W(x, y)^{ 2} \ell({\rm d}y)\right)^{ \frac{ 1}{ 2}}\ell({\rm d}x)\\
\leq \left\Vert \mathcal{ W}_{ 2} \right\Vert_{ \infty}^{ \frac{ 1}{ 2}}\left\Vert \rho(\cdot, t) \right\Vert_{ L^{ 2}(I, \ell)}\left\Vert \rho(\cdot, t) \right\Vert_{ L^{ 1}(I, \ell)} \leq \left\Vert \mathcal{ W}_{ 2} \right\Vert_{ \infty}^{ \frac{ 1}{ 2}}\left\Vert \rho(\cdot, t) \right\Vert_{ L^{ 2}(I, \ell)}^{ 2}.
\end{multline*}
Using that $ \rho(\cdot, 0)\equiv 0$, we obtain uniqueness by a Gr\"onwall's Lemma.
\end{proof}

\subsection{Convergence of the spatial profile}
\label{sec:proof_conv_spatial_profile}
The point of this paragraph is to prove Theorem~\ref{theo:conv_profile}. Recall that $I=[0,1]$ with regular deterministic positions (Assumption~\ref{ass:deterministic_positions}) and we suppose that the hypotheses of Theorem~\ref{theo:conv_profile} hold. We use the shortcut $ \left\lbrace x\right\rbrace_{ n}:= \left\lfloor nx+1\right\rfloor$ for any $x\in I$. Recall the definition of the spatial field $ \theta_{ n}(x, t) = \theta_{ \left\lbrace x\right\rbrace_{ n}, t}^{ (n)}$ in \eqref{eq:spatial_profile}. Introduce the coupling (where the initial conditions and Brownian motions are the same as for \eqref{eq:odegene}):
\begin{align}
\label{eq:psi_particle_system}
{\rm d}\psi_{i, t}^{ (n)}&= c(\psi_{i, t}^{ (n)}){\rm d} t + \frac{1}{n}\sum_{j=1}^{ n} W(x_{ i}, x_{ j})\Gamma\left(\psi_{i, t}^{ (n)}, \psi_{j, t}^{ (n)}\right) {\rm d} t + \sigma{\rm d} B_{i, t},\ 0\leq t\leq T,\ i\in \left[n\right],
\end{align}
as well as its corresponding spatial field (recall that $x_{ 0}=0$ by definition): 
\begin{align}
\label{eq:psi_spatial_profile}
\psi_{ n}(x, t)&:=\sum_{ i=1}^{ n} \psi_{i, t}^{ (n)}\textbf{ 1}_{ [x_{ i-1}, x_{ i})}(x) = \psi_{ \left\lbrace x\right\rbrace_{ n}, t}^{ (n)}, \ x\in I.
\end{align}
With this notations at hand, we directly see that $(\theta_{ n}(x, t), \psi_{ n}(x, t))$ are solutions to
\begin{align}
{\rm d}\theta_{n}(x, t)&= c( \theta_{n}(x, t)) {\rm d}t + \int_{ I} \Gamma( \theta_{n}(x, t) , \theta_{n}(y, t)) \Xi^{ (n)}_{x, y}{\rm d}y{\rm d}t + \sigma {\rm d}B_{ \left\lbrace x\right\rbrace_{ n}, t},\\
{\rm d}\psi_{n}(x, t)&= c( \psi_{n}(x, t)) {\rm d}t + \int_{ I} \Gamma( \psi_{n}(x, t) , \psi_{n}(y, t)) \hat W_{ n}(x, y) {\rm d}y{\rm d}t + \sigma {\rm d}B_{ \left\lbrace x\right\rbrace_{ n}, t}.
\end{align}
where
\begin{equation}
\Xi^{ (n)}_{x, y}:= \kappa_{\left\lbrace x\right\rbrace_{ n}}^{ (n)} \xi_{ \left\lbrace x\right\rbrace_{ n}, \left\lbrace y\right\rbrace_{ n}}^{ (n)}\text{ and }\hat W_{ n}(x, y):= W( \left\lbrace x\right\rbrace_{ n}, \left\lbrace y\right\rbrace_{ n}).
\end{equation}
\begin{proposition}
Under the hypotheses of Section~\ref{sec:convergence_profile_intro}, the process $ \left(\theta_{ n}(\cdot, t)\right)_{ n\geq1, t\in[0, T]}$ is tight in $\mathcal{ C}([0, T], L^{ k}([0,1]))$.
\end{proposition}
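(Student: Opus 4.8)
The plan is to establish tightness in $\mathcal{C}([0,T],L^k([0,1]))$ via the standard Aldous-type / Kolmogorov criterion adapted to the Banach-space-valued setting: it suffices to show (i) that for each fixed $t$ the laws of $\theta_n(\cdot,t)$ are tight in $L^k$, and (ii) a uniform modulus-of-continuity estimate in time, typically of the form $\mathbf{E}\left[\left\Vert \theta_n(\cdot,t) - \theta_n(\cdot,s)\right\Vert_{L^k}^{k}\right] \leq C\left\vert t-s\right\vert^{\beta}$ for some $\beta>1$ or $=1$ (Kolmogorov continuity) uniformly in $n$, together with control of the initial data. Because $\theta_n(\cdot,t)$ is a step function equal to $\theta_{\lfloor nx+1\rfloor,t}^{(n)}$, one has the crucial identity $\left\Vert \theta_n(\cdot,t)\right\Vert_{L^k}^k = \frac{1}{n}\sum_{i=1}^n \left\vert \theta_{i,t}^{(n)}\right\vert^k$, so all $L^k$-norms of the spatial field reduce to empirical averages of the particle positions, for which the a priori moment bound \eqref{eq:apriori_bound_odegene} is available.

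First I would handle the time-regularity estimate. Writing $\theta_{i,t}^{(n)} - \theta_{i,s}^{(n)}$ from \eqref{eq:odegene} as the sum of a drift term $\int_s^t c(\theta_{i,u}^{(n)})\,{\rm d}u$, an interaction term $\int_s^t \frac{\kappa_i^{(n)}}{n}\sum_j \xi_{i,j}^{(n)}\Gamma(\theta_{i,u}^{(n)},\theta_{j,u}^{(n)})\,{\rm d}u$, and a stochastic term $\sigma(B_{i,t}-B_{i,s})$, I would bound the $2k$-th moment (or $k$-th moment, after an application of Jensen/Hölder in the time variable) of each piece. The local drift is controlled by the polynomial growth \eqref{hyp:polynomial_control_c} together with \eqref{eq:apriori_bound_odegene}; the interaction term is controlled by the sublinear bound \eqref{hyp:Gamma_bound} on $\Gamma$, the bound $b_n(\underline{\xi})\leq C$ valid almost surely for large $n$ from Proposition~\ref{prop:control_Delta_2}, and again \eqref{eq:apriori_bound_odegene}; the Brownian increment is handled by the Burkholder-Davis-Gundy inequality. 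Averaging $\frac{1}{n}\sum_i$ of these bounds and using convexity yields $\mathbf{E}\left[\left\Vert \theta_n(\cdot,t)-\theta_n(\cdot,s)\right\Vert_{L^k}^{2k}\right]\leq C\left\vert t-s\right\vert^{k}$ with $C$ deterministic and independent of $n$ (on the full-probability event where $b_n$ is bounded), which for $k\geq 2$ gives the required Kolmogorov-type continuity with exponent $>1$.

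Next I would address tightness of the time-marginals in $L^k([0,1])$. Since $L^k$ is infinite-dimensional, uniform moment bounds alone do not give compactness; one needs an equicontinuity-in-$x$ or smoothing argument. Here the natural route is to compare $\theta_n(\cdot,t)$ with the spatial profile $\psi_n(\cdot,t)$ of the intermediate system \eqref{eq:psi_particle_system}, and then with a smoothed version, exploiting the Hölder regularity of $W$ built into \eqref{hyp:Int_W_Lip} and \eqref{hyp:Int_Wn_W_2}; alternatively, and more cleanly, one observes that tightness at a single time follows from the uniform moment bound $\sup_n \mathbf{E}[\frac{1}{n}\sum_i |\theta_{i,t}^{(n)}|^{2k}]\leq C_0$ combined with the fact that on $L^k([0,1])$ the family of step functions with uniformly bounded $L^{2k}$-norm and a uniform bound on a spatial oscillation functional is relatively compact — the spatial oscillation being controlled because neighbouring particles $\theta_{i,t}^{(n)}$ and $\theta_{i+1,t}^{(n)}$ are close in mean by the propagation-of-chaos estimate \eqref{eq:conv_general} together with the Hölder continuity $w_1(\nu_0^{x},\nu_0^{y})\leq L_0|x-y|^{\iota_1}$ of the initial data and $\delta\mathcal{W}(x,y)\leq L_W|x-y|^{\iota_2}$ propagated in time. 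In practice I expect it is easiest to first prove tightness of $(\psi_n(\cdot,t))$ using the regularity of the deterministic kernel $W$ (this is essentially a PDE-compactness statement and does not involve the random graph), then transfer it to $(\theta_n(\cdot,t))$ using $\sup_i\mathbf{E}[\sup_{s\leq T}|\theta_{i,s}-\psi_{i,s}|^2]\to 0$, which follows from the same arguments as Proposition~\ref{prop:control_one} and Theorem~\ref{theo:conv_general} applied with the weighted graph $W(x_i,x_j)/n$ in place of $\kappa_i^{(n)}\xi_{i,j}^{(n)}/n$.

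The main obstacle, I expect, is precisely the spatial-compactness part: obtaining a modulus of continuity in the space variable $x$ that is uniform in $n$ and in $t\in[0,T]$, for the quenched randomly-wired system, rather than just a moment bound. The drift and noise are perfectly correlated only along the index $\lfloor nx+1\rfloor$, so $\theta_n(x,t)$ has genuine jumps of order one between adjacent cells and is not equicontinuous pathwise; equicontinuity can only hold in an averaged ($L^k$, in expectation) sense, and establishing it requires carefully propagating the initial-data Hölder bound and the kernel regularity \eqref{hyp:Int_W_Lip}, \eqref{hyp:Int_Wn_W_2} through the dynamics, while absorbing the graph fluctuations using the almost-sure bounds from Proposition~\ref{prop:control_Delta_2} and the regularity Definition~\ref{def:graphon_regular}. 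Once the two ingredients (uniform Kolmogorov-type time continuity and spatial $L^k$-tightness of the marginals) are in hand, the classical tightness criterion for $\mathcal{C}([0,T],E)$-valued processes with $E=L^k([0,1])$ concludes the proof.
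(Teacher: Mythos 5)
Your first ingredient is, in substance, what the paper actually proves, but it executes it differently: the paper applies It\^o's formula to $\left\vert \theta_{ n}(x, t) \right\vert^{ k}$ (and to the time increments $\Delta_{ n, s, t}(x)$), uses the one-sided Lipschitz condition \eqref{hyp:c_onesidedLip} so that the local drift only enters through $\left\langle \theta\, ,\, c(\theta)\right\rangle$, controls the interaction through the almost-sure bound on $\frac{ 1}{ n}\sum_{ j} \kappa_{ i}^{ (n)} \xi_{ i, j}$ (i.e. \eqref{hyp:bn_xi}, \eqref{eq:control_sum_xi}, \eqref{hyp:bound_W_L1_sum}), and finishes with Burkholder--Davis--Gundy and Gr\"onwall, yielding exactly the uniform bound \eqref{eq:tightness_theta} and the time modulus \eqref{eq:tightness_Delta}; it then declares tightness and stops. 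In particular the paper contains \emph{no} spatial-compactness argument: your second ingredient (a Fr\'echet--Kolmogorov-type oscillation bound in $x$, obtained by comparison with $\psi_{ n}$ and propagation of the H\"older regularity of $\nu_{ 0}$ and of $W$) goes beyond the paper's proof. Your concern is legitimate — an $L^{ 2k}$ moment bound plus time equicontinuity controls only boundedness, not compact containment, in the norm topology of $L^{ k}([0,1])$ — so your route is the more conservative one; note however that the propagation-of-chaos error in your spatial estimate is $o(1)$ in $n$ and not small in $\left\vert x-y \right\vert$, so the equicontinuity is only asymptotic and you must treat the finitely many small $n$ separately when building the compact sets.

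There is one step in your time-regularity argument that fails as written. You propose to bound the local drift increment $\int_{ s}^{ t} c(\theta_{ i, u}^{ (n)})\, {\rm d}u$ in $2k$-th (or even $k$-th) moment using the polynomial control \eqref{hyp:polynomial_control_c} together with the a priori bound \eqref{eq:apriori_bound_odegene}. Under Model~\ref{mod:polynomial} the drift $c$ may have degree up to $k$, so $\left\vert c(\theta) \right\vert^{ 2k}$ is of order $\left\vert \theta \right\vert^{ 2k^{ 2}}$, while \eqref{eq:apriori_bound_odegene} (and the assumption \eqref{hyp:second_moment_nu0} on the initial data) only provide moments of order $2k$; for $k=2$ the bookkeeping closes, but for $k>2$ it does not, and no higher moments are available in the standing assumptions. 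The remedy is the paper's device: never estimate $\left\vert c(\theta) \right\vert$ raised to a high power, but apply It\^o's formula to the power of the (increment of the) process so that $c$ appears only in inner products of the form $\left\langle \theta\, ,\, c(\theta)\right\rangle$ or $\left\langle \theta_{ u}- \theta_{ s}\, ,\, c(\theta_{ u})- c(\theta_{ s})\right\rangle$, which \eqref{hyp:c_onesidedLip} reduces to quadratic quantities. With this modification your interaction and noise estimates (sublinearity \eqref{hyp:Gamma_bound}, the almost-sure bound on $b_{ n}( \underline{ \xi})$, BDG) match the paper's, and the exponent you obtain in $\left\vert t-s \right\vert$ is indeed sufficient for the Kolmogorov-type criterion.
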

\begin{proof}
Apply Ito's formula:
\begin{align*}
\left\vert \theta_{ n}(x, u) \right\vert^{k} &= \left\vert \theta_{ n}(x, 0) \right\vert^{k} + k\int_{ 0}^{u} \left\vert \theta_{ n}(x, s) \right\vert^{ k-2} \left\langle \theta_{ n}(x, s)\, ,\, c( \theta_{ n}(x,s))\right\rangle {\rm d}s\\ 
&+ k\int_{ 0}^{u} \left\vert \theta_{ n}(x,s) \right\vert^{ k-2} \left\langle \theta_{ n}(x, s)\, ,\, \int \Gamma \left( \theta_{ n}(x, s), \theta_{ n}(y, s)\right) \Xi_{x,y}^{(n)}{\rm d}y\right\rangle{\rm d}s\\
&+ k \int_{ 0}^{u} \left\vert \theta_{ n}(x, s) \right\vert^{ k-2} \left\langle \theta_{ n}(x, s)\, ,\, \sigma {\rm d}B_{ \left\lbrace x\right\rbrace_{ n}, s}\right\rangle\\
&+ \int_{ 0}^{u} k \left(\frac{ k}{ 2}-1\right) \left\vert \theta_{ n}(x,s) \right\vert^{ k-4} \left\langle \sigma \sigma^{ \dagger} \theta_{ n}(x,s)\, ,\, \theta_{ n}(x,s)\right\rangle{\rm d}s + kd\int_{ 0}^{u} \left\vert \theta_{ n}(x,s) \right\vert^{ k-2} {\rm d}s
\end{align*}
Let
\begin{equation*}
\theta_{ n}^{ \ast}(x, t):= \sup_{ s\in[0, t]} \left\vert \theta_{n}(x, s) \right\vert.
\end{equation*}
Using the hypothesis on $c$ and $ \Gamma$ in Section~\ref{sec:general_assumptions_nonlin}, we obtain
\begin{multline*}
\theta_{ n}^{ \ast}(x, t)^{ k} \leq\left\vert \theta_{ n}(x, 0) \right\vert^{ k}+ k \int_{ 0}^{t} \theta_{ n}^{ \ast}(x, s)^{ k-2} \left( \left(L_{ c}+ \frac{ 1}{ 2}\right) \theta_{ n}^{ \ast}(x, s)^{ 2}+ \frac{ \left\vert c(0) \right\vert^{ 2}}{2}\right) {\rm d}s\\ 
+ kL_{ \Gamma}\int_{ 0}^{t} \theta_{ n}^{ \ast}(x,s)^{ k-1} \int_{ I}\left(1+  \theta_{ n}^{ \ast}(x, s) + \theta_{ n}^{ \ast}(y, s) \right)\Xi_{x,y}^{(n)} {\rm d}y {\rm d}s\\
+ k \left(( \frac{ k}{ 2}-1) \left\vert \sigma \sigma^{ \dagger} \right\vert + d\right)\int_{ 0}^{t} \theta_{ n}^{ \ast}(x,s)^{ k-2}{\rm d}s+k\sup_{ u\leq t} \left\vert \int_{ 0}^{u} \left\vert \theta_{ n}(x, s) \right\vert^{ k-2} \left\langle \theta_{ n}(x, s)\, ,\, \sigma {\rm d}B_{ \left\lbrace x\right\rbrace_{ n}, s}\right\rangle \right\vert
\end{multline*}
Taking the square, using Jensen's inequality and taking the expectation, we obtain, for some constant $C>0$ depending on $k, T, c, \Gamma, \sigma, d$:
\begin{align}
\mathbf{ E} \left[ \theta_{ n}^{ \ast}(x, t)^{2 k}\right] &\leq C \mathbf{ E} \left[\left\vert \theta_{ n}(x, 0) \right\vert^{ 2k}\right] \nonumber\\
&+C \left(1+ \left(\sup_{ x\in I}\int_{ I}\Xi_{x,y}^{(n)} {\rm d}y\right)^{ 2} \right)\int_{ 0}^{t} \left(\mathbf{ E}\left[ \theta_{ n}^{ \ast}(x, s)^{ 2k-4}\right]+ \mathbf{ E} \left[\theta_{ n}^{ \ast}(x,s)^{ 2k-2}\right] + \mathbf{ E} \left[\theta_{ n}^{ \ast}(x,s)^{ 2k}\right]\right) {\rm d}s, \nonumber\\
&+C \mathbf{ E}\left[\int_{ 0}^{t} \theta_{ n}^{ \ast}(x,s)^{ 2k-2} \left(\int_{ I}\theta_{ n}^{ \ast}(y, s)\Xi_{x,y}^{(n)} {\rm d}y\right)^{ 2} {\rm d}s\right], \label{aux:thetan_xy_Xin}\\
&+C \mathbf{ E}\left[\sup_{ u\leq t} \left\vert \int_{ 0}^{u} \left\vert \theta_{ n}(x, s) \right\vert^{ k-2} \left\langle \theta_{ n}(x, s)\, ,\, \sigma {\rm d}B_{ \left\lbrace x\right\rbrace_{ n}, s}\right\rangle \right\vert^{ 2}\right] \label{aux:thetan_Bx}
\end{align}
Concentrate on the term \eqref{aux:thetan_xy_Xin}: for any $x\in I$, setting $i= \left\lbrace x\right\rbrace_{ n}$, note that
\begin{multline*}
\left(\int_{ I}\theta_{ n}^{ \ast}(y, s)\Xi_{x,y}^{(n)} {\rm d}y\right)^{ 2}= \left(\sum_{ j=1}^{ n} \kappa_{ i}^{ (n)} \xi_{ i, j}  \int_{ x_{ j-1}}^{ x_{ j}}\theta_{ n}^{ \ast}(y, s)  {\rm d}y\right)^{ 2},\\
\leq \left(\frac{ 1}{ n}\sum_{ j=1}^{ n} \kappa_{ i}^{ (n)} \xi_{ i, j}\right) \sum_{ j=1}^{ n} \kappa_{ i}^{ (n)} \xi_{ i, j}  \int_{ x_{ j-1}}^{x_{ j}} \theta_{ n}^{ \ast}(y, s)^{ 2} {\rm d}y,
\end{multline*}
applying Jensen's inequality for the probability measure on $I$ $\rho_{ n}= \frac{ \sum_{ j=1}^{ n} \kappa_{ i}^{ (n)} \xi_{ i, j}  \mathbf{ 1}_{ [x_{ j-1}, x_{ j})}(y) {\rm d}y}{\frac{ 1}{ n}\sum_{ j=1}^{ n} \kappa_{ i}^{ (n)} \xi_{ i, j}}$. Note also that by H\"older's inequality, \[\mathbf{ E} \left[\theta_{ n}^{ \ast}(x,s)^{ 2k-2} \theta_{ n}^{ \ast}(y, s)^{ 2}\right] \leq \mathbf{ E} \left[\theta_{ n}^{ \ast}(x,s)^{ 2k}\right]^{ \frac{ 2k-2}{ 2k}} \mathbf{ E} \left[\theta_{ n}^{ \ast}(y, s)^{ 2k}\right]^{ \frac{ 1}{ k}}.\]
Hence, if we define  
\begin{equation}
\Theta_{ n}(t):= \sup_{ x\in I} \mathbf{ E} \left[\theta_{ n}^{ \ast}(x, t)^{ 2k}\right]
\end{equation} the term \eqref{aux:thetan_xy_Xin} may be bounded by $\left(\int_{ I}\Xi_{x,y}^{(n)} {\rm d}y\right)^{ 2}  \int_{ 0}^{t}\Theta_{ n}(s) {\rm d}s$. Finally, concerning the last term \eqref{aux:thetan_Bx}, an application of Burkholder-Davis-Gundy inequality gives, for some constant $C>0$ that depends on $ \sigma$,
\begin{align*}
\mathbf{ E}\left[\sup_{ u\leq t} \left\vert \int_{ 0}^{u} \left\vert \theta_{ n}(x, s) \right\vert^{ k-2} \left\langle \theta_{ n}(x, s)\, ,\, \sigma {\rm d}B_{ \left\lbrace x\right\rbrace_{ n}, s}\right\rangle \right\vert^{ 2}\right]&\leq C \int_{ 0}^{t} \mathbf{ E}\left[\left\vert \theta_{ n}^{ \ast}(x, s) \right\vert^{ 2k-2}\right]{\rm d}s.
\end{align*}
Putting everything together and using the fact that there exist a universal constant $C_{ k}>0$ (only depending on $k$) such that $ \left\vert x \right\vert^{ 2k-4} + \left\vert x \right\vert^{ 2k-2} \leq C_{ k} + \left\vert x \right\vert^{ 2k}$, we obtain finally that for some constant $C^{ \prime}>0$ depending on $T, k, \sigma, c, \Gamma$,
\begin{align*}
\Theta_{ n}(t) \leq C^{ \prime} \Theta_{ n}(0) + C^{ \prime} \left(1+ \sup_{ x\in I}  \left(\int_{ I} \Xi_{x, y}^{(n)} {\rm d}y\right)^{ 2} \right)+ C^{ \prime} \left(1+ \sup_{ x\in I}\left(\int_{ I}\Xi_{x, y}^{(n)}  {\rm d}y\right)^{ 2}\right) \int_{ 0}^{t} \Theta_{ n}(s) {\rm d}s.
\end{align*}
Since we have almost surely $ \sup_{ x\in I} \left(\int \Xi_{x, y}^{(n)}{\rm d} y\right)^{ 2} = \sup_{ i\in \left[n\right]}\left(\frac{ 1}{ n}\sum_{ j=1}^{ n}  \kappa_{i}^{ (n)} \xi_{ i, j}\right)^{ 2}< \infty$ (recall \eqref{hyp:bn_xi}, \eqref{eq:control_sum_xi} and \eqref{hyp:bound_W_L1_sum}) as well as the hypothesis on the initial condition \eqref{hyp:second_moment_nu0}, we obtain from Gr\"onwall's Lemma that
\begin{equation}
\sup_{ n\geq1}\sup_{x\in I} \mathbf{ E} \left[\sup_{ t\in[0, T]}\left\vert \theta_{ n}(x, t) \right\vert^{ 2k}\right]  < \infty.
\end{equation}
This implies in particular that, by two successive applications of Jensen's inequality,
\begin{align}
\sup_{ n\geq1}&\mathbf{ E} \left[\sup_{ t\in[0, T]} \left\Vert \theta_{ n}(\cdot, t) \right\Vert_{ L^{ k}(I)}^{ 2}\right] = \sup_{ n\geq1} \mathbf{ E} \left[ \sup_{ t\in[0, T]} \left( \int \left\vert \theta_{ n}(x, t) \right\vert^{ k} {\rm d}x\right)^{ \frac{ 2}{ k}}\right], \nonumber\\
&\leq \sup_{ n\geq1} \mathbf{ E} \left[\left( \int \sup_{ t\in[0, T]}\left\vert \theta_{ n}(x, t) \right\vert^{ 2k} {\rm d}x\right)^{ \frac{ 1}{ k}}\right]\leq \sup_{ n\geq1} \mathbf{ E} \left[ \int \sup_{ t\in[0, T]}\left\vert \theta_{ n}(x, t) \right\vert^{ 2k} {\rm d}x\right]^{ \frac{ 1}{ k}}, \nonumber\\
&=\sup_{ n\geq1} \left(\int \mathbf{ E} \left[\sup_{ t\in[0, T]}\left\vert \theta_{ n}(x, t) \right\vert^{ 2k} \right]{\rm d}x \right)^{ \frac{ 1}{ k}}\leq \left( \sup_{ n\geq1}\sup_{ x\in I}\mathbf{ E} \left[\sup_{ t\in[0, T]}\left\vert \theta_{ n}(x, t) \right\vert^{ 2k} \right]\right)^{ \frac{ 1}{ k}}< \infty.\label{eq:tightness_theta}
\end{align} 
We now turn to a similar estimate concerning the modulus of continuity: set $ \delta>0$ and $s<t\in [0, T]$ such that $ \left\vert t-s \right\vert \leq \delta$. Set
\begin{equation}
\Delta_{ n, s,t}(x):= \left\vert \theta_{ n}(x, t) - \theta_{ n}(x, s) \right\vert
\end{equation}
and
\begin{equation}
\Delta_{ n, \delta}^{ \ast}(x):= \sup_{ s}\Delta_{ n, s,s+ \delta}(x)
\end{equation}
Then, by the same calculations as before, it is straightforward to see that, for another constant $C^{ \prime}>0$,
\begin{align*}
\sup_{ x\in I}\mathbf{ E} \left[\Delta_{ n, \delta}^{ \ast}(x)^{ 2k}\right]&\leq C^{ \prime}\left(1+ \left(\sup_{ x\in I}\int_{ I}\Xi_{x,y}^{(n)} {\rm d}y\right)^{ 2} \right)\int_{ 0}^{ \delta}  \sup_{ x\in I} \mathbf{ E} \left[\Delta_{ n, u}^{ \ast}(x)^{ 2k}\right] {\rm d}u\\ 
&+ C^{ \prime} \mathbf{ E} \left[\sup_{ s, t, \left\vert t-s \right\vert\leq \delta} \left\vert \int_{ s}^{t} \left\vert \theta_{ n}(x, u)- \theta_{ n}(x, s) \right\vert^{ k-2} \left\langle \theta_{ n}(x, u)- \theta_{ n}(x, s)\, ,\, \sigma {\rm d}B_{ \left\lbrace x\right\rbrace_{ n}, u}\right\rangle \right\vert^{ 2}\right]
\end{align*}
Hence, once again by H\"older and Burkholder-Davis-Gundy inequalities, 
\begin{align*}
\sup_{ x}\mathbf{ E} \left[\Delta_{ n, \delta}^{ \ast}(x)^{ 2k}\right]&\leq C^{ \prime\prime} \delta +  C^{ \prime\prime} \left(1+ \sup_{ x\in I}\left(\int \Xi_{x,y}^{(n)}{\rm d}y\right)^{ 2} \right)\int_{ 0}^{ \delta}  \sup_{ x\in I}\mathbf{ E} \left[\Delta_{ n, u}^{ \ast}(x)^{ 2k}\right] {\rm d}u.
\end{align*}
Gr\"onwall's lemma gives directly that
\begin{equation}
\sup_{ n\geq1}\sup_{ x\in I}\mathbf{ E} \left[ \sup_{ \left\vert t-s \right\vert\leq \delta} \left\vert \theta_{ n}(x, t) - \theta_{ n}(x, s) \right\vert^{ 2k}\right] \xrightarrow[\delta \searrow 0]{}0,
\end{equation}
which finally gives
\begin{equation}
\label{eq:tightness_Delta}
\sup_{ n\geq1}\mathbf{ E} \left[ \sup_{ \left\vert t-s \right\vert\leq \delta} \left\Vert \theta_{ n}(\cdot, t) - \theta_{ n}(\cdot, s) \right\Vert^{ 2}_{ L^{ k}(I)}\right] \xrightarrow[\delta \searrow 0]{}0.
\end{equation}
The required tightness result follows directly from \eqref{eq:tightness_theta} and \eqref{eq:tightness_Delta}.
\end{proof}
\begin{proposition}
Under the assumption of Section~\ref{sec:convergence_profile_intro}, any accumulation point of $ \left(\theta_{ n}(\cdot, t)\right)_{ n\geq1, t\in[0, T]}$ in $\mathcal{ C}([0, T], L^{ k}(I))$ is a weak solution to \eqref{eq:heat_equation_weak}.
\end{proposition}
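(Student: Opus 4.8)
The plan is to exhibit $\theta_{n}$ as an \emph{approximate} solution of the weak formulation \eqref{eq:heat_equation_weak}, with errors that vanish quenchedly, and then to pass to the limit along the tight subsequence. Concretely: starting from the SDE satisfied by the spatial field, integrate it against a regular test function $J:I\to\mathbb{R}^{d}$ to obtain, for every $t\in[0,T]$,
\begin{multline*}
\int_{I}\langle\theta_{n}(x,t),J(x)\rangle\,{\rm d}x=\int_{I}\langle\theta_{n}(x,0),J(x)\rangle\,{\rm d}x+\int_{0}^{t}\int_{I}\langle c(\theta_{n}(x,s)),J(x)\rangle\,{\rm d}x\,{\rm d}s\\
+\int_{0}^{t}\int_{I^{2}}\langle\Gamma(\theta_{n}(x,s),\theta_{n}(y,s)),J(x)\rangle\,\Xi^{(n)}_{x,y}\,{\rm d}y\,{\rm d}x\,{\rm d}s+M_{n}^{J}(t),
\end{multline*}
where $M_{n}^{J}(t)=\int_{I}\langle\sigma B_{\{x\}_{n},t},J(x)\rangle\,{\rm d}x=\sum_{i=1}^{n}\big\langle\sigma B_{i,t},\int_{x_{i-1}}^{x_{i}}J\big\rangle$ is a continuous martingale. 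This is exactly the right-hand side of \eqref{eq:heat_equation_weak} evaluated at $\theta_{n}$, up to three discrepancies: the martingale $M_{n}^{J}$; the replacement of $\Xi^{(n)}_{x,y}=\kappa^{(n)}_{\{x\}_{n}}\xi_{\{x\}_{n},\{y\}_{n}}$ by $\widehat W_{n}(x,y)=W(\{x\}_{n},\{y\}_{n})$; and the replacement of $\widehat W_{n}$ by $W(x,y)$.

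The second step is to bound these three errors. For the martingale, independence of the $(B_{i})$ and $|\int_{x_{i-1}}^{x_{i}}J|\le\|J\|_{\infty}/n$ give $\mathbf E\sup_{t\le T}|M_{n}^{J}(t)|^{2}\le 4T|\sigma|^{2}\|J\|_{\infty}^{2}/n\to0$ after a Doob inequality. For $\Xi^{(n)}\rightsquigarrow\widehat W_{n}$, write $\Xi^{(n)}_{x,y}-\widehat W_{n}(x,y)=\kappa^{(n)}_{\{x\}_{n}}(\xi_{\{x\}_{n},\{y\}_{n}}-W_{n}(\{x\}_{n},\{y\}_{n}))+(\kappa^{(n)}_{\{x\}_{n}}W_{n}(\{x\}_{n},\{y\}_{n})-W(\{x\}_{n},\{y\}_{n}))$: the second piece is controlled by $\delta_{n}(\underline x)\to0$, and the first, centered piece is controlled on a full-$\mathbb P$-measure event for $\Xi$ by the concentration Lemmas~\ref{lem:concentration_dembo}--\ref{lem:large_deviations} and Proposition~\ref{prop:control_Delta_2}, pairing with $\langle\Gamma(\cdot,\cdot),J\rangle$ and using the sublinear bound \eqref{hyp:Gamma_bound} together with the uniform moment bound $\sup_{n}\sup_{x\in I}\mathbf E\sup_{t}|\theta_{n}(x,t)|^{2k}<\infty$ established in the tightness proposition (and \eqref{eq:control_sum_xi}). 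Finally $\widehat W_{n}\rightsquigarrow W$ is handled by assumption \eqref{hyp:Int_Wn_W_2} and Cauchy--Schwarz, again with the moment bounds. Hence, for almost every realization of $\Xi$, $\int_{I}\langle\theta_{n}(x,t),J(x)\rangle\,{\rm d}x-[\text{RHS of \eqref{eq:heat_equation_weak}, }\psi\rightsquigarrow\theta_{n}]\to0$ in $L^{1}(\mathbf P)$, uniformly in $t\in[0,T]$; in parallel, since $\theta_{n}(x,0)=\theta^{(n)}_{\{x\}_{n},0}$ with $\theta^{(n)}_{i,0}\sim\nu_{0}^{x_{i}}$ independent, a direct law of large numbers gives $\langle\theta_{n}(\cdot,0),J\rangle\to\int_{I}\langle\psi_{0}(x),J(x)\rangle\,{\rm d}x$ in $L^{2}(\mathbf P)$.

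The third step is the passage to the limit. Let $\theta_{\infty}$ be an accumulation point: along a subsequence $\theta_{n_{j}}\Rightarrow\theta_{\infty}$ in $\mathcal{C}([0,T],L^{k})$, and by Skorokhod's representation theorem we may assume $\theta_{n_{j}}\to\theta_{\infty}$ almost surely. Introduce $\Phi_{J,t}(\phi)$, the difference of the two sides of \eqref{eq:heat_equation_weak} with $\psi$ replaced by $\phi$, defined on $\mathcal{C}([0,T],L^{k})$. On each ball $\{\sup_{t}\|\phi(\cdot,t)\|_{L^{2k}}\le R\}$, the map $\phi\mapsto\Phi_{J,t}(\phi)$ is continuous, using that $c$ is $\mathcal C^{2}$ with polynomial control \eqref{hyp:polynomial_control_c} and $\Gamma$ is Lipschitz \eqref{hyp:Gamma_Lip_1} (for the bounded/linear cases of Model~\ref{mod:polynomial} the coupling term is in particular continuous), together with the equi-integrability furnished by the $L^{2k}$ bound and dominated convergence. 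Since those moment bounds pass to the limit, $\Phi_{J,t}(\theta_{n_{j}})\to\Phi_{J,t}(\theta_{\infty})$ almost surely, while Step~2 forces $\Phi_{J,t}(\theta_{n_{j}})\to0$; hence $\Phi_{J,t}(\theta_{\infty})=0$ a.s. Taking $J$ in a countable dense family of test functions and $t$ rational, and using the $\mathcal{C}([0,T],L^{k})$-continuity of $\theta_{\infty}$, we get $\Phi_{J,t}(\theta_{\infty})=0$ for all $J$ and all $t$ simultaneously, a.s.; combined with $\theta_{\infty}(\cdot,0)=\psi_{0}$ this says exactly that $\theta_{\infty}$ is, a.s., a weak solution of \eqref{eq:heat_equation_weak}.

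The main obstacle is Step~2's treatment of the coupling term: replacing the random, possibly diluted graph $\Xi^{(n)}$ by the macroscopic kernel $W$ \emph{quenchedly} requires recycling the full concentration machinery of Section~\ref{sec:proof_conv_general} but now combined with the (possibly unbounded) moments of $\theta_{n}$ through \eqref{hyp:Gamma_bound} and with the discretization estimate \eqref{hyp:Int_Wn_W_2}. The second delicate point is the continuity of $\Phi_{J,t}$ along the limit in $\mathcal{C}([0,T],L^{k})$ through the nonlinearities $c$ and $\Gamma$ — it is precisely this that dictates the restriction to Model~\ref{mod:polynomial} and the need for the $2k$-th moment bounds proved in the tightness step. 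The remaining ingredients (martingale estimate, initial condition) are routine.
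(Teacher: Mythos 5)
Your overall architecture (write an approximate weak formulation for $\theta_{n}$ tested against $J$, kill the martingale and the two kernel-replacement errors, then pass to the limit via Skorokhod and continuity of the defect functional on Model~\ref{mod:polynomial}) parallels the paper's strategy, and your treatment of the martingale term, of the replacement $\hat W_{n}\rightsquigarrow W$ via \eqref{hyp:Int_Wn_W_2}, of the initial condition, and of the final limit passage matches what is done in Section~\ref{sec:proof_conv_spatial_profile}. However, there is a genuine gap precisely at the step you yourself flag as the main obstacle: the quenched control of the centered piece
\begin{equation*}
\frac{1}{n^{2}}\sum_{i,j=1}^{n}\left\langle J(x_{i})\, ,\, \Gamma\bigl(\theta_{i,s}^{(n)},\theta_{j,s}^{(n)}\bigr)\right\rangle \kappa_{i}^{(n)}\bigl(\xi_{i,j}-W_{n}(x_{i},x_{j})\bigr).
\end{equation*}
Lemma~\ref{lem:concentration_dembo}, Lemma~\ref{lem:large_deviations} and Proposition~\ref{prop:control_Delta_2} require the weights multiplying the centered Bernoulli variables $\bar\xi_{i,j}$ to be independent of the connectivity sequence (in Proposition~\ref{prop:control_Delta_2} the weights are $\gamma^{(i)}_{k,l,u}=\mathbf{E}\bigl[[\Gamma]_{u}(\bar\theta_{i,u},x_{k})[\Gamma]_{u}(\bar\theta_{i,u},x_{l})\bigr]$, built from the nonlinear processes, hence deterministic given the positions). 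In your sum the weights $\Gamma(\theta_{i,s}^{(n)},\theta_{j,s}^{(n)})$ are functionals of the solution of \eqref{eq:odegene} on the very graph $\underline\xi$ you are trying to concentrate over, so they are correlated with the $\xi_{i,j}$'s and the martingale/concentration estimates do not apply as stated; "pairing with $\langle\Gamma(\cdot,\cdot),J\rangle$" is exactly where the argument breaks.

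The repair is a decoupling step, and it is the reason the paper introduces the auxiliary system \eqref{eq:psi_particle_system}: the $\psi^{(n)}_{i}$ are driven by the deterministic kernel $W(x_{i},x_{j})$ and the same Brownian motions, hence are independent of $\Xi$. The paper first proves $\sup_{x,t}\mathbf{E}\vert\theta_{n}(x,t)-\psi_{n}(x,t)\vert^{2k}\to0$ (estimate \eqref{eq:theta_nxt_VS_psi_nxt}) by a Gr\"onwall argument in which the graph-dependent factor $\theta_{i}-\psi_{i}$ is absorbed into the Gr\"onwall term (via Cauchy--Schwarz/Young), so that the concentration lemma is only ever applied to sums of the form $\frac1n\sum_{j}\kappa_{i}^{(n)}\bar\xi_{i,j}\,\Gamma(\psi_{i,u},\psi_{j,u})$ whose coefficients are independent of $\Xi$; then any accumulation point of $\theta_{n}$ is an accumulation point of $\psi_{n}$, and the limit is identified from the exact weak formulation \eqref{eq:psin_J} satisfied by $\psi_{n}$. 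An alternative repair along your lines would be to first replace $\theta_{i,s}$ by the independent copies $\bar\theta_{i,s}$ of the nonlinear process using Theorem~\ref{theo:conv_general} (available under the standing assumptions) together with the almost sure bound \eqref{eq:control_sum_xi} on $b_{n}(\underline\xi)$, and only then concentrate; but as written, your proposal omits this decoupling, which is the heart of the proof rather than a routine recycling of Section~\ref{sec:proof_conv_general}.
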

\begin{proof}
Let us consider the field $ \psi_{ n}$ \eqref{eq:psi_spatial_profile} corresponding to the particle system \eqref{eq:psi_particle_system} driven by the same Brownian motions as \eqref{eq:odegene} with the same initial condition. By calculations similar to the previous proof, the difference $ \theta_{ n}- \psi_{ n}$ verifies, using the assumptions on $c$,
\begin{multline*}
 \mathbf{ E} \left[\left\vert \theta_{ n}(x, t)- \psi_{ n}(x,t) \right\vert^{ 2k}\right]\leq C\int_{ 0}^{t} \mathbf{ E} \left[\left\vert \theta_{ n}(x, u)- \psi_{ n}(x,u) \right\vert^{ 2k}\right] {\rm d}u+ C \mathbf{ E}\int_{ 0}^{t} \left\vert \theta_{ n}(x, u)- \psi_{ n}(x,u) \right\vert^{ 2k-2} \\
 \left\langle \theta_{ n}(x, u)- \psi_{ n}(x, u)\, ,\, \int \left(\Gamma \left( \theta_{ n}(x, u), \theta_{ n}(y, u)\right)\Xi_{x, y}^{(n)} - \Gamma \left( \psi_{ n}(x, u), \psi_{ n}(y, u)\right) \hat W_{ n}(x, y)\right) {\rm d}y\right\rangle{\rm d}u
\end{multline*}
Using the assumptions on $ \Gamma$, \eqref{hyp:Gamma_Lip_1}, the difference in the last term above can be bounded by  $C(A_{ 1}+A_{ 2})$ where $A_{ 1}$ and $A_{ 2}$ are given below. First,
\begin{multline*}
A_{ 1}:=\int_{ 0}^{t} \mathbf{ E} \left\vert \theta_{ n}(x, u)- \psi_{ n}(x,u) \right\vert^{ 2k-2}\\
\int_{I}\left\langle \theta_{ n}(x, u) - \psi_{ n}(x, u)\, ,\, \Gamma(\theta_{ n}(x, u), \theta_{ n}(y, u))- \Gamma(\psi_{ n}(x, u), \psi_{ n}(y, u))\right\rangle\Xi^{ (n)}_{x, y} {\rm d}y {\rm d}u,\\
\leq C\int_{ 0}^{t} \mathbf{ E} \left\vert \theta_{ n}(x, u)- \psi_{ n}(x,u) \right\vert^{ 2k-1} \int_{ 0}^{1} \left( \left\vert \theta_{ n}(x, u) - \psi_{ n}(x, u) \right\vert + \left\vert \theta_{ n}(y, u) - \psi_{ n}(y, u) \right\vert\right)\Xi^{ (n)}_{x, y} {\rm d}y {\rm d}u,\\
\leq 2C\int_{ 0}^{1} \Xi^{ (n)}_{x, y} {\rm d}y\int_{ 0}^{t} \mathbf{ E} \left\vert \theta_{ n}(x, u) - \psi_{ n}(x, u) \right\vert^{ 2k}  {\rm d}u \\
+ 2C\int_{ 0}^{t} \int_{ 0}^{1}  \mathbf{ E} \left\vert \theta_{ n}(x, u) - \psi_{ n}(x, u) \right\vert^{ 2k-1} \left\vert \theta_{ n}(y, u) - \psi_{ n}(y, u) \right\vert  \Xi^{ (n)}_{x,y} {\rm d}y {\rm d}u.
\end{multline*}
By H\"older's inequality followed by Young's inequality, 
\begin{multline*}
\mathbf{ E} \left[\left\vert \theta_{ n}(x, u) - \psi_{ n}(x, u) \right\vert^{ 2k-1} \left\vert \theta_{ n}(y, u) - \psi_{ n}(y, u) \right\vert\right] \\ \leq \mathbf{ E} \left[\left\vert \theta_{ n}(x, u) - \psi_{ n}(x, u) \right\vert^{ 2k}\right]^{ \frac{ 2k-1}{ 2k}} \mathbf{ E}\left[\left\vert \theta_{ n}(y, u) - \psi_{ n}(y, u) \right\vert^{ 2k}\right]^{ \frac{ 1}{ 2k}},\\
\leq \frac{ 2k-1}{ 2k} \mathbf{ E} \left[\left\vert \theta_{ n}(x, u) - \psi_{ n}(x, u) \right\vert^{ 2k}\right] + \frac{ 1}{ 2k}\mathbf{ E}\left[\left\vert \theta_{ n}(y, u) - \psi_{ n}(y, u) \right\vert^{ 2k}\right],\\
\leq \sup_{ z\in [0, 1]}\mathbf{ E} \left[\left\vert \theta_{ n}(z, u) - \psi_{ n}(z, u) \right\vert^{ 2k}\right]
\end{multline*}
Hence,
\begin{align*}
A_{ 1}&\leq 4C \left(\sup_{ x\in I}\int_{ I}\Xi^{ (n)}_{x, y} {\rm d}y\right)\int_{ 0}^{t} \sup_{ z\in I}\mathbf{ E} \left[\left\vert \theta_{ n}(z, u) - \psi_{ n}(z, u) \right\vert^{ 2k}\right]  {\rm d}u.
\end{align*}
Secondly,
\begin{align*}
A_{2}&:=\int_{ 0}^{t} \mathbf{ E} \int_{I} \left\langle \theta_{ n}(x, u) - \psi_{ n}(x, u)\, ,\, \Gamma(\psi_{ n}(x, u), \psi_{ n}(y, u))\right\rangle \left(\Xi^{ (n)}_{x, y} - \hat W_{ n}(x, y)\right){\rm d}y {\rm d}s.
\end{align*}
For $i= \left\lbrace x\right\rbrace_{ n}$, we have
 \begin{align*}
A_{2}&:= 2 \int_{0}^{t}\frac{1}{n} \sum_{j=1}^{n} \zeta^{(n)}_{i, j,u} \left(\kappa_{i}^{ (n)} \xi_{i, j} - W(x_{i}, x_{j})\right) {\rm d}u,\\
 &= 2 \int_{0}^{t}\frac{1}{n} \sum_{j=1}^{n} \zeta^{(n)}_{i, j,u} \kappa_{i}^{ (n)}\left(\xi_{i, j} - W_{ n}(x_{i}, x_{j})\right) {\rm d}u + 2 \int_{0}^{t}\frac{1}{n} \sum_{j=1}^{n} \zeta^{(n)}_{i, j,u} \left(\kappa_{i}^{ (n)} W_{ n}(x_{ i}, x_{ j}) - W(x_{i}, x_{j})\right) {\rm d}u,
\end{align*}
where $\zeta^{(n)}_{i, j,s}:=\mathbf{ E} \left[\left\langle \theta_{ i, s}^{(n)} - \psi_{i, s}^{(n)}\, ,\, \Gamma(\psi_{ i, s}^{(n)}, \psi_{ j, s}^{(n)})\right\rangle\right]$. The same reasoning (together with the apriori control \eqref{eq:apriori_bound_odegene}) as in Proposition~\ref{prop:control_Delta_2} shows that the supremum in $i\in \left[n\right]$ of the first term in the sum above goes almost surely to $0$ as $n\to \infty$. The second term in the sum is bounded above by $\sup_{ i\in \left[n\right]}\frac{C}{n} \sum_{j=1}^{n} \left\vert \kappa_{i}^{ (n)} W_{n}(x_{i}, x_{j}) - W(x_{i}, x_{j})\right\vert$, which goes to $0$ uniformly in $i$ as $n\to\infty$, by \eqref{hyp:Delta_n_1}. From the previous estimates and a Gr\"onwall's Lemma, we conclude that $\sup_{x\in I, t\in[0, T]}\mathbf{ E}\left\vert \theta_{ n}(x, t) - \psi_{ n}(x, t)\right\vert^{ 2k}\to_{n\to\infty}0$. In particular,
\begin{equation}
\label{eq:theta_nxt_VS_psi_nxt}
\sup_{t\in[0, T]} \mathbf{E} \int_{I} \left\vert \theta_{ n}(x, t) - \psi_{ n}(x, t)\right\vert^{ 2k} {\rm d}x\to 0,\text{ as }n\to\infty.
\end{equation}
It now remains to identify the limit. By \eqref{eq:theta_nxt_VS_psi_nxt}, any limit point of $( \theta_{ n}(\cdot, t))_{ n\geq1}$ in $ \mathcal{ C}([0, T], L^{ k}(I))$ is also a limit point of $( \psi_{ n}(\cdot, t))_{ n\geq1}$. Recall that, for any $ \mathcal{ C}^{ 1}$-bounded test function $J:I \to \mathbb{ R}^{ d}$ with bounded derivative
\begin{multline}
\label{eq:psin_J}
\int_{I} \left\langle \psi_{n}(x, t)\, ,\, J(x)\right\rangle  {\rm d}x= \int_{ I} \left\langle \psi_{n}(x, 0)\, ,\,  J(x)\right\rangle {\rm d}x + \int_{ 0}^{t}\int_{I} \left\langle c( \psi_{n}(x, s))\, ,\, J(x)\right\rangle  {\rm d}x{\rm d}s \\
+ \int_{ 0}^{t} \int_{ I^{ 2}}\left\langle  \Gamma( \psi_{n}(x, s) , \psi_{n}(y, s)) \hat W_{ n}(x, y)\, ,\,  J(x)\right\rangle{\rm d}x{\rm d}y{\rm d}s +  \int_{I} \left\langle \sigma B_{ \left\lbrace x\right\rbrace_{ n}, t}\, ,\, J(x)\right\rangle {\rm d}x.
\end{multline} 
Concerning the initial condition $ \theta_{ n}(x, 0)= \psi_{ n}(x, 0)$ (recall the definition of $\psi_{ 0}$ in \eqref{hyp:initial_condition_psi}):
\begin{multline*}
\mathbf{E}\left\vert \int_{I} \left\langle \theta_{n}(x, 0)\, ,\, J(x)\right\rangle  {\rm d}x - \int_{I} \left\langle \psi_{ 0}(x)\, ,\, J(x)\right\rangle  {\rm d}x\right\vert^{ 2} \\
\leq 3\mathbf{ E} \left\vert \frac{ 1}{ n}\sum_{ k=1}^{ n} \left\langle \theta_{ k, 0}^{ (n)} - \mathbf{ E} \left[ \theta_{ k, 0}^{ (n)}\right]\, ,\, n \int_{x_{ k-1}}^{x_{ k}} J(u) {\rm d}u\right\rangle \right\vert^{ 2},\\
+3 \left\vert \frac{ 1}{ n}\sum_{ k=1}^{ n} \left\langle \int \theta \nu_{ 0}^{ x_{ k}}({\rm d}\theta)\, ,\, n \int_{x_{ k-1}}^{x_{ k}} J(u) {\rm d}u - J(x_{ k})\right\rangle\right\vert^{ 2},\\
+ 3\left\vert \frac{ 1}{ n}\sum_{ k=1}^{ n} \left\langle \int \theta \nu_{ 0}^{ x_{ k}}({\rm d}\theta)\, ,\, J(x_{ k})\right\rangle - \int_{I} \left\langle \psi_{ 0}(x)\, ,\, J(x)\right\rangle{\rm d}x\right\vert^{ 2},\\
\leq \frac{ 3 \left\Vert J \right\Vert_{ \infty}^{ 2}}{ n} \sup_{ x\in I} \int \left\vert \theta - \int \theta \nu_{ 0}^{ x}({\rm d}\theta) \right\vert^{ 2} \nu_{ 0}^{ x}({\rm d}\theta)+3 \sup_{ x\in I} \left\vert \int \theta \nu_{ 0}^{ x}({\rm d}\theta) \right\vert^{ 2} \frac{ \left\Vert J^{ \prime} \right\Vert_{ \infty}^{ 2}}{ 4n^{ 2}},\\
+ 3\left\vert \frac{ 1}{ n}\sum_{ k=1}^{ n}\left\langle \int \theta \nu_{ 0}^{ x_{ k}}({\rm d}\theta)\, ,\, J(x_{ k})\right\rangle  - \int_{I} \left\langle \psi_{ 0}(x)\, ,\, J(x)\right\rangle {\rm d}x\right\vert^{ 2}.
\end{multline*}
The two first terms above converge to $0$, by \eqref{hyp:second_moment_nu0}. It is straightforward to see that the third term is $3 \left\vert \int_{ I}\left\langle \psi_{ 0}(\cdot)\, ,\, J(\cdot)\right\rangle ({\rm d}\ell_{ n}- {\rm d}\ell)\right\vert$ where $ x \mapsto \left\langle \psi_{ 0}(x)\, ,\, J(x)\right\rangle$ is $ \iota_{ 1}$-H\"older, by \eqref{hyp:nu_0_Lip}. Hence, this term also goes to $0$ as $n\to\infty$. Concerning the noise term in \eqref{eq:psin_J}, we have, for some constant $C>0$
\begin{multline*}
\mathbf{ E} \left[\left\vert \int_{I} \left\langle \sigma B_{ \left\lbrace x\right\rbrace_{ n}, t}\, ,\, J(x)\right\rangle  {\rm d} x \right\vert^{ 2}\right] = \mathbf{ E} \left[\left\vert\sum_{ j=1}^{ n} \int_{x_{ j-1}}^{x_{ j}} \left\langle \sigma B_{ j, t}\, ,\, J(x)\right\rangle    {\rm d} x\right\vert^{ 2}\right],\\
= \sum_{ i, j=1}^{ n} \int_{x_{ i-1}}^{x_{ i}} \int_{x_{ j-1}}^{x_{ j}}  \mathbf{ E} \left[ \left\langle \sigma B_{ i, t}\, ,\, J(x)\right\rangle\left\langle \sigma B_{ j, t}\, ,\, J(y)\right\rangle \right]{\rm d} x {\rm d} y,\\
\leq C t \sum_{ i=1}^{ n} \left(\int_{x_{ i-1}}^{x_{ i}} \left\vert J(x) \right\vert {\rm d} x\right)^{ 2} \leq  \frac{ C T}{ n}\sum_{ i=1}^{ n} \int_{x_{ i-1}}^{x_{ i}} \left\vert J(x) \right\vert^{ 2} {\rm d} x= \frac{ C(\sigma) T}{ n} \int_{I} \left\vert J(x) \right\vert^{ 2}{\rm d} x,
\end{multline*}
which goes to $0$ as $n\to \infty$, uniformly on $t\in[0,T]$.

Let $\left(\psi(x,t)\right)_{ x\in I, t\in[0,T]}\in \mathcal{ C}([0, T], L^{ k}(I))$ an accumulation point of $( \psi_{ n})$: there exists a subsequence $(n_{ k})_{ k\geq1}$ (that we rename $n$ for simplicity of exposition) such that $ (\psi_{ n})_{ n\geq1}$ converges in law to $ \psi$ in $ \mathcal{ C}([0, T], L^{ k}(I))$. By the Skhorokhod representation theorem ($\mathcal{ C}([0, T], L^{ k}(I))$ is separable, \cite{MR2378491}, p.125), one can suppose that $(\psi_{ n})$ converges almost surely in $\mathcal{ C}([0, T], L^{ k}(I))$ to $ \psi$. Since we have convergence in $\mathcal{ C}([0, T], L^{ k}(I))$, we have that $\int_{ I} \left\langle \psi_{n}(x, t)\, ,\, J(x)\right\rangle  {\rm d}x \xrightarrow[n\to\infty]{} \int_{I} \left\langle \psi(x, t)\, ,\, J(x)\right\rangle  {\rm d}x$, uniformly in $t\in [0, T]$. For the same reason,  $\int_{ 0}^{t}\int_{I} \left\langle c( \psi_{n}(x, s))\, ,\, J(x)\right\rangle  {\rm d}x{\rm d}s \xrightarrow[n\to\infty]{} \int_{ 0}^{t}\int_{I} \left\langle c( \psi(x, s))\, ,\, J(x)\right\rangle  {\rm d}x{\rm d}s $ (by an application of dominated convergence theorem when $c$ is bounded or using the convergence in $\mathcal{ C}([0, T], L^{ k}(I))$ when $c$ is polynomial). We now turn the interaction term in \eqref{eq:psin_J}. Assume first that $ \Gamma$ is bounded. Then,
\begin{multline*}
\left\vert \int_{ 0}^{t} \int_{ I^{ 2}} \left\langle \Gamma( \psi_{n}(x, s) , \psi_{n}(y, s)) \left(\hat W_{ n}(x, y) - W(x, y)\right)\, ,\, J(x)\right\rangle {\rm d}x{\rm d}y{\rm d}s \right\vert \leq\\
\left\Vert \Gamma \right\Vert_{ \infty}\left\Vert J \right\Vert_{ \infty} T \int_{ I^{ 2}}\left\vert\hat W_{ n}(x, y) - W(x, y)\right\vert {\rm d}x{\rm d}y,
\end{multline*}
which goes to $0$ as $n\to\infty$, by \eqref{hyp:Int_Wn_W_2}. Moreover,
\begin{multline*}
 \left\vert \int_{ 0}^{t} \int_{I^{ 2}} \left\langle \left(\Gamma( \psi_{n}(x, s) , \psi_{n}(y, s)) - \Gamma( \psi(x, s) , \psi(y, s))\right) W(x, y)\, ,\, J(x)\right\rangle {\rm d}x{\rm d}y{\rm d}s \right\vert\\
\leq \int_{ 0}^{t} \int_{I^{ 2}} \left\vert\Gamma( \psi_{n}(x, s) , \psi_{n}(y, s)) - \Gamma( \psi(x, s) , \psi(y, s))\right\vert W(x, y)\left\vert J(x) \right\vert{\rm d}x{\rm d}y{\rm d}s,
\end{multline*}
which goes to $0$ as $n\to\infty$, by dominated convergence theorem. In the case where $ \Gamma(\theta, \theta^{ \prime})= \Gamma\cdot(\theta- \theta^{ \prime})$ is linear, we have firstly
\begin{multline*}
\left\vert \int_{ 0}^{t} \int_{ I^{ 2}} \left\langle \left(\psi_{n}(x, s)-\psi(x, s)\right) \hat W_{ n}(x, y)\, ,\, J(x)\right\rangle {\rm d}x{\rm d}y{\rm d}s \right\vert\\
\leq \sup_{ z\in I}\left(\int_{I} \hat W_{ n}(z, y) {\rm d}y\right) \left\Vert J \right\Vert_{ \infty}\int_{ 0}^{t} \int_{ I} \left\vert \psi_{n}(x, s)-\psi(x, s)\right\vert {\rm d}x{\rm d}s
\end{multline*}
which goes to $0$ as $n\to\infty$, by \eqref{hyp:bound_W_L1_sum} and since we have convergence in $\mathcal{ C}([0, T], L^{ k}(I))$. Secondly,
\begin{multline*}
\left\vert \int_{ 0}^{t} \int_{I^{ 2}} \left\langle \psi(x, s)\, ,\, J(x)\right\rangle \left(\hat W_{ n}(x, y) - W(x, y)\right) {\rm d}x{\rm d}y{\rm d}s \right\vert,\\
\leq \left\vert \int_{ 0}^{t} \left(\int_{ I} \left\vert \left\langle \psi(x, s)\, ,\, J(x)\right\rangle  \right\vert^{ 2} {\rm d}x\right)^{ \frac{ 1}{ 2}}\left(\int_{ I^{ 2}} \left(\hat W_{ n}(x, y) - W(x, y)\right)^{ 2} {\rm d}y{\rm d}x\right)^{ \frac{ 1}{ 2}}{\rm d}s \right\vert,
\end{multline*}
which goes to $0$ as $n\to\infty$, by \eqref{hyp:Int_Wn_W_2} and since $ \psi\in \mathcal{ C}([0, T], L^{ k}(I))$. Thirdly,
\begin{multline*}
\left\vert \int_{ 0}^{t} \int_{I^{ 2}} \left\langle \left(\psi_{n}(y, s)-\psi(y, s)\right)\, ,\, J(x)\right\rangle \hat W_{ n}(x, y) {\rm d}x{\rm d}y{\rm d}s \right\vert\\
\leq \left\Vert J \right\Vert_{ \infty}\left\vert \int_{ 0}^{t}  \left(\int_{I} \left\vert\psi_{n}(y, s)-\psi(y, s)\right\vert^{ 2} {\rm d}y\right)^{ \frac{ 1}{ 2}} \left(\int_{ I^{ 2}}\hat W_{ n}(x, y)^{ 2}{\rm d}x{\rm d}y \right)^{ \frac{ 1}{ 2}}{\rm d}s \right\vert,
\end{multline*}
which goes to $0$ as $n\to\infty$. Finally,
\begin{multline*}
\left\vert \int_{ 0}^{t} \int_{I^{ 2}} \left(\hat W_{ n}(x, y) - W(x, y)\right) \left\langle \psi(y, s)\, ,\, J(x)\right\rangle {\rm d}x{\rm d}y{\rm d}s \right\vert,\\
\leq \left\Vert J \right\Vert_{ \infty} \int_{ 0}^{t}  \left(\int_{ I} \vert\psi(y, s)\vert^{ 2} {\rm d}y\right)^{ \frac{ 1}{ 2}} \left(\int_{ I^{ 2}} \left\vert\hat W_{ n}(x, y) - W(x, y)\right\vert^{ 2}{\rm d}x{\rm d}y\right)^{ \frac{ 1}{ 2}}{\rm d}s,
\end{multline*}
which also goes to $0$ as $n\to\infty$. These estimates altogether gives the convergence of the interaction term in \eqref{eq:psin_J}. Putting everything together, we obtain that any accumulation point of $( \theta_{ n})_{ n\geq1}$ in $\mathcal{ C}([0, T], L^{ k}(I))$ is a weak solution to \eqref{eq:heat_equation_weak}. 
\end{proof}
\subsection{Identification in the compact case}
\label{sec:ident_compact_case}
We prove Theorem~\ref{theo:identification}. Let $x \mapsto J(x)$ be a regular ($ \mathcal{ C}^{ 1}$) test function on $[0,1]$. Then,
\begin{equation}
\label{eq:thetan_J_nun}
\int_{I} \left\langle \theta_{ n}(x, t)\, ,\, J(x)\right\rangle  {\rm d} x = \frac{ 1}{ n} \sum_{ i=1}^{ n} \left\langle \theta_{ i, t}^{ (n)}\, ,\, \bar J \left( \frac{ i}{ n}\right)\right\rangle ,
\end{equation}
where $\bar J \left( \frac{ i}{ n}\right):= n \int_{ \frac{ i-1}{ n}}^{ \frac{ i}{ n}} J(x) {\rm d} x$. The expression $\bar J \left( \frac{ i}{ n}\right)$ is not an actual function of $ \frac{ i}{ n}$, but one can replace $ \bar J \left(\frac{ i}{ n}\right)$ by $J \left( \frac{ i}{ n}\right)$:
\begin{align*}
\mathbf{ E} \left[ \frac{ 1}{ n} \sum_{ i=1}^{ n} \left\langle \theta_{ i, t}^{ (n)}\, ,\, \bar J \left( \frac{ i}{ n}\right) -  J \left( \frac{ i}{ n}\right)\right\rangle \right]^{ 2}&\leq \frac{ 1}{ n} \sum_{ i=1}^{ n} \mathbf{ E} \left\vert \theta_{ i, t}^{ (n)} \right\vert^{ 2} \left\vert \bar J \left( \frac{ i}{ n}\right) -  J \left( \frac{ i}{ n}\right) \right\vert^{ 2},\\
&\leq \frac{ C}{ n} \sum_{ i=1}^{ n}  \left\vert \bar J \left( \frac{ i}{ n}\right) -  J \left( \frac{ i}{ n}\right) \right\vert^{ 2}\leq \frac{ C \left\Vert J^{ \prime} \right\Vert_{ \infty}^{ 2}}{ 4 n^{ 2}},
\end{align*} using \eqref{eq:apriori_bound_odegene}. So the limit of $\int_{ I} \left\langle \theta_{ n}(x, t)\, ,\, J(x)\right\rangle  {\rm d} x$ as $n \to \infty$ is the same as 
\begin{equation}
U_{ n, t}(J):=\frac{ 1}{ n} \sum_{ i=1}^{ n} \left\langle \theta_{ i, t}^{ (n)} \, ,\, J \left( \frac{ i}{ n}\right)\right\rangle = \int \left\langle \theta\, ,\, J(x)\right\rangle \nu_{ n, t}({\rm d} \theta, {\rm d} x).
\end{equation}
Taking the limit as $ n\to\infty$ in \eqref{eq:thetan_J_nun}, using Theorem~\ref{theo:conv_empirical_measure}, one obtains that, for all $t\in[0, T]$,
\begin{equation}
\int_{I} \left\langle \psi(x, t)\, ,\, J(x)\right\rangle {\rm d} x= \int_{I}\int \left\langle \theta\, ,\, J(x)\right\rangle \nu_{ t}^{ x}({\rm d}\theta) {\rm d}x.
\end{equation}
Theorem~\ref{theo:identification} follows.
\appendix
\section{Well-posedness and regularity results for the nonlinear Fokker-Planck PDE}
\label{sec:wellposed_PDE}
The aim of this section is to prove Proposition~\ref{prop:PDE_wellposed}, as well as some regularity estimates concerning the solution $ \nu$ to \eqref{eq:McKeanVlasov}.
\subsection{Existence of a solution to the nonlinear Fokker-Planck PDE}
We prove here the existence part of Proposition~\ref{prop:PDE_wellposed} and the result of Remark~\ref{rem:apriori_nonlin}. Recall the definition of the set $ \mathcal{ M}$ in Section~\ref{sec:well_posedness_intro}. For any $m\in \mathcal{ M}$, consider the solution to
\begin{equation}
\label{eq:sde_m}
{\rm d} \theta_{ t}^{ x} = c(\theta_{ t}^{ x}) {\rm d}t + \int \Gamma(\theta_{ t}^{ x}, \tilde{ \theta}) W(x, y) m_{ t}({\rm d} \tilde{ \theta}, {\rm d}y) {\rm d}t + \sigma {\rm d}B_{ t},
\end{equation}
with initial condition $ \theta_{ 0}^{ x}\sim \nu_{ 0}^{ x}$. Consider $ \Theta:\mathcal{ M} \to \mathcal{ M}$ the functional which maps any measure $m({\rm d}\theta, {\rm d}x)$ to the law $ \Theta(m)$ of $(\theta^{ x}, x)$, where $ \theta^{ x}$ solves \eqref{eq:sde_m}. The point is to prove that $ \Theta$ admits a fixed-point in $ \mathcal{ M}$, which gives the existence of a solution $ \nu$ to \eqref{eq:McKeanVlasov}. Recall the definition of the Wasserstein metric in \eqref{eq:wasserstein_ptfixe} and the definition of $k$ in \eqref{hyp:polynomial_control_c}. Then, for two driving measures $m_{ 1}$ and $m_{ 2}$ in $ \mathcal{ M}$, for the coupling $( \theta_{ 1}, \theta_{ 2})$ with same initial conditions and Brownian noise, using the properties on $c$ and $ \Gamma$, we get
\begin{align*}
\left\vert \theta_{ 1, t} - \theta_{ 2, t}\right\vert^{ 2} 
&\leq C\int_{ 0}^{t} \left\vert \theta_{ 1, s} - \theta_{ 2, s}\right\vert^{ 2} {\rm d}s\\& + C \int_{ 0}^{t} \left\vert \theta_{ 1, s} - \theta_{ 2, s}\right\vert \int \left\vert\Gamma(\theta_{ 1, s}, \tilde{ \theta}) - \Gamma(\theta_{ 2, s}, \tilde{ \theta})\right\vert W(x, y) m_{ 1, s}({\rm d}\tilde{ \theta}, {\rm d}y){\rm d}s\\
&+ C \int_{ 0}^{t} \left\vert \theta_{ 1, s} - \theta_{ 2, s}\right\vert \left\vert \int \left\lbrace\Gamma(\theta_{ 2, s}, u)-\Gamma(\theta_{ 2, s}, v)\right\rbrace p_{ s}^{ y}({\rm d}u, {\rm d}v) W(x, y) \ell({\rm d}y)\right\vert{\rm d}s,
\end{align*}
where $p^{ y}({\rm d}u, {\rm d}v)$ is \emph{any} coupling of $m_{ 1}^{ y}$ and $m_{ 2}^{ y}$. Hence,
\begin{align*}
\left\vert \theta_{ 1, t} - \theta_{ 2, t}\right\vert^{ 2} &\leq C \left(1+ \left\Vert \mathcal{ W}_{ 1} \right\Vert_{ \infty}\right)\int_{ 0}^{t} \left\vert \theta_{ 1, s} - \theta_{ 2, s}\right\vert^{ 2} {\rm d}s\\
&+ C \int_{ 0}^{t} \left\vert \theta_{ 1, s} - \theta_{ 2, s}\right\vert \int \left\vert u- v\right\vert  p_{ s}^{ y}({\rm d}u, {\rm d}v) W(x, y) \ell({\rm d}y){\rm d}s.
\end{align*}
Elevating everything to the power $ k\geq 1$ and taking the supremum in $s\leq t$ and the expectation, one obtains, for another constant $C=C(T)$
\begin{align*}
\mathbf{ E} \left[\sup_{ s\leq t}\left\vert \theta_{ 1, s} - \theta_{ 2, s}\right\vert^{ 2k}\right] &\leq C \left(1+ \left\Vert \mathcal{ W}_{ 1} \right\Vert_{ \infty}\right)^{k}\int_{ 0}^{t} \mathbf{ E} \left[\sup_{ u\leq s}\left\vert \theta_{ 1, s} - \theta_{ 2, s}\right\vert^{2 k}\right] {\rm d}s\\
&+ C \int_{ 0}^{t} \mathbf{ E} \left[\sup_{ u\leq s}\left\vert \theta_{ 1, u} - \theta_{ 2, u}\right\vert^{2k}\right] \left\lbrace \int \left\vert u- v\right\vert  p_{ s}^{ y}({\rm d}u, {\rm d}v) W(x, y) \ell({\rm d}y)\right\rbrace^{k}{\rm d}s,\\
&\leq C \left(1+ \left\Vert \mathcal{ W}_{ 1} \right\Vert_{ \infty}\right)^{ k}\int_{ 0}^{t} \mathbf{ E} \left[\sup_{ u\leq s}\left\vert \theta_{ 1, s} - \theta_{ 2, s}\right\vert^{ 2k}\right] {\rm d}s\\
&+ C \int_{ 0}^{t} \left\lbrace \sup_{ u\leq s}\int \left\vert u- v\right\vert  p_{ u}^{ y}({\rm d}u, {\rm d}v) W(x, y) \ell({\rm d}y)\right\rbrace^{2k}{\rm d}s.
\end{align*}
Applying Jensen's inequality to the probability measure (for fixed $x\in I$) $ \frac{ W(x, y)}{ \int W(x, z) \ell({\rm d}z)} \ell({\rm d}y)$ (recall \eqref{eq:W_nondegenerate}), we can bound the last term above by
\begin{align*}
C \left\Vert \mathcal{ W}_{ 1} \right\Vert_{ \infty}^{ 2k-1}\int_{ 0}^{t} \sup_{ u\leq s}\int \left\vert u- v\right\vert^{ 2k}  p_{ u}^{ y}({\rm d}u, {\rm d}v) W(x, y) \ell({\rm d}y){\rm d}s.
\end{align*}
Since this is true for all coupling, we obtain
\begin{align*}
\mathbf{ E} \left[\sup_{ s\leq t}\left\vert \theta_{ 1, s} - \theta_{ 2, s}\right\vert^{ 2k}\right] 
&\leq  C \left(1+ \left\Vert \mathcal{ W}_{ 1} \right\Vert_{ \infty}\right)^{k}\int_{ 0}^{t} \mathbf{ E} \left[\sup_{ u\leq s}\left\vert \theta_{ 1, s} - \theta_{ 2, s}\right\vert^{ 2k}\right] {\rm d}s\\
&+ C \left\Vert \mathcal{W}_{ 1} \right\Vert_{ \infty}^{ 2k}\int_{ 0}^{t}  \delta_{ s}(m_{ 1}, m_{ 2})^{ 2k} {\rm d}s.
\end{align*}
Hence, by Gr\"onwall's Lemma, we obtain finally, for constant $C$ depending on $T$ and $ \left\Vert \mathcal{ W}_{ 1} \right\Vert_{ \infty}$,
\begin{align*}
\delta_{ t}( \Theta(m_{ 1}), \Theta(m_{ 2}))^{ 2k} \leq C \int_{ 0}^{t} \delta_{ s}(m_{ 1}, m_{ 2})^{ 2k} {\rm d}s.
\end{align*}
Iterating this estimate gives, for all $l\geq1$, $\delta_{ T}( \Theta^{ l+1}( \nu_{ 0}), \Theta^{ l}( \nu_{ 0}))^{ 2k} \leq C^{ l} \frac{ T^{ l}}{ l!} \delta_{ T}( \Theta(\nu_{ 0}), \nu_{ 0})^{ 2k}$, so that $ \left( \Theta^{ l}(\nu_{ 0})\right)$ is a Cauchy sequence, and hence, converging to $ \nu$, solution to \eqref{eq:McKeanVlasov}.
\subsection{Uniqueness of a solution to the nonlinear Fokker-Planck PDE}
\label{sec:uniqueness_PDE_proof}
We now turn to the uniqueness part of Proposition~\ref{prop:PDE_wellposed}. Let $ \mu = \left\lbrace \mu_{ t}\right\rbrace_{ t\in[0, T]}$ be any other weak solution to \eqref{eq:FP} in $ \mathcal{ M}$ such that $ \nu_{ 0}= \mu_{ 0}$. The point is to prove that $\mu_{ t}= \nu_{ t}$ for $t\in[0, T]$.
\begin{definition}
\label{def:phi_theta_x}
For $m\in \left\lbrace \nu, \mu\right\rbrace$, for $\ell$-almost every $x\in I$, for any $0\leq s\leq t$, any $ \theta\in \mathbb{ R}^{ d}$, denote by $ \left\lbrace \varphi_{ s}^{m, t}( \theta, x)\right\rbrace_{ s\leq t \leq T}$ the unique solution of
\begin{equation}
\label{eq:diff_nu}
{\rm d} \vartheta_t^{m,x}\, =\, \left(c(\vartheta_t^{m,x}) + \int \Gamma \left( \vartheta_{ t}^{m, x}, \tilde{ \theta}\right) W(x, y) m_{ t}({\rm d} \tilde{ \theta}, {\rm d} y)\right){\rm d} t+ \sigma{\rm d} B_t\, , 
\end{equation}
with position $x$ and initial condition $ \varphi_{s}^{m, s}= \theta$ at $t=s$. Define finally, for any test function $f:\theta\mapsto f(\theta)$, $ \theta\in \mathbb{ R}^{ d}$ and $\ell$-almost every $x\in I$, 
\begin{equation}
\label{eq:propagator}
P_{ s, t}f( \theta, x):= \mathbf{ E} f \left( \varphi_{ s}^{ \nu, t}(\theta, x)\right).
\end{equation}
\end{definition}
Let us suppose that $c$ is uniformly Lispchitz on $ \mathbb{ R}^{ d}$ (one can remove this assumption by replacing $c(\cdot)$ by its Yosida approximation, we refer to \cite{LucSta2014}, Section~7, where the same procedure is carried out). Under the assumptions made on the model, the propagator $P$ satisfies the following Backward Kolmogorov equation (see \cite{doi:10.1080/07362999808809576}, Remark 2.3): for $ \theta\in \mathbb{ R}^{ d}$ and $x\in I$,
\begin{multline}
\label{eq:kolmogorov}
\partial_{ s} P_{ s, t}f + \frac{ 1}{ 2} \nabla_{ \theta}\cdot \left( \sigma \sigma^{ \dagger} \nabla_{ \theta} P_{ s, t}f\right)+ \left(  \left( c(\theta) + \int \Gamma(\theta, \tilde{ \theta}) W(x, y)\nu_{ t}({\rm d} \tilde{ \theta}, {\rm d} y)\right)\cdot \nabla_{ \theta}\right)P_{ s, t}f=0.
\end{multline}
For any regular test function $f$, applying Ito formula to $ t \mapsto P_{ t, T}f( \vartheta_{ t}^{ \mu, x}, x)$, where $ \vartheta^{ \mu, x}$ solves \eqref{eq:diff_nu} for the choice of $m= \mu$, gives:
\begin{align}
P_{ s, t}f(\vartheta_{ s}^{ \mu, x}, x) &= P_{ 0, t}f(\vartheta_{ 0}^{ \mu, x}, x) + \int_{ 0}^{s} \partial_{ v} P_{ v, t}f(\vartheta_{ v}^{ \mu, x}, x) {\rm d} v \nonumber\\
&+ \int_{ 0}^{s} \nabla_{ \theta} P_{ v, t}f( \vartheta_{ v}^{ \mu, x}, x) \cdot \left( c(\vartheta_{ v}^{ \mu, x}) + \int \Gamma \left( \vartheta_{ v}^{ \mu, x}, \tilde{ \theta}\right) W(x, y) \mu_{ v}({\rm d} \tilde{ \theta}, {\rm d}y)\right){\rm d} v \nonumber\\
&+ \frac{1}{ 2} \int_{ 0}^{s} \nabla_{ \theta}\cdot \left( \sigma \sigma^{\dagger}\nabla_{ \theta}P_{ v, t}f\right)(\vartheta_{ v}^{ \mu, x}, x) {\rm d} v + \int_{ 0}^{s} \nabla_{ \theta} P_{ v, t}f(\vartheta_{ v}^{ \mu, x}, x)\cdot\sigma{\rm d}B_{ v}.
\end{align}
Using \eqref{eq:kolmogorov}, this simplifies into
\begin{multline}
P_{ s, t}f(\vartheta_{ t}^{ \mu, x}, x) = P_{ 0, t}f( \vartheta_{ 0}^{ \mu, x}, x) +\int_{ 0}^{s} \nabla_{ \theta} P_{ v, t}f(\vartheta_{ v}^{ \mu, x}, x) \cdot\sigma{\rm d}B_{ v}\\
+ \int_{ 0}^{s} \nabla_{ \theta} P_{ v, t}f(\vartheta_{ v}^{ \mu, x}, x) \cdot  \int \Gamma \left( \vartheta_{ v}^{ \mu, x}, \tilde{ \theta}\right)W(x, y) (\mu_{ v}({\rm d} \tilde{ \theta}, {\rm d} y)-  \nu_{ v}({\rm d} \tilde{ \theta}, {\rm d} y)){\rm d} v.
\end{multline}
Since the law of $\vartheta_{ v}^{ \mu, x}$ is $ \mu^{ x}_{ v}({\rm d}\theta)$,  taking the expectation w.r.t. the Brownian motion, we obtain for $s=t$ (recall that $P_{ s, s}f=f$), 
\begin{multline}
\int f(\theta) \mu_{ s}^{ x}({\rm d}\theta) = \int P_{ 0, s}f(\theta, x) \mu_{ 0}^{ x}({\rm d}\theta)\\
+ \int_{ 0}^{s} \int\left\lbrace \int \nabla_{ \theta} P_{ v, s}f(\theta, x) \cdot \Gamma \left(\theta, \tilde{ \theta}\right)W(x, y) \mu_{ v}^{ x}({\rm d}\theta) \right\rbrace(\mu_{ v}({\rm d} \tilde{ \theta}, {\rm d} y)-  \nu_{ v}({\rm d} \tilde{ \theta}, {\rm d} y)){\rm d} v.
\end{multline}
Furthermore, taking in \eqref{eq:McKeanVlasov} test functions of the form $ \varphi(\theta, x)= \phi(\theta) \psi(x)$
shows that for every regular test function $ \theta \mapsto \phi(\theta)$, for $\ell$-almost every $x$,
\begin{align}
\int \phi(\theta)\nu_{s}^{ x}({\rm d}\theta) &= \int \phi(\theta) \nu_{0}^{ x}({\rm d}\theta)+ \int_{ 0}^{s} \int \left\lbrace\frac{ 1}{ 2}\nabla_{ \theta}\left( \sigma \sigma^{ \dagger} \nabla_{ \theta}\phi(\theta) \right) + \nabla_{ \theta} \phi(\theta)\cdot c(\theta)\right\rbrace \nu_{v}^{ x}({\rm d}\theta) {\rm d} v \nonumber\\ &+ \int_{ 0}^{s} \int   \nabla_{ \theta} \phi(\theta) \cdot\int W(x, y)\Gamma(\theta, \tilde \theta) \nu_{ v}({\rm d} \tilde \theta, {\rm d} y) \nu_{ v}^{ x}({\rm d} \theta){\rm d} v.
\end{align}
From this, we get
\begin{equation}
\partial_{ v} \int P_{ v, t}f(\theta, x)\nu_{ v}^{ x}({\rm d}\theta) = \int \partial_{ v}P_{ v, t}f(\theta, x) \nu_{ v}^{ x}({\rm d}\theta) + \int P_{ v, t}f(\theta, x)\partial_{ v}\nu_{ v}^{ x}({\rm d}\theta) =0.
\end{equation}
We obtain finally, for $\ell$-almost every $x$,
\begin{multline}
\label{eq:propagator_mu_nu}
\int f(\theta) \left\lbrace\mu_{ s}^{ x}({\rm d}\theta) - \nu_{ s}^{ x}({\rm d}\theta)\right\rbrace = \int P_{ 0, s}f(\theta, x) \left\lbrace\mu_{ 0}^{ x}({\rm d}\theta)- \nu_{ 0}^{ x}({\rm d}\theta)\right\rbrace\\
+ \int_{ 0}^{s} \int\left\lbrace \int \nabla_{ \theta} P_{ v, s}f(\theta, x) \cdot \Gamma \left(\theta, \tilde{ \theta}\right)W(x, y) \mu_{ v}^{ x}({\rm d}\theta) \right\rbrace(\mu_{ v}^{ y}({\rm d} \tilde{ \theta})-  \nu_{ v}^{ y}({\rm d} \tilde{ \theta}))\ell({\rm d}y){\rm d} v.
\end{multline}
Let us recall the definition of the Wasserstein metric $w_{ 1}(\cdot, \cdot)$ in \eqref{eq:wasserstein}. By the Kantorovich-Rubinstein duality, an equivalent expression of this distance is
\begin{equation}
w_{ 1}( \mu, \nu)= \sup_{ \left\Vert f \right\Vert_{ Lip}\leq 1} \left\vert \int f {\rm d} \mu - \int f {\rm d} \nu \right\vert.
\end{equation}
An important point is to note that there exists a constant $C>0$ such that, uniformly in $(\theta, x)$, $ \left\vert \nabla_{ \theta}P_{ v, s}f(\theta, x) \right\vert\leq C$, where $C$ is uniform on $v, s\in[0, T]$ and $f$ such that $ \left\Vert f \right\Vert_{ Lip}\leq1$ (see \cite{LucSta2014}, Lemma~4.4 for more details). Thus, for fixed $x, y\in I$ and $v,s\in [0, T]$, the function $ \tilde{ \theta} \mapsto I_{ x, y}(\tilde{ \theta}):=\int \nabla_{ \theta} P_{ v, s}f(\theta, x) \cdot \Gamma \left(\theta, \tilde{ \theta}\right)W(x, y) \mu_{ v}^{ x}({\rm d}\theta)$ is Lipschitz:
\begin{align*}
\left\vert I_{ x, y}(\theta_{ 1}) - I_{ x, y}(\theta_{ 2})\right\vert&= \left\vert \int \nabla_{ \theta} P_{ v, s}f(\theta, x) \cdot \left(\Gamma \left(\theta, \theta_{ 1}\right) - \Gamma \left(\theta, \theta_{ 2}\right)\right)W(x, y) \mu_{ v}^{ x}({\rm d}\theta)\right\vert\\&\leq C L_{ \Gamma} W(x, y)\left\vert \theta_{ 1}- \theta_{ 2} \right\vert
\end{align*}
Hence, we obtain
\begin{multline}
\left\vert \int f(\theta) \left\lbrace\mu_{ s}^{ x}({\rm d}\theta) - \nu_{ s}^{ x}({\rm d}\theta)\right\rbrace \right\vert \leq \left\vert \int P_{ 0, s}f(\theta, x) \left\lbrace\mu_{ 0}^{ x}({\rm d}\theta)- \nu_{ 0}^{ x}({\rm d}\theta)\right\rbrace \right\vert \\
+ C L_{ \Gamma}\int_{ 0}^{s} \int \sup_{ u\leq v}w_{ 1}(\mu_{ u}^{ y}, \nu_{ u}^{ y})W(x, y)\ell({\rm d}y){\rm d} v.
\end{multline}
Taking the supremum on $f$ with $ \left\Vert f \right\Vert_{ Lip}\leq 1$ and using the fact that $ \mu_{ 0}= \nu_{ 0}$, we obtain
\begin{equation*}
w_{ 1} \left(\mu_{ s}^{ x}, \nu_{ s}^{ x}\right) \leq C L_{ \Gamma}\int_{ 0}^{s} \int \sup_{ u\leq v}w_{ 1}(\mu_{ u}^{ y}, \nu_{ u}^{ y})W(x, y)\ell({\rm d}y){\rm d} v.
\end{equation*}
By Cauchy-Schwarz and Jensen inequalities, 
\begin{align*}
w_{ 1} \left(\mu_{ s}^{ x}, \nu_{ s}^{ x}\right)^{ 2} &\leq C^{ 2} L_{ \Gamma}^{ 2} \left(\int_{ 0}^{s}  \left(\int \sup_{ u\leq v}w_{ 1}(\mu_{ u}^{ y}, \nu_{ u}^{ y})^{ 2}\ell({\rm d}y)\right)^{ \frac{ 1}{ 2}} \left(\int W(x, y)^{ 2}\ell({\rm d}y)\right)^{ \frac{ 1}{ 2}}{\rm d} v\right)^{ 2} \\
& \leq C^{ 2} L_{ \Gamma}^{ 2} T \left\Vert \mathcal{ W}_{ 2} \right\Vert_{ \infty} \int_{ 0}^{s}  \int \sup_{ u\leq v}w_{ 1}(\mu_{ u}^{ y}, \nu_{ u}^{ y})^{ 2}\ell({\rm d}y) {\rm d} v.
\end{align*}
Taking the supremum in $s\leq t$ and integrating w.r.t. $x$ gives
\begin{align*}
\int \sup_{ s\leq t}w_{ 1} \left(\mu_{ s}^{ x}, \nu_{ s}^{ x}\right)^{ 2}\ell({\rm d}x)\leq C^{ 2} L_{ \Gamma}^{ 2} T \left\Vert \mathcal{ W}_{ 2} \right\Vert_{ \infty} \int_{ 0}^{t}  \int \sup_{ u\leq v}w_{ 1}(\mu_{ u}^{ y}, \nu_{ u}^{ y})^{ 2}\ell({\rm d}y) {\rm d} v
\end{align*}
so that Gr\"onwall's lemma gives uniqueness.
\subsection{A priori estimates and spatial regularity}
We now gather some estimates concerning the solution $ \nu$ to \eqref{eq:McKeanVlasov}. Recall Definition~\ref{def:phi_theta_x}: in the following, for $ \theta\in \mathbb{ R}^{ d}$, $x\in I$, we set $ \varphi_{ s}^{ t}(\theta, x) := \varphi_{ s}^{\nu, t}( \theta, x)$ (now $m= \nu$).
\begin{lemma}
\label{lem:regul_Phi}
For all $ \theta_{ 1}, \theta_{ 2}\in \mathbb{ R}^{ d}$, $x,y\in I$, $t\leq T$, consider $ \varphi_{ 0}^{ t}(\theta_{ 1}, x)$ and $ \varphi_{ 0}^{ t}(\theta_{ 2}, y)$ coupled in such a way that they are driven by the same Brownian motion $B$ in \eqref{eq:diff_nu}. Under the assumptions of Section~\ref{sec:general_assumptions_nonlin}, there exists a constant $C_{ 1}$ depending only on $ \Gamma, \sigma, c, W, \nu_{ 0}$ such that for all $t\geq0$,
\begin{equation}
\label{eq:regul_Phi}
 \mathbf{ E} \left[\sup_{ u\in[0, t]} \left\vert  \varphi_{ 0}^{ u}(\theta_{ 1}, x) - \varphi_{ 0}^{ u}(\theta_{ 2}, y)\right\vert^{ 2}\right]\leq C_{ 1} e^{ C_{ 1}t} \left( \delta \mathcal{ W}(x, y)^{ 2}+ \left\vert \theta_{ 1}- \theta_{ 2} \right\vert^{ 2}\right),
\end{equation}
where we recall the definition of $ \delta W$ in \eqref{hyp:Int_W_Lip}.
\end{lemma}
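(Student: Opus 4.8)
The plan is to run a synchronous-coupling Gr\"onwall estimate on the difference process, the only point requiring care being the contribution generated by replacing $W(x,\cdot)$ by $W(y,\cdot)$. Set $\Delta_u:=\varphi_0^u(\theta_1,x)-\varphi_0^u(\theta_2,y)$. Since the two copies of \eqref{eq:diff_nu} (with $m=\nu$) are driven by the \emph{same} Brownian motion, the stochastic integrals cancel, $u\mapsto\Delta_u$ has finite variation, and $u\mapsto\left\vert\Delta_u\right\vert^2$ is absolutely continuous with
\begin{equation*}
\left\vert\Delta_u\right\vert^2=\left\vert\theta_1-\theta_2\right\vert^2+2\int_0^u\left\langle\Delta_v\, ,\, c(\varphi_0^v(\theta_1,x))-c(\varphi_0^v(\theta_2,y))\right\rangle{\rm d}v+2\int_0^u\left\langle\Delta_v\, ,\, R_v\right\rangle{\rm d}v,
\end{equation*}
where $R_v$ denotes the difference of the two nonlinear interaction drifts. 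The first integrand is at most $L_c\left\vert\Delta_v\right\vert^2$ by the one-sided Lipschitz bound \eqref{hyp:c_onesidedLip}.

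Next I would decompose $R_v=R_v^{(1)}+R_v^{(2)}$, using the disintegration $\nu_v({\rm d}\tilde\theta,{\rm d}z)=\nu_v^z({\rm d}\tilde\theta)\,\ell({\rm d}z)$:
\begin{align*}
R_v^{(1)}&:=\int\left(\Gamma(\varphi_0^v(\theta_1,x),\tilde\theta)-\Gamma(\varphi_0^v(\theta_2,y),\tilde\theta)\right)W(x,z)\,\nu_v^z({\rm d}\tilde\theta)\,\ell({\rm d}z),\\
R_v^{(2)}&:=\int\Gamma(\varphi_0^v(\theta_2,y),\tilde\theta)\left(W(x,z)-W(y,z)\right)\nu_v^z({\rm d}\tilde\theta)\,\ell({\rm d}z).
\end{align*}
For $R_v^{(1)}$ the Lipschitz property \eqref{hyp:Gamma_Lip_1} and $\int W(x,z)\,\ell({\rm d}z)\leq\left\Vert\mathcal{W}_1\right\Vert_\infty$ (a consequence of \eqref{hyp:bound_W_L1}) give $\left\vert R_v^{(1)}\right\vert\leq L_\Gamma\left\Vert\mathcal{W}_1\right\Vert_\infty\left\vert\Delta_v\right\vert$, hence $\left\langle\Delta_v\, ,\, R_v^{(1)}\right\rangle\leq L_\Gamma\left\Vert\mathcal{W}_1\right\Vert_\infty\left\vert\Delta_v\right\vert^2$, again a Gr\"onwall-type term.

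For $R_v^{(2)}$ the sublinear bound \eqref{hyp:Gamma_bound} yields
\begin{equation*}
\left\vert R_v^{(2)}\right\vert\leq L_\Gamma\Bigl(1+\left\vert\varphi_0^v(\theta_2,y)\right\vert+\sup_{z\in I}\int\left\vert\tilde\theta\right\vert\nu_v^z({\rm d}\tilde\theta)\Bigr)\int\left\vert W(x,z)-W(y,z)\right\vert\ell({\rm d}z),
\end{equation*}
the last integral being $\delta\mathcal{W}(x,y)$ (recall \eqref{hyp:Int_W_Lip}). Contracting with $\Delta_v$ and applying Young's inequality gives $\left\langle\Delta_v\, ,\, R_v^{(2)}\right\rangle\leq\tfrac{1}{2}\left\vert\Delta_v\right\vert^2+C\bigl(1+\left\vert\varphi_0^v(\theta_2,y)\right\vert^2+\sup_z\int\left\vert\tilde\theta\right\vert^2\nu_v^z({\rm d}\tilde\theta)\bigr)\delta\mathcal{W}(x,y)^2$. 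After taking $\sup_{u\leq t}$ and $\mathbf{E}$, the two moment quantities are controlled uniformly in $v\leq T$ by the a priori estimate of Remark~\ref{rem:apriori_nonlin}, i.e. \eqref{eq:apriori_bound_nonlin} (itself obtained by the same one-sided Lipschitz plus Gr\"onwall argument applied to a single copy of \eqref{eq:diff_nu}); this is where the dependence of $C_1$ on $\Gamma,\sigma,c,W,\nu_0$ enters. Collecting the three contributions leaves
\begin{equation*}
\mathbf{E}\left[\sup_{u\leq t}\left\vert\Delta_u\right\vert^2\right]\leq\left\vert\theta_1-\theta_2\right\vert^2+C\int_0^t\mathbf{E}\left[\sup_{w\leq v}\left\vert\Delta_w\right\vert^2\right]{\rm d}v+C\,t\,\delta\mathcal{W}(x,y)^2,
\end{equation*}
and Gr\"onwall's lemma gives \eqref{eq:regul_Phi}. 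The main obstacle here is not the structure of the estimate but the bookkeeping in the last step: one must make sure that all constants, and in particular the moment controls entering the treatment of $R_v^{(2)}$, are genuinely uniform in the spatial variables $x,y\in I$, which is exactly what the uniform-in-$x$ a priori bounds of Section~\ref{sec:well_posedness_intro} provide.
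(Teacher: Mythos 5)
Your proposal is correct and follows essentially the same route as the paper: synchronous coupling, one-sided Lipschitz control of $c$, splitting the interaction difference into the term Lipschitz in the first argument (absorbed into the Gr\"onwall term via $\left\Vert \mathcal{ W}_{ 1}\right\Vert_{ \infty}$) and the kernel-difference term bounded through \eqref{hyp:Gamma_bound}, the a priori moment estimate \eqref{eq:apriori_bound_nonlin} and $\delta \mathcal{ W}(x,y)$, followed by Gr\"onwall's lemma. The only cosmetic difference is that the paper bounds the square of the whole interaction difference at once (hence the $2L_{ c}+1$ factor) while you apply Cauchy--Schwarz and Young's inequality to the two pieces separately, which is an equivalent bookkeeping choice.
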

\begin{proof}[Proof of Lemma~\ref{lem:regul_Phi}]
Set for simplicity $ \Phi_{u}:= \varphi_{ 0}^{ u}(\theta_{ 1}, x)$ and $ \Psi_{ u}:= \varphi_{ 0}^{ u}(\theta_{ 2}, y)$. Then, for all $s\leq t$,
\begin{multline}
\left\vert \Phi_{ s} - \Psi_{ s}\right\vert^{ 2} = \left\vert \theta_{ 1} - \theta_{ 2} \right\vert^{ 2} + 2 \int_{ 0}^{s} \left\langle \Phi_{ u} - \Psi_{ u}\, ,\, c( \Phi_{ u}) - c( \Psi_{ u})\right\rangle {\rm d} u\\ + 2\int_{ 0}^{s}  \left\langle \Phi_{ u} - \Psi_{ u}\, ,\, \int  \left(W(x, z)\Gamma( \Phi_{ u}, \tilde\theta)  - W(y,z)\Gamma( \Psi_{ u}, \tilde\theta)\right)\nu_{ u}({\rm d} \tilde\theta, {\rm d} z)\right\rangle{\rm d} u.
\end{multline}
Since $c$ is one-sided Lipschitz \eqref{hyp:c_onesidedLip}, taking the supremum in $s\in [0, t]$
\begin{multline}
\sup_{s\in[0, t]}\left\vert  \Phi_{ s} - \Psi_{ s}\right\vert^{ 2} \leq \left\vert \theta_{ 1} - \theta_{ 2} \right\vert^{ 2}+ (2L_{ c}+1) \int_{ 0}^{t} \sup_{ v\in [0, u]}\left\vert \Phi_{ v} - \Psi_{ v} \right\vert^{ 2} {\rm d} u\\ + \int_{ 0}^{t}  \left\vert \int  \left(W(x, z)\Gamma( \Phi_{ u},  \tilde\theta)  - W(y,z)\Gamma( \Psi_{ u}, \tilde\theta)\right)\nu_{ u}({\rm d} \tilde\theta, {\rm d} z) \right\vert^{ 2}{\rm d} u.
\end{multline}
The last term within the last integral can be estimated above by $2(a^{ (1)}_{ u} + a^{ (2)}_{ u})$ where
\begin{align*}
a^{ (1)}_{ u}&= \left\vert \int  W(x, z)\left(\Gamma( \Phi_{ u}, \tilde\theta)  - \Gamma(\Psi_{ u}, \tilde\theta)\right)\nu_{ u}({\rm d} \tilde\theta,  {\rm d} z) \right\vert^{ 2}\leq  L_{ \Gamma}^{ 2} \left\Vert \mathcal{ W} \right\Vert_{ \infty}^{ 2} \sup_{ v\in[0, u]}\left\vert \Phi_{ v} - \Psi_{ v}\right\vert^{ 2},
\end{align*}
and where
\begin{align*}
a^{ (2)}_{ u}&= \left( \int \left\lbrace\int  \Gamma( \Psi_{ u}, \tilde\theta) \nu_{ u}^{z}({\rm d} \tilde\theta)\right\rbrace  \left\vert W(x, z)  - W(y,z) \right\vert \ell({\rm d} z)\right)^{ 2}\\
&\leq  L_{ \Gamma}^{ 2} \left(1 + \left\vert \Psi_{ u} \right\vert^{ 2} + \sup_{z\in I}\mathbf{ E} \left[\sup_{ v\leq u}\left\vert \bar \theta_{ v}^{z} \right\vert^{ 2}\right]\right)\delta \mathcal{ W}(x, y)^{ 2}.
\end{align*}
Taking the expectation, using \eqref{eq:apriori_bound_nonlin} and Gr\"onwall's lemma, one obtains \eqref{eq:regul_Phi}.
\end{proof}
\begin{lemma}
\label{lem:regul_nu}
Let $f: \theta \mapsto f(\theta)$ be such that $\left\vert f(\theta) - f(\theta^{ \prime}) \right\vert \leq c^{ (1)}_{ f} \left\vert \theta- \theta^{ \prime} \right\vert$ and $\left\vert f(\theta) \right\vert \leq c^{ (2)}_{ f}\left(1 + \left\vert \theta \right\vert\right)$.
Then, under the assumptions of Section~\ref{sec:general_assumptions_nonlin}, for the same constant $C_{ 1}$ as in \eqref{eq:regul_Phi}, for all $t\geq0$, for $x, y\in[0,1]$,
\begin{equation}
\label{eq:regularity_lipschitz_nu_x}
\left\vert \int f(\theta) \nu_{ t}^{x}({\rm d} \theta) - \int f(\theta) \nu_{ t}^{y}({\rm d} \theta)\right\vert \leq c_{ f}^{ (1)}C_{ 1}e^{ C_{ 1}t} \left( \delta \mathcal{ W}(x, y)+ w_{ 1}(\nu_{ 0}^{x}, \nu_{ 0}^{y})\right).
\end{equation}
\end{lemma}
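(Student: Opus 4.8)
The plan is to represent $\nu_t^x$ and $\nu_t^y$ as the laws of the nonlinear flow started from coupled initial data, and then to push the pathwise estimate of Lemma~\ref{lem:regul_Phi} through that coupling. By Proposition~\ref{prop:PDE_wellposed} together with Definition~\ref{def:phi_theta_x} (taken with $m=\nu$, so that $\varphi_s^t=\varphi_s^{\nu,t}$), for $\ell$-almost every $x\in I$ the measure $\nu_t^x$ is the law of $\varphi_0^t(\bar\theta_0^x,x)$, where $\bar\theta_0^x\sim\nu_0^x$ is independent of the Brownian motion $B$ appearing in \eqref{eq:diff_nu}. Fix $x,y\in I$. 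I would choose a coupling $(\Theta_1,\Theta_2)$ of $(\nu_0^x,\nu_0^y)$, independent of $B$, and set $\Phi_t:=\varphi_0^t(\Theta_1,x)$ and $\Psi_t:=\varphi_0^t(\Theta_2,y)$, both driven by the same $B$. Then $\int f\,{\rm d}\nu_t^x=\mathbf{E}[f(\Phi_t)]$ and $\int f\,{\rm d}\nu_t^y=\mathbf{E}[f(\Psi_t)]$, so that by the Lipschitz hypothesis on $f$,
\[
\left\vert \int f(\theta)\,\nu_t^x({\rm d}\theta) - \int f(\theta)\,\nu_t^y({\rm d}\theta)\right\vert \leq \mathbf{E}\left\vert f(\Phi_t)-f(\Psi_t)\right\vert \leq c_f^{(1)}\,\mathbf{E}\left\vert \Phi_t-\Psi_t\right\vert.
\]

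The next step is to condition on $(\Theta_1,\Theta_2)$. Because $B$ is independent of the initial data, the conditional law of $(\Phi,\Psi)$ given $(\Theta_1,\Theta_2)=(\theta_1,\theta_2)$ is precisely that of the two coupled flows of Lemma~\ref{lem:regul_Phi} issued from the \emph{deterministic} points $\theta_1,\theta_2$. Invoking that lemma, then Jensen's inequality (for the conditional expectation in $B$) and the elementary bound $\sqrt{a^2+b^2}\leq a+b$ for $a,b\geq0$, I obtain
\[
\mathbf{E}\left[\left\vert \Phi_t-\Psi_t\right\vert \,\big\vert\, \Theta_1,\Theta_2\right] \leq \left(C_1 e^{C_1 t}\right)^{1/2}\left(\delta\mathcal{W}(x,y)^2+\left\vert \Theta_1-\Theta_2\right\vert^2\right)^{1/2} \leq \left(C_1 e^{C_1 t}\right)^{1/2}\left(\delta\mathcal{W}(x,y)+\left\vert \Theta_1-\Theta_2\right\vert\right).
\]
Taking expectations and then specializing $(\Theta_1,\Theta_2)$ to a $w_1$-optimal coupling, so that $\mathbf{E}\vert\Theta_1-\Theta_2\vert=w_1(\nu_0^x,\nu_0^y)$, yields the estimate with constant $c_f^{(1)}\left(C_1 e^{C_1 t}\right)^{1/2}$; since $C_1$ may harmlessly be enlarged so that $C_1\geq1$, this quantity is bounded by $c_f^{(1)}C_1 e^{C_1 t}$, which is the claim \eqref{eq:regularity_lipschitz_nu_x}.

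There is no genuinely hard step here: the statement is essentially a corollary of Lemma~\ref{lem:regul_Phi}. The one point deserving care is that Lemma~\ref{lem:regul_Phi} is formulated only for deterministic initial conditions, so the passage to random initial data must go through the conditioning on $(\Theta_1,\Theta_2)$; this conditioning is also what makes the (a priori smaller) $w_1$-distance of the initial laws appear on the right-hand side, rather than the $w_2$-distance that a crude global $L^2$ computation would produce.
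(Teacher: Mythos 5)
Your argument is correct, and it rests on the same two pillars as the paper's proof—the representation of $\nu_{ t}^{ x}$ as the law of the flow $\varphi_{ 0}^{ t}(\cdot, x)$ of \eqref{eq:diff_nu} (with $m=\nu$) started from $\nu_{ 0}^{ x}$-distributed data, and the synchronous-coupling estimate of Lemma~\ref{lem:regul_Phi}—but it is organized differently. The paper works through the propagator $P_{ 0, t}f(\theta, x)= \mathbf{ E} f( \varphi_{ 0}^{ t}(\theta, x))$ and the identity $\int f\, {\rm d}\nu_{ t}^{ x}= \int P_{ 0, t}f(\theta, x)\, \nu_{ 0}^{ x}({\rm d}\theta)$ (obtained from \eqref{eq:propagator_mu_nu} with $\mu= \nu$), and then splits the difference into two terms: first varying the position $x\to y$ at fixed initial point $\theta$, then varying the initial law at fixed position $y$ via the Lipschitz continuity of $\theta \mapsto P_{ 0, t}f(\theta, y)$; each piece is handled by a separate application of Lemma~\ref{lem:regul_Phi}, the second after optimizing over couplings of $(\nu_{ 0}^{ x}, \nu_{ 0}^{ y})$. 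You instead treat both variations in a single coupling: a $w_{ 1}$-optimal coupling of the initial laws, independent of $B$, plus conditioning on the initial data so that Lemma~\ref{lem:regul_Phi} (stated for deterministic starting points) applies conditionally, one joint application producing the $\delta \mathcal{ W}(x, y)$ and $\left\vert \Theta_{ 1}- \Theta_{ 2} \right\vert$ contributions simultaneously; this is exactly the point you flag, and the independence of $B$ from the initial data makes the conditioning legitimate. Your justification of the representation via strong uniqueness for \eqref{eq:diff_nu} together with Proposition~\ref{prop:PDE_wellposed} is valid and bypasses the backward-Kolmogorov computation the paper uses to derive \eqref{eq:propagator_mu_nu} (that machinery is needed elsewhere for uniqueness, so nothing is saved globally, but your route is locally more elementary). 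The only cosmetic point is the constant: your estimate naturally carries $c_{ f}^{ (1)}\left(C_{ 1}e^{ C_{ 1}t}\right)^{ 1/2}$, and reaching the stated $c_{ f}^{ (1)}C_{ 1}e^{ C_{ 1}t}$ requires enlarging $C_{ 1}$ so that $C_{ 1}\geq1$ in \eqref{eq:regul_Phi}, which is harmless—the paper's own proof performs the same silent absorption of the square root.
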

\begin{proof}[Proof of Lemma~\ref{lem:regul_nu}]
Recall Definition~\ref{def:phi_theta_x} (in particular the definition of $P$ in \eqref{eq:propagator}) and the calculations made in the proof of uniqueness in Section~\ref{sec:uniqueness_PDE_proof}. Apply \eqref{eq:propagator_mu_nu} to the case $ \mu= \nu$:
for any regular test function $f$ satisfying the hypotheses of Lemma~\ref{lem:regul_nu},
\begin{equation}
\int f(\theta) \nu_{ t}^{x}({\rm d} \theta) = \int P_{ 0, t}f(\theta, x) \nu_{ 0}^{x}({\rm d} \theta).
\end{equation}
In particular, for $x, y\in I$,
\begin{align}
 \left\vert \int f(\theta) \nu_{ t}^{x}({\rm d} \theta) - \int f(\theta) \nu_{ t}^{ y}({\rm d} \theta)\right\vert 
&\leq\int  \left\vert P_{ 0, t}f(\theta, x) -  P_{ 0, t}f(\theta, y) \right\vert \nu_{ 0}^{x}({\rm d} \theta) \label{aux:Pt1}\\
&+\left\vert \int P_{ 0, t}f(\theta, y) \nu_{ 0}^{x}({\rm d} \theta) - \int P_{ 0, t}f(\theta, y) \nu_{ 0}^{y}({\rm d} \theta) \right\vert. \label{aux:Pt2}
\end{align} 
For the first term \eqref{aux:Pt1}, we have
\begin{align}
\int  \left\vert P_{ 0, t}f(\theta, x) -  P_{ 0, t}f(\theta, y) \right\vert \nu_{ 0}^{x}({\rm d} \theta) &= \int  \left\vert \mathbf{ E} \left[f( \varphi_{ 0}^{ t}(\theta, x))\right] - \mathbf{ E} \left[f( \varphi_{ 0}^{ t}(\theta, y))\right]\right\vert \nu_{ 0}^{x}({\rm d} \theta),\nonumber\\
&\leq c_{ f}^{ (1)}\int  \mathbf{ E}\left\vert \varphi_{ 0}^{ t}(\theta, x)-\varphi_{ 0}^{ t}(\theta, y)\right\vert \nu_{ 0}^{x}({\rm d} \theta)\label{aux:Pt0f}
\end{align} 
where the coupling $(\varphi_{ 0}^{ t}(\theta, x), \varphi_{ 0}^{ t}(\theta, y))$ is given by Lemma~\ref{lem:regul_Phi}. By \eqref{eq:regul_Phi}, one obtains that
\begin{equation}
\int  \left\vert P_{ 0, t}f(\theta, x) -  P_{ 0, t}f(\theta, y) \right\vert \nu_{ 0}^{x}({\rm d} \theta)\leq c_{ f}^{ (1)} C_{ 1} e^{ C_{ 1} t} \delta \mathcal{ W}(x, y).
\end{equation}
As far as the second term \eqref{aux:Pt2} is concerned, for any fixed $(\theta_{ 1}, \theta_{ 2})$, applying once again Lemma~\ref{lem:regul_Phi} gives
 $\left\vert P_{ 0, t}f(\theta_{ 1}, y) - P_{ 0, t}f(\theta_{ 2}, y) \right\vert \leq c_{ f}^{ (1)} C_{ 1} e^{ C_{ 1}t} \left\vert \theta_{ 1} - \theta_{ 2} \right\vert$.
In particular, for any coupling $(\bar \theta_{ 0}^{ x}, \bar \theta_{ 0}^{ y})$,
\begin{multline*}
\left\vert \int P_{ 0, t}f(\theta, y) \nu_{ 0}^{x}({\rm d} \theta) - \int P_{ 0, t}f(\theta, y) \nu_{ 0}^{y}({\rm d} \theta) \right\vert = \left\vert \mathbf{ E} \left[P_{ 0, t}f( \bar \theta_{ 0}^{ x}, y) - P_{ 0, t}f( \bar \theta_{ 0}^{ y}, y)\right] \right\vert,\\
\leq  \mathbf{ E} \left\vert P_{ 0, t}f( \bar \theta_{ 0}^{ x}, y) - P_{ 0, t}f( \bar \theta_{ 0}^{ y}, y)\right\vert \leq c_{ f}^{ (1)} C_{ 1} e^{ C_{ 1}t} \mathbf{ E} \left\vert \bar \theta_{ 0}^{ x} - \bar \theta_{ 0}^{ y} \right\vert.
\end{multline*}
Since this true for all coupling of the initial conditions $(\bar \theta_{ 0}^{ x}, \bar \theta_{ 0}^{ y})$, one obtains finally that
\begin{equation}
\left\vert \int P_{ 0, t}f(\theta, y) \nu_{ 0}^{x}({\rm d} \theta) - \int P_{ 0, t}f(\theta, y) \nu_{ 0}^{y}({\rm d} \theta) \right\vert \leq c_{ f}^{ (1)} C_{ 1} e^{ C_{ 1}t} w_{ 1}( \nu_{ 0}^{x}, \nu_{ 0}^{y})
\end{equation}
This concludes the proof of Lemma~\ref{lem:regul_nu}.
\end{proof}
\begin{lemma}
\label{lem:regul_Upsilon}
Recall the definition of $ \left[ \Gamma\right]_{ u}$ in \eqref{eq:G} and of $ \Upsilon_{ u}$ in \eqref{eq:upsilon_t} ($u\geq0$). Under the assumptions of Section~\ref{sec:general_assumptions_nonlin}, there exists a constant $C>0$ (that depends only on $T$, $c$, $ \Gamma$ and $ \sigma$) such that for all $u\in [0, T]$, $ \theta\in \mathbb{ R}^{ d}$, $x,y, y^{ \prime}, z, z^{ \prime} \in I$, 
\begin{align}
\sup_{ u\in [0, T]} \sup_{x\in I} \left\vert \left[ \Gamma\right]_{ u}(\theta, x) \right\vert&\leq C (1+ \left\vert \theta \right\vert),\label{eq:bound_Gamma_u}\\
\sup_{ u\in [0, T]} \sup_{ x,y,z\in I} \left\vert \Upsilon_{ u}(x,y,z) \right\vert &\leq C
\end{align}
as well as
\begin{align}
\sup_{ u\in[0, T]}\left\vert [\Gamma]_{u}(\theta, x) - [\Gamma]_{u}(\theta, y) \right\vert &\leq C\left(\delta \mathcal{ W}(x, y)+ w_{ 1}(\nu_{ 0}^{x}, \nu_{ 0}^{y})\right),\label{eq:bound_Gamma_diff}\\
\sup_{ u\in [0, T]}\sup_{ x, z\in I}\left\vert  \Upsilon_{ u}(x, y, z)-\Upsilon_{ u}(x, y^{ \prime}, z) \right\vert &\leq C \left( \delta \mathcal{ W}(y, y^{ \prime}) + w_{ 1}(\nu_{ 0}^{ y},\nu_{ 0}^{ y^{ \prime}})\right),\\
\sup_{ u\in[0, T]}\sup_{ x, y\in I}\left\vert  \Upsilon_{ u}(x, y, z)-\Upsilon_{ u}(x, y, z^{ \prime}) \right\vert &\leq C \left( \delta \mathcal{ W}(z, z^{ \prime}) + w_{ 1}(\nu_{ 0}^{ z},\nu_{ 0}^{ z^{ \prime}})\right).
\end{align}
\end{lemma}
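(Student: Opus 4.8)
The plan is to derive every bound in Lemma~\ref{lem:regul_Upsilon} from the a priori moment estimate \eqref{eq:apriori_bound_nonlin} together with the spatial regularity of $\nu^x$ already established in Lemma~\ref{lem:regul_nu}. First I would treat $[\Gamma]_u$. For the growth bound \eqref{eq:bound_Gamma_u}, I apply the sublinear bound \eqref{hyp:Gamma_bound} on $\Gamma$ inside the integral defining $[\Gamma]_u(\theta,x)=\int\Gamma(\theta,\tilde\theta)\nu_u^x({\rm d}\tilde\theta)$, giving $\left\vert[\Gamma]_u(\theta,x)\right\vert\leq L_\Gamma(1+\left\vert\theta\right\vert+\int\left\vert\tilde\theta\right\vert\nu_u^x({\rm d}\tilde\theta))$, and then I bound $\int\left\vert\tilde\theta\right\vert\nu_u^x({\rm d}\tilde\theta)\leq(\mathbf{E}[\sup_{s\leq T}\left\vert\bar\theta_s^x\right\vert^{2k}])^{1/2k}\leq C_0^{1/2k}$ by \eqref{eq:apriori_bound_nonlin}, uniformly in $x$ and $u$. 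For the spatial Lipschitz-type bound \eqref{eq:bound_Gamma_diff}, I note that for fixed $\theta$ the map $\tilde\theta\mapsto\Gamma(\theta,\tilde\theta)$ is Lipschitz with constant $L_\Gamma$ by \eqref{hyp:Gamma_Lip_1}, and it has linear growth, so Lemma~\ref{lem:regul_nu} applied to $f(\cdot)=\Gamma(\theta,\cdot)$ (with $c_f^{(1)}=L_\Gamma$) directly yields $\left\vert[\Gamma]_u(\theta,x)-[\Gamma]_u(\theta,y)\right\vert\leq L_\Gamma C_1 e^{C_1 T}(\delta\mathcal{W}(x,y)+w_1(\nu_0^x,\nu_0^y))$, which absorbs into the constant $C$.

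Next I would pass to $\Upsilon_u$. The uniform bound is immediate: $\left\vert\Upsilon_u(x,y,z)\right\vert\leq\int_0^u\int\left\vert[\Gamma]_v(\theta,y)\right\vert\left\vert[\Gamma]_v(\theta,z)\right\vert\nu_v^x({\rm d}\theta){\rm d}v$, and using \eqref{eq:bound_Gamma_u} and $ab\leq\frac12(a^2+b^2)$ this is at most $C\int_0^T(1+\int\left\vert\theta\right\vert^2\nu_v^x({\rm d}\theta)){\rm d}v\leq CT(1+C_0^{1/k})$ by \eqref{eq:apriori_bound_nonlin}. For the regularity in the second argument, I write
\begin{align*}
\Upsilon_u(x,y,z)-\Upsilon_u(x,y',z)&=\int_0^u\int\left\langle[\Gamma]_v(\theta,y)-[\Gamma]_v(\theta,y')\,,\,[\Gamma]_v(\theta,z)\right\rangle\nu_v^x({\rm d}\theta){\rm d}v,
\end{align*}
then apply Cauchy--Schwarz in $\theta$ and bound one factor by \eqref{eq:bound_Gamma_diff} and the other by \eqref{eq:bound_Gamma_u} combined with \eqref{eq:apriori_bound_nonlin}, producing the claimed $C(\delta\mathcal{W}(y,y')+w_1(\nu_0^y,\nu_0^{y'}))$. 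The regularity in the third argument is identical by the symmetry of the integrand in $y,z$.

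There is no serious obstacle here; the lemma is essentially a bookkeeping corollary of Lemmas~\ref{lem:regul_Phi}--\ref{lem:regul_nu} and the a priori bound. The one point requiring a little care is that the moment control \eqref{eq:apriori_bound_nonlin} is stated at order $2k$ with $k\geq2$, so whenever I need only a second moment of $\bar\theta^x$ I should invoke it through Jensen's inequality ($\mathbf{E}[\left\vert\bar\theta_v^x\right\vert^2]\leq(\mathbf{E}[\left\vert\bar\theta_v^x\right\vert^{2k}])^{1/k}\leq C_0^{1/k}$) rather than assuming a second-moment bound directly, and likewise to justify that $\int\left\vert\theta\right\vert\,\nu_v^x({\rm d}\theta)$ and $\int\left\vert\theta\right\vert^2\,\nu_v^x({\rm d}\theta)$ are finite and uniformly bounded in $x,v$. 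The other mild subtlety is uniformity: all estimates must be uniform in $u\in[0,T]$ and in the spatial variables, which is automatic because the constants $C_0$ in \eqref{eq:apriori_bound_nonlin} and $C_1$ in Lemma~\ref{lem:regul_Phi} are already uniform, and the time integrals only contribute a factor $T$. I would then collect all the resulting constants into a single $C=C(T,c,\Gamma,\sigma)$ (noting that the dependence on $W$ and $\nu_0$ enters only through $\left\Vert\mathcal{W}\right\Vert_\infty$, $C_0$ and $C_1$, which one may suppress in the notation or keep, as in the statement), completing the proof.
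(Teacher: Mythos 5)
Your proposal is correct and follows essentially the same route as the paper: the growth bound on $[\Gamma]_u$ comes from \eqref{hyp:Gamma_bound} together with the a priori moment estimate \eqref{eq:apriori_bound_nonlin}, the spatial estimate \eqref{eq:bound_Gamma_diff} comes from applying Lemma~\ref{lem:regul_nu} to the test function $f=\Gamma(\theta,\cdot)$ with $c_f^{(1)}=L_\Gamma$ (uniformly in $\theta$), and the bounds on $\Upsilon$ are then direct consequences. Your added care about using Jensen for the lower-order moments and about tracking the constants is consistent with, and merely more explicit than, the paper's argument.
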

\begin{proof}[Proof of Lemma~\ref{lem:regul_Upsilon}]
The estimates on $ \Upsilon$ are an easy consequence of the estimates on $ \left[ \Gamma\right]$. The bound \eqref{eq:bound_Gamma_u} on $ \left[ \Gamma\right]$ is a direct consequence of \eqref{hyp:Gamma_bound} and the uniform estimates we have on $ \bar \theta$ \eqref{eq:apriori_bound_nonlin}. Let us now prove \eqref{eq:bound_Gamma_diff}: apply the results of Lemma~\ref{lem:regul_nu} to the test function $f:= \tilde{ \theta} \mapsto \Gamma(\theta, \tilde{ \theta})$ (for fixed $\theta\in \mathbb{ R}^{ d}$). The test function $f$ satisfies the hypothesis of Lemma~\ref{lem:regul_nu} for $c_{ f}^{ (1)}:= L_{ \Gamma}$ (and some $c_{ f}^{ (2)}$ that depends on $ \theta$, but note that $c_{ f}^{ (2)}$ does not enter into account in the estimates of Lemma~\ref{lem:regul_nu}). In particular, uniformly in $\theta$,
\begin{align*}
\left\vert [\Gamma]_{u}(\theta, x) - [\Gamma]_{u}(\theta, y) \right\vert &=  \left\vert \int \Gamma(\theta, \tilde{ \theta}) \nu_{ u}^{x}({\rm d} \tilde{ \theta}) - \int \Gamma(\theta, \tilde{ \theta}) \nu_{ u}^{y}({\rm d} \tilde{ \theta}) \right\vert \leq C \left( \delta \mathcal{ W}(x, y)+ w_{ 1}(\nu_{ 0}^{x}, \nu_{ 0}^{y})\right),
\end{align*}
which gives the result.
\end{proof}
\section{Identification in the general case: proof of Theorem~\ref{theo:identification_non_compact}}
\label{sec:identification_noncompact_case}
The point of this section is to prove Theorem~\ref{theo:identification_non_compact}. Recall that $I= \mathbb{ R}^{ p}$ endowed with a probability measure $\ell({\rm d}x) := \ell(x) {\rm d}x$ with a $ \mathcal{ C}^{ 1}$ density $\ell$. We fix a $ \mathcal{ C}^{ 1}$-kernel $W(x, y)$ on $ \mathbb{ R}^{ p}\times \mathbb{ R}^{ p}$. We proceed by truncation from the compact case (Theorem~\ref{theo:identification}): fix $M>0$, define
\begin{equation}
B_{ M}:=[-M, M]^{ p} \text{ and }\Lambda_{ M}:= B_{ M}\times B_{ M},
\end{equation}
and introduce the following probability measure, whose support is $B_{ M}$:
\begin{equation}
\ell^{(M)}({\rm d}x):= \ell^{ (M)}(x) {\rm d}x:= \ell(x)\frac{ \mathbf{ 1}_{ B_{ M}}(x)}{ \ell(B_{ M})} {\rm d}x.
\end{equation}
In what follows, we choose $M$ sufficiently large so that 
\begin{equation}
\label{eq:ell_BM}
\ell(B_{ M}) \geq \frac{ 1}{ 2}
\end{equation}
What has been done in Section~\ref{sec:ident_compact_case} for $I=[0,1]$ with deterministic regular positions can be transposed without difficulties (up to obvious notational changes) to $I=B_{ M}$, endowed with its renormalized Lebesgue measure $ \frac{ {\rm d}x}{ \left\vert B_{ M} \right\vert}$, where $ \left\vert B_{ M} \right\vert= (2M)^{ p}$. Hence, we can apply the result of Section~\ref{sec:ident_compact_case} for $I=B_{ M}$ and deterministic positions, for the choice of kernel on $B_{ M}^{ 2}$
\begin{equation}
\label{eq:tilde_WM}
\tilde{W}(x, y):=\tilde{W}_{ M}(x, y):= W(x, y)\ell^{ (M)}(y) \left\vert B_{ M} \right\vert,\ x, y\in B_{ M}.
\end{equation}
Indeed, the kernel $(x, y) \mapsto \tilde{W}_{ M}(x, y)$ is bounded and $ \mathcal{ C}^{ 1}$ on $B_{ M}^{ 2}$ so that $\tilde{W}_{ M}$ satisfies the assumptions of Section~\ref{sec:general_assumptions_nonlin}. Moreover, we see from Section~\ref{sec:dense_bounded_graphons} that $\tilde{W}_{ M}$ can be realized as the macroscopic limit of a graph $(\mathcal{G}^{ (n)}, \kappa^{(n)})$ constructed as in \eqref{eq:generic_Wn_P_bounded} and \eqref{eq:kappan_uniform_P}. Since $W$ and $\ell$ are regular, Definitions~\ref{def:convergence_graph} and~\ref{def:graphon_regular} are satisfied. Hence, both solutions $ \nu^{ (M)}$ to \eqref{eq:McKeanVlasov} (with initial condition $ \nu_{ 0}^{ (M)}({\rm d}\theta, {\rm d}x)= \nu_{ 0}^{ x}({\rm d}\theta) \ell^{ (M)}({\rm d}x)$) and $ \psi^{ (M)}$ to \eqref{eq:heat_equation_weak} (with initial condition $ \psi_{ 0}^{ (M)}(x)= \int \theta \nu_{ 0}^{ x, (M)}({\rm d}\theta)$) in the case $I=B_{ M}$ endowed with $ \frac{ {\rm d}x}{ \left\vert B_{ M} \right\vert}$, for the kernel $\tilde{W}_{ M}$ are well posed and satisfy the identification 
\begin{equation}
\label{eq:ident_psiM_nuM}
\int_{B_{ M}} \left\langle \psi^{ (M)}(x, t)\, ,\, J(x)\right\rangle \frac{ {\rm d}x}{ \left\vert B_{ M} \right\vert}= \int_{B_{ M}}\int \left\langle \theta\, ,\, J(x)\right\rangle \nu_{t}^{(M), x}({\rm d}\theta) \frac{ {\rm d}x}{ \left\vert B_{ M} \right\vert},
\end{equation}
for all regular test functions $J$ on $B_{ M}$. The point of the remaining is to make $M\to\infty$ in \eqref{eq:ident_psiM_nuM}. We treat the two sides of \eqref{eq:ident_psiM_nuM} separately. Concerning the lefthand, $ \psi^{ (M)}$ is the unique weak solution to
\begin{multline}
\label{eq:psiM}
\int_{B_{ M}} \left\langle \psi^{ (M)}(x, t)\, ,\, J(x)\right\rangle \frac{ {\rm d}x}{ \left\vert B_{ M} \right\vert}= \int_{B_{ M}} \left\langle \psi_{ 0}^{ (M)}(x)\, ,\, J(x)\right\rangle \frac{ {\rm d}x}{ \left\vert B_{ M} \right\vert} + \int_{ 0}^{t} \int_{B_{ M}} \left\langle c( \psi^{ (M)}(x, s))\, ,\, J(x)\right\rangle \frac{ {\rm d}x}{ \left\vert B_{ M} \right\vert} {\rm d}s \\+ \int_{ 0}^{t}\int_{B_{ M}^{ 2}} \left\langle \Gamma( \psi^{ (M)}(x, s) , \psi^{ (M)}(y, s))\, ,\, J(x)\right\rangle \tilde{W}_{ M}(x, y)\frac{ {\rm d}y}{ \left\vert B_{ M} \right\vert} \frac{ {\rm d}x}{ \left\vert B_{ M} \right\vert}{\rm d}s.
\end{multline}
Multiplying everything by $ \left\vert B_{ M} \right\vert$ and choosing test functions of the form $J(x) \ell^{ (M)}(x)$ gives, by definition of $\tilde{W}_{ M}$,
\begin{multline}
\int_{ \mathbb{ R}^{ p}} \frac{ {\rm d}}{ {\rm d}t} \left\langle \psi^{ (M)}(x, t)\, ,\,  J(x)\right\rangle\ell^{ (M)}({\rm d} x)=  \int_{ \mathbb{ R}^{ p}} \left\langle c( \psi^{ (M)}(x, t))\, ,\, J(x)\right\rangle\ell^{ (M)}({\rm d}x)\\
+ \int_{ \mathbb{ R}^{ p}}\int_{ \mathbb{ R}^{ p}} \left\langle \Gamma( \psi^{ (M)}(x, t) , \psi^{ (M)}(y, t))\, ,\, J(x)\right\rangle  W(x, y)\ell^{ (M)}({\rm d}y)\ell^{ (M)}({\rm d}x). \label{eq:psiM_2}
\end{multline}
We first give some a priori bound on $ \psi^{ (M)}$: by density, \eqref{eq:psiM_2} is also true for all test functions $J(x,t)$ and for $J(x,t)= \left\vert \psi^{ (M)}(x, t) \right\vert^{ 2k-2} \psi^{ (M)}(x,t)$, we obtain, using the properties on $c$ and $ \Gamma$
\begin{multline}
\frac{ 1}{ 2k}\frac{ {\rm d}}{ {\rm d}t}\int_{ \mathbb{ R}^{ p}} \left\vert \psi^{ (M)}(x, t) \right\vert^{ 2k}\ell^{ (M)}({\rm d} x)=  \int_{ \mathbb{ R}^{ p}} \left\langle c( \psi^{ (M)}(x, t))\, ,\, \psi^{ (M)}(x, t)\right\rangle \left\vert \psi^{ (M)}(x, t) \right\vert^{ 2k-2} \ell^{ (M)}({\rm d}x)\\
+ \int_{ \mathbb{ R}^{ 2p}} \left\langle \Gamma( \psi^{ (M)}(x, t) , \psi^{ (M)}(y, t))\, ,\, \psi^{ (M)}(x,t)\right\rangle  \left\vert \psi^{ (M)}(x, t) \right\vert^{ 2k-2} W(x, y)\ell^{ (M)}({\rm d}y)\ell^{ (M)}({\rm d}x),\\
\leq \left(L_{ c} + \frac{1}{ 2}\right) \int_{ \mathbb{ R}^{ p}} \left\vert \psi^{ (M)}(x, t)) \right\vert^{ 2k}\ell^{ (M)}({\rm d}x) + \frac{ \left\vert c(0) \right\vert^{ 2}}{ 2}\int_{ \mathbb{ R}^{ p}} \left\vert \psi^{ (M)}(x, t)) \right\vert^{ 2k-2}\ell^{ (M)}({\rm d}x) \\
+ L_{ \Gamma}\int_{ \mathbb{ R}^{ 2p}} \left(\left\vert \psi^{ (M)}(x, t) \right\vert^{ 2k-1} + \left\vert \psi^{ (M)}(x, t) \right\vert^{ 2k} \right) W(x, y)\ell^{ (M)}({\rm d}y)\ell^{ (M)}({\rm d}x)\\
+ L_{ \Gamma} \int_{ \mathbb{ R}^{ 2p}} \left\vert \psi^{ (M)}(y, t) \right\vert  \left\vert \psi^{ (M)}(x, t) \right\vert^{ 2k-1} W(x, y)\ell^{ (M)}({\rm d}y)\ell^{ (M)}({\rm d}x). \label{aux:bound_psi_M}
\end{multline}
The last term in \eqref{aux:bound_psi_M} is bounded by
\begin{multline*}
\int_{ \mathbb{ R}^{p}} \left\vert \psi^{ (M)}(x, t) \right\vert^{ 2k-1} \left(\int_{ \mathbb{ R}^{ p}} \left\vert \psi^{ (M)}(y, t) \right\vert^{ 2} \ell^{ (M)}({\rm d}y)\right)^{ \frac{ 1}{ 2}} \left(\int_{ \mathbb{ R}^{ p}} W(x, y)^{ 2}\ell^{ (M)}({\rm d}y)\right)^{ \frac{ 1}{ 2}}\ell^{ (M)}({\rm d}x),\\
\leq \sqrt{ 2}\left\Vert \mathcal{ W}_{ 2} \right\Vert_{ \infty}^{ \frac{ 1}{ 2}} \left(\int_{ \mathbb{ R}^{ p}} \left\vert \psi^{ (M)}(y, t) \right\vert^{ 2} \ell^{ (M)}({\rm d}y)\right)^{ \frac{ 1}{ 2}} \int_{ \mathbb{ R}^{p}} \left\vert \psi^{ (M)}(x, t) \right\vert^{ 2k-1}  \ell^{ (M)}({\rm d}x),
\end{multline*}
where we used \eqref{eq:ell_BM} and \eqref{hyp:bound_W_L1}. Applying Young's inequality for $p=2k$ and $q=p^{ \ast}= \frac{ 2k}{ 2k-1}$, the last quantity is smaller than
\begin{multline*}
\sqrt{2}\left\Vert \mathcal{ W}_{ 2} \right\Vert_{ \infty}^{ \frac{ 1}{ 2}} \left\lbrace \frac{ 1}{ 2k}\left(\int_{ \mathbb{ R}^{ p}} \left\vert \psi^{ (M)}(y, t) \right\vert^{ 2} \ell^{ (M)}({\rm d}y)\right)^{ k} + \frac{ 2k-1}{ 2k}\left(\int_{ \mathbb{ R}^{p}} \left\vert \psi^{ (M)}(x, t) \right\vert^{ 2k-1}  \ell^{ (M)}({\rm d}x)\right)^{ \frac{ 2k}{ 2k-1}}\right\rbrace,\\
\leq \sqrt{2}\left\Vert \mathcal{ W}_{ 2} \right\Vert_{ \infty}^{ \frac{ 1}{ 2}} \left\lbrace \frac{ 1}{ 2k}\int_{ \mathbb{ R}^{ p}} \left\vert \psi^{ (M)}(y, t) \right\vert^{ 2k} \ell^{ (M)}({\rm d}y) + \frac{ 2k-1}{ 2k}\int_{ \mathbb{ R}^{p}} \left\vert \psi^{ (M)}(x, t) \right\vert^{2k}  \ell^{ (M)}({\rm d}x)\right\rbrace,\\
= \sqrt{2}\left\Vert \mathcal{ W}_{ 2} \right\Vert_{ \infty}^{ \frac{ 1}{ 2}} \int_{ \mathbb{ R}^{ p}} \left\vert \psi^{ (M)}(y, t) \right\vert^{ 2k} \ell^{ (M)}({\rm d}y),
\end{multline*}
by Jensen's inequality. Using this bound in \eqref{aux:bound_psi_M}, the fact that there are constants $c_{ 0}, c_{ 1}>0$ such that $ \left\vert x \right\vert^{ 2k-2} + \left\vert x \right\vert^{ 2k-1} + \left\vert x \right\vert^{ 2k}\leq c_{ 0} + c_{ 1} \left\vert x \right\vert^{ 2k}$ and Gr\"onwall's lemma gives 
\begin{equation}
\label{eq:uniform_bound_psiM}
\sup_{ t\in [0, T]} \sup_{ M\geq1} \int_{ \mathbb{ R}^{ p}} \left\vert \psi^{ (M)}(x, t) \right\vert^{ 2k} \ell^{ (M)}({\rm d}x)<+\infty.
\end{equation}
We now prove that $( \psi^{ (M)} \mathbf{ 1}_{ B_{ M}})$ is Cauchy: for $N> M$, set 
\begin{equation}
\rho(x,t)=\rho_{ N, M}(x, t):= \psi^{ (N)}(x, t)\mathbf{ 1}_{ B_{ N}}(x) - \psi^{ (M)}(x, t) \mathbf{ 1}_{ B_{ M}}(x).
\end{equation}
Multiplying by $\ell(B_{ M})$ in \eqref{eq:psiM_2}, we have,
\begin{multline*}
\int_{ \mathbb{ R}^{ p}} \frac{ {\rm d}}{ {\rm d}t} \left\langle \psi^{ (M)}(x, t)  \mathbf{ 1}_{ B_{ M}}(x)\, ,\,  J(x, t)\right\rangle\ell({\rm d} x)=  \int_{ \mathbb{ R}^{ p}} \left\langle c( \psi^{ (M)}(x, t))\mathbf{ 1}_{ B_{ M}}(x)\, ,\, J(x, t)\right\rangle \ell({\rm d}x)\\
+ \frac{ 1}{ \ell(B_{ M})}\int_{ \mathbb{ R}^{ 2p}} \left\langle \Gamma( \psi^{ (M)}(x, t) , \psi^{ (M)}(y, t))\, ,\, J(x, t)\right\rangle  W(x, y)\mathbf{ 1}_{ \Lambda_{ M}}(x, y)\ell({\rm d}y)\ell({\rm d}x).
\end{multline*}
Use the notation $\tilde{c}(\theta)= c(\theta) -c(0)$. Note that $\tilde{c}$ also satisfies \eqref{hyp:c_onesidedLip} for the same constant $L_{ c}$. Since $\tilde{c}(0)=0$, one has that for all indicator function $ x \mapsto \mathbf{ 1}_{ B}(x)$, $\tilde{c}(\theta) \mathbf{ 1}_{ B}(x) = \tilde{c}(\theta \mathbf{ 1}_{ B}(x))$. In a same way, one has for all $ \theta_{ 1}, \theta_{ 2}$, $ \Gamma \left(\theta_{ 1}, \theta_{ 2}\right) \mathbf{ 1}_{ B^{ 2}}(x, y)= \Gamma \left(\theta_{ 1}\mathbf{ 1}_{ B}(x), \theta_{ 2}\mathbf{ 1}_{ B}(y)\right) \mathbf{ 1}_{ B^{ 2}}(x, y)$. Hence, we can write
\begin{multline*}
\int_{ \mathbb{ R}^{ p}} \frac{ {\rm d}}{ {\rm d}t} \left\langle \psi^{ (M)}(x, t)  \mathbf{ 1}_{ B_{ M}}(x)\, ,\,  J(x, t)\right\rangle\ell({\rm d} x)=  \int_{ \mathbb{ R}^{ p}} \left\langle \tilde{c}( \psi^{ (M)}(x, t)\mathbf{ 1}_{ B_{ M}}(x))\, ,\, J(x, t)\right\rangle \ell({\rm d}x)\\
+ \int_{ \mathbb{ R}^{ p}} \left\langle c( 0)\, ,\, J(x, t)\right\rangle \mathbf{ 1}_{ B_{ M}}(x) \ell({\rm d}x)\\
+ \frac{ 1}{ \ell(B_{ M})}\int_{\mathbb{ R}^{ 2p}} \left\langle \Gamma( \psi^{ (M)}(x, t)\mathbf{ 1}_{ B_{ M}}(x) , \psi^{ (M)}(y, t)\mathbf{ 1}_{ B_{ M}}(y))\, ,\, J(x, t)\right\rangle  W(x, y)\mathbf{ 1}_{ \Lambda_{ M}}(x, y)\ell({\rm d}y)\ell({\rm d}x).
\end{multline*}
Taking the difference between $ \psi^{ (M)}$ and $ \psi^{ (N)}$ gives for $J(x, t)= \rho(x, t) \left\vert \rho(x, t) \right\vert^{ k-2}$:
\begin{multline}
\frac{ {\rm d}}{ {\rm d}t}\int_{ \mathbb{ R}^{ p}} \left\vert \rho(x, t) \right\vert^{ k}\ell({\rm d} x)=   \int_{ \mathbb{ R}^{ p}} \left\langle \tilde{c}( \psi^{ (N)}(x, t)\mathbf{ 1}_{ B_{ N}}(x)) - \tilde{c}( \psi^{ (M)}(x, t)\mathbf{ 1}_{ B_{ M}}(x)) \, ,\, J(x,t)\right\rangle   \ell({\rm d}x)\\
+\int_{ \mathbb{ R}^{ p}} \left\langle c(0)\, ,\, J(x,t)\right\rangle \mathbf{ 1}_{ B_{ N}\setminus B_{ M}}(x)\ell({\rm d}x)\\
+\frac{ 1}{ \ell(B_{ M})}\int_{\mathbb{ R}^{ 2p}} \left\langle \Gamma( \psi^{ (N)}(x, t)\mathbf{ 1}_{ B_{ N}}(x) , \psi^{ (N)}(y, t)\mathbf{ 1}_{ B_{ N}}(y))\, ,\, J(x, t)\right\rangle  W(x, y)\mathbf{ 1}_{ \Lambda_{ N}\setminus \Lambda_{ M}}(x, y)\ell({\rm d}y)\ell({\rm d}x)\\
+ \left(\frac{ 1}{ \ell(B_{ N})} - \frac{ 1}{ \ell(B_{ M})}\right)\int_{\mathbb{ R}^{ 2p}} \left\langle \Gamma( \psi^{ (N)}(x, t)\mathbf{ 1}_{ B_{ N}}(x) , \psi^{ (N)}(y, t)\mathbf{ 1}_{ B_{ N}}(y))\, ,\, J(x,t)\right\rangle  W(x, y)\mathbf{ 1}_{ \Lambda_{ N}}(x, y)\ell({\rm d}y)\ell({\rm d}x)\\
+\frac{ 1}{ \ell(B_{ M})}\int_{\mathbb{ R}^{ 2p}} \left\langle \Gamma( \psi^{ (N)}(x, t)\mathbf{ 1}_{ B_{ N}}(x) , \psi^{ (N)}(y, t)\mathbf{ 1}_{ B_{ N}}(y))- \Gamma( \psi^{ (M)}(x, t)\mathbf{ 1}_{ B_{ M}}(x) , \psi^{ (M)}(y, t)\mathbf{ 1}_{ B_{ M}}(y))\, ,\, J(x, t)\right\rangle  W(x, y)\mathbf{ 1}_{ \Lambda_{ M}}(x, y)\ell({\rm d}y)\ell({\rm d}x)\\
:= (I) + (II) + (III) + (IV)+(V). \label{aux:eq_rhoNM}
\end{multline}
Concerning the first term in \eqref{aux:eq_rhoNM}, we have, by the property of $c$, $ \left\vert (I) \right\vert \leq L_{ c} \int_{ \mathbb{ R}^{ p}}\left\vert \rho(x, t) \right\vert^{ k}\ell({\rm d}x)$. The second term is controlled as 
\begin{multline*}
\left\vert (II) \right\vert \leq \left\vert c(0) \right\vert \left(\int_{ \mathbb{ R}^{ p}} \left\vert \rho(x, t) \right\vert^{ k} \ell({\rm d}x)\right)^{ 1-\frac{ 1}{k}} \ell(B_{ N}\setminus B_{ M})^{ \frac{ 1}{ k}} \leq \left\vert c(0) \right\vert \left\lbrace \frac{ k-1}{ k}\int_{ \mathbb{ R}^{ p}} \left\vert \rho(x, t) \right\vert^{ k} \ell({\rm d}x) + \frac{ 1}{ k} \ell(B_{ N}\setminus B_{ M})\right\rbrace.\end{multline*}
We now turn to $(III)$: by \eqref{eq:ell_BM} and \eqref{hyp:Gamma_bound},
\begin{multline}
\left\vert (III) \right\vert 
\leq 2 L_{ \Gamma}\int_{\mathbb{ R}^{ 2p}}  \left\vert \rho(x, t) \right\vert^{ k-1} W(x, y)\mathbf{ 1}_{ \Lambda_{ N}\setminus \Lambda_{ M}}(x, y)\ell({\rm d}y)\ell({\rm d}x)\\
+2 L_{ \Gamma}\int_{\mathbb{ R}^{ 2p}} \left\vert \psi^{ (N)}(x, t)\mathbf{ 1}_{ B_{ N}}(x) \right\vert \left\vert \rho(x, t) \right\vert^{ k-1} W(x, y)\mathbf{ 1}_{ \Lambda_{ N}\setminus \Lambda_{ M}}(x, y)\ell({\rm d}x)\ell({\rm d}y)\\
+2 L_{ \Gamma}\int_{\mathbb{ R}^{ 2p}}  \left\vert \psi^{ (N)}(y, t)\mathbf{ 1}_{ B_{ N}}(y) \right\vert \left\vert \rho(x, t) \right\vert^{ k-1} W(x, y)\mathbf{ 1}_{ \Lambda_{ N}\setminus \Lambda_{ M}}(x, y)\ell({\rm d}y)\ell({\rm d}x)\\:=(i)+(ii)+(iii). \label{aux:III_psiM}
\end{multline}
The term $(i)$ is bounded by
\begin{align*}
 (i) \leq 2L_{ \Gamma} \left(\int_{ \mathbb{ R}^{ p}}\left\vert \rho(x, t) \right\vert^{ k} \ell({\rm d}x)\right)^{ 1- \frac{ 1}{ k}} \left(\int_{ \mathbb{ R}^{ p}} \left(\int_{ \mathbb{ R}^{ p}}  W(x, y) \mathbf{ 1}_{ \Lambda_{ N}\setminus \Lambda_{ M}}(x, y)\ell({\rm d}y)\right)^{ k} \ell({\rm d}x)\right)^{ \frac{ 1}{ k}}\\ 
\leq \frac{2(k-1)L_{ \Gamma}}{ k} \int_{ \mathbb{ R}^{ p}}\left\vert \rho(x, t) \right\vert^{ k} \ell({\rm d}x) + \frac{ 2L_{ \Gamma}}{ k}\left\Vert \mathcal{ W}_{ 2} \right\Vert_{ \infty}^{ \frac{ k}{ 2}}\ell(\Lambda_{ N}\setminus \Lambda_{ M})^{ \frac{ k}{ 2}}.
\end{align*}
Secondly,
\begin{multline*} 
 (ii) \leq 
\leq  2^{ k-1}L_{ \Gamma}\int_{ B_{ N}} \left\vert \psi^{ (N)}(x, t) \right\vert^{ k} \left(\int_{ \mathbb{ R}^{p}} W(x, y) \mathbf{ 1}_{ \Lambda_{ N}\setminus \Lambda_{ M}}(x, y)\ell({\rm d}y)\right) \ell({\rm d}x)\\
+2^{ k-1}L_{ \Gamma}\int_{ B_{ M}} \left\vert \psi^{ (N)}(x, t) \right\vert \left\vert \psi^{ (M)}(x, t) \right\vert^{ k-1} \left(\int_{ \mathbb{ R}^{p}} W(x, y) \mathbf{ 1}_{ \Lambda_{ N}\setminus \Lambda_{ M}}(x, y)\ell({\rm d}y)\right) \ell({\rm d}x),\\
\leq  2^{ k-1}L_{ \Gamma} \left(\int_{ B_{ N}} \left\vert \psi^{ (N)}(x, t) \right\vert^{ 2k} \ell({\rm d}x)\right)^{ \frac{ 1}{ 2}} \left\Vert \mathcal{ W}_{ 2} \right\Vert_{ \infty}^{ \frac{ 1}{ 2}} \ell(\Lambda_{ N}\setminus \Lambda_{ M})^{ \frac{ 1}{ 2}}\\
+2^{ k-1}L_{ \Gamma}\left(\int_{ B_{ N}} \left\vert \psi^{ (N)}(x, t) \right\vert^{ qr} \ell({\rm d}x)\right)^{ \frac{ 1}{ qr}} \left(\int_{ B_{ M}} \left\vert \psi^{ (M)}(x, t) \right\vert^{ qr^{ \ast}(k-1)}\ell({\rm d}x)\right)^{ \frac{ 1}{ qr^{ \ast}}} \\\left(\int_{ \mathbb{ R}^{p}} \left(\int_{ \mathbb{ R}^{p}} W(x, y) \mathbf{ 1}_{ \Lambda_{ N}\setminus \Lambda_{ M}}(x, y)\ell({\rm d}y)\right)^{ q^{ \ast}} \ell({\rm d}x)\right)^{ \frac{ 1}{ q^{ \ast}}},
\end{multline*}
by two successive applications of H\"older's inequality for the last term. Choosing $q=q^{ \ast}=2$ and $(r, r^{ \ast})=(k, \frac{ k}{ k-1})$, using \eqref{eq:uniform_bound_psiM}, we see that the term $(ii)$ is finally bounded by $C  \left\Vert \mathcal{ W}_{ 2} \right\Vert_{ \infty}^{ \frac{ 1}{ 2}} \ell(\Lambda_{ N}\setminus \Lambda_{ M})^{ \frac{ 1}{ 2}}$. Finally, the last term in \eqref{aux:III_psiM} is controlled as
\begin{align*}
(iii)
&\leq 2^{ k-1}L_{ \Gamma} \int_{ B_{ N}^{ 2}} \left\vert \psi^{ (N)}(y, t) \right\vert \left\vert \psi^{ (N)}(x, t) \right\vert^{ k-1} W(x, y) \mathbf{ 1}_{ \Lambda_{ N}\setminus \Lambda_{ M}}(x, y)\ell({\rm d}y) \ell({\rm d}x)\\
&+ 2^{ k-1}L_{ \Gamma}\int_{ B_{ M}} \int_{ B_{ N}} \left\vert \psi^{ (N)}(y, t) \right\vert \left\vert \psi^{ (M)}(x, t) \right\vert^{ k-1} W(x, y) \mathbf{ 1}_{ \Lambda_{ N}\setminus \Lambda_{ M}}(x, y)\ell({\rm d}y) \ell({\rm d}x),\\
&:= (a)+(b).
\end{align*}
By H\"older's inequality, the term $(a)$ is controlled as
\begin{multline*}
(a)\leq 2^{ k-1}L_{ \Gamma} \left(\int_{ B_{ N}} \left\vert \psi^{ (N)}(y, t) \right\vert^{ q}\ell({\rm d}y)\right)^{ \frac{ 1}{ q}}  \left(\int_{ B_{ N}} \left\vert \psi^{ (N)}(x, t) \right\vert^{ r(k-1)}  \ell({\rm d}x)\right)^{ \frac{ 1}{ r}}\\ \left(\int_{ \mathbb{ R}^{p}}\left(\int_{ \mathbb{ R}^{p}}W(x, y)^{ q^{ \ast}} \mathbf{ 1}_{ \Lambda_{ N}\setminus \Lambda_{ M}}(x, y)\ell({\rm d}y)\right)^{ \frac{ r^{ \ast}}{ q^{ \ast}}} \ell({\rm d}x)\right)^{ \frac{ 1}{ r^{ \ast}}}.
\end{multline*}
For the choice $(r, r^{ \ast})=( \frac{ 2k}{ k-1}, \frac{ 2k}{ k+1})$, $(q, q^{ \ast})=( \frac{ 2- \delta}{ 1- \delta}, 2- \delta)$ for $ \delta\in (0,1)$,
\begin{multline*}
(a)\leq 2^{ k-1}L_{ \Gamma} \left(\int_{ B_{ N}} \left\vert \psi^{ (N)}(y, t) \right\vert^{ \frac{ 2- \delta}{ 1- \delta}}\ell({\rm d}y)\right)^{ \frac{ 1- \delta}{ 2- \delta}}  \left(\int_{ B_{ N}} \left\vert \psi^{ (N)}(x, t) \right\vert^{2k}  \ell({\rm d}x)\right)^{ \frac{ k-1}{ 2k}}\\ \left(\int_{ \mathbb{ R}^{p}}\left(\int_{ \mathbb{ R}^{p}}W(x, y)^{ 2- \delta} \mathbf{ 1}_{ \Lambda_{ N}\setminus \Lambda_{ M}}(x, y)\ell({\rm d}y)\right)^{ \frac{ 2k}{ (2- \delta)(k+1)}} \ell({\rm d}x)\right)^{ \frac{ k+1}{ 2k}}
\end{multline*}
Choosing $ \delta$ sufficiently small so that $ \frac{ 2- \delta}{ 1- \delta}<2k$ (possible since $k>1$), we obtain
\begin{equation*}
(a)\leq 2^{ k-1}L_{ \Gamma} \sup_{ N\geq1}\sup_{ t\in[0, T]}\left(\int_{ B_{ N}} \left\vert \psi^{ (N)}(y, t) \right\vert^{ 2k}\ell({\rm d}y)\right)^{ \frac{ 1}{2}} \left\Vert \mathcal{ W}_{ 2} \right\Vert_{ \infty}^{ \frac{ 1}{ 2}}  \ell(\Lambda_{ N}\setminus \Lambda_{ M})^{\frac{ \delta }{ 2(2- \delta)}}.
\end{equation*}
The same calculation gives the same estimate for $(b)$. Concerning the term $(IV)$, we have obviously $\left\vert \frac{ 1}{ \ell(B_{ N})} - \frac{ 1}{ \ell(B_{ M})} \right\vert \leq 4 \ell(B_{ N}\setminus B_{ M})$ so that it only suffices to have a uniform bound on the integral term in $(IV)$. This is indeed true, by the same calculations as for $(III)$ (with $ \mathbf{ 1}_{ \Lambda_{ N}\setminus \Lambda_{ M}}(x,y)$ replaced by $ \mathbf{ 1}_{ \Lambda_{ N}}(x,y)\leq1$). It remains to control $(V)$: by \eqref{hyp:Gamma_Lip_1},
\begin{multline*}
\left\vert (V) \right\vert \leq 
2L_{ \Gamma} \int_{ \left(\mathbb{ R}^{ p}\right)^{ 2}} \left\vert \rho(x, t) \right\vert^{ k}W(x, y)\mathbf{ 1}_{ \Lambda_{ M}}(x, y)\ell({\rm d}y)\ell({\rm d}x)\\
+2L_{ \Gamma} \int_{ \left(\mathbb{ R}^{ p}\right)^{ 2}} \left\vert \rho(y, t) \right\vert  \left\vert \rho(x, t) \right\vert^{ k-1}W(x, y)\mathbf{ 1}_{ \Lambda_{ M}}(x, y)\ell({\rm d}y)\ell({\rm d}x).
\end{multline*}
The first term in the sum above si easily bounded by $2L_{ \Gamma} \left\Vert \mathcal{ W}_{ 1} \right\Vert_{ \infty}\int_{\mathbb{ R}^{ p}} \left\vert \rho(x, t) \right\vert^{ k}\ell({\rm d}x)$. The second is controlled by
\begin{align*}
2L_{ \Gamma} \left\Vert \mathcal{ W}_{ 2} \right\Vert_{ \infty}^{ \frac{ 1}{ 2}}\left(\int_{ \mathbb{ R}^{p}} \left\vert \rho(y, t) \right\vert^{ 2}\ell({\rm d}y)\right)^{ \frac{ 1}{ 2}} \int_{ \mathbb{ R}^{p}}\left\vert \rho(x, t) \right\vert^{ k-1} \ell({\rm d}x)\\ 
\leq 2L_{ \Gamma} \left\Vert \mathcal{ W}_{ 2} \right\Vert_{ \infty}^{ \frac{ 1}{ 2}}\left\lbrace  \frac{ 1}{ k}\left(\int_{ \mathbb{ R}^{p}} \left\vert \rho(y, t) \right\vert^{ 2}\ell({\rm d}y)\right)^{ \frac{ k}{ 2}} + \frac{ k-1}{ k}\left(\int_{ \mathbb{ R}^{p}}\left\vert \rho(x, t) \right\vert^{ k-1} \ell({\rm d}x)\right)^{ \frac{ k}{ k-1}}\right\rbrace \\
\leq 2L_{ \Gamma}  \left\Vert \mathcal{ W}_{ 2} \right\Vert_{ \infty}^{ \frac{ 1}{ 2}}\int_{ \mathbb{ R}^{p}}\left\vert \rho(x, t) \right\vert^{k} \ell({\rm d}x),
\end{align*}
by Jensen's inequality (since $k\geq2$).

Gathering all these estimates into \eqref{aux:eq_rhoNM}, by a Gr\"onwall's lemma and the fact that $\ell( \Lambda_{ N}\setminus \Lambda_{ M})\leq \ell( \Lambda_{ M}^{ c}) \xrightarrow[ M\to\infty]{}0$, we see that the sequence $ \left(\psi^{ (M)}\mathbf{ 1}_{ B_{ M}}\right)_{ M\geq1}$ is Cauchy in $ \mathcal{ C}([0, T], L^{ k}(I, \ell))$ and hence convergent to some $ \psi(\cdot, t)$. By the same argument as before, it is easy to show that $ \psi$ is a weak solution (and hence, by Proposition~\ref{prop:uniqueness_weak_solution} the only solution) to \eqref{eq:heat_equation_weak}. Moreover, the convergence in $ \mathcal{ C}([0, T], L^{ k}(I, \ell))$ implies that $ \int_{ \mathbb{ R}^{ p}} \left\langle \psi^{ (M)}(x, t)\, ,\, J(x)\right\rangle {\rm d} x \xrightarrow[ M\to\infty]{} \int_{ \mathbb{ R}^{ p}} \left\langle \psi(x, t)\, ,\, J(x)\right\rangle {\rm d} x $ for all $t\in [0, T]$ and every test functions $J$ with compact support.

\medskip

We now turn to the righthand side of \eqref{eq:ident_psiM_nuM}. By the same procedure as in Section~\ref{sec:uniqueness_PDE_proof}, it is possible to prove that, for all test functions $ \theta \mapsto f(\theta)$,
for $\ell^{ (M)}$-almost every $x$,
\begin{multline}
\int f(\theta) \left\lbrace\nu_{s}^{(M), x}({\rm d}\theta) - \nu_{s}^{ x}({\rm d}\theta)\right\rbrace = \int P_{ 0, s}f(\theta, x) \left\lbrace\nu_{0}^{(M), x}({\rm d}\theta) - \nu_{0}^{ x}({\rm d}\theta)\right\rbrace\\
+ \int_{ 0}^{s}  \int_{ \mathbb{ R}^{ d}\times \mathbb{ R}^{ p}}  \left\lbrace\int_{ \mathbb{ R}^{ d}}\nabla_{ \theta} P_{ v, t}f(\theta, x) \cdot  \Gamma \left(\theta, \tilde{ \theta}\right) \nu_{ v}^{(M),x}({\rm d} \theta) \right\rbrace W(x, y) \left\lbrace \nu_{ v}^{(M), y}({\rm d} \tilde{ \theta}) - \nu_{ v}^{y}({\rm d} \tilde{ \theta})\right\rbrace\ell^{ (M)}({\rm d} y) {\rm d} v\\
+ \int_{ 0}^{s}  \int_{ \mathbb{ R}^{ d}\times \mathbb{ R}^{ p}}  \left\lbrace\int_{ \mathbb{ R}^{ d}}\nabla_{ \theta} P_{ v, t}f(\theta, x) \cdot  \Gamma \left(\theta, \tilde{ \theta}\right) \nu_{ v}^{(M), x}({\rm d} \theta) \right\rbrace W(x, y)\nu_{ v}^{y}({\rm d} \tilde{ \theta}) \left\lbrace \frac{ \mathbf{ 1}_{ B_{ M}}(x)}{ \ell(B_{ M})} -1\right\rbrace \ell({\rm d}y){\rm d} v\\
:= (A)+(B).
\end{multline}
Concerning the first term: it is bounded by
\begin{align*}
\left\vert (A) \right\vert \leq \frac{ C L_{ \Gamma}}{ \ell(B_{ M})}\int_{ 0}^{s} \int \sup_{ u\leq v} w_{ 1} \left(\nu_{ u}^{(M), y}, \nu_{ u}^{ y}\right) W(x, y)  \ell({\rm d} y){\rm d} v\\\leq \frac{ C L_{ \Gamma} \left\Vert \mathcal{ W}_{ 2} \right\Vert_{ \infty}^{ \frac{ 1}{ 2}}}{ \ell(B_{ M})}\int_{ 0}^{s} \left(\int \sup_{ u\leq v} w_{ 1} \left(\nu_{ u}^{(M), y}, \nu_{ u}^{ y}\right)^{ 2} \ell({\rm d} y)\right)^{ \frac{ 1}{ 2}}{\rm d} v.
\end{align*}
Concerning the second term, using the fact that $\sup_{ s\leq T, M\geq1, y\in I} \int \left\vert \theta \right\vert \nu_{ s}^{ M, y} ({\rm d}\theta)<\infty$, for some constant $C>0$
\begin{align*}
\left\vert (B) \right\vert 
&\leq C \int_{ 0}^{s} \left\vert \frac{ \mathbf{ 1}_{B_{ M}}(x)}{ \ell(B_{ M})} -1 \right\vert \int \int \left(1+ \left\vert \theta \right\vert + \left\vert \tilde{ \theta} \right\vert\right) \nu_{ v}^{(M), x}({\rm d} \theta) \nu_{ v}^{ y}({\rm d} \tilde{ \theta}) W(x, y)\ell({\rm d} y){\rm d} v\\
&\leq  C \left\Vert \mathcal{ W}_{ 1} \right\Vert_{ \infty} \left\vert \frac{ \mathbf{ 1}_{B_{ M}}(x)}{ \ell(B_{ M})} -1 \right\vert s.
\end{align*}
These estimates and Gr\"onwall Lemma gives that $\sup_{ x}\sup_{ s\in[0,T]}\int f(\theta) \left\lbrace\nu_{s}^{(M), x}({\rm d}\theta) - \nu_{s}^{ x}({\rm d}\theta)\right\rbrace \xrightarrow[M\to\infty]{}0$.

\section{Convergence of graphs: proof of Proposition~\ref{prop:conv_graph_bar_Gn}}
\label{sec:link_graph_conv}
The point of this section is to prove Proposition~\ref{prop:conv_graph_bar_Gn}. To do so, define first two other auxiliary (directed and weighted) graphs:
\begin{enumerate}
\item $ \mathcal{ H}^{ (n)}_{ 1}$, with vertex set $[n]$: in $ \mathcal{ H}^{ (n)}_{ 1}$, for all $i\neq j\in [n]$, both directed edges $i\to j$ and $j\to i$ are present and associated with the respective weights $ \kappa_{i}^{ (n)} W_{ n}(x_{ i}^{ (n)}, x_{ j}^{ (n)})$ and $ \kappa_{j}^{ (n)} W_{ n}(x_{ i}^{ (n)}, x_{ j}^{ (n)})$,
\item $ \mathcal{ H}^{ (n)}_{ 2}$, with vertex set $[n]$: in $ \mathcal{ H}^{ (n)}_{ 2}$, for all $i\neq j\in [n]$, the edge $i\to j$ (resp. $j\to i$) is present and associated with the weight $ W(x_{ i}^{ (n)}, x_{ j}^{ (n)})$ (resp. $W(x_{ i}^{ (n)}, x_{ j}^{ (n)})$).
\end{enumerate} 
\subsection{Some distances and norms on graphs and kernels}
Before proving Proposition~\ref{prop:conv_graph_bar_Gn}, we need to introduce the necessary definitions coming from graph convergence theory (see \cite{LOVASZ2006933,MR2455626,MR2825531} and references therein). Concerning the notion of cut-off distance considered in Proposition~\ref{prop:conv_graph_bar_Gn} and other related definitions, we follow here closely \cite{borgs2018,Borgs:2014aa}. In particular, we generalize here the definitions of \cite{Borgs:2014aa}, \S~2.3 to the case of directed graphs and non-symmetric kernels: let $G= (V(G), E(G))$ be a possibly directed weighted graph, where each vertex $i\in V(G)$ is associated to a weight $ \alpha_{ i}$ and each edge $i\to j$ is associated to a weight $ \beta_{ i,j}$ (where possibly $ \beta_{ i,j} \neq \beta_{ j, i}$). We define $ \alpha_{ G}= \sum_{ i\in V(G)} \alpha_{ i}$ and the kernel $W^{G}$ on $[0,1]^{ 2}$ in the following way: divide $[0,1]$ into intervals $I_{ 1}, \ldots, I_{ \left\vert V(G) \right\vert}$ of length $ \left\vert I_{ i} \right\vert = \frac{ \alpha_{ i}}{ \alpha_{ G}}$ and define
\begin{equation}
W^{G}(x, y) = \sum_{ i,j \in V(G)} \beta_{ i, j} \mathbf{ 1}_{ (x, y)\in I_{ i}\times I_{ j}}.
\end{equation}

For one kernel $W$ define
\begin{equation}
\label{eq:norm_W_L_infty_1}
\left\Vert W \right\Vert_{ \infty\to1} := \sup_{ \left\Vert f \right\Vert_{ \infty}, \left\Vert g \right\Vert_{ \infty}\leq1} \left\vert \int_{ [0, 1]^{ 2}} W(x, y) f(x) g(y) {\rm d}x{\rm d}y\right\vert
\end{equation}
as well as the cut norm
\begin{equation}
\label{eq:cut_norm_W}
\left\Vert W \right\Vert_{ \Box} := \sup_{ S, T\subset [0,1]} \left\vert \int_{ S\times T} W(x,y) {\rm d}x {\rm d}y\right\vert.
\end{equation}
Note that the norms $ \left\Vert \cdot \right\Vert_{ \infty\to 1}$ and $ \left\Vert \cdot \right\Vert_{ \Box}$ are equivalent (see \cite{Borgs:2014aa}, Eq. (2.3)): for some $C>0$, for any kernel $W$, 
\begin{equation}
\label{eq:equivalence_norms_infty1_cut}
 \left\Vert W \right\Vert_{ \Box}\leq \left\Vert W \right\Vert_{ \infty\to 1} \leq C \left\Vert W \right\Vert_{ \Box}.
\end{equation}
For any weighted directed graphs $ \mathcal{ G}$ and $ \mathcal{ G}^{ \prime}$ with vertex set $[n]$, the same nodeweights $ \alpha_{ i}$ and with respective weights $( \beta_{ i,j})_{ i,j\in \left[n\right]}$ and $( \beta_{ i,j}^{ \prime})_{ i,j\in \left[n\right]}$, define the cut-off distance
\begin{equation}
\label{eq:cut_distance_G}
d_{ \Box}(G, G^{ \prime}):= \left\Vert W^{ G} - W^{ G^{ \prime}} \right\Vert_{ \Box}=\max_{ S, T\subset [n]} \left\vert \sum_{ i\in S, j\in T} \frac{ \alpha_{ i} \alpha_{ j}}{ \alpha_{ G}^{ 2}}(\beta_{ i, j}- \beta_{ i, j}^{ \prime})\right\vert.
\end{equation}
Finally, we define the following $L^{ 1}$-distance between two kernels $W$ and $W^{ \prime}$ (not necessarily symmetric):
\begin{equation}
d_{ 1}(W, W^{ \prime}):= \left\Vert W- W^{ \prime} \right\Vert_{ 1}:= \int_{ [0,1]^{ 2}} \left\vert W(x,y) -W^{ \prime}(x,y)\right\vert {\rm d}x {\rm d}y.
\end{equation}
\subsection{Proof of Proposition~\ref{prop:conv_graph_bar_Gn}}
Proposition~\ref{prop:conv_graph_bar_Gn} is a direct consequence of Propositions~\ref{prop:distance_Gn_H1},~\ref{prop:distance_H1_H2} and~\ref{prop:distance_H2_W} below.
\begin{proposition}
\label{prop:distance_Gn_H1}
Under the hypotheses of Section~\ref{sec:comment_graph_convergence}, we have
\begin{equation}
\label{eq:distance_Gn_H1}
d_{ \Box}\left( \bar{ \mathcal{ G}}^{ (n)}, \mathcal{ H}^{ (n)}_{ 1}\right) \xrightarrow[ n\to\infty]{}0,\ \text{ with probability $1$}.
\end{equation}
\end{proposition}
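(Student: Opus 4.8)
\textbf{Proof plan for Proposition~\ref{prop:distance_Gn_H1}.} The plan is to compare directly the kernels $W^{\bar{\mathcal{G}}^{(n)}}$ and $W^{\mathcal{H}^{(n)}_1}$ associated with the two graphs on the common vertex set $[n]$ with uniform nodeweights $\alpha_i\equiv1$ (so $\alpha_G=n$). By Definition~\ref{def:bar_Gn}, the edge $i\to j$ in $\bar{\mathcal{G}}^{(n)}$ carries weight $\kappa_i^{(n)}\xi_{i,j}^{(n)}$, while in $\mathcal{H}^{(n)}_1$ the same edge carries weight $\kappa_i^{(n)}W_n(x_i,x_j)$; hence, by the explicit expression \eqref{eq:cut_distance_G} for the cut-off distance in the case of equal uniform nodeweights,
\begin{equation*}
d_{\Box}\left(\bar{\mathcal{G}}^{(n)},\mathcal{H}^{(n)}_1\right) = \max_{S,T\subset[n]} \left\vert \frac{1}{n^2}\sum_{i\in S,\,j\in T} \kappa_i^{(n)}\left(\xi_{i,j}^{(n)} - W_n(x_i,x_j)\right)\right\vert.
\end{equation*}
First I would bound this maximum by $\sup_{i\in[n]} \frac{1}{n}\sum_{j=1}^n \kappa_i^{(n)}\bigl\vert\xi_{i,j}^{(n)} - W_n(x_i,x_j)\bigr\vert$ by taking the worst case $T=[n]$ and summing absolute values over $j$, then choosing $S=\{i\}$; actually one keeps the double sum but controls it row by row, since for each fixed $i$ the sum over $j\in T$ of the centered Bernoulli variables is what must be made small uniformly.

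The core estimate is then a concentration argument of exactly the same flavour as the one already carried out for the term $d_{n,T}(\underline\xi,\underline x)$ in the proof of Proposition~\ref{prop:control_Delta_2}. Fix $i$; the variables $Y_j := \xi_{i,j}^{(n)} - W_n(x_i,x_j)$ for $j\in[n]$ are independent, centered, bounded by $1$, with $\mathbb{E}[Y_j^2]\le w_n$, so applying Lemma~\ref{lem:large_deviations} with $U_l=\xi_{i,l}^{(n)}$, $p_l=W_n(x_i,x_l)$, $\kappa_n=\kappa_i^{(n)}$, and test sequence $v_l\equiv1$ (signed appropriately to recover the supremum over subsets $T$ via a union bound over the two choices of sign, or more simply to bound the $\ell^1$-sum of the centered increments), one gets that $\mathbb{P}\bigl(\sup_{i\in[n]}\frac1n\bigl\vert\sum_{l=1}^n\kappa_i^{(n)}\bar\xi_{i,l}\bigr\vert>\varepsilon_n\bigr)\le 2n\exp(-4\log n)=2n^{-3}$ for $n$ large, with $\varepsilon_n\to0$ given by \eqref{eq:xve_n}; this is precisely inequality \eqref{eq:bound_xiik} in the earlier proof. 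A Borel--Cantelli argument over $n$ then yields an event of probability $1$ on which $d_{\Box}(\bar{\mathcal{G}}^{(n)},\mathcal{H}^{(n)}_1)\le\varepsilon_n$ for all $n$ large enough, which gives \eqref{eq:distance_Gn_H1}. (To pass from the signed sum $\sum_l\kappa_i^{(n)}\bar\xi_{i,l}$ to the subset-maximized quantity $\max_{T}\bigl\vert\sum_{j\in T}\kappa_i^{(n)}\bar\xi_{i,j}\bigr\vert$ one writes $\sum_{j\in T}\bar\xi_{i,j}=\tfrac12\sum_j(1+\epsilon_j)\bar\xi_{i,j}$ with $\epsilon_j=\pm1$ indicating membership in $T$, so one needs the concentration bound applied both to $v_l\equiv1$ and to the deterministic signs $v_l=\epsilon_l$; but since $\epsilon_l\in\{-1,1\}$ satisfies $\vert v_l\vert\le1$ and Lemma~\ref{lem:large_deviations} allows any such bounded sequence, this costs only an extra factor of $2$ in the union bound.)

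The main obstacle, such as it is, is bookkeeping rather than conceptual: one must make sure the union bound over $i\in[n]$ (and over the two signs, and in principle over the exponentially many subsets $T$, which is why one must route the argument through the $\ell^1$-sum $\sum_l\vert\kappa_i^{(n)}\bar\xi_{i,l}\vert$ rather than over subsets directly) still leaves a summable-in-$n$ probability after multiplying by the polynomial factors $n$ or $n^2$ — this works because the exponent $-16\log(n)B(\cdots)$ in \eqref{eq:LD_gen} beats any fixed polynomial once $B$ is bounded below by $1/4$, exactly as in the proof of Proposition~\ref{prop:control_Delta_2}. One also needs the hypotheses \eqref{hyp:compare_wn_alphan} and \eqref{hyp:alpha_n_infty} (part of Definition~\ref{def:convergence_graph}, assumed in Section~\ref{sec:comment_graph_convergence}) to guarantee $\frac{\log n}{n w_n}\to0$ and hence $\varepsilon_n\to0$; these are in force by hypothesis. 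No use is made here of $\delta_n(\underline x)\to0$ or of \eqref{hyp:Int_complete_W_converge} — those enter only in Propositions~\ref{prop:distance_H1_H2} and~\ref{prop:distance_H2_W} — so the proof of this proposition is essentially a transcription of the concentration step already established, specialized to $v_l\equiv1$ and to the cut-off distance.
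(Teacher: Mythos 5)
Your reduction of the cut norm to row-wise quantities is where the argument breaks down. Writing $d_{\Box}\left(\bar{\mathcal{G}}^{(n)},\mathcal{H}^{(n)}_1\right)=\max_{S,T\subset[n]}\left\vert \frac{1}{n^2}\sum_{i\in S,j\in T}\kappa_i^{(n)}\bar\xi_{i,j}\right\vert$ is correct, but neither of your two row-wise majorants tends to $0$. The absolute-value bound $\sup_i\frac1n\sum_j\kappa_i^{(n)}\vert\bar\xi_{i,j}\vert$ has expectation $\frac{2\kappa_i^{(n)}}{n}\sum_j W_n(x_i,x_j)(1-W_n(x_i,x_j))$, which is of order one (e.g.\ it is $\approx 1$ in the homogeneous case $W_n\equiv\tfrac12$, $\kappa_n=2$): the smallness of the cut norm comes from cancellation of the signs of $\bar\xi_{i,j}$, which taking absolute values destroys. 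The signed row-wise variant fares no better: $\max_T\left\vert\sum_{j\in T}\kappa_i^{(n)}\bar\xi_{i,j}\right\vert$ equals $\max\left(\sum_j\kappa_i^{(n)}(\bar\xi_{i,j})_+,\ \sum_j\kappa_i^{(n)}(\bar\xi_{i,j})_-\right)$, whose expectation is again comparable to $\kappa_i^{(n)}\sum_j W_n(x_i,x_j)(1-W_n(x_i,x_j))$, i.e.\ of order $n$, so after dividing by $n$ it does not vanish either. Your device of encoding $T$ by signs $\epsilon_j=\pm1$ and claiming this ``costs only an extra factor of $2$'' is the concrete error: the sign vector depends on $T$, so controlling the maximum over $T$ forces a union bound over all $2^n$ sign patterns per row, and at the scale $\varepsilon_n$ of \eqref{eq:xve_n} Lemma~\ref{lem:large_deviations} only yields tails of order $n^{-4}$; the bound $n\cdot 2^n\cdot n^{-4}$ is not summable, so Borel--Cantelli is unavailable. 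Inequality \eqref{eq:bound_xiik}, which you invoke, controls only the single choice $T=[n]$ and is far weaker than what the cut norm requires.

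The paper's proof is genuinely two-dimensional: for each fixed pair $(S,T)$ it applies the Chung--Lu inequality (Proposition~\ref{prop:concentration_CL}) with weights $a_i=\kappa_i^{(n)}$ to the full double sum $\sum_{i\in S,j\in T}\kappa_i^{(n)}\bar\xi_{i,j}$, after splitting into $\{i<j\}$ and $\{i>j\}$ so that the summands remain independent when $\mathcal{G}^{(n)}$ is undirected (in which case $\xi_{i,j}=\xi_{j,i}$), a point your scheme also ignores. The variance proxy is bounded by $n^2\kappa_n\bigl(\delta_n(\underline{x})+\frac{1}{n^2}\sum_{i,j}W(x_i,x_j)\bigr)$, so, contrary to your closing remark, $\delta_n(\underline{x})\to0$ and the convergence of $\frac{1}{n^2}\sum_{i,j}W(x_i,x_j)$ to $\left\Vert W\right\Vert_{L^1}$ do enter this proposition. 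With $\varepsilon_n=u_n^{1/2}$, where $w_n\kappa_n^2=u_n\,n/\log(n)$ from \eqref{hyp:alpha_n_infty} and $\kappa_n\le u_n\,n/\log(n)$ from \eqref{hyp:compare_wn_alphan}, the per-pair probability is $\exp(-c\,n\log n)$, which beats the $4^{n+1}$ union bound over $(S,T)$, and Borel--Cantelli concludes. The essential point your plan misses is that only the $n^2$ summands of the full rectangle produce tails small enough to absorb the exponentially many subsets; the row-wise concentration of Lemma~\ref{lem:large_deviations}, at the scale for which it was designed, cannot.
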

\begin{proof}[Proof of Proposition~\ref{prop:distance_Gn_H1}]
Let $S, T\subseteq [n]$: we have
\begin{align*}
\mathbb{ P} \left( \left\vert \frac{ 1}{ n^{ 2}} \sum_{ i\in S, j\in T} \kappa_{i}^{ (n)} \bar \xi_{ i, j}\right\vert > \varepsilon\right) &\leq \mathbb{ P}\left( \left\vert \frac{ 1}{ n^{ 2}}\sum_{ i\in S, j\in T, i<j} \kappa_{i}^{ (n)}\bar\xi_{ i, j} \right\vert> \frac{ \varepsilon}{ 2}\right)+ \mathbb{ P}\left( \left\vert \frac{ 1}{ n^{ 2}}\sum_{ i\in S, j\in T, i>j} \kappa_{i}^{ (n)}\bar\xi_{ i, j} \right\vert> \frac{ \varepsilon}{ 2}\right).
\end{align*}
Let $U_{ +}:= \left\lbrace (i, j)\in [n]^{ 2},\ i\in S,\ j\in T, i<j\right\rbrace$. We have $ \left\vert U_{ +} \right\vert \leq \left\vert S \right\vert \left\vert T \right\vert$. We use the following result (\cite{chung2006}, Th 3.3):
\begin{proposition}
\label{prop:concentration_CL}
Let $X_{ 1}, \ldots, X_{ n}$ be independent random variables with $X_{ i}\sim \mathcal{ B}(p_{ i})$. For $X= \sum_{i=1}^{ n} a_{ i}X_{ i}$ with $a_{ i}>0$, define $ \nu= \sum_{ i=1}^{ n}a_{ i}^{ 2} p_{ i}$. Then, for $a=\max \left\lbrace a_{ 1}, \ldots, a_{ n}\right\rbrace$,
\begin{align}
\mathbb{ P} \left(X- \mathbb{ E}[X]< -\lambda\right)&\leq e^{ - \frac{ \lambda^{ 2}}{ 2 \nu}}, \label{eq:concentration_CL_1}\\
\mathbb{ P}\left(X- \mathbb{ E}[X]> \lambda\right)&\leq e^{ - \frac{ \lambda^{ 2}}{ 2(\nu + a \lambda/3)}},\label{eq:concentration_CL_2}
\end{align}
\end{proposition}
Since, by definition the variables $( \xi_{ i, j})_{ (i, j)\in U_{ +}}$ are independent (recall that $ \bar \xi_{ i,j} = \xi_{ i, j} - W_{ n}(x_{ i}, x_{ j})$, apply \eqref{eq:concentration_CL_1} with $a_{ i}= \kappa_{ i}^{ (n)}$, so that
\[\mathbb{ P}\left( \frac{ 1}{ n^{ 2}}\sum_{ (i, j)\in U_{ +}} \kappa_{i}^{ (n)}\bar\xi_{ i, j} < -\frac{ \varepsilon}{ 2}\right)
\leq \exp \left(- \frac{ \varepsilon^{ 2} n^{ 4}}{ 8 \sum_{ (i, j)\in U_{ +}} \left(\kappa_{i}^{ (n)}\right)^{ 2}W_{ n}(x_{ i}, x_{ j}) }\right).\]
Since 
\begin{align*}
\frac{ 1}{ n^{ 2}\kappa_{ n}}\sum_{ (i, j)\in U_{ +}}  \left(\kappa_{i}^{ (n)}\right)^{ 2}W_{ n}(x_{ i}, x_{ j})&\leq \frac{ 1}{ n^{ 2}}\sum_{ i,j=1}^{ n} \kappa_{i}^{ (n)} W_{ n}(x_{ i}, x_{ j})
\leq  \delta_{ n}(\underline{ x}) + \frac{ 1}{ n^{ 2}}\sum_{ i,j=1}^{ n} W(x_{ i}, x_{ j}),
\end{align*}
using the fact that $ \frac{ 1}{ n^{ 2}} \sum_{ i, j=1}^{ n} W(x_{ i}, x_{ j}) \xrightarrow[ n\to\infty]{} \int W(x, y) {\rm d}x {\rm d}y = \left\Vert W \right\Vert_{ L^{ 1}}>0$, we have for $n$ sufficiently large,
\begin{align*}
\frac{ 1}{ n^{ 2}\kappa_{ n}}\sum_{ (i, j)\in U_{ +}} \left(\kappa_{i}^{ (n)}\right)^{ 2} W_{ n}(x_{ i}, x_{ j}) &\leq 1 + \frac{ 3 \left\Vert W \right\Vert_{ L^{ 1}}}{ 2}.
\end{align*}
For such $n$,
\begin{align*}
\mathbb{ P}\left( \frac{ 1}{ n^{ 2}}\sum_{ (i, j)\in U_{ +}} \kappa_{i}^{ (n)}\bar\xi_{ i, j} < -\frac{ \varepsilon}{ 2}\right)&\leq \exp \left(- \frac{ \varepsilon^{ 2} n^{ 2}}{ 8\kappa_{ n} \left(1 + \frac{ 3 \left\Vert W \right\Vert_{ L^{ 1}}}{ 2}\right)}\right).
\end{align*}
Moreover, by similar arguments, for $n$ sufficiently large and $ \varepsilon< 6$, we have
\begin{align*}
\mathbb{ P}\left( \frac{ 1}{ n^{ 2}}\sum_{ (i, j)\in U_{ +}} \kappa_{i}^{ (n)}\bar\xi_{ i, j} >\frac{ \varepsilon}{ 2}\right)&=\mathbb{ P}\left(\sum_{ (i, j)\in U_{ +}} \kappa_{i}^{ (n)}\bar\xi_{ i, j} >\frac{ \varepsilon n^{ 2}}{ 2}\right)\leq \exp \left(- \frac{ \varepsilon^{ 2} n^{ 2}}{ 8\kappa_{ n} \left( 2 + \frac{ 3 \left\Vert W \right\Vert_{ L^{ 1}}}{ 2}\right)}\right).
\end{align*}
Putting things together, we obtain
\begin{align*}
\mathbb{ P}\left( \left\vert \frac{ 1}{ n^{ 2}}\sum_{ (i, j)\in U_{ +}} \kappa_{i}^{ (n)}\bar\xi_{ i, j} \right\vert >\frac{ \varepsilon}{ 2} \right)\leq 2\exp \left(- \frac{ \varepsilon^{ 2} n^{ 2}}{ 8 \kappa_{ n} \left( 2 + \frac{ 3 \left\Vert W \right\Vert_{ L^{ 1}}}{ 2}\right)}\right).
\end{align*}
By the same argument on $U_{ -}$, we have
\begin{align*}
\mathbb{ P} \left( \left\vert \frac{ 1}{ n^{ 2}} \sum_{ i\in S, j\in T} \kappa_{i}^{ (n)} \bar \xi_{ i, j}\right\vert > \varepsilon\right)\leq 4 \exp \left(- \frac{ \varepsilon^{ 2} n^{ 2}}{ 8\kappa_{ n} \left( 2 + \frac{ 3 \left\Vert W \right\Vert_{ L^{ 1}}}{ 2}\right)}\right).
\end{align*}
Writing $w_{ n}\kappa_{ n}^{ 2}= u_{ n} \frac{ n}{ \log(n)}$  with $u_{ n} \xrightarrow[ n\to\infty]{}0$ (recall \eqref{hyp:alpha_n_infty}), we have from $ \frac{ 1}{ \kappa_{ n}} \leq w_{ n}$ (recall \eqref{hyp:compare_wn_alphan}) that $\kappa_{ n} \leq u_{ n} \frac{ n}{ \log(n)}$. Hence, for $ \varepsilon:= \varepsilon_{ n}= u_{ n}^{ \frac{ 1}{ 2}} \xrightarrow[ n\to\infty]{}0$, 
a union bound on $S, T\subseteq [n]$ gives:
\begin{align*}
\mathbb{ P} \left(\max_{ S, T\subseteq[n]}\left\vert \frac{ 1}{ n^{ 2}} \sum_{ i\in S, j\in T} \kappa_{i}^{ (n)} \bar \xi_{ i, j}\right\vert > \varepsilon_{ n}\right)\leq 4^{ n+1} \exp \left(- \frac{n\log(n) }{ 8\left( 2 + \frac{ 3 \left\Vert W \right\Vert_{ L^{ 1}}}{ 2}\right)}\right).
\end{align*}
Borel-Cantelli Lemma gives the convergence \eqref{eq:distance_Gn_H1}.
\end{proof}
\begin{proposition}
\label{prop:distance_H1_H2}
With the previous definitions, assuming that \eqref{hyp:Delta_n_1} holds, we have
\begin{equation}
\label{eq:distance_H1_H2}
d_{ \Box} \left(\mathcal{ H}_{ 1}^{ (n)}, \mathcal{ H}_{ 2}^{ (n)}\right) \xrightarrow[ n\to\infty]{}0.
\end{equation}
\end{proposition}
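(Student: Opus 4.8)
The plan is to read the cut distance off directly from its combinatorial expression \eqref{eq:cut_distance_G}. Both $\mathcal{ H}_{ 1}^{ (n)}$ and $\mathcal{ H}_{ 2}^{ (n)}$ are directed weighted graphs on the vertex set $[n]$ carrying the same (uniform) node weights $\alpha_{ i}\equiv 1$, so that $\alpha_{ \mathcal{ H}}=n$ for both. Writing $\beta_{ i\to j}^{ (1)}= \kappa_{ i}^{ (n)}W_{ n}(x_{ i}^{ (n)}, x_{ j}^{ (n)})$ and $\beta_{ i\to j}^{ (2)}= W(x_{ i}^{ (n)}, x_{ j}^{ (n)})$ for the weights of the out-edge $i\to j$ in $\mathcal{ H}_{ 1}^{ (n)}$, resp. $\mathcal{ H}_{ 2}^{ (n)}$ (for $i\neq j$; on the diagonal $i=j$ one assigns both graphs the same self-loop weight, say $0$, which cancels), \eqref{eq:cut_distance_G} gives
\begin{equation*}
d_{ \Box}\left(\mathcal{ H}_{ 1}^{ (n)}, \mathcal{ H}_{ 2}^{ (n)}\right)= \max_{ S, T\subseteq [n]} \left\vert \frac{ 1}{ n^{ 2}} \sum_{ i\in S, j\in T} \left(\kappa_{ i}^{ (n)}W_{ n}(x_{ i}^{ (n)}, x_{ j}^{ (n)}) - W(x_{ i}^{ (n)}, x_{ j}^{ (n)})\right)\right\vert .
\end{equation*}

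Next I would drop the supremum over $S,T$ by passing to absolute values termwise and completing the sum to all of $[n]^{ 2}$: for any $S,T\subseteq[n]$,
\begin{equation*}
\left\vert \frac{ 1}{ n^{ 2}} \sum_{ i\in S, j\in T} \left(\kappa_{ i}^{ (n)}W_{ n}(x_{ i}, x_{ j}) - W(x_{ i}, x_{ j})\right)\right\vert \leq \frac{ 1}{ n^{ 2}} \sum_{ i=1}^{ n} \sum_{ j=1}^{ n} \left\vert \kappa_{ i}^{ (n)}W_{ n}(x_{ i}, x_{ j}) - W(x_{ i}, x_{ j})\right\vert .
\end{equation*}
Summing first over $j$ and invoking the very definition \eqref{hyp:Delta_n_1} of $\delta_{ n}(\underline{ x})$, the inner sum over $j$ is at most $n\,\delta_{ n}(\underline{ x})$ for each fixed $i$, so the right-hand side is bounded by $\delta_{ n}(\underline{ x})$. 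Hence $d_{ \Box}(\mathcal{ H}_{ 1}^{ (n)}, \mathcal{ H}_{ 2}^{ (n)})\leq \delta_{ n}(\underline{ x})$, and \eqref{eq:distance_H1_H2} follows from the assumed convergence $\delta_{ n}(\underline{ x})\to 0$ (part of Definition~\ref{def:convergence_graph}).

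In essence the whole statement is the elementary bound $\left\Vert \cdot \right\Vert_{ \Box}\leq \left\Vert \cdot \right\Vert_{ 1}$ applied to the step-kernel $W^{ \mathcal{ H}_{ 1}^{ (n)}}-W^{ \mathcal{ H}_{ 2}^{ (n)}}$, already encoded in \eqref{eq:cut_distance_G}, so there is no genuine obstacle. The only bookkeeping points are the two edge orientations — the reverse edge $j\to i$ contributes a discrepancy of the form $\vert \kappa_{ j}^{ (n)}W_{ n}(x_{ j}, x_{ i})-W(x_{ j}, x_{ i})\vert$, which is absorbed into the same double sum (here the presence probability of the undirected edge $\{i,j\}$ of $\mathcal{ G}^{ (n)}$ is a single number, so $W_{ n}$ is symmetric along that pair) — and the diagonal, which is dealt with either by the common-weight choice above or, if one prefers, bounded by the $O(1/n)$ total measure of the diagonal blocks.
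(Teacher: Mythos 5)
Your argument is correct and is essentially the paper's own proof: both reduce the statement to the bound $d_{\Box}(\mathcal{H}_1^{(n)},\mathcal{H}_2^{(n)})\leq \frac{1}{n^2}\sum_{i,j}\left\vert \kappa_i^{(n)}W_n(x_i,x_j)-W(x_i,x_j)\right\vert\leq \delta_n(\underline{x})$, the only cosmetic difference being that you work directly with the combinatorial form \eqref{eq:cut_distance_G} of the cut distance while the paper bounds $\Vert W^{\mathcal{H}_1^{(n)}}-W^{\mathcal{H}_2^{(n)}}\Vert_{\infty\to 1}$ by the $L^1$ norm and then invokes the norm equivalence \eqref{eq:equivalence_norms_infty1_cut}. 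Your side remarks on the diagonal and on the reversed edges are consistent with the paper's (implicitly symmetric) definitions of $\mathcal{H}_1^{(n)}$ and $\mathcal{H}_2^{(n)}$, so no gap remains.
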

\begin{proof}[Proof of Proposition~\ref{prop:distance_H1_H2}]
For the norm \eqref{eq:norm_W_L_infty_1}, we have
\begin{align*}
\left\Vert W^{ \mathcal{ H}_{ 1}^{ (n)}} - W^{ \mathcal{ H}_{ 2}^{ (n)}}\right\Vert_{ \infty \to 1} &= \sup_{ \left\Vert f \right\Vert_{ \infty}, \left\Vert g \right\Vert_{ \infty}\leq 1} \left\vert \int_{ [0, 1]^{ 2}} \left(W^{ \mathcal{ H}_{ 1}^{ (n)}}(x, y) - W^{ \mathcal{ H}_{ 2}^{ (n)}}(x, y)\right)f(x) g(y) {\rm d}x {\rm d}y\right\vert,\\
&\leq \sum_{ k,l=1}^{ n}\int_{I_{ k} \times I_{ l}} \left\vert W^{ \mathcal{ H}_{ 1}^{ (n)}}(x, y) - W^{ \mathcal{ H}_{ 2}^{ (n)}}(x, y) \right\vert {\rm d}x {\rm d}y \leq \delta_{ n}(\underline{ x}) \xrightarrow[ n\to\infty]{}0.
\end{align*}
Hence, we obtain from \eqref{eq:equivalence_norms_infty1_cut} and \eqref{eq:cut_distance_G} the required convergence \eqref{eq:distance_H1_H2}.
\end{proof}
\begin{proposition}
\label{prop:distance_H2_W}
Under \eqref{hyp:Int_complete_W_converge}, we have
\begin{equation}
d_{ \Box} \left( \mathcal{ H}^{ (n)}_{ 2}, W\right) \xrightarrow[n \to \infty]{} 0.
\end{equation}
\end{proposition}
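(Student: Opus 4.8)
The plan is to reduce the cut-off distance $d_{\Box}(\mathcal{H}^{(n)}_2, W)$ to the $L^{1}$-distance between the step kernel attached to $\mathcal{H}^{(n)}_2$ and $W$ itself, and then to observe that this $L^{1}$-distance is exactly the quantity that assumption \eqref{hyp:Int_complete_W_converge} forces to vanish. First I would write down the kernel $W^{\mathcal{H}^{(n)}_2}$ explicitly: in the present paragraph $I=[0,1]$ with $x_i^{(n)}=i/n$ (Assumption~\ref{ass:deterministic_positions}), the graph $\mathcal{H}^{(n)}_2$ has vertex set $[n]$ with uniform node weights $\alpha_i=1$, so $\alpha_{\mathcal{H}^{(n)}_2}=n$ and the intervals of the construction are $I_i=[\tfrac{i-1}{n},\tfrac{i}{n})$, while the edge weights are $\beta_{i,j}=W(x_i^{(n)},x_j^{(n)})=W(\tfrac{i}{n},\tfrac{j}{n})$ (with $\beta_{i,i}=0=W(\tfrac{i}{n},\tfrac{i}{n})$, since $W$ vanishes on the diagonal). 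Hence
\[
W^{\mathcal{H}^{(n)}_2}(x,y)=\sum_{i,j=1}^{n}W\!\left(\tfrac{i}{n},\tfrac{j}{n}\right)\mathbf{1}_{(x,y)\in I_i\times I_j}.
\]

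Next I would use the elementary inequality $\|V\|_{\Box}\leq\|V\|_{1}$, valid for any kernel $V$ on $[0,1]^2$: this follows at once from \eqref{eq:cut_norm_W}, since $\left|\int_{S\times T}V\right|\leq\int_{[0,1]^2}|V|$ for all measurable $S,T\subset[0,1]$ (one could instead route through $\|\cdot\|_{\infty\to1}$ and \eqref{eq:equivalence_norms_infty1_cut}, but that detour is unnecessary). Applying this with $V=W^{\mathcal{H}^{(n)}_2}-W$, and recalling the definition \eqref{eq:cut_distance_G} of $d_{\Box}$ together with the explicit form of $W^{\mathcal{H}^{(n)}_2}$ above,
\[
d_{\Box}\!\left(\mathcal{H}^{(n)}_2,W\right)=\left\|W^{\mathcal{H}^{(n)}_2}-W\right\|_{\Box}\leq d_1\!\left(W^{\mathcal{H}^{(n)}_2},W\right)=\sum_{i,j=1}^{n}\int_{\frac{i-1}{n}}^{\frac{i}{n}}\int_{\frac{j-1}{n}}^{\frac{j}{n}}\left|W\!\left(\tfrac{i}{n},\tfrac{j}{n}\right)-W(x,y)\right|{\rm d}x\,{\rm d}y.
\]
The right-hand side is precisely the left-hand side of \eqref{hyp:Int_complete_W_converge}, so it tends to $0$ as $n\to\infty$, which is the claim.

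I expect no genuine obstacle here; the argument is essentially bookkeeping. The two points needing a little care are that the node weights of $\mathcal{H}^{(n)}_2$ must be taken uniform, so that the partition $\{I_i\}$ of $[0,1]$ matches the regular positions $i/n$ and the step kernel is the one written above, and that the diagonal terms $i=j$ cause no trouble (automatic, since $W$ vanishes on the diagonal; in any case $\sum_i\int_{I_i\times I_i}|W|\to0$ by $W\in L^{1}$, which holds by \eqref{hyp:bound_W_L1}). Finally, Propositions~\ref{prop:distance_Gn_H1}, \ref{prop:distance_H1_H2} and the present proposition combine through the triangle inequality for $d_{\Box}$ to give $d_{\Box}(\bar{\mathcal{G}}^{(n)},W)\to0$, i.e. Proposition~\ref{prop:conv_graph_bar_Gn}.
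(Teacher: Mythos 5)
Your argument is correct and is essentially the paper's own proof: both reduce the cut distance to the $L^{1}$-distance between the step kernel of $\mathcal{H}^{(n)}_{2}$ and $W$, and then observe that this $L^{1}$-quantity is exactly the left-hand side of \eqref{hyp:Int_complete_W_converge}. The only (immaterial) difference is that you bound $\left\Vert \cdot \right\Vert_{\Box}$ by $\left\Vert \cdot \right\Vert_{1}$ directly from \eqref{eq:cut_norm_W}, whereas the paper routes through $\left\Vert \cdot \right\Vert_{\infty\to 1}$ and \eqref{eq:equivalence_norms_infty1_cut}.
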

\begin{proof}[Proof of Proposition~\ref{prop:distance_H2_W}]
Note that by \eqref{eq:norm_W_L_infty_1}, we have that $ \left\Vert W \right\Vert_{ \infty\to1} \leq \left\Vert W \right\Vert_{ 1}$, so that, by \eqref{eq:equivalence_norms_infty1_cut}, the convergence w.r.t. the distance $d_{ 1}(\cdot, \cdot)$ implies the convergence w.r.t. the distance $d_{ \Box}(\cdot, \cdot)$. This, it suffices to prove that
\begin{equation}
d_{ 1} \left( \mathcal{ H}^{ (n)}_{ 2}, W\right) \xrightarrow[n \to \infty]{} 0.
\end{equation}
But this is exactly equivalent to \eqref{hyp:Int_complete_W_converge}. 
\end{proof}
\def\cprime{$'$}


\end{document}